\documentclass{amsart}
\usepackage[utf8]{inputenc}

\usepackage{amssymb,comment}
\usepackage{mathrsfs}
\usepackage{stmaryrd}

\usepackage{tikz-cd}

\usepackage{mathabx}

\usepackage{lmodern}
\usepackage{csquotes}

\usepackage[T1]{fontenc}

\definecolor{darkgreen}{rgb}{0,0.5,0}
\definecolor{darkblue}{rgb}{0,0,0.8}
\definecolor{darkred}{rgb}{0.8,0,0}
\definecolor{lightblue}{rgb}{0,0.6,0.8}

\usepackage[pdfencoding=auto,colorlinks,citecolor=darkgreen,linkcolor=darkblue,urlcolor=darkred]{hyperref}

\usepackage[english]{babel}
\usepackage{url}
%\usepackage[
%backend=bibtex,
%style=alphabetic,
%sorting=nyt,
%backref=false,
%maxitems=10,
%minitems=10,
%maxbibnames=10,
%minbibnames=10,
%maxcitenames=10,
%mincitenames=10,
%giveninits=true,
%maxalphanames=7,
%minalphanames=7,
%doi=false,
%isbn=false,
%url=false
%]{biblatex}
%\bibliography{references}

\usepackage{multirow}

\usepackage{enumerate}
\usepackage{colonequals}
\usetikzlibrary{matrix, calc, arrows}

\DeclareFontFamily{U}{wncy}{}
\DeclareFontShape{U}{wncy}{m}{n}{<->wncyr10}{}
\DeclareSymbolFont{mcy}{U}{wncy}{m}{n}
\DeclareMathSymbol{\Sha}{\mathord}{mcy}{"58}

\usepackage{microtype}

\theoremstyle{plain}
\newtheorem{theorem}{Theorem}[subsection]
\newtheorem{lemma}[theorem]{Lemma}
\newtheorem{corollary}[theorem]{Corollary}
\newtheorem{proposition}[theorem]{Proposition}

\newtheorem{maintheorem}{Theorem}

\theoremstyle{definition}
\newtheorem{definition}[theorem]{Definition}

\newtheorem{assumption}[theorem]{Assumption}

\newtheorem{remark}[theorem]{Remark}

\newtheorem*{assumption*}{Assumption}
\newtheorem*{claim*}{Claim}

\theoremstyle{remark}

\usepackage{cleveref}
\crefname{theorem}{Theorem}{Theorems}
\crefname{lemma}{Lemma}{Lemmata}
\crefname{corollary}{Corollary}{Corollaries}
\crefname{proposition}{Proposition}{Propositions}
\crefname{definition}{Definition}{Definitions}
\crefname{conjecture}{Conjecture}{Conjectures}
\crefname{question}{Question}{Questions}
\crefname{example}{Example}{Examples}
\crefname{algorithm}{Algorithm}{Algorithms}
\crefname{remark}{Remark}{Remarks}
\crefname{assumption}{Assumption}{Assumptions}
\crefname{maintheorem}{Theorem}{Theorems}

%%% Basic Macro%%%%%%%%%%%%%%%%%%%%%%%%%%%%%%%%%%%%%%%%
\def\ol#1{\overline{#1}}% 		overline
% 	wide hat
\def\wt#1{\widetilde{#1}}% 	wide tilde
% 	underline
% 	wide tilde

%%% Define \Alphabet&\endpiece------------------------------------------------------------------------
\def\Alphabet{A,B,C,D,E,F,G,H,I,J,K,L,M,N,O,P,Q,R,S,T,U,V,W,X,Y,Z}%  Capitalized Alphabet
\def\alphabet{a,b,c,d,e,f,g,h,i,j,k,l,m,n,o,p,q,r,s,t,u,v,w,x,y,z}%	lowercase alphabet
\def\endpiece{xxx}%									marks end of list
%%% Define \makeAlphabet------------------------------------------------------------------------
\def\makeAlphabet[#1]{\expandafter\makeA#1,xxx,}%		Ex. \makeAlphabet[A,B]
\def\makealphabet[#1]{\expandafter\makea#1,xxx,}%		Ex. \makealphabet[c,d]
\def\makeA#1,{\def\temp{#1}\ifx\temp\endpiece\else%
	\mkbb{#1}\mkfrak{#1}\mkbf{#1}\mkcal{#1}\mkscr{#1}\mkbs{#1}\expandafter\makeA\fi}%
\def\makea#1,{\def\temp{#1}\ifx\temp\endpiece\else\mkfrak{#1}\mkbf{#1}\mkbs{#1}\expandafter\makea\fi}%
\def\mkbb#1{\expandafter\def\csname bb#1\endcsname{\mathbb{#1}}}%      Define bb
\def\mkfrak#1{\expandafter\def\csname fr#1\endcsname{\mathfrak{#1}}}%    Define frak
\def\mkbf#1{\expandafter\def\csname b#1\endcsname{\mathbf{#1}}}%           Define bold letters
\def\mkcal#1{\expandafter\def\csname c#1\endcsname{\mathcal{#1}}}%       Define calligraphy
\def\mkscr#1{\expandafter\def\csname s#1\endcsname{\mathscr{#1}}}%       Define script
\def\mkbs#1{\expandafter\def\csname bs#1\endcsname{{\boldsymbol{#1}}}}%       Define bold symbol
%%% Define \makeop-------------------------------------------------------------------------------------------------------
\def\makeop[#1]{\xmakeop#1,xxx,}%					Ex. \makeop[Hom,Spec]
\def\mkop#1{\expandafter\def\csname #1\endcsname{{\operatorname{#1}}}} %
\def\xmakeop#1,{\def\temp{#1}\ifx\temp\endpiece\else\mkop{#1}\expandafter\xmakeop\fi}%
\def\makeup[#1]{\xmakeup#1,xxx,}%					Ex. \makeop[Hom,Spec]
\def\mkup#1{\expandafter\def\csname #1\endcsname{{\mathrm{#1}\,}}} %
\def\xmakeup#1,{\def\temp{#1}\ifx\temp\endpiece\else\mkup{#1}\expandafter\xmakeup\fi}%
%%% Initialize------------------------------------------------------------------------------------------------------------------
% Define Alphabets.  Alphabets stored in \Alphabet
\makeAlphabet[\Alphabet]%				Define bb, frak, bf, cal for Capitalized Alphabet
\makealphabet[\alphabet]%  				Define frak and bf for uncapitalized alphabet
% Define Operators.  Separate Items by using comma.
\makeop[Hom,Tor,Ext,holim,hocolim,dim,Sel,GL,PGL,SL,H,ord,Sym,Gal,Ann,ind,Aut,End,cd,Frob,Gr,sp,Tot,sm,an,Pic,NS,Br,tors,Reg,ss,new,cts,ur,alg,pd,disc,cyc,rk,cork,ab,nr,tors,Iw,res,inf,Cl,Quot,corank,ord,length,proj,Hg,im,coker,Tam,loc,cores,Char,Fitt,free,Fil,MSD,Ind]
% 		Homs
\makeup[Spec,Proj,Spwf,Sp,Sh,Spf,Sch,id,dR,rig,cris,ur,div,ac,BK]

\newcommand{\F}{\mathbf{F}}

\newcommand{\Q}{\mathbf{Q}}
\newcommand{\Qbar}{\ol\Q}
 % right derived functor
\newcommand{\Z}{\mathbf{Z}}
\newcommand{\KS}{\mathbf{KS}}

\newcommand{\fm}{\mathfrak{m}}

\newcommand{\fX}{\mathfrak{X}}

\newcommand{\et}{\mathrm{\acute{e}t}}

\renewcommand{\epsilon}{\varepsilon}
\renewcommand{\theta}{\vartheta}
\renewcommand{\phi}{\varphi}

\newcommand{\mathup}[1]{\text{\textup{#1}}}
\renewcommand{\H} {\ensuremath{\mathup{H}}}

\newcommand{\GalQ}{G_\Q}

\newcommand{\Mod}{\mathbf{Mod}}

\newcommand{\defn}{\colonequals}
\newcommand{\defeq}{\colonequals}

\newcommand{\iso}{\simeq}
\newcommand{\isom}{\cong}
\newcommand{\isoto}{\stackrel{\sim}{\longrightarrow}}
\newcommand{\inj}{\hookrightarrow}
\newcommand{\surj}{\twoheadrightarrow}

\renewcommand{\Im}{\operatorname{Im}}

\renewcommand{\injlim}{\varinjlim}

\numberwithin{equation}{section}

\begin{document}
\title[Anticyclotomic Iwasawa theory of newforms at Eisenstein primes]{On the anticyclotomic Iwasawa theory\\ of newforms at Eisenstein primes\\ of semistable reduction}
\author{Timo Keller}
\address{(T. Keller) Institut für Mathematik, Universität Würzburg, Emil-Fischer-Strasse 30, 97074,
	Würzburg, Germany}
\address{Rijksuniversiteit Groningen, Bernoulli Institute, Bernoulliborg, Nijenborgh 9, 9747 AG Groningen, The Netherlands}
\address{Leibniz Universität Hannover, Institut für Algebra, Zahlentheorie und Diskrete Mathematik, Welfengarten 1, 30167 Hannover, Germany}
\address{Department of Mathematics, Chair of Computer Algebra, Universität Bay\-reuth, Germany}
\email{math@kellertimo.de}
\urladdr{\url{https://www.timo-keller.de}}
\thanks{TK was supported by the Deutsche Forschungsgemeinschaft (DFG),
	Geschäftszeichen STO~299/18-1, AOBJ: 667349 and by the 2021 MSCA Postdoctoral Fellowship
	01064790 -- Ex\-pli\-cit\-Rat\-Points while working on this article.}
\author{Mulun Yin}
\address{(M. Yin) University of California Santa Barbara, South Hall, Santa Barbara, CA 93106, USA}
\email{mulun@ucsb.edu}
\date{\today}

\subjclass[2020]{11G40 (Primary) 11G05, 11G10, 14G10 (Secondary)}

\begin{abstract}
    Let $f$ be a newform of weight $k=2r$ and level $N$ with trivial nebentypus. Let $\frp\nmid 2N$ be a maximal ideal of the ring of integers of the coefficient field of $f$ such that the self-dual twist of the mod-$\frp$ Galois representation of $f$ is reducible with constituents $\phi,\psi$. Denote a decomposition group over the rational prime $p$ below $\frp$ by $G_p$. We remove the condition $\phi|_{G_p} \neq \mathbf{1}, \omega$ from~\cite{CGLS}, and generalize their results to newforms of higher weights $2r$ with $r$ being odd. As a consequence, we prove some Iwasawa Main Conjectures and get the $p$-part of the strong BSD Conjecture for elliptic curves of analytic rank $0$ or $1$ over $\Q$ in this setting. In particular, non-trivial $p$-torsion is allowed in the Mordell--Weil group. Using Hida families, we also prove an Iwasawa Main Conjecture for newforms of weight $2$ of multiplicative reduction at Eisenstein primes. In the above situations, we also get $p$-converse to the theorems of Gross--Zagier--Kolyvagin. The $p$-converse theorems have applications to Goldfeld's conjecture in certain quadratic twist families of elliptic curves having a $3$-isogeny.
\end{abstract}

\maketitle

\tableofcontents

\section*{Introduction}
\subsection{Statement of the main results.}\label{0.1}
Let $f \in S_k(\Gamma_0(N))$ with $k = 2r$ be a newform with coefficient field $\Q(f)$ and coefficient ring $\Z[f]$. Let $F$ be a finite extension of the completion of $\Q(f)$ at a chosen prime above $p>2$, and denote by $\cO$ the ring of integers in $F$. Let $\frp$ be the maximal ideal of $\cO$ and let $\pi$ be a generator of $\frp$. Assume that $\frp$ is an \emph{Eisenstein prime} for $f$, i.e., that the mod-$\frp$ residual Galois representation $\ol\rho_f$ of $\GalQ$ associated with $f$ is reducible. This means that its semisimplification is the direct sum of two characters. After replacing $\ol\rho_f$ by its self-dual twist $\ol\rho_f(1 - r)$, these characters are $\phi,\psi = \phi^{-1}\omega$. (In the case $k = 2$, $\ol\rho_f$ is the Galois representation of $\GalQ$ acting on $A_f[\frp](\Qbar)$ where $A_f/\Q$ is the modular abelian variety attached to $f$). Note that by Serre's Modularity Conjecture, a theorem of Khare--Kisin--Wintenberger, simple RM abelian varieties over $\Q$ are attached to newforms of weight $2$, and vice versa. For elliptic curves, good Eisenstein primes are known to be ordinary.

Let $K/\Q$ be a Heegner field for $N$, i.e., an imaginary quadratic field such that all primes dividing $N$ split completely in $K$. We further assume that $D_K$ is odd and $\neq -3$, and that $p = v\ol{v}$ splits in $K$.

In~\cite{CGLS}, it is proved for $k = 2$ and $\Z[f] = \Z$ that, under the assumption $\phi|_{G_v} \ne \mathbf{1}, \omega$ (where $\mathbf{1}$ is the trivial and $\omega$ the mod-$p$ cyclotomic character), the anticyclotomic Main Conjecture and Perrin-Riou's Heegner point Main Conjecture hold. From this, the $p$-converse to Gross--Zagier--Kolyvagin's theorem and the $p$-part of BSD in the analytic rank $1$ case (under the assumption that $\phi$ is ramified at $p$ and odd or unramified at $p$ and even, which was later removed by~\cite{CGS}) easily follow.

However, this excludes in particular the case where $A_f(\Q)[\frp] \neq 0$ in the case $k = 2$. In this article, we close this gap and generalize the results to all weights $2r$ with $r$ being odd and all coefficient rings.

Our main results are the Iwasawa Main Conjectures~\cref{Gr imc} and~\cref{Heeg imc}, a $p$-converse theorem~\cref{B} to the theorem of Gross--Zagier--Kolyvagin--Logach\"ev, and the $p$-part of the BSD formula~\cref{A} in our situation when $p$ does not divide the level of $f$. For newforms of weight $2$, we obtain Iwasawa Main Conjectures at Eisenstein primes of \textit{semistable} reduction (i.e., good reduction or bad multiplicative reduction). See~\cref{C}.

\subsection{Method of proof and outline of the paper.}

First, we assume $p$ does not divide the level of $f$ from~\cref{sec:algebraic-side} through~\cref{4}. Following the strategies in~\cite{CGLS}, the proof of our Iwasawa Main Conjectures for $f$ is divided into two steps. We first compare the Iwasawa $\lambda$- and $\mu$-invariants of the algebraic side of and analytic side, then prove a one side divisibility. The starting point is again the observation that a Main Conjecture should be equivalent to an imprimitive one. More specifically, for an appropriate Selmer group $\fX_f$ for $f$ and a corresponding  Bertolini--Darmon--Prasanna $p$-adic $L$-function $\cL_f$, there is an equality for their `$S$-imprimitive' versions\[
\Char(\fX^S_f)\Lambda^{\nr}=(\cL^S_f)
\]
as ideals in $\Lambda^{\nr}$, where the $p$-adic $L$-function for $f$ lives.
Here we consider the primitive and imprimitive unramified Selmer groups (which are identified with the corresponding Greenberg Selmer groups in~\cite{CGLS} in their setting) of the characters $\phi$ and $\psi$ and of $f$ over the anticyclotomic Iwasawa algebra $\Lambda \defeq \cO\llbracket \Gamma \rrbracket$, where $\Gamma$ is the Galois group of the anticyclotomic $\Z_p$-extension $K_\infty$ of $K$. Here $\Lambda^{\nr}\coloneq\Lambda\hat{\otimes}_{\bZ_p}\bZ_p^{\nr}$, for $\bZ_p^{\nr}$ the completion of the ring of integers of the maximal unramified extension of $\bQ_p$. Denote by $\F$ the residue field of $F$. 

In Section~\ref{sec:algebraic-side}, we show that the primitive unramified Selmer group of $f$ and the characters $\phi,\psi$ appearing in the semisimplification of $\ol\rho_f$ are $\Lambda$-torsion with $\mu$-invariant $0$. In most cases, we get a simple relation between the $\lambda$-invariants\begin{equation}\label{sumlambda}
\lambda(\fX^S_f)=\lambda(\fX^S_\phi)+\lambda(\fX^S_\psi)
\end{equation} as in~\cite{CGLS}, but there is a special case when one of the characters is trivial over $G_K$, where we get\begin{equation}\label{lambdadiff}
\lambda(\fX^S_f)+1=\lambda(\fX^S_\phi)+\lambda(\fX^S_\psi).
\end{equation}
From here, we can easily get the same comparisons between the corresponding primitive Selmer groups.
We mention that this difference in $\lambda$-invariants agrees with the Iwasawa Main Conjecture for the trivial character, which takes on a different form from others\[
\Char(\fX_\mathbf{1})=(\cL_\mathbf{1}\cdot T).\] 
Since we allow one of the characters to be trivial on $G_p$, the proofs of~\eqref{sumlambda} and~\eqref{lambdadiff} requires more general ideas than those in \textit{op.\ cit.} in that the Selmer groups $\fX^S_?$ ($?=f,\theta$) may not be \textit{almost divisible} in the sense that they might contain non-trivial finite $\Lambda$-submodules. Indeed, the vanishing results of some $\H^0$ and $\H^2$ cohomology groups in \textit{op.\ cit.} heavily rely on this condition and no longer hold in our situation. We will compare these groups for $f$ to those of the characters directly and obtain an expected relation between their Selmer groups.

By Ribet's Lemma (see~\cite{Bellaiche}), we can find a sub-lattice of the lattice for the $\frp$-adic Galois representation of $f$ such that the associated mod-$\frp$ Galois representation of $f$ has no nonzero trivial subrepresentation over $G_K$. More precisely, we can fix a non-split extension\[
0\to \F(\phi) \to \ol\rho_f \to \F(\psi) \to 0\] where the first character restricts to $\omega$ on $G_p$ so in particular it's non-trivial over $G_K$. We can assume this since both the Iwasawa Main Conjectures and the BSD Conjecture is invariant under isogenies. This choice of the lattice is crucial for our computations. 

In Section~\ref{sec:analytic side}, we show an analogous formula for the $\mu$- and $\lambda$-invariants for the Katz $p$-adic $L$-functions of the characters and the BDP $L$-function of $f/K$. That is,
\begin{equation*}
	\lambda(\cL^S_f)=\lambda(\cL^S_\phi)+\lambda(\cL^S_\psi),
\end{equation*}
where $\cL^S_?\coloneq \cL_?\cdot \prod_{w\in S} \cP_w(?)$ for certain elements $\cP_w(?)$ in $\Lambda$ (see~\cref{H1 Euler}, \cref{1.5}). Here $?=f,\ \phi\text{ or }\psi$. Using the known results on the Iwasawa Main Conjecture for the characters proved by Rubin~\cite{Rubin1991}, we get the equality between algebraic and analytic Iwasawa invariants for $f$.

Hence it suffices to show that the characteristic ideal of the unramified Selmer group divides the $p$-adic $L$-function. For this, we can refer to~\cite[§§\,3, 4]{CGLS} because we chose a lattice $T_f$ such that $\ol\rho_f$ has no nonzero trivial subrepresentation. However, some of their arguments only apply to a canonical Galois stable lattice $\tilde{T}$ of $\rho_f$, which is generally different from our choice and might have a trivial subrepresentation. We will take care of this by fully utilizing the flexibility of Kolyvagin systems in~\cref{Koly}. Our modifed Kolyvagin system argument yields one-side divisibilities of the Main Conjectures, and using the comparison of Iwasawa invariants from~\cref{sec:algebraic-side} and~\cref{sec:analytic side} we can turn the divisibilities into equalities. The Main Conjectures can then be applied to yield the main results of this paper for the good reduction part. 

The first anticyclotomic Iwasawa Main Conjecture we obtain is the following, which can be seen as a special case of Greenberg's Main Conjectures.
\begin{maintheorem}[\cref{IMC}\,(IMC2)]\label{Gr imc}
	Let $f\in S_k(\Gamma_0(N))$ be a newform of weight $k=2r$ where $r$ is odd and $p>2$ an ordinary Eisenstein prime not dividing $N$, and let $K$ be an imaginary quadratic field satisfying the hypotheses in~\cref{0.1}. Let $\fX_f$ be the Pontryagin dual of the unramified Selmer groups for $f$ (see~\cref{sec:selmer-groups-of-rhof} for precise definitions) and let $\cL_f$ be the corresponding BDP $p$-adic $L$-function. Then $\fX_f$ is $\Lambda$-torsion, and 
	\[\Char(\fX_f)\Lambda^{\nr}=(\cL_f)\]
	as ideals in $\Lambda^{\nr}$.
\end{maintheorem}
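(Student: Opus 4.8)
The plan is to follow the two-step template of \cite{CGLS}: rather than attacking $\Char(\fX_f)\Lambda^{\nr}=(\cL_f)$ directly, first prove it in ``$S$-imprimitive'' form, where the characteristic ideal and the $p$-adic $L$-function are multiplied by the same Euler factors $\prod_{w\in S}\cP_w(f)$, and then descend to the primitive statement. For the imprimitive version I would establish three things: (a) $\fX_f^S$ is $\Lambda$-torsion with $\mu(\fX_f^S)=0$ and also $\mu(\cL_f^S)=0$; (b) $\lambda(\fX_f^S)=\lambda(\cL_f^S)$, so that $\Char(\fX_f^S)\Lambda^{\nr}$ and $(\cL_f^S)$ have the same Iwasawa invariants; (c) the divisibility $\Char(\fX_f^S)\Lambda^{\nr}\subseteq(\cL_f^S)$ coming from the Heegner point Euler system. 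Since both sides are then characteristic ideals of $\Lambda$-torsion modules with vanishing $\mu$-invariant and equal $\lambda$-invariant, one divisibility forces equality; dividing out the common factors $\cP_w(f)$ (\cref{H1 Euler}, \cref{1.5}) gives the theorem.

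For (a) and (b) on the algebraic side I would use the lattice $T_f$ produced by Ribet's Lemma (see \cite{Bellaiche}), chosen so that $\ol\rho_f$ fits into a non-split short exact sequence $0\to\F(\phi)\to\ol\rho_f\to\F(\psi)\to 0$ with $\phi|_{G_p}=\omega$, hence $\phi$ nontrivial over $G_K$; this is legitimate because the Main Conjecture and the local Euler factors are isogeny-invariant. Taking $\GalQ$-cohomology of this sequence over $K_\infty$ and computing the relevant unramified local conditions compares $\fX_f^S$ with the analogous Selmer duals $\fX_\phi^S$ and $\fX_\psi^S$ of the two characters, whose torsionness and $\mu=0$ are (essentially) due to Rubin. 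From this comparison one extracts the cotorsionness of $\fX_f^S$, the vanishing $\mu(\fX_f^S)=0$, and the additivity $\lambda(\fX_f^S)=\lambda(\fX_\phi^S)+\lambda(\fX_\psi^S)$ in the generic case, or $\lambda(\fX_f^S)+1=\lambda(\fX_\phi^S)+\lambda(\fX_\psi^S)$ in the special case where one character is trivial over $G_K$ --- these are exactly \eqref{sumlambda} and \eqref{lambdadiff}. On the analytic side, the factorization of the BDP $p$-adic $L$-function at an Eisenstein prime into (a unit times) the product of the two Katz $p$-adic $L$-functions $\cL_\phi$ and $\cL_\psi$ gives $\mu(\cL_f^S)=0$ and $\lambda(\cL_f^S)=\lambda(\cL_\phi^S)+\lambda(\cL_\psi^S)$, with the same shift in the anomalous case (matching the form $\Char(\fX_\mathbf{1})=(\cL_\mathbf{1}\cdot T)$ of the trivial-character Main Conjecture). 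Rubin's two-variable Main Conjecture for imaginary quadratic fields \cite{Rubin1991} gives $\Char(\fX_\phi)\Lambda^{\nr}=(\cL_\phi)$ and likewise for $\psi$, hence $\lambda(\fX_\phi^S)=\lambda(\cL_\phi^S)$ and $\lambda(\fX_\psi^S)=\lambda(\cL_\psi^S)$; summing and using the two additivities above yields $\lambda(\fX_f^S)=\lambda(\cL_f^S)$, which is (b).

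For (c), the divisibility $\Char(\fX_f^S)\Lambda^{\nr}\subseteq(\cL_f^S)$ is precisely \cite[§§3--4]{CGLS}: the Heegner point Euler system argument, together with the first and second explicit reciprocity laws relating Heegner classes to $\cL_f$, goes through verbatim once $T_f$ is the lattice for which $\ol\rho_f$ has no nonzero $G_K$-trivial subrepresentation, which is exactly why that choice was made. Combining (a), (b), (c) yields the $S$-imprimitive Main Conjecture, and removing the $\cP_w$ factors from both sides gives $\Char(\fX_f)\Lambda^{\nr}=(\cL_f)$; ordinarity of $p$ is what makes the BDP/Greenberg Selmer framework and the interpolation available throughout.

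The main obstacle is step (a)/(b) on the algebraic side in the special case where a character becomes trivial over $G_K$. There the Selmer modules $\fX_f^S$ and $\fX_\theta^S$ need not be \emph{almost divisible}: they may contain nonzero finite $\Lambda$-submodules, so the vanishing of the $\H^0$ and $\H^2$ terms invoked in \cite{CGLS} to run the comparison fails. The fix is to abandon the ``almost divisible'' shortcut and instead compare the cohomology of $f$ with that of $\phi$ and $\psi$ directly --- tracking the finite submodules through the long exact sequence of the Ribet extension --- which is what produces the corrected relation \eqref{lambdadiff} and, matched against the analytic side, still yields the equality of $\lambda$-invariants.
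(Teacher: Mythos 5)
Your proposal is correct and follows essentially the same route as the paper: one divisibility from the Heegner point Kolyvagin system (stated in the paper first in the Perrin-Riou form and transferred to the BDP form via the reciprocity laws), upgraded to an equality by matching the algebraic and analytic Iwasawa invariants through the imprimitive character decomposition, with the same lattice normalization via Ribet's Lemma and the same $+1$ shift in the anomalous case. The only cosmetic difference is that the paper routes the divisibility through the equivalence of (IMC1) and (IMC2) rather than proving it directly in the $S$-imprimitive $\cL_f^S$ form.
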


The assumption that $r$ is odd is a technical assumption that is only used in the proof of~\cref{Koly} to turn the Heegner cycles into a Kolyvagin system. It can be removed in certain situations (see~\cref{rodd}) and is not considered essential. In particular, we still have access to a infinite subfamily of some Hida family for which the Main Conjectures are known, which is the key to showing a Main Conjecture for a weight $2$ form of multiplicative reduction (see~\cref{5.1}).

A second Iwasawa Main Conjecture of Perrin-Riou type, which is equivalent to the one above, is also obtained.
\begin{maintheorem}[\cref{IMC}\,(IMC1)]\label{Heeg imc}
	Let $f\in S_k(\Gamma_0(N))$ be a newform of weight $k=2r$ where $r$ is odd and $p>2$ an ordinary Eisenstein prime not dividing $N$, and let $K$ be an imaginary quadratic field satisfying the hypotheses in~\cref{0.1}. Then both $\H^1_{\cF_\Lambda}(K,\bT)$ and $\cX=\H^1_{\cF_\Lambda}(K,M_f)^\vee$ have $\Lambda$-rank one (see~\cite[section 3]{CGLS} for general definitions), and the equality \begin{equation*}
		\Char_\Lambda(\cX_{\tors})= \Char_\Lambda(\H^1_{\cF_\Lambda}(K,\bT)/\Lambda\kappa_{\infty})^2
	\end{equation*}
holds in $\Lambda$.
\end{maintheorem}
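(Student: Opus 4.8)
The plan is to deduce \cref{Heeg imc} from \cref{Gr imc}, i.e.\ to show that Perrin-Riou's Heegner point Main Conjecture is equivalent to the Greenberg/BDP Main Conjecture, following the argument of \cite[\S3]{CGLS}. That argument was written for $k=2$, $\Z[f]=\Z$ and under the hypothesis $\phi|_{G_p}\neq\mathbf 1,\omega$; the point is that once the lattice $T_f$ has been chosen (via Ribet's Lemma) so that $\ol\rho_f$ has no nonzero $G_K$-trivial subrepresentation, the same argument runs in our generality, the only extra ingredients being the higher-weight versions of the local and global inputs.

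First I would prove the rank-one assertions. The $\Lambda$-adic Heegner class $\kappa_\infty\in\H^1_{\cF_\Lambda}(K,\bT)$ is not $\Lambda$-torsion: this follows from the non-triviality of Heegner points along the anticyclotomic tower (Cornut--Vatsal) together with a control theorem, or alternatively from the explicit reciprocity law below together with $\cL_f\neq 0$ (which holds since $\fX_f$ is $\Lambda$-torsion by \cref{Gr imc}). By the structure theory for Heegner point Kolyvagin systems underlying \cite[\S\S3--4]{CGLS} --- whose applicability in the present reducible residual setting is exactly what the lattice choice buys, as it restores the vanishing of $\H^0(K_\infty,\ol\rho_f)$ --- the module $\H^1_{\cF_\Lambda}(K,\bT)$ has $\Lambda$-rank one, and a Poitou--Tate/global Euler characteristic computation for the (balanced) local conditions then gives that $\cX$ also has $\Lambda$-rank one.

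Next I would run the duality step. The $\Lambda$-adic explicit reciprocity law of Bertolini--Darmon--Prasanna type --- in the form of Castella--Hsieh for $k=2$, and its extension to generalized Heegner cycles for higher $k$ --- identifies, up to a unit, the image of $\loc_v(\kappa_\infty)$ under the relevant big-logarithm map with $\cL_f$. Feeding this into the Poitou--Tate exact sequence that relates $\fX_f$, $\cX$ and $\H^1_{\cF_\Lambda}(K,\bT)/\Lambda\kappa_\infty$ --- the local conditions at the two primes above $p$ being arranged to match, so that this is formal as in \cite[\S3]{CGLS} --- produces an equality of ideals in $\Lambda^{\nr}$ of the form
\[
\Char_\Lambda(\cX_{\tors})\,\Lambda^{\nr}\cdot(\cL_f)=\Char_\Lambda\!\big(\H^1_{\cF_\Lambda}(K,\bT)/\Lambda\kappa_\infty\big)^2\Lambda^{\nr}\cdot\Char_\Lambda(\fX_f)\,\Lambda^{\nr}.
\]
Substituting $\Char_\Lambda(\fX_f)\,\Lambda^{\nr}=(\cL_f)$ from \cref{Gr imc} and cancelling this common factor (a nonzerodivisor, by flatness of $\Lambda\to\Lambda^{\nr}$) yields $\Char_\Lambda(\cX_{\tors})\,\Lambda^{\nr}=\Char_\Lambda(\H^1_{\cF_\Lambda}(K,\bT)/\Lambda\kappa_\infty)^2\,\Lambda^{\nr}$; since both sides are extensions of ideals of $\Lambda$ and $\Lambda\to\Lambda^{\nr}$ is faithfully flat, the equality already holds in $\Lambda$, which is the assertion.

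The duality bookkeeping is formal; the main obstacle is supplying its two inputs in the required generality. One is the higher-weight explicit reciprocity law and the attendant $\Lambda$-adic generalized Heegner cycle classes. The more delicate one is making the Euler-system/Kolyvagin-system formalism of \cite[\S\S3--4]{CGLS} run when $\ol\rho_f$ is reducible and $\phi|_{G_p}$ may equal $\mathbf 1$ or $\omega$: this is precisely where a lattice with no nonzero $G_K$-trivial subrepresentation is indispensable, since it recovers the $\H^0$- and $\H^2$-vanishing statements on which both the structure theory and the matching of local conditions rest --- even though, as explained in \cref{sec:algebraic-side}, the dual Selmer groups are then no longer almost divisible.
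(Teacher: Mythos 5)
Your proposal is correct and follows essentially the same route as the paper: the paper's proof of \cref{IMC} likewise rests on the equivalence of (IMC1) and (IMC2) via the explicit reciprocity law and Poitou--Tate duality (citing \cite[Prop.~4.2.1]{CGLS} and \cite[Thm.~5.2]{BCK21}), the Kolyvagin-system structure theorem (\cref{HeegMC}) built on the lattice choice forcing $\H^0(K_\infty,\ol\rho_f)=0$, and the comparison of algebraic and analytic Iwasawa invariants that yields the equality in (IMC2). The only cosmetic difference is that you take \cref{Gr imc} as a black-box input and transfer it to (IMC1), whereas the paper proves both conjectures simultaneously, first extracting one divisibility of (IMC1) from the Kolyvagin system and then upgrading it on the (IMC2) side.
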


We now formulate our main results on the $p$-part of strong BSD for elliptic curves of good reduction at Eisenstein primes.

\begin{maintheorem}[\cref{BSD E}]\label[maintheorem]{A}
	Let $E/\Q$ be an elliptic curve and $p > 2$ a prime of good reduction. Assume that $E$ admits a cyclic $p$-isogeny with kernel $C=\F_p(\phi)$ for some character $\phi\colon G_\bQ\to \F_p^\times$ (equivalently, $E[p]$ is reducible). Assume that $r_\an(E) \in \{0,1\}$. 
	
	Then the $p$-part of the BSD formula holds for $E/\Q$, i.e.,
	\[
	\ord_p\Big(\frac{L^{*}(E/\Q,1)}{\Omega_E \cdot \Reg_{E/\Q}}\Big) = \ord_p\Big(\frac{\Tam(E/\Q)\#\Sha(E/\Q)[p^\infty]}{(\#E(\Q)_\tors)^2}\Big).
	\]
	Here, $L^*(E/\Q,1)$ denotes the leading Taylor coefficient of $L(E/\Q,s)$ at $s = 1$.
\end{maintheorem}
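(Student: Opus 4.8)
The plan is to deduce the $p$-part of the Birch--Swinnerton-Dyer formula over $\Q$ from the anticyclotomic Iwasawa Main Conjectures \cref{Gr imc} and \cref{Heeg imc} over a well-chosen auxiliary imaginary quadratic field $K$, following the strategy of \cite{CGLS} (which itself rests on the Gross--Zagier--Kolyvagin method and the descent arguments of Skinner--Urban and Jetchev--Skinner--Wan); the new ingredient is that these main conjectures now hold with no hypothesis on $\phi|_{G_p}$, hence in particular when $E(\Q)[p]\neq 0$. Since the validity of the $p$-part of the BSD formula is invariant under $\Q$-isogeny (Cassels), we may replace $E$ by any isogenous curve; by the lattice choice of the Introduction (Ribet's lemma) we arrange that $\ol\rho_E$ fits in a non-split exact sequence $0\to\F_p(\phi)\to\ol\rho_E\to\F_p(\psi)\to 0$ with $\phi|_{G_p}=\omega$, so that $\phi|_{G_K}\neq\mathbf{1}$ for every $K$ in which $p$ splits. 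As $p\nmid N$, the curve has good ordinary reduction at $p$, so the hypotheses of \cref{Gr imc} and \cref{Heeg imc} are met once $K$ is fixed.

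Choose $K$ imaginary quadratic with $D_K$ odd, $D_K\neq-3$, $p=v\ol v$ split in $K$, and every prime dividing $N$ split in $K$ (Heegner hypothesis); under this hypothesis the global root number of $E/K$ is $-1$. Using nonvanishing theorems for quadratic twists of $L(E/\Q,s)$ and of its first derivative (Waldspurger; Bump--Friedberg--Hoffstein, Murty--Murty), together with nonvanishing modulo $p$ of the relevant algebraic central value (Vatsal-type, which one checks survive in our setting), one imposes in addition that $r_{\an}(E^{(D_K)}/\Q)=1-r_{\an}(E/\Q)$---so that $r_{\an}(E/K)=1$---along with any finite set of genericity conditions, and, when $r_{\an}(E/\Q)=1$, that $L(E^{(D_K)}/\Q,1)/\Omega_{E^{(D_K)}}$ is a $p$-adic unit. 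With $r_{\an}(E/K)=1$ the basic Heegner point $z_K\in E(K)$ is non-torsion by the Gross--Zagier formula, whence $\operatorname{rank}_\Z E(K)=1$ and $\Sha(E/K)$ is finite by Kolyvagin, so the $K$-side of BSD is a genuine finite identity.

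The heart of the argument is to extract the $p$-part of BSD over $K$ from \cref{Heeg imc} (equivalently, via the explicit reciprocity law, from the BDP form \cref{Gr imc}) through a control-theorem computation at the trivial character of $\Gamma$. Specialization identifies $\H^1_{\cF_\Lambda}(K,\bT)/\Lambda\kappa_\infty$ at the bottom layer with $\H^1_f(K,T)/\cO\kappa_1$, $\kappa_1$ being the class of $z_K$, and identifies $\cX_{\tors}$ at the bottom layer with the Pontryagin dual of $\Sha(E/K)[p^\infty]$ up to explicit local factors at $v,\ol v$---equivalently, $\cL_f$ at the trivial character equals $\log_{\omega_E}(z_K)^2$ up to an explicit $p$-adic period and Euler factor. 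Combining the resulting equality of characteristic ideals with the classical Gross--Zagier formula (which converts the N\'eron--Tate height of $z_K$, hence the index $[E(K):\Z z_K]$, into $\Reg(E/K)$ and $L'(E/K,1)/\Omega_{E/K}$) yields the $p$-part of the BSD formula for $E/K$. I expect the main obstacle to be precisely this step: the local and archimedean bookkeeping (Tamagawa numbers, the ratio of the canonical $p$-adic period to $\Omega_{E/K}$, and the $p$-part of $\#E(K)_{\tors}$) must be carried out \emph{without} the simplifications usually afforded by $\ol\rho_E$ being irreducible and by the Selmer groups being almost divisible---which fails here, as the paper stresses---so the arguments of \cite[\S\S\,3, 4]{CGLS} and of Jetchev--Skinner--Wan have to be reworked under the present weaker hypotheses.

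Finally, descend to $\Q$. As $p$ is odd, $\Sel_{p^\infty}(E/K)$, $E(K)\otimes\Q_p/\Z_p$ and $\Sha(E/K)[p^\infty]$ decompose into the $\Gal(K/\Q)$-eigenparts attached to $E/\Q$ and to $E^{(D_K)}/\Q$, while $L(E/K,s)=L(E/\Q,s)\,L(E^{(D_K)}/\Q,s)$; comparing periods, regulators and Tamagawa factors up to powers of $2$ (hence up to $p$-units) shows that the $p$-part of BSD for $E/K$ is the product of the $p$-parts of BSD for $E/\Q$ and for $E^{(D_K)}/\Q$, and in particular that $\operatorname{rank}_\Z E(\Q)=r_{\an}(E/\Q)$ with $\Sha(E/\Q)$ finite. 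When $r_{\an}(E/\Q)=1$, the twist $E^{(D_K)}/\Q$ has analytic rank $0$ and, by the choice of $K$, $p$-unit algebraic $L$-value, which forces $\Sha(E^{(D_K)}/\Q)[p^\infty]=0=E^{(D_K)}(\Q)[p^\infty]$ and $p\nmid\Tam(E^{(D_K)}/\Q)$, so its $p$-part of BSD holds trivially and \cref{A} follows. When $r_{\an}(E/\Q)=0$, the twist has analytic rank $1$; one disposes of it by running the same argument with $E^{(D_K)}$ in place of $E$ and a second auxiliary field $K'$, chosen so that $E^{(D_K D_{K'})}/\Q$ has analytic rank $0$ with $p$-unit algebraic $L$-value, and the recursion terminates after one further step. (Alternatively, the rank-$0$ case can be obtained from the residually reducible cyclotomic main conjecture together with Mazur's control theorem once the corank of $\Sel_{p^\infty}(E/\Q)$ is known to vanish, which follows from the $p$-converse theorem.)
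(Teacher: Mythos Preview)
Your overall strategy matches the paper's: deduce $p$-BSD over $K$ from the anticyclotomic Main Conjectures via a control theorem and Gross--Zagier, then split into $\Gal(K/\Q)$-eigenparts. You also correctly flag the main technical hurdle---the Jetchev--Skinner--Wan control theorem must be reworked when $\H^0(K,W)\neq 0$ and the Selmer groups are not almost divisible---which the paper carries out in Appendix~B (yielding a formula for $\#\Z_p/f_E(0)$ with the extra factors $\#\H^0(K,W)$ and $\#\H^0(K,W)^\vee$, and with $[E(K)_{/\tors}:\Z\cdot P]_p$ in place of the full index).

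There is, however, a genuine gap in your treatment of the rank-$0$ complementary factor. In the case $r_\an(E/\Q)=1$ you claim that choosing $K$ with $L(E^{(D_K)}/\Q,1)/\Omega_{E^{(D_K)}}$ a $p$-unit ``forces $\Sha(E^{(D_K)}/\Q)[p^\infty]=0=E^{(D_K)}(\Q)[p^\infty]$ and $p\nmid\Tam(E^{(D_K)}/\Q)$''. This implication is false: a $p$-unit algebraic $L$-value constrains only the \emph{combination} $\Tam\cdot\#\Sha/(\#\text{tors})^2$ once $p$-BSD is known, and says nothing about the individual factors a priori (indeed, in the Eisenstein situation one typically has $p\mid\#E^{(D_K)}(\Q)_\tors$ and $p\mid\Tam$ simultaneously, and these cancel). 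The paper does not attempt any such choice of $K$; instead it invokes the rank-$0$ $p$-BSD theorem in the residually reducible case directly---Greenberg--Vatsal, with the parity condition on $\phi$ removed by~\cite{CGS}---so that the right-hand side of the split identity vanishes. You in fact mention this cyclotomic route as an ``alternative'' for the $r_\an(E/\Q)=0$ case; the fix is simply to use it for the rank-$0$ twist in the $r_\an(E/\Q)=1$ case as well, and drop the $p$-unit $L$-value condition on $K$ and the recursion with a second field $K'$.
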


Finally, we modify Skinner's use of Hida families in~\cite{Skinner} to prove a Main Conjecture in the multiplicative reduction case for weight $2$ forms in~\cref{5}. The obstruction is again that our imprimitive Selmer groups $\fX^S_?$ may have finite $\Lambda$-submodules. In particular, the characteristic ideals are no longer identified with the Fitting ideals. The formal argument can be carried out by replacing the Selmer groups by their free parts. We would also need to modify several pieces that go into the proof into the residually reducible setting.

\begin{maintheorem}[\cref{imc mult}]\label[maintheorem]{C}
	Let $f$ be a newform of weight $2$ and $p\|N$ an odd Eisenstein prime of multiplicative reduction. Let $K$ be an imaginary quadratic field satisfying the hypotheses in~\cref{0.1}. Then the following equation holds in $\Lambda^{\nr}$:
	\[\Char(\fX_f)\Lambda^{\nr}=(\cL_f).\]
\end{maintheorem}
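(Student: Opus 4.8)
The plan is to deduce the multiplicative-reduction Main Conjecture from the good-reduction case (Theorem~\ref{Gr imc}) by a Hida-family deformation argument, following the strategy of Skinner~\cite{Skinner} but adapted to the residually reducible setting. First I would place $f$ in a Hida family: since $p \| N$ and $f$ has weight $2$ and is $p$-ordinary, $f$ arises as a specialization of a primitive Hida family $\mathbf{f}$ of tame level $N/p$, and the nearby weight-$k$ specializations $\mathbf{f}_k$ for $k \equiv 2$ in a suitable residue class are newforms of level $N/p$ (i.e.\ of good — in fact, since $p \nmid N/p$, ordinary good — reduction at $p$) whose residual representation is still the fixed reducible self-dual representation with constituents $\phi,\psi$. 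Hence Theorem~\ref{Gr imc} applies to each such $\mathbf{f}_k$ with $k > 2$, giving $\Char(\fX_{\mathbf{f}_k})\Lambda^{\nr} = (\cL_{\mathbf{f}_k})$.

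The second step is to interpolate both sides over the Hida family. On the analytic side, the BDP $p$-adic $L$-functions $\cL_{\mathbf{f}_k}$ are known to interpolate $p$-adic-analytically into a two-variable object $\cL_{\mathbf{f}}$ over $\Lambda^{\nr}\hat\otimes\Lambda_{\mathrm{wt}}$ (the weight variable coming from the Hida Hecke algebra), and its weight-$2$ specialization recovers $\cL_f$ up to the expected interpolation factor — here one must track the Euler factor at $p$ coming from the transition between the good and multiplicative settings, which involves the $\cP_v$-type terms of~\cref{H1 Euler},~\cref{1.5}. On the algebraic side, one assembles the unramified/Greenberg Selmer groups of the $\mathbf{f}_k$ into a Selmer group $\fX_{\mathbf{f}}$ over the two-variable algebra, controls its specializations via a control theorem in the weight variable (this is where $f$ being a specialization at a classical, non-critical point is used), and checks that the weight-$2$ specialization is $\fX_f$ up to a finite error. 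The key structural point — and the reason the paper emphasizes it — is that because $\fX^S_?$ may contain nonzero finite $\Lambda$-submodules, one cannot identify characteristic ideals with Fitting ideals; instead one runs the whole comparison with the \emph{free parts} $\fX^S_?/(\fX^S_?)_{\Lambda\text{-tors, finite}}$, for which the formal properties (base change, divisibility under specialization) behave well, and only at the very end translates back to $\Char$.

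The third step is the actual divisibility: combining the interpolated equality $\Char(\fX_{\mathbf{f}})\Lambda^{\nr}\hat\otimes\Lambda_{\mathrm{wt}} = (\cL_{\mathbf{f}})$ (from the good-reduction cases, which are Zariski-dense in weight space) with the two control theorems, one obtains $\Char(\fX_f)\Lambda^{\nr} \supseteq (\cL_f)$ in weight $2$; the reverse divisibility $\Char(\fX_f)\Lambda^{\nr} \subseteq (\cL_f)$ is then supplied, as in the good-reduction argument, by the Euler-system/Kolyvagin input of~\cite[\S\S3,4]{CGLS} together with the comparison of $\mu$- and $\lambda$-invariants of Sections~\ref{sec:algebraic-side}--\ref{sec:analytic side}, using the lattice $T_f$ chosen so that $\ol\rho_f$ has no nonzero trivial $G_K$-subrepresentation (Ribet's Lemma). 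Equality in $\Lambda^{\nr}$ follows.

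I expect the main obstacle to be the two control theorems in the weight variable in the residually reducible, multiplicative setting: one must show that passing from $\fX_{\mathbf{f}}$ (resp.\ $\cL_{\mathbf{f}}$) to the weight-$2$ fiber introduces only a controlled, finite discrepancy, and in particular that the extra Euler factor at $p$ in the multiplicative case — the difference between the BDP interpolation factor in the good-ordinary family and the one attached to $f$ itself — matches exactly the $\cP_v$-term that the definition of the imprimitive Selmer group $\fX^S_f$ contributes on the algebraic side. Because the relevant modules are not almost divisible, the usual "snake lemma plus finiteness of $\H^0,\H^2$" bookkeeping of \textit{op.\ cit.} fails, so this matching has to be done by hand on the free parts, much as in Sections~\ref{sec:algebraic-side}--\ref{sec:analytic side}; getting the finite error terms on the two sides to cancel is the delicate point.
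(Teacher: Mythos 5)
Your overall skeleton (deform $f$ in a Hida family to good-ordinary forms of higher weight with the same reducible residual representation, then transfer the Main Conjecture) is the right one, but two of your load-bearing steps diverge from what is actually available. The more serious issue is your source of the second divisibility: you propose to obtain $\Char(\fX_f)\Lambda^{\nr}\subseteq(\cL_f)$ by running the Kolyvagin/Heegner-cycle argument of Sections 3--4 for $f$ itself. That machinery is constructed only for $p\nmid N$: the crystallinity input (\cref{rhof crystalline}), the BDP interpolation and the nonvanishing of $\kappa_1$ via Castella--Hsieh all live in the good-reduction setting, and nothing in the paper extends the generalized Heegner cycle Kolyvagin system to a multiplicative prime. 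The entire point of the Hida-family detour is to avoid ever proving a divisibility directly at $p\,\|\,N$. Your first step also overshoots: you ask for a two-variable Selmer group over $\Lambda^{\nr}\hat\otimes\Lambda_{\mathrm{wt}}$ together with a control theorem in the weight variable, which is machinery the argument neither has nor needs (and your worry about a $\cP_v$-type Euler factor at $p$ is misplaced: the imprimitivization $S=\Sigma\setminus\{v,\ol v,\infty\}$ removes Euler factors only at primes away from $p$).

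What the proof actually does, following Skinner, is a weight-by-weight congruence with no interpolation of Selmer groups: for each $m$ one picks a good-ordinary $f_m$ with $\ol\rho_{f_m}\cong\ol\rho_f$ and $(\cL^S_f,\frp^m)=(\cL^S_{f_m},\frp^m)$ (the two-variable object is needed only on the analytic side, via Castella's construction), and the full equality $\Char(\fX^S_{f_m})\Lambda^{\nr}=(\cL^S_{f_m})$ is already known from \cref{IMC}. The algebraic comparison between $f$ and $f_m$ is then done through Iwasawa invariants alone: both $\fX^S_f$ and $\fX^S_{f_m}$ have $\mu=0$, and \cref{imprimlambda}/\cref{algmain} express their $\lambda$-invariants by the same formula in terms of the shared characters $\phi,\psi$, so $\lambda(\fX^S_f)=\lambda(\fX^S_{f_m})$. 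Your instinct about finite $\Lambda$-submodules is correct and is exactly where the paper works with the quotients $\fX^S_{?,\free}$, for which $\Fitt=\Char$; the matching of $\mu$ and $\lambda$ then gives $(\Fitt(\fX^S_{f,\free}),\frp^m)=(\Fitt(\fX^S_{f_m,\free}),\frp^m)$, hence $(\Char(\fX^S_f),\frp^m)=(\cL^S_f,\frp^m)$ for every $m$, and one passes to the limit. The primitive statement then follows from the Euler-factor comparisons of \cref{algmain} and \cref{anacong}. If you want to salvage your route, you would have to either supply an Euler system at multiplicative primes or, better, replace your "one divisibility plus Kolyvagin" endgame by this transfer of the full equality.
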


We remark that our Main Conjecture would yield both rank $0$ and rank $1$ BSD formulae at multiplicative primes provided the results in~\cite{CGS} are extended to higher weight modular forms.

\subsection{Examples}

Here are the elliptic curves $E$ over $\Q$ in the~\cite{lmfdb} with $E(\Q)_\tors \isom \Z/p$, rank $0$ or $1$, and $p$ a prime of good reduction: \href{https://www.lmfdb.org/EllipticCurve/Q/?bad_quantifier=exclude&bad_primes=3&rank=0..1&torsion=%5B3%5D}{$p=3$}, \href{https://www.lmfdb.org/EllipticCurve/Q/?rank=0-1&bad_quantifier=exclude&bad_primes=5&torsion=%5B5%5D}{$p=5$}, and \href{https://www.lmfdb.org/EllipticCurve/Q/?rank=0-1&bad_quantifier=exclude&bad_primes=7&torsion=%5B7%5D}{$p = 7$}.

Here are concrete examples for each of these three infinite families, an example of smallest conductor in each case. Our main result implies the $p$-part of strong BSD for all their twists that have analytic rank $0$ or $1$, of which there exist infinitely many.
Case $p = 3$: \href{https://www.lmfdb.org/EllipticCurve/Q/19/a/3}{19a3} has torsion subgroup $\Z/3$ and rank $1$.
Case $p = 5$: \href{https://www.lmfdb.org/EllipticCurve/Q/11/a/3}{11a3} ($= X_1(11)$) has torsion subgroup $\Z/5$ and rank $0$.
Case $p = 7$: \href{https://www.lmfdb.org/EllipticCurve/Q/26/b/2}{26b2} has torsion subgroup $\Z/7$ and rank $1$.

There are also examples of higher dimension: The descent computations for absolutely simple modular abelian surfaces in~\cite{KellerStoll2022} and~\cite[§\,6]{KellerStoll2023} are not necessary anymore for odd primes of good reduction. For example, the genus $2$ curve $X\colon y^2 = 5x^6 + 10x^5 - 13x^4 - 30x^3 + 9x^2 + 20x - 8$ has absolutely simple semistable RM Jacobian $J$ of squarefree level $5 \cdot 13$ and rank $0$ associated to the newform \href{https://www.lmfdb.org/ModularForm/GL2/Q/holomorphic/65/2/a/c/}{65.2.a.c}. Note that this curves is not covered by \emph{op.\ cit.} The algorithms from \emph{op.\ cit.} prove the prime-to-$3$ part of strong BSD for $J/\Q$ with $\#\Sha(J/\Q)_\an = 2$. Our results allow us to also prove the $3$-part: 
Let the good ordinary prime $3$ split as $\frp\frp'$ in $\Z[f]=\Z[\sqrt{3}]$ such that $\rho_\frp$ is irreducible and $\rho_{\frp'}$ is reducible with a trivial $1$-dimensional subrepresentation coming from the $3$-torsion. The results and algorithms of \emph{op.\ cit.} prove that the Heegner index $I_K$ for $D_K = -131$ (so $3$ is split in $\cO_K$) equals $\frp'$ times some ideal lying above $2$, and that the Tamagawa product considered as an ideal in $\Z[f]$ equals $\frp'$. Hence~\cite[Theorem~5.2.2]{KellerStoll2023} proves that $\Sha(J/\Q)[\frp] = 0$. Our main theorem implies~\cite[eq.~(5.5)]{CGLS} with $p$-torsion replaced by $\frp$-torsion throughout, showing that $v_{\frp'}(\#\Sha(J/K)[\frp'^\infty]) = 2v_{\frp'}(I_K) - 2v_{\frp'}(\Tam(J/\Q)) = 2 - 2 = 0$. Since $\frp' \nmid 2$, we have $\Sha(J/K)[\frp'] \iso \Sha(J/\Q)[\frp'] \oplus \Sha(J^K/\Q)[\frp']$, hence we get $\Sha(J/\Q)[\frp'] = 0$, so strong BSD holds for $J/\Q$ with $\Sha(J/\Q) \isom \Z/2$. (Note that the argument with~\cite[eq.~(5.5)]{CGLS} also provides evidence for the conjecture that the Tamagawa product divides the Heegner index.)

\subsection{A $p$-converse theorem and applications to Goldfeld's conjecture}

As in the good ordinary case, we show the equivalence between~\cref{C} and a Heegner Point Main Conjecture (\cref{multHg}). As a consequence, we obtain the following.

\begin{maintheorem}[\cref{p converse}, \cref{p converse mult}]\label[maintheorem]{B}
	Let $A/\Q$ be a simple RM abelian variety associated to a newform $f$ and $\frP \mid p > 2$ a prime ideal of its endomorphism ring $\cO$ of good ordinary or bad multiplicative reduction such that $\rho_{A,\frP}$ is reducible.
	
	Then
	\[
	\corank_\cO \Sel_{\frP^\infty}(A/\Q) = r \in \{0,1\} \implies \rk_\cO A(\Q) = r_\an(f) = r,
	\]
	and $\Sha(A/\Q)[\frP^\infty]$ is finite.
\end{maintheorem}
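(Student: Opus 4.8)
The plan is to deduce the $p$-converse theorem from the Iwasawa Main Conjectures established earlier (\cref{Gr imc} in the good reduction case and \cref{C} in the multiplicative reduction case) together with Perrin-Riou's Heegner point Main Conjecture (\cref{Heeg imc}), following the now-standard architecture of \cite{CGLS} but carefully tracking the lattice choice. Since both the Selmer corank and the Mordell--Weil rank are isogeny-invariant, and $\Sha(A/\Q)[\frP^\infty]$ finiteness for one $A$ in an isogeny class is equivalent to that for any other, I may replace the lattice $T_f$ by the one furnished by Ribet's Lemma, so that $\ol\rho_f$ has no nonzero $G_K$-trivial subrepresentation — exactly the normalization used throughout \cref{sec:algebraic-side}. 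Passing to the imaginary quadratic field $K$ of \cref{0.1} (a Heegner field with $D_K$ odd, $\neq -3$, and $p$ split), one has $\Sel_{\frP^\infty}(A/\Q) \otimes$-related to $\Sel_{\frP^\infty}(A/K)$ via the twist $A^K$, so it suffices to work over $K$ and control the anticyclotomic Selmer group and the Heegner class $\kappa_\infty$.

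The key steps, in order: (i) Translate $\corank_\cO \Sel_{\frP^\infty}(A/\Q) = r$ into information about the $\Lambda$-module $\cX = \H^1_{\cF_\Lambda}(K,M_f)^\vee$ at the trivial specialization — namely that its rank-one-ness (from \cref{Heeg imc}) plus the corank hypothesis pins down whether the specialization of $\kappa_\infty$ at the augmentation ideal is torsion or not. (ii) Invoke the explicit reciprocity law / $p$-adic Gross--Zagier formula relating the image of $\kappa_\infty$ under the Perrin-Riou big logarithm to the BDP $p$-adic $L$-function $\cL_f$, so that $\kappa_\infty$ is non-torsion $\iff$ $\cL_f$ does not vanish at the relevant point. (iii) For $r = 0$: if $\Sel_{\frP^\infty}(A/\Q)$ is finite, then $\cL_f(\text{trivial}) \neq 0$, hence by interpolation $L(f,r) \neq 0$, so $r_\an(f) = 0$, and then Kolyvagin (for the chosen auxiliary $K$, whose existence as a "rank $0$" Heegner field is guaranteed by Bump--Friedberg--Hoffstein / Murty--Murty type results compatible with the Heegner hypothesis) gives $\rk_\cO A(\Q) = 0$ and finiteness of $\Sha$. (iv) For $r = 1$: the corank-one hypothesis forces $\kappa_\infty$ to be non-torsion in $\H^1_{\cF_\Lambda}(K,\bT)$; combined with \cref{Heeg imc} (so $\cX_{\tors}$ has characteristic ideal the square of $\Char(\H^1_{\cF_\Lambda}(K,\bT)/\Lambda\kappa_\infty)$) and a control theorem, one deduces the Heegner point over $K$ is of infinite order, whence by Gross--Zagier--Zagier/Kolyvagin $r_\an(f) = 1 = \rk_\cO A(\Q)$ and $\Sha(A/\Q)[\frP^\infty]$ is finite.

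The main obstacle — and the point where genuinely new work beyond \cite{CGLS} is needed — is step (i)–(ii) in the presence of finite $\Lambda$-submodules: because we allow $\phi|_{G_p} \in \{\mathbf 1, \omega\}$, the Selmer groups $\fX^S_?$ need not be almost divisible, so the characteristic ideal no longer coincides with the Fitting ideal and the usual control-theorem comparison between $\cX_{\tors}$ and $\Sel_{\frP^\infty}(A/K)^\vee_{\tors}$ acquires error terms. One must run the argument on the maximal free quotient (as flagged in the outline for \cref{C}) and separately bound the finite submodule, using the explicit relation \eqref{lambdadiff} — the "$+1$" correction exactly matching the shape $\Char(\fX_{\mathbf 1}) = (\cL_{\mathbf 1}\cdot T)$ of the trivial-character Main Conjecture — to verify that no spurious contribution to the corank arises at the trivial specialization. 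Once the bookkeeping of these finite submodules is controlled and shown not to affect the leading-term/corank comparison, the implication follows as in the residually irreducible case, with the multiplicative-reduction variant handled identically using \cref{C} in place of \cref{Gr imc} and Skinner's Hida-family input.
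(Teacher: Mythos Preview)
Your overall architecture matches the paper, but step~(iii) contains a genuine error. The trivial anticyclotomic character is not in the interior of the interpolation range of $\cL_f$; the value $\cL_f(\mathbf{1})$ is governed by the BDP formula, which expresses it (up to explicit constants) as the square of the $p$-adic logarithm of the Heegner class $z_f$, not as a central complex $L$-value. Indeed, under the Heegner hypothesis one has $\epsilon(f/K)=-1$, so $L(f/K,k/2)=0$ identically and no interpolation could possibly recover $L(f,k/2)$ from $\cL_f(\mathbf{1})$. Hence ``$\cL_f(\text{trivial})\neq 0$, hence by interpolation $L(f,k/2)\neq 0$'' is false, and your $r=0$ case collapses as written. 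The correct route, following \cite[\S5.2]{CGLS}, treats $r=0$ and $r=1$ together: after choosing $K$ suitably, one uses IMC1 plus a descent argument to show that the Heegner point in $A(K)$ has infinite order, applies Gross--Zagier to obtain $\ord_{s=1}L(f/K,s)=1$, and then reads off $r_\an(f)=r$ from the factorization $L(f/K,s)=L(f,s)L(f^K,s)$ together with the root numbers --- the latter determined via $p$-parity, which is exactly where \cite[Theorem~C]{NekovarTame} replaces Monsky's result in the paper's one-line proof.

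Your ``main obstacle'' paragraph is also misplaced. The finite-$\Lambda$-submodule bookkeeping and the $+1$ discrepancy of \eqref{lambdadiff} are entirely absorbed into the proofs of \cref{IMC} and \cref{imc mult}; once those Main Conjectures are in hand, the $p$-converse is a formal consequence of the \cite{CGLS} template with no further almost-divisibility analysis needed. The paper's proof of \cref{p converse} is literally one sentence to this effect. The refined control theorem with torsion (Appendix~\ref{appB}) is invoked for the BSD \emph{formula} in \cref{A}, not for the $p$-converse.
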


Our $3$-converse theorem has several applications to Goldfeld's conjecture, improving the bounds for the proportion of rank~$1$ twists when being combined with results~\cite{BKLOS,ShnidmanIMRN,BruinFlynnShnidman} on the average $3$-Selmer rank. See~\cref{Goldfeld conjecture}.

\subsection{Ordering of the characters.}
Recall the short exact sequence relating the residual representation $\ol\rho_f$ to the characters\[
0\to \F(\phi) \to \ol\rho_f \to \F(\psi) \to 0,\]
where we assumed that the first character $\phi$ restricts to $\omega$ on $G_p$ and that the sequence is non-split. From the invariance of the Main Conjectures and the BSD formulae, we could as well fix another ordering where the first character restricts to $\mathbf{1}$ on $G_p$. As is already noted in~\cref{Koly}, the use Ribet's lemma is somewhat limited, and some results for a different ordering should also be necessary. In fact, our original approach assumed that locally, the trivial character is a subrepresentation, regardless of whether the sequence is split or not. This made our computations in Section~\ref{sec:algebraic-side} easier. However, we had to heavily extend the results on the Kolyvagin system from~\cite[§\,3]{CGLS}, or rather~\cite{How2004}, to allow non-trivial torsion group $\H^0(K,\ol\rho_f)$. As our results might be useful for the future, we included them in the Appendix~\ref{sec:appendix}. We also deduce a version of the anticyclotomic control theorem which allows non-trivial torsion in Appendix~\ref{appB}. It should be mentioned that a direct proof of everything with torsion is possible, but we do not include it in this paper for simplicity.

\subsection{Relation to previous works.}

Results in this paper in the good reduction case can be seen as direct generalizations of those in~\cite{CGLS} to higher weight modular forms while removing the technical condition $\theta|_{G_p}\ne \mathbf{1},\omega$. Thus it would automatically generalize the results which require the main theorem in \textit{op.\ cit.} as an input, for example~\cite[Theorem A]{CGS}. We also obtain generalizations of the structure theorems in~\cite{How2004} as well as the anticyclotomic control theorem in~\cite{JSW2017} in the residually reducible case which allow torsion. Since our characters are more general than those in~\cite{CGLS}, the results on the proportion of ranks of quadratic twists of an elliptic curve over $\bQ$ with a rational $3$-isogeny are also improved, following~\cite{BKLOS}. 

Finally, we obtain new results for newforms with multiplicative reduction at Eisenstein primes which include a proof of an Iwasawa Main Conjecture. These results are analogous to those in~\cite{Skinner}, but the residually reducible case were untouched before.

\subsection{Future work}
As is mentioned before, a generalization of the results in~\cite{CGS} to higher weight modular forms would yield BSD formulas for elliptic curves at multiplicative reduction. This is what we will examine in later work.

In our subsequent paper~\cite{KY24}, we have obtained the anticyclotomic Main Conjectures for elliptic curves at Eisenstein primes of potentially good ordinary reduction. As a consequence, we obtain the $p$-converse theorems in that setting and the applications to Goldfeld's Conjecture (see~\cref{Goldfeld conjecture}) are much improved in some situations. We would like to study potentially multiplicative case and potentially supersingular case in future work, as both see very interesting applications in arithmetic statistics.

\subsection{Notation.}

Pontryagin duals are denoted by $(-)^\vee$. The mod-$p$ cyclotomic character is denoted by $\omega$ and the $p$-adic cyclotomic character by $\chi$. For any subextension $L/K$ of $\bar{K}/K$, denote by $G_L$ the absolute Galois group of $L$.

\subsection{Acknowledgements.} We thank Debanjana Kundu and Otmar Venjakob for helpful discussions and Ari Shnidman for pointing out the application of our $p$-converse theorem to Goldfeld's conjecture. We also thank MY's advisor Francesc Castella for his guidance throughout this project. This work is part of MY's forthcoming Ph.D.\ thesis. 
\section{Algebraic side}\label{sec:algebraic-side}

Let $f \in S_{k}(\Gamma_0(N))^\new$ be a newform of weight $k = 2r \geq 2$ and level $N$ with trivial nebentypus. Fix an odd prime $p \nmid N$. Let $\Z[f]$ be the coefficient ring of $f$ with field of fractions $\Q(f)$. Let $F$ be a finite extension of a completion of $\bQ(f)$ at a chosen prime above $p$ with ring of integers $\cO$. Let $\frp=(\pi)$ be the maximal ideal of $\cO$ and let $\F/\frp$ be the residue field of $\cO$. 
We use the Pontryagin dual $(-)^\vee = \Hom_{\cO,\cts}(-, F/\cO)$ for locally compact $\cO$-modules.

Recall that $\omega$ is the mod-$p$ cyclotomic character.
Denote by
\[
    \rho_f\colon \Gal(\Qbar/\Q) \to \Aut_F(V_f(1-r)) \isom \GL_2(F)
\]
the \emph{self-dual Tate twist} of the $p$-adic Galois representation attached to $f$.
It has determinant $\chi$.

Let $K \subset \Qbar$ be an imaginary quadratic field in which $p = v\ol{v}$ splits, with $v$ the prime of $K$ above $p$ induced by $\iota_p$. We also fix an embedding $\iota_\infty\colon \Qbar \inj \bC$.

We assume that $K$ satisfies the following \textit{Heegner hypothesis}:\begin{equation*}
\text{every prime } \ell\mid N \text{ splits in } K
\end{equation*}

Let $G_K \defeq \Gal(\Qbar/K) \subset \GalQ \defn \Gal(\Qbar/\Q)$ be the absolute Galois group of $K$, and for each place $w$ of $K$ let $I_w \subset G_w \subset G_K$ be the corresponding inertia and decomposition groups, unique up to conjugation. Let $\Frob_w \in G_w/I_w$ be the arithmetic Frobenius. For the prime $v \mid p$, we assume $G_v$ is chosen so that it is identified with $\Gal(\Qbar_p/\Q_p)$ via $\iota_p$.

Let $\Gamma \defn \Gal(K_\infty/K)$ be the Galois group of the anticyclotomic $\Z_p$-extension $K_\infty$ of $K$, and let $\Lambda \defn \cO\llbracket\Gamma\rrbracket$ be the anticyclotomic Iwasawa algebra over $\cO$. We shall often identify $\Lambda$ with the power series ring $\cO\llbracket T\rrbracket$ by setting $T = \gamma - 1$ for a fixed topological generator $\gamma \in \Gamma$. 

We record the following well-known properties of the anticyclotomic $\Z_p$-extension $K_\infty/K$ of an imaginary quadratic field $K$, i.e., the $\Z_p$-extension on which complex conjugation acts as inversion. Readers who are only interested in the main results of this paper can directly go to~\cref{1.1}.
\begin{lemma}[splitting of primes in $K_\infty$] \label[lemma]{local behavior of anticyclotomic Zp extension at ell}
	Let $w \mid \ell \neq p$ be a place of $K$. If $\ell$ is unramified and split in $K$, there are only finitely many primes of $K_\infty$ lying above $\ell$. If $\ell$ is ramified or inert in $K$, $\ell$ splits completely in $K_\infty$.
\end{lemma}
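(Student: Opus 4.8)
The plan is to analyze the decomposition of a rational prime $\ell \neq p$ in the tower $K_\infty/K$ by separating the local behavior according to the splitting type of $\ell$ in $K/\Q$. The key general fact to invoke is that for a $\Z_p$-extension, a prime $w$ of $K$ either splits completely or has finitely many primes above it in the tower, and the latter happens precisely when the local extension $K_{\infty,w}/K_w$ is infinite (equivalently, the decomposition group of $w$ in $\Gamma \cong \Z_p$ is a finite-index — hence nontrivial closed — subgroup, which for $\Z_p$ means it is all of $\Gamma$ up to finite index). So the whole argument reduces to computing, for each splitting type, the image of the local Galois group $G_w$ in $\Gamma = \Gal(K_\infty/K)$, using that $K_\infty/K$ is the \emph{anticyclotomic} $\Z_p$-extension, i.e., complex conjugation $c$ acts on $\Gamma$ by $-1$.

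First I would handle the case $\ell$ split in $K$, say $\ell = w\ol{w}$. Here I would argue by contradiction or by a direct group-theoretic observation: complex conjugation conjugates $G_w$ to $G_{\ol w}$, so $c$ sends the decomposition subgroup $D_w \subset \Gamma$ to $D_{\ol w}$; but since $\Gamma$ is abelian and $c$ acts by inversion, $D_{\ol w} = c D_w c^{-1} = D_w$ as well (inversion fixes every subgroup setwise). On the other hand, $D_w$ is a quotient of $G_{\Q_\ell}$ acting through the anticyclotomic direction, and the maximal $\Z_p$-power quotient of $G_{\Q_\ell}$ for $\ell \neq p$ is the pro-$p$ part of $\wh\Z \times \Z_\ell^\times$ which is finite (pro-$\ell$ part of $\Z_\ell^\times$ contributes nothing to $\Z_p$, and the unramified $\wh\Z$ contributes a free rank-one pro-$p$ part). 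More cleanly: the unramified $\Z_p$-extension of $\Q_\ell$ pulled back is the cyclotomic-type direction, which has \emph{trivial} intersection with the anticyclotomic direction after accounting for the $c$-action — so $D_w$ must be finite, giving only finitely many primes above $\ell$ in $K_\infty$. I would phrase this via the cyclotomic character: the unramified quotient of $G_w$ maps to $\Gamma$, but composed with the fact that the cyclotomic $\Z_p$-extension and anticyclotomic $\Z_p$-extension are linearly disjoint over $K$, this map has finite image.

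Next, for $\ell$ inert or ramified in $K$, let $w$ be the unique prime of $K$ above $\ell$. Then $c \in G_w$ (since $w$ is $c$-stable), so the image of $G_w$ in $\Gamma$ contains the image of $c$; but that image is all of $\Gamma$ since $c$ acts by $-1 \neq 1$ on the torsion-free group $\Gamma \cong \Z_p$, $p$ odd — wait, more precisely the image of $c$ itself in the abelian quotient could still be trivial, so instead I would argue: the fixed field of the decomposition group $D_w$ inside $K_\infty$ is a subextension of $\Q$ that is unramified outside $\ell$ (and $p$) and in which $\ell$ splits; but also $c$ normalizes $D_w$ and acts by inversion on $\Gamma/D_w$, while $c$ fixes $w$ so it acts trivially on the splitting behavior — a descent-type argument shows $K_\infty^{D_w}/K$ would descend to a $\Z_p$-extension of $\Q$, and the only $\Z_p$-extension of $\Q$ is the cyclotomic one, which is linearly disjoint from $K_\infty$; hence $K_\infty^{D_w} = K$, i.e., $D_w = \Gamma$ and $\ell$ splits completely in $K_\infty/K$. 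Alternatively and most simply: $\Gamma = \Gamma^{c=-1}$, and the decomposition group at a $c$-stable prime, being $c$-stable and of the form (inertia)$\rtimes$(Frobenius) with $c$ acting compatibly, is forced to be the whole of $\Gamma$ by comparing with the cyclotomic line on which $c$ acts by $+1$.

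The main obstacle I anticipate is making the ``$c$-stable subgroup of $\Gamma$ with the right action must be everything'' step fully rigorous: one has to pin down precisely why a proper nontrivial closed subgroup $D_w \subsetneq \Gamma \cong \Z_p$ cannot arise, which comes down to the classification of $\Z_p$-extensions of $\Q$ (only the cyclotomic one, by class field theory / the fact that $\Q$ has no unramified extensions and the unit-at-$p$ contribution) together with linear disjointness of the cyclotomic and anticyclotomic $\Z_p$-extensions of $K$. Once that structural input is in place, the case analysis is routine. Since the lemma is stated as ``well-known,'' I expect the write-up to be short, citing the standard reference (e.g. Washington's book or the relevant Iwasawa-theory literature) for the two inputs — the description of $\Z_p$-extensions of $\Q$ and the behavior of primes in $\Z_p$-extensions — and then spending a sentence or two on each splitting case.
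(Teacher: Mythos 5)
The paper states this lemma as a well-known fact and gives no proof, so there is nothing to compare against; judged on its own, your argument has a fundamental problem: you have the dictionary between decomposition groups and splitting inverted, and it corrupts both cases. The number of primes of $K_\infty$ above $w$ is the index $[\Gamma : D_w]$, so \emph{finitely many primes above $w$} means $D_w$ is \emph{open} (equivalently infinite, since a finite closed subgroup of $\Gamma \cong \Z_p$ is trivial), while \emph{complete splitting} means $D_w = 1$. In the split case you conclude ``$D_w$ must be finite, giving only finitely many primes,'' which is exactly backwards — a finite $D_w$ is trivial and gives infinitely many primes; what must be shown is that $\Frob_w$ has \emph{infinite} order in $\Gamma$. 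In the inert/ramified case you conclude ``$D_w = \Gamma$ and $\ell$ splits completely,'' which is again backwards — $D_w = \Gamma$ would mean a single prime above $w$. The correct and genuinely simple argument there is: $\ol{w} = w$, so conjugation by $c$ fixes $\Frob_w$; but $c$ acts on $\Gamma$ by inversion, so $\Frob_w = \Frob_w^{-1}$, and since $\Gamma \cong \Z_p$ is torsion-free with $p$ odd, $\Frob_w = 1$, i.e.\ $D_w = 1$ and $w$ splits completely.

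The split case also needs a real input that your proposal does not supply. The mechanism you invoke — ``the unramified direction is the cyclotomic direction, which is linearly disjoint from the anticyclotomic one, hence the image is finite'' — is false: the anticyclotomic $\Z_p$-extension is \emph{also} unramified at every $w \nmid p$, so local unramifiedness cannot distinguish the two lines, and knowing that $D_w$ is a rank-one subgroup of $\Gal(\widetilde{K}/K) \cong \Z_p^2$ with open image in the cyclotomic factor says nothing about its image in the anticyclotomic factor (it could be anything from trivial to open). To prove openness one has to actually compute: using $\Gal(K_n/K) \iso \Pic(\Z + p^n\cO_K)$, write $\mathfrak{w}^h = (\alpha)$ with $h = \ord([\mathfrak{w}])$ in $\Pic(\cO_K)$; the image of $\Frob_w^h$ in $\varprojlim_n (\cO_K/p^n)^\times/(\Z/p^n)^\times$ is governed by $\alpha/\ol{\alpha} \in \Z_p^\times$ (via the two embeddings at $v, \ol v$), and this is not a root of unity because $\alpha^m \in \Q$ would force $\mathfrak{w}^{hm} = \ol{\mathfrak{w}}^{hm}$, contradicting $\mathfrak{w} \neq \ol{\mathfrak{w}}$. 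Hence $\Frob_w$ has infinite order, $D_w$ is open, and only finitely many primes lie above $\ell$. Without some computation of this kind (or a citation to one), the split case is not proved.
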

Note that this is in contrast to the \emph{cyclotomic} $\Z_p$-extension, in which no primes are infinitely split.

\begin{lemma}[local behavior of $K_\infty$ above $p$] \label[lemma]{local behavior of anticyclotomic Zp extension at p}
Let $K$ be an imaginary quadratic field and $p = v\ol{v}$ a rational prime completely split in $K$. Let $K_\infty/K$ be the anticyclotomic $\Z_p$-extension.
\begin{enumerate}[(i)]
	\item $\Gamma$ is the $\Z_p$-quotient of $\varprojlim_n \Gal(K_n/K)$, where $K_n$ is the ring class field of the order $\cO_{p^n} \colonequals \Z + p^n\cO_K$. The unramified part of $\Gamma$ at $v \mid p$ is cyclic of order dividing $h_K = \#\Gal(K_0/K)$, and $K_{\infty,v}/K_{1,v}$ is totally ramified at $v$.
	
	\item The completion $K_{\infty,v}$ of $K_\infty$ at $v \mid p$ is a ramified $\Z_p$-extension of $K_v = \Q_p$ different from $K_{v,cyc}$, the cyclotomic $\Z_p$-extension of $K_v$. In particular, $K_{\infty,v}\cap K_{v,cyc}$ is a finite extension of $K_v$.
	
	\item Let $v \mid p$ be a place of $K$. The image of the compositions $G_v \inj G_K \surj \Gamma$ and $I_v \inj G_v \surj \Gamma$ are open. If $p \nmid h_K$, the composition $I_v \inj G_v \surj \Gamma$ is surjective.
\end{enumerate}
\end{lemma}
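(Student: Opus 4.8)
The plan is to derive all three parts from the classical description of $K_\infty$ inside the tower of ring class fields of $p$-power conductor, together with local and global class field theory; the one genuinely delicate point will be the inequality $K_{\infty,v}\ne K_{v,cyc}$ in~(ii). For~(i), recall that $K_\infty$, being the $\Z_p$-extension of $K$ unramified outside $p$ on which complex conjugation $c$ acts by $-1$, is dihedral over $\Q$ and hence contained in $\bigcup_n K_n$, so $\Gamma$ is a quotient of $\varprojlim_n\Gal(K_n/K)\isom\varprojlim_n\Pic(\cO_{p^n})$. Since $p=v\ol v$ splits, one has a ring isomorphism $\cO_K/p^n\cO_K\isom\Z/p^n\times\Z/p^n$ with the two factors indexed by $v,\ol v$, and as $\cO_K^\times=\{\pm1\}$ (valid since $D_K$ is odd and $\ne-3$), the conductor exact sequence for orders collapses to $1\to(\Z/p^n)^\times\to\Pic(\cO_{p^n})\to\Pic(\cO_K)\to1$. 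Taking the inverse limit (the transition maps are surjective) gives $1\to\Z_p^\times\to\varprojlim_n\Pic(\cO_{p^n})\to\Pic(\cO_K)\to1$; as $\Z_p^\times\isom\mu_{p-1}\times(1+p\Z_p)$ and $\Pic(\cO_K)$ is finite, the maximal torsion-free quotient $\Gamma$ is $\isom\Z_p$. Under class field theory the subgroup $\Z_p^\times$ is generated by the inertia at $v$ and at $\ol v$, which in $\Gamma$ collapse to a single subgroup $I_{\Gamma,v}$ (since $c$ inverts $\Gamma$ and swaps $v,\ol v$); hence $\Gamma/I_{\Gamma,v}$, the Galois group of the maximal subextension of $K_\infty/K$ unramified at $v$, is a quotient of both $\Pic(\cO_K)$ and $\Gamma\isom\Z_p$, so cyclic of $p$-power order dividing $h_K$. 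Finally the transition maps identify $\Gal(K_n/K_1)$ with $\ker\bigl((\Z/p^n)^\times\to(\Z/p)^\times\bigr)=1+p\Z/p^n\Z$, the wild inertia at $v$, so above $K_1$ the tower is totally ramified at $v$.

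Part~(iii) follows at once: the unramified-at-$v$ quotient of $\Gamma\isom\Z_p$ is finite by~(i), so $I_{\Gamma,v}$ — the image of $I_v\inj G_v\surj\Gamma$ — has finite index, hence is open, and a fortiori the image $\Gamma_v$ of $G_v\inj G_K\surj\Gamma$ is open; if moreover $p\nmid h_K$ that finite quotient is a $p$-group of order dividing $h_K$, hence trivial, so $I_v\inj G_v\surj\Gamma$ is onto. For~(ii), by~(iii) the decomposition group $\Gamma_v\subseteq\Gamma\isom\Z_p$ is open, so $K_{\infty,v}/K_v=K_{\infty,v}/\Q_p$ is a $\Z_p$-extension, and it is ramified because $I_{\Gamma,v}$ is open, hence nonzero, by~(i).

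The crux of~(ii) is $K_{\infty,v}\ne K_{v,cyc}$. I would use that $K_{v,cyc}$ is the unique $\Z_p$-extension of $\Q_p$ for which $p$ is a universal norm, equivalently for which $\mathrm{rec}_v(p)=0$, so it suffices to show $\mathrm{rec}_v(p)\ne0$ in $\Gamma_v$. Fix a generator $\alpha$ of the principal ideal $v^{h_K}$; then $\alpha\ol\alpha=\pm p^{h_K}$, so writing $\iota_v(\alpha)=p^{h_K}u$ with $u\in\Z_p^\times$ one has $\iota_{\ol v}(\alpha)=\pm u^{-1}$. As $\alpha$ is a unit away from $p$ and $K_\infty/K$ is unramified outside $p$, global reciprocity gives $0=\mathrm{rec}_v(\iota_v\alpha)+\mathrm{rec}_{\ol v}(\iota_{\ol v}\alpha)$ in $\Gamma$; using $\mathrm{rec}_{\ol v}(x)=-\mathrm{rec}_v(x)$ on $\Q_p^\times$ (a consequence of $c$ swapping $v,\ol v$ and inverting $\Gamma$), this becomes $0=h_K\,\mathrm{rec}_v(p)+2\,\mathrm{rec}_v(u)$. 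Hence $\mathrm{rec}_v(p)=0$ would force $\mathrm{rec}_v(u)=0$, i.e.\ $u\in\ker\bigl(\mathrm{rec}_v|_{\cO_{K_v}^\times}\bigr)=\mu_{p-1}$ (local class field theory, since $\mathrm{rec}_v$ carries $1+p\Z_p$ isomorphically onto $I_{\Gamma,v}\isom\Z_p$); but $u\in\mu_{p-1}$ gives $\alpha^{p-1}=p^{h_K(p-1)}$ in $K$, so $\alpha/p^{h_K}\in\mu(K)=\{\pm1\}$ and $v^{h_K}=(p^{h_K})=v^{h_K}\ol v^{h_K}$, which is absurd. Therefore $K_{\infty,v}\ne K_{v,cyc}$, and since a proper subextension of the $\Z_p$-extension $K_{v,cyc}/K_v$ must be finite, $K_{\infty,v}\cap K_{v,cyc}$ is finite over $K_v$. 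I expect this non-coincidence of the two local $\Z_p$-extensions to be the only real obstacle; the remainder is bookkeeping with the ring class field tower and its inertia subgroups.
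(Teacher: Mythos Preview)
Your proof is correct. For parts~(i) and~(iii) you are essentially unpacking what the paper obtains by citing Cox's formula $\Gal(K_n/K)\iso\Pic(\cO_{p^n})\iso(\cO_K/p^n)^\times/(\Z/p^n)^\times$: your conductor exact sequence $1\to(\Z/p^n)^\times\to\Pic(\cO_{p^n})\to\Pic(\cO_K)\to1$ is the same content, and the identification of the inertia subgroup and the passage to the $\Z_p$-quotient follow in the same way.

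The real difference is in~(ii). The paper simply invokes \cite[Theorem~3.2]{LannuzelNguyen2000} to conclude $K_{\infty,v}\ne K_{v,\cyc}$. You instead give a direct reciprocity argument: characterize $K_{v,\cyc}$ as the unique $\Z_p$-extension of $\Q_p$ with $\mathrm{rec}_v(p)=0$, then use global reciprocity on a generator $\alpha$ of $v^{h_K}$ together with the anticyclotomic relation $\mathrm{rec}_{\ol v}=-\mathrm{rec}_v$ on $\Q_p^\times$ to force $\alpha=\pm p^{h_K}$, contradicting $v\ne\ol v$. This is a genuinely different and more elementary route: it is entirely self-contained and transparent about why the anticyclotomic and cyclotomic local extensions differ, whereas the citation to Lannuzel--Nguyen imports a general structural result about $\Z_p$-extensions. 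The trade-off is that your argument is tailored to this specific situation (split $p$, anticyclotomic tower, $\cO_K^\times=\{\pm1\}$), while the cited theorem presumably applies more broadly. One small cosmetic point: since $K$ is imaginary quadratic the norm $\alpha\ol\alpha$ is automatically positive, so you may drop the $\pm$ in $\alpha\ol\alpha=\pm p^{h_K}$.
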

\begin{proof}
\begin{enumerate}[(i)]
\item Note that $K_0$ is the Hilbert class field of $K$, so by class field theory $\Frob_v \in \Gal(K_0/K)$ has order equal to the order of $v$ in the class group of $K$. All primes above $p$ are totally ramified in $K_n/K_0$, and one has canonical isomorphisms $\Gal(K_n/K) \iso \Pic(\cO_n) \iso (\cO_K/p^n)^\times/(\Z/p^n)^\times$ from~\cite[equation~(7.27)]{CoxPrimes}.

\item The first claim follows from~\cite[Theorem~3.2]{LannuzelNguyen2000}; condition~(ii) there is satisfied for $K$. Thus 
\begin{center}$\Gal((K_{\infty,v}\cap K_{v,\cyc})/K_v)\isom\Gal(K_{\infty,v}/K_v)/\Gal(K_{\infty,v}/(K_{\infty,v}\cap K_{v,\cyc}))$
\end{center} 
must be a non-trivial quotient of $\Gal(K_{\infty,v}/K_v)\isom \bZ_p$ by an open subgroup, and hence it must be finite.

\item By the (ii), $G_v \inj G_{K_v}$ is open. The remaining statements follow from the (i).\qedhere

\end{enumerate}
\end{proof}

\begin{corollary}[the localization is an isomorphism away from $p$] \label[corollary]{restriction away from p}
Let $A$ be a $p^\infty$-torsion $G_{K_w}$-module. If $\ell \neq p$ is unramified and split with $w \mid \ell$ in $K$, then the restriction morphism $\H^1(K_{\infty,w}/K_w, A) \to \H^1(I_w,A)^{\Gamma/I_w}$ is an isomorphism.
\end{corollary}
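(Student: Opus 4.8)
The plan is to use the hypothesis $\ell\neq p$ to identify $K_{\infty,w}/K_w$ with the unramified $\Z_p$-extension of the local field $K_w$, and then to read off the isomorphism from the inflation--restriction sequence, together with the vanishing of the cohomology of a prime-to-$p$ profinite group on $p^\infty$-torsion coefficients.

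First I would pin down the local extension. By \cref{local behavior of anticyclotomic Zp extension at ell} there are only finitely many primes of $K_\infty$ above $\ell$, so the decomposition subgroup $\Gal(K_{\infty,w}/K_w)\subseteq\Gamma\cong\Z_p$ has finite index, hence is open and isomorphic to $\Z_p$; in particular $K_{\infty,w}/K_w$ is infinite. On the other hand, since $K_w/\Q_\ell$ is finite with $\ell\neq p$, the unit group $\cO_{K_w}^\times$ has finite pro-$p$ part, so by local class field theory $K_w$ admits a \emph{unique} $\Z_p$-extension, namely the one inside $K_w^{\mathrm{ur}}$ cut out by the $\Z_p$-quotient of $\Gal(K_w^{\mathrm{ur}}/K_w)\cong\wh\Z$. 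Therefore $K_{\infty,w}$ is that extension; in particular it is unramified over $K_w$, so $I_w\subseteq G_{K_{\infty,w}}$, and the residual quotient $G_{K_{\infty,w}}/I_w=\Gal(K_w^{\mathrm{ur}}/K_{\infty,w})$ is the maximal prime-to-$p$ quotient $\prod_{q\neq p}\Z_q$ of $\wh\Z$.

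Next I would run the inflation--restriction sequence for the normal subgroup $I_w\subseteq G_{K_{\infty,w}}$ with coefficients in $A$; its two outer terms are $\H^1$ and $\H^2$ of $G_{K_{\infty,w}}/I_w$ with coefficients in $A^{I_w}$. Since $A^{I_w}$ is $p^\infty$-torsion while $G_{K_{\infty,w}}/I_w$ has (supernatural) order prime to $p$, these outer terms vanish: any cohomology class is represented at a finite level, where it is annihilated both by a power of $p$ and by a prime-to-$p$ integer, hence is zero. The sequence thus collapses to the restriction isomorphism
\[
\H^1(K_{\infty,w}/K_w,A)\ \isoto\ \H^1(I_w,A)^{\Gamma/I_w},
\]
the superscript on the right being invariance under the residual Galois group $G_{K_{\infty,w}}/I_w\cong\prod_{q\neq p}\Z_q$.

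The only genuinely load-bearing step is the first: deducing that $K_{\infty,w}/K_w$ is \emph{unramified} --- not merely that $\ell$ splits with finitely many primes in $K_\infty$ --- which is exactly where $\ell\neq p$ is used, via the classification of $\Z_p$-extensions of local fields of residue characteristic $\neq p$. Once that is established, the remainder is the standard inflation--restriction formalism together with the triviality of prime-to-$p$ cohomology on $p$-primary modules, and no further input is needed.
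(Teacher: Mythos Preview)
Your proof is correct and follows essentially the same approach as the paper's. Both arguments use the inflation--restriction (Hochschild--Serre) sequence for $I_w \trianglelefteq G_{K_{\infty,w}}$, together with the fact that the quotient $G_{K_{\infty,w}}/I_w \cong \prod_{q\neq p}\Z_q$ has profinite order prime to $p$, so its cohomology on the $p$-primary module $A^{I_w}$ vanishes. The paper simply cites Pollack--Weston for these details, whereas you spell them out and also make explicit why $K_{\infty,w}/K_w$ is the unramified $\Z_p$-extension (via the uniqueness of $\Z_p$-extensions of local fields of residue characteristic $\ell\neq p$). In fact your formulation is cleaner: the paper's sentence ``the profinite degree of the absolute Galois group $G_{\infty,w}$ of $K_{\infty,w}$ is prime to $p$'' is a slip --- it is the quotient $G_{K_{\infty,w}}/I_w$, not $G_{K_{\infty,w}}$ itself, that has prime-to-$p$ order, exactly as you state.
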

\begin{proof}
See~\cite[Remark~3.1 and Lemma~3.2]{PollackWeston2011}. Note that the kernel and cokernel of the restriction morphism are the first and second Galois cohomology groups of a $p$-primary Galois module and that the profinite degree of the absolute Galois group $G_{\infty,w}$ of $K_{\infty,w}$ is prime to $p$.
\end{proof}

\begin{lemma}[the dual of the Iwasawa algebra] \label[lemma]{Lambda dual}
The Pontryagin dual $\Lambda^\vee$ of the Iwasawa algebra is isomorphic to $(F/\cO)[\injlim_n (\Gamma/\Gamma_n)^\vee] \iso (\injlim_nF/\cO)[(\Gamma/\Gamma_n)^\vee]$.
\end{lemma}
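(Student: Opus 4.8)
The plan is to realize the Iwasawa algebra as the profinite $\cO$-module $\Lambda = \varprojlim_n \cO[\Gamma/\Gamma_n]$ and to dualize it level by level. The key general input is that $\cO$ is a complete discrete valuation ring with finite residue field $\F$, so that Pontryagin duality $(-)^\vee = \Hom_{\cO,\cts}(-,F/\cO)$ is an exact anti-equivalence between profinite $\cO$-modules and discrete $\cO$-modules; being contravariant, it takes the inverse limit $\Lambda = \varprojlim_n \cO[\Gamma/\Gamma_n]$, whose terms are finite free (hence profinite) $\cO$-modules and whose transition maps $\cO[\Gamma/\Gamma_{n+1}]\twoheadrightarrow\cO[\Gamma/\Gamma_n]$ are surjective, to the direct limit
\[ \Lambda^\vee \;\cong\; \varinjlim_n\,\bigl(\cO[\Gamma/\Gamma_n]\bigr)^\vee, \]
the transition maps now being the (injective) $\cO$-linear duals of the projections. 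The same can be seen by hand: a continuous $\cO$-homomorphism $\Lambda\to F/\cO$ has open kernel since $F/\cO$ is discrete, an open $\cO$-submodule of $\Lambda$ contains $\ker\bigl(\Lambda\to(\cO/\frp^m)[\Gamma/\Gamma_n]\bigr)$ for suitable $m,n$, and hence every element of $\Lambda^\vee$ factors through some finite quotient $(\cO/\frp^m)[\Gamma/\Gamma_n]$.

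Next I would identify the terms and pass to the colimit. Because $\cO[\Gamma/\Gamma_n]$ is $\cO$-free on the finite set $\Gamma/\Gamma_n$, its dual is the $\cO$-module $\mathrm{Maps}(\Gamma/\Gamma_n,F/\cO)$ of all set maps, i.e.\ the free $F/\cO$-module on $\Gamma/\Gamma_n$; since $\#(\Gamma/\Gamma_n)^\vee=\#(\Gamma/\Gamma_n)$ this is (non-canonically) isomorphic to $(F/\cO)[(\Gamma/\Gamma_n)^\vee]$, and the transition maps are the inflation maps along $\Gamma/\Gamma_{n+1}\twoheadrightarrow\Gamma/\Gamma_n$. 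Granting that these can be matched with the maps induced by the canonical inclusions $(\Gamma/\Gamma_n)^\vee\hookrightarrow(\Gamma/\Gamma_{n+1})^\vee$, and using that forming the free $F/\cO$-module on a set commutes with filtered colimits, one obtains
\[ \Lambda^\vee \;\cong\; \varinjlim_n (F/\cO)\bigl[(\Gamma/\Gamma_n)^\vee\bigr] \;\cong\; (F/\cO)\Bigl[\varinjlim_n (\Gamma/\Gamma_n)^\vee\Bigr], \]
which is the assertion — the equivalence of the two right-hand expressions being exactly this interchange of $\varinjlim$ with $(F/\cO)[\,\cdot\,]$. As an independent check, using $\Lambda\cong\cO\llbracket T\rrbracket$, continuity into a discrete module forces a functional in $\Lambda^\vee$ to be the same datum as a sequence $\bigl(\phi(T^i)\bigr)_{i\ge0}$ in $F/\cO$ that vanishes for $i\gg0$, so $\Lambda^\vee\cong\bigoplus_{i\ge0}F/\cO$, a free $F/\cO$-module on the countably infinite set $\varinjlim_n(\Gamma/\Gamma_n)^\vee\cong\Q_p/\Z_p$.

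The step that I expect to need genuine care is matching the colimit $\varinjlim_n\mathrm{Maps}(\Gamma/\Gamma_n,F/\cO)$ — concretely, the $\cO$-module of locally constant $F/\cO$-valued functions on $\Gamma$ — with the group algebra $(F/\cO)[\varinjlim_n(\Gamma/\Gamma_n)^\vee]$: the level-wise isomorphisms $\mathrm{Maps}(\Gamma/\Gamma_n,F/\cO)\cong(F/\cO)[(\Gamma/\Gamma_n)^\vee]$ are not canonical and do not obviously intertwine the inflation maps with the inclusions of Pontryagin duals. Since the lemma only claims an isomorphism of $\cO$-modules, I would finish instead by a direct structural comparison: both modules are divisible torsion $\cO$-modules, and their $\frp$-torsion submodules are the $\F$-vector spaces $C^{\mathrm{lc}}(\Gamma,\F)$ and $\F[\varinjlim_n(\Gamma/\Gamma_n)^\vee]$ respectively, each of countably infinite dimension; by the structure theory of divisible modules over the discrete valuation ring $\cO$ they are therefore isomorphic. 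Everything else is routine bookkeeping.
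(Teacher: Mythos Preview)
Your proposal is correct and follows exactly the same idea as the paper: the paper's entire proof is the single sentence ``This follows from the exactness property of the Pontryagin dual, which interchanges projective and injective limits, and the definition of the Iwasawa algebra as a projective limit,'' which is precisely your first step $\Lambda^\vee \cong \varinjlim_n(\cO[\Gamma/\Gamma_n])^\vee$. You have simply been far more careful than the paper, in particular by flagging (and then resolving via the structure of divisible $\cO$-modules) the compatibility-of-transition-maps issue in passing from $\mathrm{Maps}(\Gamma/\Gamma_n,F/\cO)$ to $(F/\cO)[(\Gamma/\Gamma_n)^\vee]$; the paper does not address this at all, treating the lemma as a routine bookkeeping statement about $\cO$-modules.
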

\begin{proof}
This follows from the exactness property of the Pontryagin dual, which interchanges projective and injective limits, and the definition of the Iwasawa algebra as a projective limit.
\end{proof}

\begin{corollary}[Galois cohomology of Iwasawa algebra equals Galois cohomology over the $\Z_p$-extension] \label[corollary]{cohomology of Mtheta}
One has $\H^i(K,\Lambda^\vee) \iso \H^i(K_\infty,F/\cO)$. For the twist of $\Lambda^\vee$ by a character $\theta\colon G_K \to \F^\times \inj \cO^\times$, one has $\H^i(K,\cO(\theta) \otimes_{\cO} \Lambda^\vee) \iso \H^i(K_\infty, F/\cO(\theta))$. If $K_\theta$ is obtained from $K$ by adjoining the values of $\theta$, then $K_\theta$ is linearly disjoint from $K_\infty$ and one has $\H^i(K_\infty, F/\cO(\theta)) \iso \H^i(K_\infty K_\theta, F/\cO)^{\Delta_\theta}$.
\end{corollary}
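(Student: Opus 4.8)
The plan is to recognize $\Lambda^\vee$ --- viewed as a $G_K$-module via the surjection $G_K \surj \Gamma = \Gal(K_\infty/K)$ --- as a continuous induced module and invoke Shapiro's lemma for the first isomorphism, then to transport the twist by $\theta$ across induction via the projection formula for the second, and finally to descend from $K_\infty K_\theta$ to $K_\infty$ using that $\theta$ has prime-to-$p$ order for the third.

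First, by~\cref{Lambda dual} one gets a $G_K$-equivariant identification $\Lambda^\vee \iso \injlim_n \mathrm{Map}(\Gamma/\Gamma_n, F/\cO) = \mathrm{Map}_{\cts}(\Gamma, F/\cO)$, the discrete module of continuous maps $\Gamma \to F/\cO$ on which $g \in G_K$ acts by $(g\cdot\phi)(\gamma) = \phi(\bar g^{-1}\gamma)$, with $\bar g$ the image of $g$ in $\Gamma$. This is precisely the induced module $\mathrm{Map}_{G_{K_\infty}}(G_K, F/\cO)$ with $G_{K_\infty}$ acting trivially on $F/\cO$, so Shapiro's lemma in continuous cohomology gives $\H^i(K,\Lambda^\vee) \iso \H^i(K_\infty, F/\cO)$. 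For the twisted statement I would use the projection formula: since $\cO(\theta)$ is free of rank one over $\cO$, tensoring commutes with induction, whence $\cO(\theta)\otimes_\cO \mathrm{Map}_{\cts}(\Gamma, F/\cO) \iso \mathrm{Map}_{G_{K_\infty}}(G_K, \res_{G_{K_\infty}}\cO(\theta)\otimes_\cO F/\cO)$ as $G_K$-modules, the right-hand coefficient module being $F/\cO$ twisted by $\theta|_{G_{K_\infty}}$; Shapiro again yields $\H^i(K, \cO(\theta)\otimes_\cO\Lambda^\vee) \iso \H^i(K_\infty, F/\cO(\theta))$.

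It then remains to analyze $\H^i(K_\infty, F/\cO(\theta))$. Because $\theta$ factors through $\F^\times$, the field $K_\theta$ is a finite extension of $K$ with $\#\Gal(K_\theta/K)$ dividing $\#\F^\times$, hence prime to $p$, whereas every finite subextension of the $\Z_p$-extension $K_\infty/K$ has $p$-power degree. Therefore $K_\theta\cap K_\infty = K$, the fields $K_\theta$ and $K_\infty$ are linearly disjoint over $K$, and $\Delta_\theta \defeq \Gal(K_\infty K_\theta/K_\infty) \iso \Gal(K_\theta/K)$ has order prime to $p$. Over $K_\infty K_\theta$ the character $\theta$ is trivial, so $F/\cO(\theta)$ restricts to $F/\cO$ on $G_{K_\infty K_\theta}$; in the Hochschild--Serre spectral sequence for $G_{K_\infty K_\theta}\trianglelefteq G_{K_\infty}$, the terms $\H^p(\Delta_\theta, \H^q(K_\infty K_\theta, F/\cO(\theta)))$ vanish for $p>0$ since $\#\Delta_\theta$ is prime to $p$ while the coefficients are $p$-primary torsion, so the spectral sequence degenerates and produces $\H^i(K_\infty, F/\cO(\theta)) \iso \H^i(K_\infty K_\theta, F/\cO)^{\Delta_\theta}$, with $\Delta_\theta$ acting through its natural action on $K_\infty K_\theta$ twisted by $\theta$.

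Essentially everything is formal once~\cref{Lambda dual} is available; the one point I expect to require a little care is the projection formula in the continuous/topological setting, namely checking that passing limits through $\otimes$ and through $\mathrm{Map}_{\cts}$ stays compatible and $G_K$-equivariant --- but for the discrete coefficient module $F/\cO$ and the finite-order character $\theta$ this is routine and may be verified at each finite layer $\Gamma/\Gamma_n$, where it amounts to the elementary duality $\cO[\Gamma/\Gamma_n]^\vee\otimes_\cO\cO(\theta) \iso (\cO[\Gamma/\Gamma_n]\otimes_\cO\cO(\theta^{-1}))^\vee$.
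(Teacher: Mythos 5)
Your argument is correct and follows essentially the same route the paper implicitly takes through its citation to~\cite[proof of Theorem~1.2.2]{CGLS}: recognize $\Lambda^\vee$ (and its $\theta$-twist, via the projection formula for induction) as the module coinduced from $G_{K_\infty}$ to $G_K$, apply Shapiro's lemma, and then descend from $K_\infty K_\theta$ to $K_\infty$ by Hochschild--Serre using that $\Delta_\theta$ has order prime to $p$ while the coefficients are $p$-primary. The one small caveat worth noting explicitly is that $G_{K_\infty}$ is a closed but not open subgroup of $G_K$, so the relevant form of Shapiro's lemma is the one for coinduction in continuous cohomology of a discrete module, which is exactly what $\mathrm{Map}_{\cts}(\Gamma, F/\cO)$ realizes.
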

\begin{proof}
	See~\cite[proof of Theorem~1.2.2]{CGLS}. Note that their proof works for any $\theta$.
\end{proof}

\subsection{Local cohomology groups of characters.}\label{1.1}

Recall that $\F = \cO/\frp$ is the residue field of $\cO$, a finite field of characteristic $p$. Let $\theta\colon G_K \to \F^\times$ be a character with conductor divisible only by primes that are split in $K$. Since the case where $\theta|_{G_p}\ne \mathbf{1},\omega$ is already dealt with in~\cite{CGLS}, we focus on the complementary cases. Via the Teichmüller lift $\F^\times \inj F^\times$, we shall also view $\theta$ as taking values in $\cO^\times$. Set
\[
    M_\theta \defn \cO(\theta) \otimes_{\cO} \Lambda^\vee.
\]
 The module $M_\theta$ is equipped with a $G_K$-action via $\theta \otimes \Psi^{-1}$, where $\Psi\colon G_K \to \Lambda^\times$ is the character arising from the projection $G_K \surj \Gamma$. One has a $G_K$-equivariant isomorphism $M_\theta \isoto \Hom_{\cO,\cts}(\Lambda, \cO(\theta))$.

Note that one has an exact sequence
\begin{equation} \label{eq:kernel of T multiplication on Mtheta}
    0 \to (F/\cO)(\theta) \to M_\theta \stackrel{\cdot T}{\to} M_\theta \to 0.
\end{equation}

In this section, we study the local cohomology of $M_\theta$ at various primes $w$ of $K$.

\subsubsection{$w \nmid p$ split in K}

\begin{lemma} \label[lemma]{H1 Euler}
Let $\gamma_w$ be the image of $\Frob_w$ in the decomposition group of $w \mid \ell$. Let $\cP_w(\theta) = P_w(\ell^{-1}\gamma_w) \in \Lambda$ with $P_w \colonequals \det(1 - \Frob_wX \mid F(\theta)_{I_w})$ the Euler factor at $w$ of the $L$-function of $\theta$.

The module $\H^1(K_w, M_\theta)^\vee$ is $\Lambda$-torsion with characteristic ideal $(\cP_w(\theta))$. In particular, it has $\mu$-invariant $0$.
\end{lemma}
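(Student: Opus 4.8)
The plan is to compute $\H^1(K_w, M_\theta)$ by descent from $\H^1(K_{\infty,w}, F/\cO(\theta))$, using the fact (\cref{local behavior of anticyclotomic Zp extension at ell}) that only finitely many primes of $K_\infty$ lie above a split prime $\ell \neq p$, so $K_{\infty,w}/K_w$ is a $\Z_p$-extension unramified outside $\ell$ with $G(K_{\infty,w}/K_w) \isom \Z_p$ topologically generated by (the image of) $\gamma_w$. First I would use the inflation–restriction sequence, or directly \cref{cohomology of Mtheta}, to identify $\H^1(K_w, M_\theta)$ with a module built from $\H^*(K_{\infty,w}, F/\cO(\theta))$ together with the $\Gamma_w \defeq G(K_{\infty,w}/K_w)$-action; concretely, since $M_\theta = \cO(\theta)\otimes_\cO \Lambda^\vee$ and $\Lambda^\vee$ is the injective hull accounting for the $\Gamma$-coinvariants, one gets $\H^1(K_w,M_\theta)^\vee$ as the Iwasawa cohomology $\varprojlim_n \H^1(K_{n,w}, \cO(\theta)/\pi^m)$-type object, i.e.\ the $\Lambda$-module $\H^1_{\Iw}(K_w, \cO(\theta))$. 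The local Euler characteristic over a field of residue characteristic $\neq p$ is trivial, and $\H^0$ and $\H^2$ are controlled by Tate local duality; the upshot is that $\H^1(K_w, M_\theta)^\vee$ is a torsion $\Lambda$-module whose support is exactly where $\Frob_w$ has an eigenvalue making the unramified characteristic polynomial vanish.

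The key computation is then purely about the unramified (pro-$p$) quotient. Since $\ell \neq p$, the wild inertia acts through a finite prime-to-$p$ group, so $\H^*(I_w, F/\cO(\theta))$ reduces to cohomology of the tame quotient, and after restricting to the inertia-invariants $F(\theta)_{I_w}$ the relevant complex is the two-term complex $F(\theta)_{I_w} \otimes \Lambda^\vee \xrightarrow{\Frob_w - 1} F(\theta)_{I_w}\otimes \Lambda^\vee$ with $\Frob_w$ acting on the $\Lambda^\vee$ factor as multiplication by $\gamma_w^{-1}$ (via $\Psi^{-1}$) and on the first factor by its natural action scaled by $\ell^{-1}$ coming from the Tate twist bookkeeping. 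Dualizing, $\H^1(K_w,M_\theta)^\vee$ becomes the cokernel (equivalently, by the Euler characteristic argument, also the kernel up to the torsion discrepancy) of $1 - \ell^{-1}\gamma_w \cdot \Frob_w$ acting on $F(\theta)_{I_w}\otimes_{\cO}\Lambda$, whose characteristic ideal is $\det\!\big(1 - \ell^{-1}\gamma_w\,\Frob_w \mid F(\theta)_{I_w}\otimes\Lambda\big) = P_w(\ell^{-1}\gamma_w) = \cP_w(\theta)$. Because $F(\theta)_{I_w}$ is a finite-dimensional $\F$-vector space and the matrix entries of $1-\ell^{-1}\gamma_w\Frob_w$ lie in $\Lambda$ with constant term a unit whenever $\ell^{-1}\Frob_w \not\equiv 1$ and otherwise giving a distinguished-polynomial contribution from $\gamma_w - 1 = T_w$, the quotient is $\Lambda$-torsion; and since $\gamma_w$ maps to an element of $\Gamma$ of the form $(1+T)^{a}$ for a $p$-adic integer $a$, the characteristic power series is (a unit times) a polynomial that is $\equiv$ unit $\bmod\,\frp$ up to a power of $T$, forcing $\mu = 0$.

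The step I expect to be the main obstacle is making the identification of $\H^1(K_w, M_\theta)^\vee$ with this explicit cokernel/kernel fully rigorous \emph{in the presence of the $\Gamma$-action when the decomposition group $\Gamma_w \subset \Gamma$ is a proper (finite-index) subgroup} — i.e.\ $\ell$ split but not totally split in $K_\infty$. One must track the induction from $\Gamma_w$ to $\Gamma$ correctly: $M_\theta$ restricted to $G_{K_w}$ is $\cO(\theta)\otimes \Lambda^\vee$ where $\Lambda^\vee$ as a $\Gamma_w$-module is $\Hom(\Lambda,\cO)$ with $\Lambda$ free of rank $[\Gamma:\Gamma_w]$ over $\cO\llbracket\Gamma_w\rrbracket$, and one needs Shapiro's lemma together with \cref{restriction away from p} to reduce the computation to $\H^1(I_w, -)^{\Gamma/I_w}$ cleanly. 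I would handle this by invoking \cref{restriction away from p} to replace $\H^1(K_{\infty,w}/K_w, A)$ with $\H^1(I_w, A)^{\Gamma/I_w}$, so that the only Frobenius-type operator remaining is $\gamma_w$ acting on the unramified quotient, after which the determinant identity is a finite linear-algebra statement over $\Lambda$. The remaining bookkeeping — that the operator is $\ell^{-1}\gamma_w \Frob_w$ and not, say, $\gamma_w\Frob_w$ — is exactly the twist in the definition of $\Psi$ versus the cyclotomic normalization, and amounts to comparing $\Frob_w$ acting on $\Z_p(1)$ (giving the $\ell^{-1}$, since arithmetic Frobenius acts on $\mu_{p^\infty}$-roots by $\ell$) against its action on $F/\cO(\theta)$; this is routine once the conventions are pinned down.
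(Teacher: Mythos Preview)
Your sketch is correct and follows the standard approach; the paper itself gives no independent argument but simply cites \cite[Lemma~1.1.1]{CGLS}, noting that their proof applies to any $\theta$. Your outline (Shapiro/inflation--restriction to pass to $K_{\infty,w}$, reduce via \cref{restriction away from p} to the two-term Frobenius complex on $F(\theta)_{I_w}\otimes\Lambda$, read off the characteristic ideal as $P_w(\ell^{-1}\gamma_w)$, and observe $\mu=0$ because $\cP_w(\theta)\bmod\frp$ is a nonzero element of $\F\llbracket T\rrbracket$) is exactly the computation underlying that reference, including your correctly flagged subtlety about inducing from $\Gamma_w$ to $\Gamma$ when the decomposition group has finite index. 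One small slip: $F(\theta)_{I_w}$ is an $F$-vector space (or $\cO$-module), not an $\F$-vector space; and your explanation of the $\ell^{-1}$ is right in spirit---it enters through local Tate duality (the Cartier dual picks up a $\chi$, and $\chi(\Frob_w)=\ell$)---rather than from a Tate twist already present in $M_\theta$.
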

\begin{proof}
See~\cite[Lemma~1.1.1]{CGLS}. Note that their proof works for any $\theta$.
\end{proof}

\subsubsection{$w \mid p$ in $K$}

\begin{lemma} \label[lemma]{Iwasawa module free}
Let $X$ be a finitely generated $\Lambda$-module.

\begin{enumerate}[(i)]
	\item If 
	\begin{itemize}
		\item $X[T] = 0$ and
		\item $X/TX$ is a free $\cO$-module of rank $r$,
		\end{itemize}
		then $X$ is a free $\Lambda$-module of rank $r$ (and conversely).
	
	\item If
	\begin{itemize}
		\item $X[T] = 0$ and
		\item $\rk_\cO X/TX=r$,
		\end{itemize}
		then $\rk_\Lambda X=r$.
\end{enumerate}
\end{lemma}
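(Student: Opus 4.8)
The plan is to reduce everything to the structure theory of finitely generated modules over the regular local ring $\Lambda \cong \cO\llbracket T\rrbracket$, which is a two-dimensional regular local Noetherian (in fact a UFD, and a complete intersection) domain, and to the Nakayama lemma. For part (i), the strategy is to produce a surjection $\Lambda^r \surj X$ and then show it is injective. First I would use the hypothesis that $X/TX$ is free of rank $r$ over $\cO$: pick $x_1,\dots,x_r \in X$ whose images form an $\cO$-basis of $X/TX$, and also note $X/(T,\pi)X = X/\fm X$ (where $\fm = (\pi,T)$ is the maximal ideal of $\Lambda$) has $\F$-dimension $r$. By the topological Nakayama lemma for the complete local ring $\Lambda$ (valid since $X$ is finitely generated), the $x_i$ generate $X$, giving a surjection $\phi\colon \Lambda^r \surj X$ with kernel $Y$. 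Then I would run the snake lemma on multiplication by $T$ applied to $0 \to Y \to \Lambda^r \to X \to 0$: since $\Lambda^r[T] = 0$ and $X[T] = 0$ by hypothesis, the connecting map shows $Y/TY \hookrightarrow \Lambda^r/T\Lambda^r = \cO^r$, and comparing with the surjection $\cO^r = \Lambda^r/T \surj X/TX = \cO^r$ (an isomorphism, since a surjective endomorphism of a finitely generated module over a Noetherian ring is an isomorphism), a diagram chase forces $Y/TY = 0$. Now $Y$ is a finitely generated $\Lambda$-module with $Y = TY$; by Nakayama applied with the ideal $(T)$ — or simply because $Y \subset \Lambda^r$ is torsion-free so $T$ acts injectively, yet $Y = TY = T^2Y = \cdots$ puts $Y$ inside $\bigcap_n T^n \Lambda^r = 0$ — we get $Y = 0$, so $X \cong \Lambda^r$ is free. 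The converse is immediate since $\Lambda^r[T] = 0$ and $\Lambda^r/T \cong \cO^r$.

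For part (ii), the cleanest route is to localize at the height-one prime $(T)$. Writing $\Lambda_{(T)}$ for the localization — a discrete valuation ring with uniformizer $T$ and residue field $\mathrm{Frac}(\cO) =: E$ — the hypothesis $X[T] = 0$ means $T$ is a nonzerodivisor on $X$, so $X \hookrightarrow X \otimes_\Lambda \Lambda_{(T)} =: X_{(T)}$, and $\rk_\Lambda X = \dim_E X_{(T)}/TX_{(T)} = \dim_E(X/TX)_{(T)}$. Since $X/TX$ is a finitely generated $\cO$-module of $\cO$-rank $r$, i.e.\ $X/TX \cong \cO^r \oplus (\text{torsion})$, tensoring up to $E$ kills the torsion and gives $\dim_E (X/TX) \otimes_\cO E = r$; and $(X/TX) \otimes_\cO E = (X/TX)_{(T)}$ because inverting $T$ on the $\cO$-module $X/TX$ is the same as inverting $\pi$ (as $T$ is already a unit on $\Lambda/T = \cO$, wait—) — more carefully, $(X/TX)$ is a $\Lambda/T = \cO$-module, and $\Lambda_{(T)}/T\Lambda_{(T)} = E$, so $(X/TX) \otimes_{\cO} E$ is exactly the localization, and its $E$-dimension is $r$. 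Hence $\rk_\Lambda X = r$. Alternatively one can avoid localization entirely: by the structure theorem, $X \sim \Lambda^s \oplus \bigoplus \Lambda/(g_i)$ pseudo-isomorphically, $X[T] = 0$ forces each $g_i$ coprime to $T$ (else $\Lambda/(g_i)$ has $T$-torsion and pseudo-isomorphism would transport nonzero $T$-torsion into $X$ after checking the finite kernel/cokernel — this needs a small argument), whence $X/TX \sim \cO^s \oplus \bigoplus \Lambda/(T,g_i)$, each $\Lambda/(T,g_i)$ being $\cO$-torsion, so $\rk_\cO X/TX = s = \rk_\Lambda X$.

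The main obstacle is the bookkeeping in part (ii) when using the structure theorem: pseudo-isomorphisms have finite but possibly nonzero kernel and cokernel, so one must be careful that the hypothesis $X[T]=0$ genuinely rules out $T$ dividing any elementary divisor, and that $\rk_\cO X/TX$ is unaffected by the finite discrepancy (it is, since finite $\Lambda$-modules have $\cO$-rank zero, but for $X/TX$ one needs the discrepancy modules to also be $\cO$-torsion, which follows because a finite $\Lambda$-module is killed by a power of $\fm \supseteq (\pi)$). For this reason I would actually present part (ii) via the DVR localization argument above, which sidesteps the elementary-divisor combinatorics: the only things used are that $\Lambda_{(T)}$ is a DVR, that $X \otimes_\Lambda \Lambda_{(T)}$ is then free over it of rank equal to $\rk_\Lambda X$, and that inverting $T$ commutes with the right-exact functor $-/T(-)$ up to the identification $\Lambda_{(T)}/T = E$. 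Part (i)'s Nakayama-plus-snake-lemma argument is entirely routine and I expect no difficulty there.
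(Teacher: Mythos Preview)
Your proof is correct. For part~(i) you give in full the Nakayama-plus-snake-lemma argument that the paper simply delegates to \cite[Lemma~1.1.2]{CGLS} and \cite[Proposition~5.3.19\,(ii)]{NSW2.3}; there is nothing to add.

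For part~(ii) the two approaches diverge. The paper argues via the structure theorem: writing $X \sim \Lambda^s \oplus \bigoplus_i \Lambda/(f_i) \oplus \bigoplus_j \Lambda/(\pi^{m_j})$, it asserts that $X[T]=0$ forces there to be no $f_i$, and then reads off $\rk_\cO X/TX = s$. (That assertion is stated a bit too strongly---e.g.\ $X = \Lambda/(T-\pi)$ has $X[T]=0$ and a distinguished $f_1 = T-\pi$---but what is actually needed, that no $f_i$ is divisible by $T$ so each $\Lambda/(f_i,T)$ is $\cO$-torsion, does follow, modulo exactly the pseudo-isomorphism bookkeeping you flag.) Your localization at the height-one prime $(T)$ sidesteps all of this: $X[T]=0$ makes $X_{(T)}$ free over the DVR $\Lambda_{(T)}$ of rank $\rk_\Lambda X$, and $(X/TX)\otimes_\cO E$ computes that rank directly as $\rk_\cO X/TX$. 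This is cleaner and more robust; the paper's route is shorter to state but leans on the structure theorem and leaves the finite-kernel/cokernel issues implicit.
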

\begin{proof}
\begin{enumerate}[(i)]
	\item This is~\cite[Lemma~1.1.2]{CGLS} (see also~\cite[Pro\-po\-si\-tion~5.3.19\,(ii)]{NSW2.3}).	
	\item Consider the structure theorem\[
	X\sim \Lambda^s\oplus\bigoplus_{i}\Lambda/(f_i)\oplus\bigoplus_j \Lambda/(\pi^{m_j})\]
	for some distinguished polynomials $f_i$.
	That $X[T]=0$ implies there is no $f_i$. Then $X/TX$ would have $\cO$-rank $s$ because $(\Lambda/(\pi^{m_j}))/T$ is finite for any $j$. Hence $r=s$.
\qedhere\end{enumerate}
\end{proof}

We now study the case $\theta|_{G_v} \in \{\mathbf{1}, \omega\}$ excluded in~\cite{CGLS}. Note that in each of these cases one property in~\cite{CGLS} fails.

\begin{proposition} \label[proposition]{H1ofMtheta}
Assume that $\theta|_{G_v} \in  \{\mathbf{1}, \omega\}$ (all other cases have been considered in the analogous~\cite[Proposition~1.1.3]{CGLS}). Then:
\begin{enumerate}[(i)]
    \item The restriction morphism
        \[
            r_w\colon \H^1(K_w, M_\theta) \to \H^1(I_w, M_\theta)^{G_w/I_w}
        \]
        is surjective.
        
        \begin{itemize}
            \item If $\theta|_{G_w} = \mathbf{1}$, its kernel is a cofree $\cO$-module of rank $1$. In particular, $ker(r_w)^\vee$ has $\Lambda$-rank $0$ and it has no non-trivial finite $\Lambda$-submodule because all non-trivial subgroups of $\cO$ are infinite. 
            
            \item If $\theta|_{G_w} = \omega$, it is an isomorphism.
        \end{itemize}
        
    \item 
    \begin{itemize}
        \item If $\theta|_{G_w} = \mathbf{1}$, $\H^1(K_w, M_\theta)$ is $\Lambda$-cofree of rank $1$.
        
        \item If $\theta|_{G_w} = \omega$, $X \colonequals \H^1(K_w, M_\theta)^\vee$ satisfies $\pd_\Lambda X \leq 1$, (hence) has no non-trivial finite ($\Lambda$-)submodule, and is generated by $2$ elements with $\Lambda$-rank equal to $1$.
    \end{itemize}
\end{enumerate}
\end{proposition}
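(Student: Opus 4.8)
The plan is to analyze the local cohomology of $M_\theta$ at $w \mid p$ via the inflation--restriction sequence together with the exact sequence~\eqref{eq:kernel of T multiplication on Mtheta}, treating the two cases $\theta|_{G_w} = \mathbf{1}$ and $\theta|_{G_w} = \omega$ in parallel. First, recall from~\cref{local behavior of anticyclotomic Zp extension at p} that the image of $I_w \inj G_w \surj \Gamma$ is open, so $\Gamma/\mathrm{im}(I_w)$ is finite; combined with the fact that $G_w/I_w \isom \wh{\Z}$ has cohomological dimension $1$, this controls the terms $\H^i(I_w, M_\theta)^{G_w/I_w}$ and $\H^i(G_w/I_w, M_\theta^{I_w})$. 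For (i), the inflation-restriction sequence gives $0 \to \H^1(G_w/I_w, M_\theta^{I_w}) \to \H^1(K_w, M_\theta) \xrightarrow{r_w} \H^1(I_w, M_\theta)^{G_w/I_w} \to \H^2(G_w/I_w, M_\theta^{I_w})$. The last term vanishes since $\cd(G_w/I_w) = 1$, giving surjectivity of $r_w$. For the kernel, I would compute $M_\theta^{I_w}$: when $\theta|_{I_w} = \mathbf{1}$ (which holds in both cases, as $\omega$ is unramified at $p$... wait, $\omega$ is ramified at $p$) — more carefully, $M_\theta = \cO(\theta)\otimes\Lambda^\vee$ with $G_K$ acting through $\theta \otimes \Psi^{-1}$, and $\Psi$ factors through $\Gamma$. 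In the case $\theta|_{G_w} = \mathbf{1}$, the $G_w$-action on $M_\theta$ is through $\Psi^{-1}|_{G_w}$ alone, which is ramified (image of $I_w$ open in $\Gamma$), and one computes $\ker(r_w) = \H^1(G_w/I_w, M_\theta^{I_w}) = M_\theta^{I_w}/(\gamma_w - 1)$ where $M_\theta^{I_w}$ is the part of $\Lambda^\vee$ fixed by the open subgroup $\mathrm{im}(I_w) \le \Gamma$; this is a cofree $\cO$-module of finite rank, and a direct calculation (using that $K_{\infty,v}/K_{1,v}$ is totally ramified while $K_{1,v}/K_v$ is unramified of degree dividing $h_K$) pins the rank at $1$. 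In the case $\theta|_{G_w} = \omega$, the extra twist by $\omega$ makes $M_\theta^{I_w} = 0$ (the Teichmüller character $\omega$ is non-trivial on $I_w$ and $\mathrm{im}(I_w)$ acts through a pro-$p$ group on $\Lambda^\vee$, so no invariants), whence $\ker(r_w) = 0$ and $r_w$ is an isomorphism.

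For (ii) in the case $\theta|_{G_w} = \mathbf{1}$: having shown $\H^1(K_w, M_\theta)$ sits in $0 \to (\text{cofree }\cO\text{ of rank }1) \to \H^1(K_w, M_\theta) \to \H^1(I_w, M_\theta)^{G_w/I_w} \to 0$, I would pass to Pontryagin duals and apply~\cref{Iwasawa module free}\,(i). Concretely, set $X = \H^1(K_w, M_\theta)^\vee$; using~\eqref{eq:kernel of T multiplication on Mtheta} and the long exact cohomology sequence one gets $\H^0(K_w, (F/\cO)(\theta)) \to \H^1(K_w, M_\theta) \xrightarrow{\cdot T} \H^1(K_w, M_\theta)$, so $X[T]$ is a quotient of $\H^1(K_w, M_\theta)[T]$'s dual; one checks $X[T] = 0$ by showing the connecting map is surjective, i.e. that $T$ acts surjectively on $\H^1(K_w, M_\theta)$, equivalently that $\H^1(K_w, M_\theta)$ has no $\Gamma$-coinvariant-detecting obstruction — this follows from $\H^2(K_w, (F/\cO)(\theta)) = 0$ (local duality: dual to $\H^0(K_w, \cO(\theta^{-1}\chi))$, which vanishes as $\theta^{-1}\chi|_{G_w} = \chi \neq \mathbf{1}$). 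Then $X/TX \isom \H^1(K_w, (F/\cO)(\theta))^\vee$, which by local Euler characteristic / Tate duality is a free $\cO$-module of rank $1$ (the $\H^0$ and $\H^2$ terms being finite or zero and the local Euler characteristic being $-[K_w:\Q_p] = -1$). By~\cref{Iwasawa module free}\,(i), $X$ is free of rank $1$, i.e. $\H^1(K_w, M_\theta)$ is $\Lambda$-cofree of rank $1$.

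For (ii) in the case $\theta|_{G_w} = \omega$: here $r_w$ is an isomorphism, so $X = \H^1(K_w, M_\theta)^\vee \isom (\H^1(I_w, M_\theta)^{G_w/I_w})^\vee$. I would compute $\H^1(I_w, M_\theta)$ via $M_\theta = \cO(\theta) \otimes \Lambda^\vee$ and the structure of $I_w \surj \mathrm{im}(I_w) \le \Gamma$ (open), reducing to $\H^1$ of the full inertia acting through $\omega$ times a character onto an open subgroup of $\Gamma \isom \Z_p$. Using~\eqref{eq:kernel of T multiplication on Mtheta} again: $0 \to \H^0(I_w, M_\theta)/(\gamma-1)\text{-type terms}$ — more efficiently, dualize and show $\pd_\Lambda X \le 1$ by exhibiting $X$ as the cokernel of an injective map of free $\Lambda$-modules, or equivalently by checking $X$ has no non-trivial finite submodule and then invoking that a finitely generated $\Lambda$-module with $\pd \le 1$ iff it has no finite submodule (this equivalence holds because $\Lambda$ is a 2-dimensional regular local ring, and modules of $\pd \le 1$ are exactly the "torsion-free-in-codimension-$0$" ones with no embedded primes at $\fm$). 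The generation by $2$ elements and $\Lambda$-rank $1$: compute $X/TX = \H^1(K_w, (F/\cO)(\omega))^\vee$ via~\eqref{eq:kernel of T multiplication on Mtheta}, getting an extension involving $\H^1(K_w, (F/\cO)(\omega))$ of $\cO$-corank $1$ and $\H^2(K_w, (F/\cO)(\omega))$ which by local duality is dual to $\H^0(K_w, \cO) = \cO$ — hence nonzero, contributing an extra cyclic piece; so $X/TX$ is generated by $2$ elements over $\cO$ with $\cO$-rank $1$, and since $X[T] = 0$ (same surjectivity-of-$T$ argument, now using that the relevant $\H^2$ does not obstruct — here one must be careful, as $\H^2(K_w,(F/\cO)(\omega)) \ne 0$, so instead $X[T]$ is handled by noting $r_w$ iso identifies $X$ with a quotient of $\Lambda^\vee$-type module that is $T$-divisible), Nakayama gives generation by $2$ elements over $\Lambda$ and~\cref{Iwasawa module free}\,(ii) gives $\rk_\Lambda X = 1$.

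The main obstacle I anticipate is the case $\theta|_{G_w} = \omega$ in part (ii): unlike the $\mathbf{1}$ case, the module $\H^1(K_w, M_\theta)^\vee$ is genuinely not cofree — it has $\Lambda$-rank $1$ but needs $2$ generators — and the source of the "extra" generator is the non-vanishing $\H^2(K_w, (F/\cO)(\omega))$ (dual to $\H^0(K_w, \cO) = \cO$, which is exactly the phenomenon that is killed in~\cite{CGLS} by their hypothesis $\theta|_{G_p} \neq \mathbf{1}, \omega$). Tracking this term correctly through the $T$-multiplication sequence~\eqref{eq:kernel of T multiplication on Mtheta} — in particular verifying $X[T] = 0$ despite $\H^2 \neq 0$, and getting the projective dimension bound rather than freeness — is where the argument departs most sharply from \textit{op.\ cit.}, and it will require the explicit identification of $M_\theta^{I_w}$ and of the $G_w/I_w$-action from~\cref{local behavior of anticyclotomic Zp extension at p}\,(i).
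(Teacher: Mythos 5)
Your overall strategy is the same as the paper's: inflation--restriction for (i), then the multiplication-by-$T$ sequence~\eqref{eq:kernel of T multiplication on Mtheta}, local Tate duality and~\cref{Iwasawa module free} for (ii). Part (i) and the $\theta|_{G_w}=\mathbf{1}$ half of (ii) go through essentially as you describe, though in the latter you make two slips that happen to cancel: $X/TX$ is not all of $\H^1(K_w,F/\cO(\mathbf{1}))^\vee$ but its quotient by $\big(\H^0(K_w,M_{\mathbf{1}})/T\big)^\vee\isom\cO$, and $\H^1(K_w,F/\cO(\mathbf{1}))$ has $\cO$-corank $2$, not $1$ --- your Euler-characteristic count assumes the $\H^0$ term is finite, but $\H^0(K_w,F/\cO(\mathbf{1}))=F/\cO$ is not. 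The correct dual sequence $0\to X/TX\to\cO^2\to\cO\to0$ still gives $X/TX\isom\cO$, so the conclusion survives, but as written both intermediate claims are false.

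The genuine gap is in the $\theta|_{G_w}=\omega$ half of (ii). You assert that $\H^2(K_w,F/\cO(\omega))$ is dual to $\H^0(K_w,\cO)=\cO$, hence nonzero, and you build the rest of the argument (the source of the ``extra cyclic piece'' and the workaround for $X[T]=0$) on that. But the Cartier dual of $F/\cO(\omega)$ is $\cO(\chi\omega^{-1})$, not $\cO$: since $\chi\omega^{-1}$ is the nontrivial projection of $\chi$ to $1+p\Z_p$, one has $\H^0(K_w,\cO(\chi\omega^{-1}))=0$ and therefore $\H^2(K_w,F/\cO(\omega))=0$. This vanishing is exactly what yields $X[T]=0$ (via $\H^1(K_w,M_\omega)/T\inj\H^2(K_w,F/\cO(\omega))$, using $\H^2(K_w,M_\omega)=0$), so no ad hoc divisibility argument is needed; conversely, if your nonvanishing claim were true, $X[T]$ would be nonzero and~\cref{Iwasawa module free} would not apply. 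Note also that $\H^2(K_w,F/\cO(\omega))$ never enters $X/TX=\big(\H^1(K_w,M_\omega)[T]\big)^\vee$ in the first place. The actual source of the second generator is the finite cyclic summand in $\H^1(K_w,F/\cO(\omega))\isom F/\cO\oplus\cO/\frp^n$, detected by comparing $\H^1(K_w,F/\cO(\omega))[\frp]\isom\H^1(K_w,\F(\omega))\isom\F^2$ with $\H^1(K_w,F/\cO(\omega))/\frp\isom\H^2(K_w,\F(\omega))\isom\H^0(K_w,\F)^\vee\isom\F$. This computation, and the ensuing presentation $0\to C\to\Lambda^2\to X\to0$ with $C/T\isom\frp^n\cO$ and hence $C\isom\Lambda$ (which is what actually delivers $\pd_\Lambda X\le1$ and $\rk_\Lambda X=1$), are missing from your proposal; the equivalence ``$\pd_\Lambda\le1$ iff no finite submodule'' that you invoke instead still requires you to establish one of the two sides independently, which you do not do.
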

\begin{proof}
Note that $K_w = \Q_p$ and that $K_{w,\infty}/K_w$ is an infinitely ramified $\Z_p$-extension by~\cref{local behavior of anticyclotomic Zp extension at p} not containing $\mu_p$ because $p > 2$ and $([K_w(\mu_p):K_w], p) = 1$.

(i): Since $\cd(K_w) = 1$ and $G_w/I_w \iso \hat{\Z}\cdot\Frob_w$, $r_w$ is surjective and $\ker(r_w) \iso M_\theta^{I_w}/(\Frob_w - 1)M_\theta^{I_w}$ by~\cite[Lemma~I.3.2\,(i)]{RubinEulerSystems}.
Let us now determine the $\ker(r_w)$ by determining its Pontryagin dual as a $\Lambda$-module. Dualizing
\[
    0 \to M_\theta^{G_w} \to M_\theta^{I_w} \xrightarrow{\Frob_w-1} M_\theta^{I_w} \to M_\theta^{I_w}/(\Frob_w-1)M_\theta^{I_w} \to 0
\]
and noting that Pontryagin duality transforms $G_w$-invariants into the quotient by $T$, $\ker(r_w)^\vee$ sits in an exact sequence
\begin{equation} \label{eq:rw}
    0 \to \ker(r_w)^\vee \to (M_\theta^\vee)_{I_w} \to (M_\theta^\vee)_{I_w} \to M_\theta^\vee/T \to 0.
\end{equation}

If $\theta|_{G_w} \neq \mathbf{1}$, $M_\theta^{G_w} = 0$, so $(M_\theta^\vee)_{I_w} \to (M_\theta^\vee)_{I_w}$ is a surjection of finitely generated modules over the Noetherian algebra $\Lambda$, hence an isomorphism.

If $\theta|_{G_w} = \mathbf{1}$, the exact sequence~\eqref{eq:rw} reads
\[
    0 \to \ker(r_w)^\vee \to \Lambda_{I_w} \to \Lambda_{I_w} \to \Lambda/T \to 0,
\]
i.e.,
\[
    0 \to \ker(r_w)^\vee \to \cO[\Gamma/I_w] \to \cO[\Gamma/I_w] \to \cO \to 0.
\]
Since $I_w \subseteq G_w$ is of finite index by~\cref{local behavior of anticyclotomic Zp extension at p}, $\Gamma/I_w \isom \Z/p^m$ with $p^m \mid h_K$. Hence all modules are $\cO$ torsion-free and $\rk_{\cO}\ker(r_w)^\vee = 1$. 
We now prove (ii). Let $X$ be the $\Lambda$-module $\H^1(K_w, M_\theta)^\vee$. It is finitely generated because it is finite mod $(p,T)$. We determine $X[T]$ and $X/TX$ if $\theta|_{G_w} \in \{\mathbf{1}, \omega\}$.

\textbf{Claim 0: $\H^i(K_w, M_\theta) = \H^i(K_{w,\infty}, F/\cO(\theta))$ and $\H^2(K_w, M_\theta) = 0$ independent of $\theta$.} 

The first statement is Shapiro's lemma. Note that $K_{w,\infty}/K_w$ is a (infinitely ramified) $\Z_p$-extension by~\cref{local behavior of anticyclotomic Zp extension at p}.  For the second statement, use the first and that from~\cite{NekovarSelmerComplexes}
\[
    \H^2(K_w, M_\theta) = \H^2(K_{w,\infty}, F/\cO) = \H^0_\Iw(K_{w,\infty}/K_w, \cO(\chi\theta^{-1})) = 0,
\]
where the $0$-th Iwasawa cohomology is $0$ by~\cite[Remark 5.11, Lemma 5.12]{SchneiderVenjakob}. 

\textbf{Claim 1:  $\H^1(K_w, M_\mathbf{1})/T = \H^1(K_w, M_\omega)/T = 0$.}

From Claim 0, there is an isomorphism
\[
    \H^1(K_w,M_\theta)/T \isoto \H^2(K_w, F/\cO(\theta)).
\]
From~\eqref{eq:Fpomega}, one gets
\[
    0 \to \H^1(K_w, F/\cO(\theta))/\frp \to \H^2(K_w, \F(\theta)) \to \H^2(K_w, F/\cO(\theta))[\frp] \to 0
\]
and
\[
    0 \to \H^2(K_w, F/\cO(\theta))/\frp \to \H^3(K_w, \F(\theta)) \to \H^3(K_w, F/\cO(\theta))[\frp] \to 0.
\]
In the first sequence, $\H^2(K_w, \F(\theta)) \iso \H^0(K_w, \F(\omega\theta^{-1}))^\vee$ by Tate's local duality. In the second sequence, the $\H^3$ terms are $0$ since $\cd(K_w) = 2$. Hence $\H^2(K_w, F/\cO(\theta))$ is $p$-divisible and finitely generated, hence isomorphic to $(F/\cO)^r$ for some $r$. If $\theta = \mathbf{1}$, $r = 0$, since the middle term in the first sequence is $0$, and so $\H^1(K_w, M_\mathbf{1})/T = 0$. If $\theta = \omega$, $\H^2(K_w, F/\cO(\omega))\isom \H^0(K_w,\bZ_p(\omega))^\vee$ by Tate's local duality (extended to $\cO$-modules using that projective limits are dual to injective ones) and the latter is $0$.

\textbf{Claim 2a: $\H^1(K_w,M_\mathbf{1})[T] = F/\cO$.}

 Equivalently, $(X/TX)^\vee = 0$ for the Pontryagin dual. The long exact cohomology sequence over $K_w$ for~\eqref{eq:kernel of T multiplication on Mtheta} gives an exact sequence
\begin{equation} \label{eq:H1Kw}
    0 \to \H^0(K_w, M_\theta)/T \to \H^1(K_w, F/\cO(\theta)) \to X^\vee[T] \to 0.
\end{equation}
By Claim~0, $\H^0(K_w, M_\theta)/T = (F/\cO)/T = F/\cO$, hence $(\H^0(K_w, M_\theta)/T)^\vee = \H^0(K_w, M_\theta)^\vee[T] = \cO$.

If $\theta|_{G_w} = \mathbf{1}$, the middle term $\H^1(K_w, F/\cO(\theta))$ is canonically isomorphic to $\Gal(K_w^{\ab,p}/K_w)^\vee = (K_w^\times)^{\wedge,p} = \Z_p^2$ by local class field theory since $\mu_p \not\subset K_w^\times$. Hence the Pontryagin dual of the exact sequence~\eqref{eq:H1Kw} reads
\[
    0 \to X/TX \to \cO^2 \to \cO \to 0,
\]
hence $\rk_{\cO} X/TX = 1$ and $X/TX$ is $\cO$-free, hence $X/TX \isom \cO$ and so $X^\vee[T] \isom F/\cO$.

Since $X[T] = 0$ from Claim~1, apply~\cref{Iwasawa module free} to $X \colonequals \H^1(K_w, M_\theta)^\vee$ to get (ii) in the case $\theta|_{G_w} = \mathbf{1}$.

\textbf{Claim 2b: $\H^1(K_w,M_\omega)[T] \isom F/\cO \oplus \cO/\frp^n$ for some $n > 0$.} 

If $\theta|_{G_w} = \omega$, we look at the sequence~\eqref{eq:H1Kw} again:
\[
    \H^0(K_w, M_\theta)/T = (F/\cO(\omega))^{G_{K_{w,\infty}}}/T = 0/T = 0
\]
because $\mu_p \not\subset K_{w,\infty}$: $\Gal(K_w(\mu_p)/K_w) \iso (\Z/p)^\times$ cannot be a quotient of the $\Z_p$-extension $K_{w,\infty}/K_w$ if $p > 2$ by~\cref{local behavior of anticyclotomic Zp extension at p}\,(iii). Hence
\[
    \H^1(K_w, F/\cO(\omega)) \isoto X^\vee[T] = (X/TX)^\vee.
\]
The long exact Galois cohomology sequence of
\begin{equation} \label{eq:Fpomega}
    0 \to \F(\omega) \to F/\cO(\omega) \stackrel{\pi}{\to} F/\cO(\omega) \to 0
\end{equation}
gives the short exact sequence
\[
    0 \to \H^0(K_w, F/\cO(\omega))/\frp \to \H^1(K_w, \F(\omega)) \to \H^1(K_w, F/\cO(\omega))[\frp] \to 0.
\]
We have already seen $\H^0(K_w, F/\cO(\omega)) = 0$, hence one has an isomorphism \[\H^1(K_w, \F(\omega)) \isoto \H^1(K_w, F/\cO(\omega))[\frp].\] One can compute the (finite) $\F$-dimension of $\H^1(K_w, \F(\omega))$ using the local Euler--Poincar\'{e} characteristic formula, or by noting that $\H^1(K_w, \F(\omega))$ is dual to $\H^1(K_w,\F)$ by Tate's local duality, and the latter is an $\F$-vector space of dimension $\dim_{\F}(K_w^\times)/p = 2$ by local class field theory, noting that $\mu_p \not\subset K_w^\times = \Q_p^\times$ since $p > 2$.

The Galois cohomology of~\eqref{eq:Fpomega} also gives an exact sequence
\[
    0 \to \H^1(K_w, F/\cO(\omega))/\frp \to \H^2(K_w, \F(\omega)) \to \H^2(K_w, F/\cO(\omega))[\frp] \to 0.
\]
The last term evaluates by Tate's local duality theorem to
\[
    \H^2(K_w, F/\cO(\omega))[\frp] \iso (\H^0(K_w, \cO(\chi\omega^{-1}))/\frp)^\vee = 0
\]
with $\chi\colon G_{K_w} \to \cO^\times$ the $p$-adic cyclotomic character and $\chi\omega^{-1}$ its projection $\chi^1$ to the principal units $1 + p\Z_p$ (note that $p > 2$). Hence
\begin{equation} \label{eq:H1KwQpZpomegamodp}
    \H^1(K_w, F/\cO(\omega))/\frp \isoto \H^2(K_w, \F(\omega)) = \H^0(K_w, \F)^\vee = \F
\end{equation}
by Tate's local duality.

Hence we know $\H^1(K_w, F/\cO(\omega))$ mod $\frp$ (namely $\F$) and its $\frp$-torsion (namely $\F^2$). As it is a cofinitely generated $\cO$-module (since its $\frp$-torsion is finite dimensional), writing it as $(F/\cO)^r \oplus A$ with $A$ a finite $\cO$-module, $A/\frp \isom \F$ from~\eqref{eq:H1KwQpZpomegamodp}, hence $A \isom \cO/\frp^n$ for some $n > 0$, and so $\F^2 \isom \H^1(K_w, \F(\omega)) = \H^1(K_w, F/\cO(\omega))[\frp] = \F^{r+1}$. Hence $r = 1$ and $\H^1(K_w, F/\cO(\omega)) \isom F/\cO \oplus \cO/\frp^n$ with $n > 0$, agreeing with the local Euler--Poincar\'{e} characteristic formula $0 - 2 + 1 = \chi(K_w, \F(\omega)) = - [K_w:\Q_p] = -1$.

\textbf{Determination of $\H^1(K_w,M_\omega)$ as a $\Lambda$-module.}

Hence we know from~\eqref{eq:H1Kw} that $X/TX \isom \cO \oplus \cO/\frp^n$ with $n > 0$ in the case $\theta|_{G_w} = \omega$, so by~\cref{Iwasawa module free} $X$ is not $\Lambda$-free and is generated by $\leq 2$ elements over $\Lambda$, and that $X[T] = 0$ from Claim~1. When $X[T]=0$, $X$ is $\Lambda$-torsion free, and there is a presentation
\[\begin{tikzcd}
    0 \ar[r]& C     \ar[r]\ar[d,twoheadrightarrow]& \Lambda^2   \ar[r]\ar[d,twoheadrightarrow]& X   \ar[r]\ar[d,twoheadrightarrow]& 0 \\
    0 \ar[r]& C/T   \ar[r]\ar[d,"\isom"]& \Lambda^2/T \ar[r]\ar[d,"\isom"]& X/T \ar[r]\ar[d,"\isom"]& 0 \\
    0 \ar[r]    & \frp^n\cO   \ar[r]& \cO^2 \ar[r]& \cO \oplus \cO/\frp^n \ar[r]& 0 \\
\end{tikzcd}\]
of $X$ as a $\Lambda$-module. Since $C \subset \Lambda^2$ is $\Lambda$-torsion free and $C/T \isom \frp^n\cO$, $C \isom \Lambda$ by~\cref{Iwasawa module free}. Hence the presentation implies that $\rk_\Lambda X = 1$ and $\pd_\Lambda X \leq 1$, hence $X$ has no finite $\Lambda$-submodule by~\cite[Proposition~(5.3.19)\,(i)]{NSW2.3}.
\end{proof}

\subsection{Selmer groups of characters} \label{ssec:Selmer groups of characters}

Let $\theta\colon G_K \to \F_\frp^\times$ be a character whose conductor is divisible only by primes split in $K$ (i.e., unramified over $\Q$ and have degree $1$).

Let $\Sigma$ be a finite set of places of $K$ containing $\infty$ and the primes dividing $p$ or the conductor of $\theta$ and such that every finite place in $\Sigma$ is split in $K$. Denote the maximal extension of $K$ unramified outside $\Sigma$ by $K^\Sigma$. Recall that $p$ splits as $v\ol{v}$ in $K$.

\begin{definition} \label[definition]{def:Selmer groups of characters}
\begin{enumerate}
	\item The \emph{Greenberg Selmer group} of $\theta$ is
	{\small
		\[
		\H^1_{\cF_\Gr}(K, M_\theta) \colonequals \ker\Big\{\H^1(K^\Sigma/K, M_\theta) \xrightarrow{\res_\Gr} \prod_{w\in\Sigma, w \nmid p}\H^1(K_w,M_\theta) \times \H^1(K_{\ol v}, M_\theta)\Big\}
		.\]}
	
	\item For $S \colonequals \Sigma \setminus \{v,\ol v,\infty\}$, let the \emph{$S$-imprimitive Selmer group} of $\theta$ be
	\[
	\H^1_{\cF_\Gr^S}(K, M_\theta) \colonequals \ker\Big\{\H^1(K^\Sigma/K, M_\theta) \to \H^1(K_{\ol v}, M_\theta)\Big\},
	\]
	Replacing $M_\theta$ by $M_\theta[\frp]$ in the above definitions, we obtain the \emph{residual Greenberg Selmer groups} $\H^1_{\cF_\Gr}(K, M_\theta[\frp])$ and $\H^1_{\cF_\Gr^S}(K, M_\theta[\frp])$.
	
	\item The \emph{Selmer group with unramified local conditions away from $v$}, also called \textit{unramified Selmer group} for short, of $\theta$ is
	{\small
		\[
		\H^1_{\cF_\nr}(K, M_\theta) \colonequals \ker\Big\{\H^1(K^\Sigma/K, M_\theta) \xrightarrow{\res_{\nr,\theta}} \prod_{w\in\Sigma, w \nmid p}\H^1(I_w,M_\theta)^{G_w/I_w} \times \H^1(I_{\ol v}, M_\theta)^{G_{\ol v}/I_{\ol v}}\Big\}.
		\]}
	
	\item For $S \colonequals \Sigma \setminus \{v,\ol v,\infty\}$, let the \emph{$S$-imprimitive unramified Selmer group} of $\theta$ be
	\[
	\H^1_{\cF_\nr^S}(K, M_\theta) \colonequals \ker\Big\{\H^1(K^\Sigma/K, M_\theta) \to \H^1(I_{\ol v}, M_\theta)^{G_{\ol v}/I_{\ol v}}\Big\},
	\]
	Replacing $M_\theta$ by $M_\theta[\frp]$ in the above definitions, we obtain the \emph{residual unramified Selmer groups} $\H^1_{\cF_\nr}(K, M_\theta[\frp])$ and $\H^1_{\cF_\nr^S}(K, M_\theta[\frp])$.
\end{enumerate}

\end{definition}

Note these Selmer groups are $\Lambda$-cofinitely generated (by finiteness of the $\frp$-Selmer groups and Nakayama's lemma), and the Selmer groups are independent of the choice of $\Sigma$ as reflected in their notation.

The local conditions for these \textit{unramified Selmer groups} are relaxed (namely, the full group) at $v$ and $\cL_v = \ker(r_w)$  for $w \in \Sigma \setminus \{v\}$ where $r_w\colon \H^1(K_w, M_\theta) \to \H^1(I_w, M_\theta)^{G_w/I_w}$ is the restriction in~\cref{H1ofMtheta}. Note that $\H^1(K^\Sigma/K, M_\theta) = \Sel^\Sigma(K, M_\theta)$ with the $\Sigma$-relaxed Selmer group and $\H^1_{\cF_\nr}(K, M_\theta) = \Sel^\Sigma_{\Sigma \setminus \{v\}}(K, M_\theta)$ with the $\Sigma \setminus \{v\}$-strict Selmer group by~\cite[Lemma~I.5.3]{RubinEulerSystems}. Analogously, $\H^1_{\cF_\nr^S}(K, M_\theta) = \Sel^\Sigma_{\{\ol v\}}(K, M_\theta)$ is the $\ol v$-strict Selmer group. The $S$-imprimitive Selmer group has less local conditions than the Selmer group, omitting the local conditions at $S$. Hence one has an inclusion $\H^1_{\cF_\nr}(K, M_\theta) \subseteq \H^1_{\cF_\nr^S}(K, M_\theta)$.

Note that because $K_w = \Q_p$ and $[\Q_p(\mu_p) : \Q_p] = p-1 > 1$, $\omega|_{G_w} \neq 1$. Hence also $\omega|_{G_K} \neq 1$. We need this because we use several times that $\omega$ has no invariants over $G_w$ and $G_K$.

\begin{theorem} \label[theorem]{Rubin Hida}
Assume that the conductor of $\theta$ is only divisible by primes that split in $K$. Then $\H^1_{\cF_\nr}(K, M_\theta)^\vee$ is a finitely generated $\Lambda$-torsion module with $\mu$-invariant $0$.
\end{theorem}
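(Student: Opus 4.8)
The plan is to reduce the statement about the unramified Selmer group $\H^1_{\cF_\nr}(K,M_\theta)$ to the already-known Iwasawa-theoretic input for Hecke characters, namely Rubin's proof of the two-variable Main Conjecture for imaginary quadratic fields \cite{Rubin1991}, together with the local computations of \cref{H1 Euler} and \cref{H1ofMtheta}. First I would record that $\H^1_{\cF_\nr}(K,M_\theta)^\vee$ is a finitely generated $\Lambda$-module: this follows from Nakayama's lemma once one knows the residual Selmer group $\H^1_{\cF_\nr}(K,M_\theta[\frp])$ is finite, which in turn follows from the finiteness of class groups (the $\frp$-Selmer group of a finite-order character is controlled by class field theory). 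So the content is $\Lambda$-torsion-ness and vanishing of $\mu$.

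The key step is to compare the unramified Selmer group with a Greenberg-type Selmer group whose characteristic ideal is governed by a Katz $p$-adic $L$-function. Using the Poitou--Tate exact sequence for the Selmer structure defined in \cref{def:Selmer groups of characters}, one fits $\H^1_{\cF_\nr}(K,M_\theta)$ into an exact sequence relating it to $\H^1_{\cF_\Gr}(K,M_\theta)$ and to the local cohomology groups $\H^1(K_w,M_\theta)$ (respectively their unramified quotients) at the places $w\in S$ and at $v$. By \cref{H1 Euler} each such local term at $w\nmid p$ is $\Lambda$-torsion with characteristic ideal $(\cP_w(\theta))$ and $\mu$-invariant $0$; by \cref{H1ofMtheta} the analysis at the primes above $p$ shows the relevant local terms are either $\Lambda$-cofree of controlled rank or have bounded projective dimension with $\Lambda$-rank $1$ — in particular their Pontryagin duals are $\Lambda$-torsion or have a precisely computed rank. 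Thus torsion-ness and $\mu=0$ for $\H^1_{\cF_\nr}(K,M_\theta)^\vee$ will follow from the corresponding facts for $\H^1_{\cF_\Gr}(K,M_\theta)^\vee$, provided the local ranks match up; the bookkeeping is exactly that the "extra" rank one gains by relaxing the condition at $v$ is cancelled by the constraint imposed at $\ol v$, so the global Euler characteristic stays balanced.

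It remains to know that $\H^1_{\cF_\Gr}(K,M_\theta)^\vee$ is $\Lambda$-torsion with $\mu$-invariant $0$. This is precisely where Rubin's theorem enters: the Greenberg Selmer group of the Hecke character $\theta$ (viewed over the anticyclotomic line inside the two-variable Iwasawa algebra) has characteristic ideal generated by the relevant branch of the Katz--de Shalit two-variable $p$-adic $L$-function, which is nonzero, and whose $\mu$-invariant vanishes by the theorem of Gillard/Schneps (or Hida) on the vanishing of $\mu$ for Katz $p$-adic $L$-functions at split primes. One specializes the two-variable Main Conjecture to the anticyclotomic line; non-triviality of the specialized $L$-function (hence $\Lambda$-torsion-ness of the Selmer group) is guaranteed because the anticyclotomic line is not contained in the zero locus, and $\mu=0$ is inherited from the two-variable statement. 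Alternatively one can cite \cite[Theorem~1.2.2]{CGLS} directly, whose proof of this fact makes no use of the hypothesis $\theta|_{G_p}\ne\mathbf 1,\omega$.

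The main obstacle is the local analysis at $p$ when $\theta|_{G_v}\in\{\mathbf 1,\omega\}$, which is the case excluded in \cite{CGLS}: here the naive vanishing statements ($\H^0$ and $\H^2$ terms dying) fail, and one must instead use the finer structural information from \cref{H1ofMtheta} — that $\H^1(K_w,M_\theta)^\vee$ may have $\Lambda$-rank one and fail to be free — to see that, even so, the contribution to the global Euler characteristic is the expected one and no spurious positive rank survives in $\H^1_{\cF_\nr}(K,M_\theta)^\vee$. Concretely, one checks that the $\Lambda$-rank-one local term at $v$ (coming from relaxing the condition there) is exactly compensated by imposing the strict condition at $\ol v$, using that $M_\theta$ and its twist by $\chi$ have complementary local behavior at the two primes above $p$; this is the step where the residually reducible, possibly-torsion setting requires genuine care beyond \emph{op.\ cit.}
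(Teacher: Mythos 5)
Your proposal is correct in substance and ultimately rests on the same two external inputs as the paper: Rubin's Main Conjecture for imaginary quadratic fields via elliptic units, and the vanishing of the anticyclotomic $\mu$-invariant of the Katz $p$-adic $L$-function. The route is different, though. The paper does not pass through the Greenberg Selmer group or any Poitou--Tate bookkeeping: it identifies $\H^1_{\cF_\nr}(K,M_\theta)^\vee$ directly with the $\theta$-isotypic component of $\cX_\infty$, the Galois group of the maximal abelian pro-$p$ extension of $K_\infty K_\theta$ unramified outside $v$, and quotes Rubin's torsion-ness result for that classical Iwasawa module; the $\mu$-invariant statement is then read off from the Main Conjecture together with Hida's theorem. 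Your detour through $\H^1_{\cF_\Gr}(K,M_\theta)$ does work, but the relevant comparison is simpler than the rank-balancing you describe: the two Selmer groups agree at all $w\in\Sigma$, $w\nmid p$ (by \cref{restriction away from p}) and differ at $\ol v$ only by a subquotient of $\ker(r_{\ol v})$, which by \cref{H1ofMtheta}\,(i) is $\cO$-cofree of corank at most one, hence $\Lambda$-cotorsion with $\mu=0$. The ``extra rank at $v$ cancelled at $\ol v$'' argument is the Pollack--Weston criterion for surjectivity of the global-to-local map, which takes cotorsion-ness as an \emph{input}; it cannot be used to prove torsion-ness, so be careful not to lean on it for that purpose.

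Two points where the paper is more careful than your sketch. First, when $\theta|_{G_K}=\mathbf{1}$ the Main Conjecture takes the exceptional form $\Char(\fX_{\mathbf{1}})=(\cL_{\mathbf{1}}\cdot T)$, and one needs de Shalit's version to lift the hypotheses of Coates--Wiles ($p\neq 3$, non-anomalous, $i\neq 0$); the extra factor $T$ is harmless for torsion-ness and for $\mu=0$, but the generic form of the Main Conjecture should not be applied silently in this case. Second, your assertion that $\mu=0$ on the anticyclotomic line is ``inherited from the two-variable statement'' is false as stated: $\mu$-invariants do not specialize along lines (consider $T_1+p$ restricted to $T_1=0$), and the Gillard--Schneps results do not cover the anticyclotomic direction. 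The vanishing of the anticyclotomic $\mu$-invariant is precisely the content of Hida's theorem, which you do also cite, so this is a misattribution rather than a gap of substance.
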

\begin{proof}
	The first part of this proposition is essentially proved in~\cite{Rubin1991} (with~\cite{Rubin1994} to remove the assumption on $h_K$). As explained in~\cite[Theorem 1.2.2]{CGLS}, the dual Selmer groups  $\H^1_{\cF_\nr}(K,M_\theta)^\vee$ are readily identified with the $\theta$-isotypic components of $\cX_\infty$ (which are shown to be $\Lambda$-torsion in~\cite[Remark (ii) after Theorem 4.1]{Rubin1991}) where $\cX_\infty$ is the Galois group of the maximal abelian pro-$p$ extension of $K_\infty K_\theta$ unramified outside $v$, $K_\theta$ being the fixed field of $\ker(\theta)$. 
	
	That $\mu$-invariants is $0$ follows from the vanishing result of~\cite{Hida2010} and the Iwasawa Main Conjectures in~\cite{Rubin1991} together with~\cite{CW78} (or more generally, \cite[III.1.10]{deShalit} to remove the assumption on $p\ne 3$, that $p$ is an anomalous prime and that $i\ne 0$ which corresponds to the case where $\theta|_{G_K}=\mathbf{1}$). More precisely, if $\theta|_{G_K}\ne \mathbf{1}$, then the Main Conjecture for $\theta$ identifies Char$(\fX_\theta)$ with $(\cL_\theta)$ and hence the algebraic $\mu$-invariant equals the analytic one which vanishes. If $\theta|_{G_K}=\mathbf{1}$, the Main Conjecture identifies Char$(\fX_\mathbf{1})$ with $(\cL_\mathbf{1}\cdot T)$ so again both $\mu$-invariants vanish. 
\end{proof}

\begin{remark} \label[remark]{remark on Selmer groups}
\begin{enumerate}[(i)]
	\item Recall that from~\cref{restriction away from p}, the local conditions of our Selmer group at $w\nmid p$ agree with the Greenberg's local conditions. This is also discussed in~\cite[Theorem 1.2.2]{CGLS}. As a consequence, our Selmer group can be alternatively defined as 
	\[
	\H^1_{\cF_\nr}(K, M_\theta) \colonequals \ker\Big\{\H^1(K^\Sigma/K, M_\theta) \xrightarrow{\res} \prod_{w\in\Sigma, w \nmid p}\H^1(K_w,M_\theta) \times \H^1(I_{\ol v}, M_\theta)^{G_{\ol v}/I_{\ol v}}\Big\}.
	\]
	\item 
	We remark here that~\cref{Rubin Hida} is crucial in the proof of the fact that the global-to-local map defining the above Selmer group (as well as the imprimitive one) is surjective.
\end{enumerate}
\end{remark}
\begin{proof}
Regarding (ii):
We want to show that the conditions~(1)--(4) of~\cite[Proposition~A.2]{PollackWeston2011} are satisfied for $\H^1(K, M_\theta) \iso \H^1(K_\infty, F/\cO(\theta))$ by Shapiro's lemma(see~\cref{cohomology of Mtheta}). The conditions are for $K/\Q$ an imaginary quadratic number field with $p = v\ol{v}$ and local conditions $\cL_w$:
\begin{enumerate}[(1)]
	\item[(1)] No place $w \mid p$ splits completely in $K_\infty$.
	
	\item[(2)] The Selmer group is $\Lambda$-cotorsion.
	
	\item[(3)] $\H^0(K_\infty,\Hom(\cO(\theta), F/\cO(1)))$ is finite.
	
	\item[(4)] $r_v + r_{\ol{v}} \defeq \corank_\Lambda \cL_v + \corank_\Lambda \cL_{\ol v} = \rk_\cO \cO(\theta) \equalscolon [K:\Q]d - d \equalscolon \delta(K,V)$.
\end{enumerate}
Condition~(1) is satisfied for the anticyclotomic $\Z_p$-extension by~\cref{local behavior of anticyclotomic Zp extension at p}. By~\cref{Rubin Hida}, $\frX_\theta$ is $\Lambda$-torsion; this is condition~(2). Denoting by $\chi$ the $p$-adic cyclotomic character. Then the Cartier dual $\Hom(\cO(\theta),F/\cO(1)) = F/\cO(\chi\theta^{-1})$ has finite $G_{K_\infty}$-invariants for any finite order character $\theta$. (This is equivalent to showing that $\chi\theta^{-1}|_{G_{K_\infty}} \neq \mathbf{1}$. Note that $\ker(\chi)=K_{\cyc}$, the cyclotomic $\bZ_p$-extension of $K$, is linearly disjoint from $K_{\cyc}$, so $\chi(G_{K_\infty})=\chi(G_{K_\infty}G_{K_{\cyc}})=\chi(G_K)$. Hence $\chi|_{G_{K_\infty}}\colon G_{K_\infty} \to \Z_p^\times$ has open image. But multiplying it by the character $\theta^{-1}$ with finite image does not change this property.)
This means that condition~(3) is satisfied. Condition~(4) is satisfied since one has $r_v = 1, r_{\ol{v}} = 0$ (from~\cref{H1ofMtheta}), $[K:\Q] = 2$, $d = 1$ (we have characters) and $\delta(K, F(\theta)) = 1$ (there is exactly one archimedean place up to equivalence, and it is complex, and $d = 1$).

Surjectivity for the $S$-imprimitive Selmer group immediately follows. \qedhere
\end{proof}

Consider the (finitely generated) $\Lambda$-modules
\begin{align*}
	\frX_\theta^S &\colonequals \H^1_{\cF_\nr^S}(K, M_\theta)^\vee,\\
	\frX_\theta &\colonequals \H^1_{\cF_\nr}(K, M_\theta)^\vee.
\end{align*}
We now determine $\lambda(\frX_\theta^S)$ in terms of $\H^1_{\cF_\nr^S}(K, M_\theta[\frp])$.

\begin{lemma}[relation between the $p$-torsion of the $S$-imprimitive Selmer group and the residual one] \label[lemma]{eq:move p torsion}
One has an isomorphism 
\[
    \H^1_{\cF_\nr^S}(K, M_\theta[\frp]) \iso \H^1_{\cF_\nr^S}(K, M_\theta)[\frp].
\]

\end{lemma}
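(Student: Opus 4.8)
The plan is to compare the long exact cohomology sequences attached to the two short exact sequences $0 \to M_\theta[\frp] \to M_\theta \xrightarrow{\cdot\pi} M_\theta \to 0$ (which is the $\frp$-multiplication sequence; note $M_\theta[\frp] = (F/\cO)(\theta)[\frp]\otimes\text{stuff}$, and $M_\theta/\frp M_\theta \cong M_\theta[\frp]$ since $M_\theta$ is $\frp$-divisible) and use that the unramified Selmer group is cut out by kernels of restriction maps to local cohomology groups, so the statement amounts to a diagram chase once we know the relevant local maps are injective on $\frp$-torsion. First I would write down, for the global cohomology $\H^1(K^\Sigma/K, -)$ and for the single local term $\H^1(I_{\ol v}, -)^{G_{\ol v}/I_{\ol v}}$ appearing in the definition of $\H^1_{\cF_\nr^S}$, the snake-lemma sequences
\[
0 \to \H^0(K^\Sigma/K, M_\theta)/\pi \to \H^1(K^\Sigma/K, M_\theta[\frp]) \to \H^1(K^\Sigma/K, M_\theta)[\pi] \to 0
\]
and the analogous one with $\H^1(I_{\ol v}, -)^{G_{\ol v}/I_{\ol v}}$ in the middle. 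Then the two Selmer groups are the kernels of the vertical restriction maps, and I would apply the snake lemma to the resulting $3\times 3$ diagram.

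The key point is that the left-hand map on $\H^0$'s must vanish, i.e. $\H^0(K^\Sigma/K, M_\theta)/\pi = 0$. Since $M_\theta = \cO(\theta)\otimes_\cO \Lambda^\vee$ and $\Lambda^\vee$ has trivial $\Gamma$-invariants only via the augmentation, one has $\H^0(K^\Sigma/K, M_\theta) = M_\theta^{G_K} = (F/\cO)(\theta)^{G_K}$ sitting inside via the $T$-torsion (cf.\ \eqref{eq:kernel of T multiplication on Mtheta}); if $\theta|_{G_K}\neq\mathbf 1$ this is already $0$, and if $\theta|_{G_K}=\mathbf 1$ it is $F/\cO$, which is $\pi$-divisible, so in either case $\H^0(K^\Sigma/K, M_\theta)/\pi = 0$. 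Hence $\H^1(K^\Sigma/K, M_\theta[\frp]) \isoto \H^1(K^\Sigma/K, M_\theta)[\pi]$, and the same argument with $I_{\ol v}$ in place of $G_K$ (using that $M_\theta^{I_{\ol v}}$ is $\pi$-divisible, being a quotient of $F/\cO$-type modules — indeed $M_\theta^{I_{\ol v}} = (F/\cO)(\theta)^{I_{\ol v}}\otimes(\text{a piece})$, or more directly that $\H^1(I_{\ol v},M_\theta)^{G_{\ol v}/I_{\ol v}}$ has no issue because the $\H^0$ term that appears is $\pi$-divisible) shows the local term also satisfies $\H^1(I_{\ol v}, M_\theta[\frp])^{G_{\ol v}/I_{\ol v}} \hookrightarrow \H^1(I_{\ol v}, M_\theta)^{G_{\ol v}/I_{\ol v}}[\pi]$ — here even an injection suffices. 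With both rows of the comparison diagram having zero cokernel on the left and the right vertical map injective on $\pi$-torsion, the snake lemma gives $\ker(\text{top restriction}) \isoto \ker(\text{bottom restriction})[\pi]$, i.e.\ $\H^1_{\cF_\nr^S}(K, M_\theta[\frp]) \iso \H^1_{\cF_\nr^S}(K, M_\theta)[\pi]$, which is the claim.

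The main obstacle I anticipate is bookkeeping at the local place $\ol v$: one must check that taking $G_{\ol v}/I_{\ol v}$-invariants is compatible with the $\frp$-multiplication sequences and does not introduce an extra error term, i.e.\ that $\H^1(I_{\ol v}, M_\theta[\frp])^{G_{\ol v}/I_{\ol v}} \to \H^1(I_{\ol v}, M_\theta)[\pi]^{G_{\ol v}/I_{\ol v}}$ remains injective after passing to invariants, and that the $\H^0$-term $\H^0(I_{\ol v},M_\theta)/\pi$ (with its $G_{\ol v}/I_{\ol v}$-action) indeed dies — this uses $p$-divisibility of $M_\theta^{I_{\ol v}}$ together with the fact that $G_{\ol v}/I_{\ol v} \cong \widehat{\Z}$ has cohomological dimension $1$, so $\H^1(G_{\ol v}/I_{\ol v}, M_\theta^{I_{\ol v}}/\pi) = \H^1(G_{\ol v}/I_{\ol v}, 0) = 0$ and there is no obstruction to lifting invariant classes. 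Everything else is a formal diagram chase; note also that since $S$-imprimitivity only removes local conditions at split primes $w \nmid p$ and keeps the single condition at $\ol v$, exactly the same argument applies verbatim, and in fact one could equally phrase the whole comparison using $\Sel^\Sigma_{\{\ol v\}}$ as in~\cite[Lemma~I.5.3]{RubinEulerSystems}.
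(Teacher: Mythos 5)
Your approach is sound and, once cleaned up, gives an argument that is somewhat more streamlined than the paper's. You go directly to the inertial cohomology $\H^1(I_{\ol v},-)^{G_{\ol v}/I_{\ol v}}$ and show the relevant $\H^0$-term dies modulo $\pi$, whereas the paper runs a two-diagram argument — first comparing $\pi$-torsion at the level of $\H^1(K_{\ol v},-)$ in the Greenberg diagram, then for the quotient $\H^1(K_{\ol v},-)/\ker(\res_{M_\theta})$ in the unramified one — with a case split on $\theta|_{G_{\ol v}}\in\{\mathbf 1,\omega\}$ and, in the $\tilde{\mathbf 1}$ case, a separate manipulation using the identity~\eqref{quotient p}. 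Since the restriction $r_{\ol v}$ is surjective by Proposition~\ref{H1ofMtheta}\,(i), $\H^1(K_{\ol v},M_\theta)/\ker(\res_{M_\theta})\cong\H^1(I_{\ol v},M_\theta)^{G_{\ol v}/I_{\ol v}}$, so the two routes compute the same thing; yours just bundles the two local cases into one uniform divisibility computation and avoids explicit bookkeeping of $\ker(\res_{M_\theta})$.

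The one place that needs tightening is your justification that $M_\theta^{I_{\ol v}}$ is $\pi$-divisible. The line ``$M_\theta^{I_{\ol v}}=(F/\cO)(\theta)^{I_{\ol v}}\otimes(\text{a piece})$'' does not parse ($M_\theta=\cO(\theta)\otimes_\cO\Lambda^\vee$, and taking $I_{\ol v}$-invariants does not distribute over $\otimes$), and ``quotient of $F/\cO$-type modules'' is not the right reason either, since $M_\theta^{I_{\ol v}}$ sits as a \emph{sub}module of $M_\theta$ and submodules of divisible modules need not be divisible. The correct argument: because $\F^\times$ has order prime to $p$ and $\Gamma$ is pro-$p$, the image of $I_{\ol v}$ under $(\theta,\Psi)$ decomposes as $Q_{p'}\times Q_p$ (Goursat), so $M_\theta^{I_{\ol v}}=\bigl(\cO(\theta)\otimes(\Lambda^\vee)^{Q_p}\bigr)^{Q_{p'}}$. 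Here $(\Lambda^\vee)^{Q_p}=(\Lambda_{Q_p})^\vee=\cO[\Gamma/Q_p]^\vee\cong(F/\cO)^{[\Gamma:Q_p]}$ since $\Gamma/Q_p$ is finite by Lemma~\ref{local behavior of anticyclotomic Zp extension at p}, and $Q_{p'}$ acts by $\theta$-scaling, so the $Q_{p'}$-invariants are either the whole thing (if $\theta|_{I_{\ol v}}=\mathbf 1$) or $0$ (if $\theta|_{I_{\ol v}}=\omega|_{I_{\ol v}}\neq\mathbf 1$, since then $\theta(\sigma)-1\in\cO^\times$). Either way $M_\theta^{I_{\ol v}}$ is $\pi$-divisible, so $\H^1(I_{\ol v},M_\theta[\frp])\isoto\H^1(I_{\ol v},M_\theta)[\pi]$ already as $G_{\ol v}/I_{\ol v}$-modules, and taking invariants gives the local isomorphism; your $3\times3$ snake-lemma chase then finishes it. (Your aside about $\cd(G_{\ol v}/I_{\ol v})=1$ is unnecessary: the local comparison map is an isomorphism before invariants, so there is nothing to lift.)
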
 \label[lemma]{1.2.4}
Note that one has a surjection $\Sel^\Sigma(K, M_\theta[\frp]) \surj \Sel^\Sigma(K, M_\theta)[\frp]$ by~\cite[Lemma~I.5.4]{RubinEulerSystems}.
\begin{proof}
The long exact cohomology sequence of $0 \to M_\theta[\frp] \to M_\theta \to M_\theta \to 0$ (note that $M_\theta$ is $\pi$-divisible since $\Lambda^\vee$ is) yields a short exact sequence
\[
    0 \to \H^0(K^\Sigma/K, M_\theta)/\pi \to \H^1(K^\Sigma/K, M_\theta[\frp]) \to \H^1(K^\Sigma/K, M_\theta)[\frp] \to 0,
\]
with an analogous short exact sequence with $K^\Sigma/K$ replaced by $K_{\ol v}$. This allows us to first study the $p$-torsion of the Greenberg Selmer groups, which recovers~\cite[Lemma 1.2.4]{CGLS} for arbitrary characters (their assumption $\theta|_{G_{\ol v}} \neq 1$ is not essential). Indeed, by the snake lemma one has a commutative diagram with exact rows 
{\small
	\begin{equation}\tag{Gr,$\theta$}\label{move p Gr}
\begin{tikzcd}
    0 \ar[r]& \ker(r_{\ol v}^{\Gr,0}) \ar[r]\ar[d]& \H^1_{\cF_\Gr^S}(K, M_\theta[\frp]) \ar[r]\ar[d]& \H^1_{\cF_\Gr^S}(K, M_\theta)[\frp] \ar[d]& \\
   0 \ar[r]& \H^0(K^\Sigma/K, M_\theta)/\frp \ar[r]\ar[d,"r_{\ol v}^{\Gr,0}"]& \H^1(K^\Sigma/K, M_\theta[\frp]) \ar[r]\ar[d,"r_{\ol v}^{\Gr,1}"]& \H^1(K^\Sigma/K, M_\theta)[\frp] \ar[r]\ar[d]& 0\\
   0 \ar[r]& \H^0(K_{\ol v}, M_\theta)/\frp \ar[r]& \H^1(K_{\ol v}, M_\theta[\frp]) \ar[r]& \H^1(K_{\ol v}, M_\theta)[\frp] \ar[r]& 0
\end{tikzcd}
\end{equation}}
We have to study the kernel and the cokernel of \[r_{\ol v}^{\Gr,0}\colon \H^0(K^\Sigma/K, M_\theta)/\frp \to \H^0(K_{\ol v}, M_\theta)/\frp.\] But its domain and codomain are $0$: $\H^0(K^\Sigma/K, M_\theta) \iso \H^0(K^\Sigma_\infty/K_\infty, F/\cO(\theta))$ and $\H^0(K_{\ol v}, M_\theta) \iso \H^0(K_{\ol v,\infty}, F/\cO(\theta))$, both isomorphisms by Shapiro's lemma, which are both $\pi$-divisible: If $\theta = \mathbf{1}$, they are $F/\cO$; and if $\theta \ne \mathbf{1}$ but $\theta|_{G_{\ol v}} = \mathbf{1}$, they are $0$ and $F/\cO$. Otherwise $0 = \H^0(K^\Sigma_\infty/K_\infty, \F(\theta)) \isoto \H^0(K^\Sigma_\infty/K_\infty,F/\cO)[\frp]$ (and similar for the local $\H^0$) shows that they are torsion $p$-groups with trivial $\pi$-torsion, hence $0$. In particular, the middle row gives the identification $\H^1(K^\Sigma/K, M_\theta[\frp])=\H^1(K^\Sigma/K, M_\theta)[\frp]$.

We then study the imprimitive Selmer groups with unramified local conditions. There is a similar diagram 

{\small
	\begin{equation}\tag{nr,$\theta$}\begin{tikzcd}
		0 \ar[r]& \ker(r_{\ol v}^{\nr,0}) \ar[r]\ar[d]& \H^1_{\cF_{\nr}^S}(K, M_\theta[\frp]) \ar[r]\ar[d]& \H^1_{\cF_{\nr}^S}(K, M_\theta)[\frp] \ar[d]& \\
		0 \ar[r]& \H^0(K^\Sigma/K, M_\theta)/\frp \ar[r]\ar[d,"r_{\ol v}^{\nr,0}"]& \H^1(K^\Sigma/K, M_\theta[\frp]) \ar[r]\ar[d,"r_{\ol v}^{\nr,1}"]& \H^1(K^\Sigma/K, M_\theta)[\frp] \ar[r]\ar[d]& 0\\
		0 \ar[r]& \ker(f_\theta) \ar[r]& \frac{\H^1(K_{\ol v}, M_\theta[\frp])}{\ker(\res_{M_\theta[\frp]})} \ar[r,"f_\theta"]& \frac{\H^1(K_{\ol v}, M_\theta)}{\ker(\res_{M_\theta})}[\frp] 
	\end{tikzcd}\end{equation}}
where again we need to study the domain and codomain of $r_{\ol v}^{\nr,0}$. But the domain remains the same, so still $\ker(r_{\ol v}^{\nr,0}) = 0$. We claim that the codomain $\ker(f_\theta) = 0$ as well.

From here and onward, we use $\tilde{\omega}$ (resp.\ $\tilde{\mathbf{1}}$) to denote a character $\theta$ of $G_K$ whose restriction to $G_{\ol v}$ is $\omega$ (resp.\ $\mathbf{1}$) which may or may not be $\omega$ (resp.\ $\mathbf{1}$) on $G_K$.

When $\theta|_{G_{\ol v}}=\omega$, \cref{H1ofMtheta}\,(i) shows that $\ker(\res_{M_{\tilde{\omega}}})=0$. Actually, the same argument shows that $\ker(\res_{M_{\tilde{\omega}}[\frp]})=0$ as well, since both $M_{\tilde{\omega}}^{G_{\ol v}}$ and $M_{\tilde{\omega}}[\frp]^{G_{\ol v}}$ are $0$. Therefore, if $\theta|_{G_{\ol v}}=\omega$, the above two diagrams agree, so $\ker(f_{\tilde{\omega}})=\H^0(K_{\ol v}, M_{\tilde{\omega}})/\frp=0$.

When $\theta|_{G_{\ol v}}=\mathbf{1}$, however, we need to study the map $f_{\tilde{\mathbf{1}}}$ in the bottom row more carefully. Note that in general for $C$ a $\frp$-divisible submodule of $A$ we have an isomorphism
\begin{equation}\label{quotient p}
	A/C[\frp] \iso A[\frp]/C[\frp].
\end{equation}
Notice that from~\cref{H1ofMtheta}\,(i), $\ker(\res_{M_{\tilde{\mathbf{1}}}})=F/\cO$. A similar argument shows that for $M_{\tilde{\mathbf{1}}}[\frp]$, the exact sequence~\eqref{eq:rw} reads 
\[
0 \to \ker(\res_{M_{\tilde{\mathbf{1}}}[\frp]})^\vee \to \F[\Gamma/I_w] \to \F[\Gamma/I_w] \to \F \to 0,
\]
so $\ker(\res_{M_{\tilde{\mathbf{1}}}[\frp]})=\F$.

Note also that from the last row in the first diagram~\eqref{move p Gr}, since $\H^0(K_{\ol v}, M_{\tilde{\mathbf{1}}})/\frp=0$, there is an identification $\H^1(K_{\ol v}, M_{\tilde{\mathbf{1}}}[\frp])=\H^1(K_{\ol v}, M_{\tilde{\mathbf{1}}})[\frp]$. Therefore the domain of $f_{\tilde{\mathbf{1}}}$ is identified with $\frac{\H^1(K_{\ol v}, M_{\tilde{\mathbf{1}}})[\frp]}{\ker(\res_{M_{\tilde{\mathbf{1}}}[\frp]})}$. Let $A=\H^1(K_{\ol v}, M_{\tilde{\mathbf{1}}})$ and $C=\ker(\res_{M_{\tilde{\mathbf{1}}}})=F/\cO$. The map $f_{\tilde{\mathbf{1}}}$ then looks like
\[
A[\frp]/\F \xrightarrow{f_{\tilde{\mathbf{1}}}} A/(F/\cO)[\frp]=A[\frp]/(F/\cO)[\frp]=A[\frp]/\F,
\]
where the identification in the middle comes from~\eqref{quotient p}. However, $f_{\tilde{\mathbf{1}}}$ is induced by the natural map $ \H^1(K_{\ol v}, M_{\tilde{\mathbf{1}}}[\frp]) \isoto \H^1(K_{\ol v}, M_{\tilde{\mathbf{1}}})[\frp]:=A[\frp]$, so $f_{\tilde{\mathbf{1}}}$ must itself be an isomorphism on the quotients. Hence $\ker(f_{\tilde{\mathbf{1}}})=0$ as well.

Hence in both cases $\theta|_{G_{\ol v}}=\omega$ and $\mathbf{1}$, $\H^1_{\cF_\nr^S}(K, M_\theta[\frp]) \to \H^1_{\cF_\nr^S}(K, M_\theta)[\frp]$ is an isomorphism by the snake lemma applied to the second diagram.
\end{proof}

\begin{proposition}\label{Lambda invariants}
The $S$-imprimitive Selmer group $\frX_\theta^S$ is $\Lambda$-torsion with $\mu$-invariant $0$ and $\lambda$-invariant

\[
    \lambda(\frX_\theta^S) = \lambda(\frX_\theta) + \sum_{w \in \Sigma, w \nmid p}\lambda(\cP_w(\theta)).
\]
Moreover, $\H^1_{\cF_\nr^S}(K, M_\theta[\frp])$ is finite and if $\theta|_{G_{\ol v}}\ne\omega$, $\dim_\F\H^1_{\cF_\nr^S}(K, M_{\theta}[\frp])=\lambda(\frX_{\theta}^S)$.
\end{proposition}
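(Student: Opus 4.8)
The plan is to reduce the first three assertions to the known properties of the primitive Selmer group $\frX_\theta$ and of the local cohomology groups at the places $w\nmid p$, and then to treat the residual dimension count by a homological argument that isolates precisely why the case $\theta|_{G_{\ol v}}=\omega$ must be excluded. As input I would set up three short exact sequences. By \cref{remark on Selmer groups}\,(i) the local condition of $\H^1_{\cF_\nr}(K,M_\theta)$ at each $w\in S$ is the full group $\H^1(K_w,M_\theta)$, and by \cref{remark on Selmer groups}\,(ii) the global-to-local maps defining $\H^1_{\cF_\nr}(K,M_\theta)$, $\H^1_{\cF_\nr^S}(K,M_\theta)$ are surjective; hence one has
\[
0\to \H^1_{\cF_\nr}(K,M_\theta)\to \H^1_{\cF_\nr^S}(K,M_\theta)\to \bigoplus_{w\in S}\H^1(K_w,M_\theta)\to 0
\]
and, dually,
\[
0\to \bigoplus_{w\in S}\H^1(K_w,M_\theta)^\vee\to \frX_\theta^S\to \frX_\theta\to 0 ,
\]
while the defining sequence of $\H^1_{\cF_\nr^S}(K,M_\theta)$ dualizes (again by \cref{remark on Selmer groups}\,(ii)) to
\[
0\to \bigl(\H^1(I_{\ol v},M_\theta)^{G_{\ol v}/I_{\ol v}}\bigr)^\vee\to \H^1(K^\Sigma/K,M_\theta)^\vee\to \frX_\theta^S\to 0 .
\]

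From the first two sequences the first three assertions follow quickly. By \cref{Rubin Hida}, $\frX_\theta$ is finitely generated $\Lambda$-torsion with $\mu$-invariant $0$; by \cref{H1 Euler}, so is each $\H^1(K_w,M_\theta)^\vee$ for $w\in S$, with characteristic ideal $(\cP_w(\theta))$. Since being $\Lambda$-torsion and the $\mu$- and $\lambda$-invariants are additive along short exact sequences of finitely generated torsion $\Lambda$-modules (characteristic ideals being multiplicative), $\frX_\theta^S$ is $\Lambda$-torsion with $\mu(\frX_\theta^S)=0$ and $\lambda(\frX_\theta^S)=\lambda(\frX_\theta)+\sum_{w\in\Sigma,\,w\nmid p}\lambda(\cP_w(\theta))$. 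For the residual group, \cref{eq:move p torsion} gives $\H^1_{\cF_\nr^S}(K,M_\theta[\frp])\cong \H^1_{\cF_\nr^S}(K,M_\theta)[\frp]$, whose Pontryagin dual is $\frX_\theta^S/\frp\frX_\theta^S$; since $\mu(\frX_\theta^S)=0$ the module $\frX_\theta^S$ is finitely generated over $\cO$, so $\frX_\theta^S/\frp\frX_\theta^S$ is finite, and moreover $\dim_\F(\frX_\theta^S/\frp\frX_\theta^S)=\rk_\cO\frX_\theta^S+\dim_\F\frX_\theta^S[\frp]=\lambda(\frX_\theta^S)+\dim_\F\frX_\theta^S[\frp]$. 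Thus the claimed equality $\dim_\F\H^1_{\cF_\nr^S}(K,M_\theta[\frp])=\lambda(\frX_\theta^S)$ is \emph{equivalent} to $\frX_\theta^S$ being $\cO$-torsion-free, i.e.\ to its having no nonzero finite $\Lambda$-submodule.

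This last point — and the step where the hypothesis $\theta|_{G_{\ol v}}\neq\omega$ is used — is the main obstacle, and I would attack it via the third sequence above using two facts. First, $\H^1(K^\Sigma/K,M_\theta)^\vee$ has projective dimension $\leq 1$ over $\Lambda$: after Shapiro's lemma (\cref{cohomology of Mtheta}) this is the standard statement that the degree-$1$ Iwasawa cohomology of $K_\infty/K$ with the relevant coefficients has projective dimension $\leq 1$, the degree-$0$ Iwasawa cohomology vanishing since $\chi\theta^{-1}|_{G_{K_\infty}}\neq\mathbf 1$ (as in the proof of \cref{remark on Selmer groups}\,(ii)); in particular it has no nonzero finite $\Lambda$-submodule by \cite[Proposition~(5.3.19)\,(i)]{NSW2.3}. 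Second, when $\theta|_{G_{\ol v}}\neq\omega$ the submodule $\bigl(\H^1(I_{\ol v},M_\theta)^{G_{\ol v}/I_{\ol v}}\bigr)^\vee$ is $\Lambda$-\emph{free}: dualizing $0\to \ker(r_{\ol v})\to \H^1(K_{\ol v},M_\theta)\to \H^1(I_{\ol v},M_\theta)^{G_{\ol v}/I_{\ol v}}\to 0$ (exact since $r_{\ol v}$ is surjective by \cref{H1ofMtheta}\,(i)) yields $0\to \bigl(\H^1(I_{\ol v},M_\theta)^{G_{\ol v}/I_{\ol v}}\bigr)^\vee\to \H^1(K_{\ol v},M_\theta)^\vee\to \ker(r_{\ol v})^\vee\to 0$, in which $\H^1(K_{\ol v},M_\theta)^\vee$ is $\Lambda$-free of finite rank — by \cref{H1ofMtheta}\,(ii) if $\theta|_{G_{\ol v}}=\mathbf 1$, and by \cite[Proposition~1.1.3]{CGLS} if $\theta|_{G_{\ol v}}\notin\{\mathbf 1,\omega\}$ — while $\ker(r_{\ol v})^\vee$ is, by \cref{H1ofMtheta}\,(i), a finitely generated torsion-free $\cO$-module of $\cO$-rank $\leq 1$, hence cyclic over $\Lambda$ and of projective dimension $\leq 1$; Schanuel's lemma then forces $\bigl(\H^1(I_{\ol v},M_\theta)^{G_{\ol v}/I_{\ol v}}\bigr)^\vee$ to be $\Lambda$-projective, hence $\Lambda$-free. (It is exactly in the excluded case $\theta|_{G_{\ol v}}=\omega$ that \cref{H1ofMtheta}\,(ii) shows $\H^1(K_{\ol v},M_\theta)^\vee$ is \emph{not} $\Lambda$-free, so this argument genuinely breaks.) Feeding these two facts into the third sequence, $\frX_\theta^S$ is the quotient of a $\Lambda$-module of projective dimension $\leq 1$ by a free submodule, hence itself has projective dimension $\leq 1$, hence has no nonzero finite $\Lambda$-submodule, hence is $\cO$-torsion-free; by the previous paragraph this finishes the proof. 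The delicate ingredient is the first fact: one must identify $\H^1(K^\Sigma/K,M_\theta)^\vee$ correctly with an Iwasawa cohomology module over the \emph{anticyclotomic} $\Z_p$-extension and check that the relevant vanishing and finite-generation statements hold in that setting.
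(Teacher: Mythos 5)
Your proof is essentially the paper's: both realize $\frX_\theta^S$ as the quotient of $\H^1(K^\Sigma/K,M_\theta)^\vee$ by the $\Lambda$-free submodule $\bigl(\H^1(I_{\ol v},M_\theta)^{G_{\ol v}/I_{\ol v}}\bigr)^\vee$ (the paper exhibits the freeness via the same short exact sequence $0\to P^S\to\Lambda\to\cO\to 0$ that underlies your Schanuel step), and both then conclude that $\frX_\theta^S$ has no nonzero finite $\Lambda$-submodule in order to read off $\dim_\F\H^1_{\cF_\nr^S}(K,M_\theta[\frp])=\lambda(\frX_\theta^S)$ from $\mu=0$. Your phrasing via $\pd_\Lambda\le 1$ is, for finitely generated $\Lambda$-modules, equivalent to ``no finite $\Lambda$-submodule'' by Auslander--Buchsbaum over the two-dimensional regular local ring $\Lambda$, and your projective-dimension propagation through the third sequence plays the role that the Greenberg--Vatsal lemma cited in the paper plays there; the uniform treatment of all $\theta|_{G_{\ol v}}\neq\omega$ is a small tidiness gain over the paper's case split, which defers $\theta|_{G_{\ol v}}\notin\{\mathbf{1},\omega\}$ to CGLS. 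For the input you rightly flag as the delicate one -- that $\H^1(K^\Sigma/K,M_\theta)^\vee$ has no finite $\Lambda$-submodule -- the paper does not go through Iwasawa cohomology: it first proves $\H^2(K^\Sigma/K,M_\theta)=0$ by combining Greenberg's $\Lambda$-corank identity (his 1989 paper, Proposition 3) with $\corank_\Lambda\H^1(K^\Sigma/K,M_\theta)\le 1$ and the $\Lambda$-cofreeness of $\H^2$, and then invokes Proposition 5 of the same paper; since vanishing of $\H^0_{\Iw}$ alone does not force $\pd\le 1$ on the top cohomology of a perfect complex concentrated in degrees $[0,2]$, this $\H^2$-vanishing route is the one to take to close out your sketch.
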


\begin{proof}
When $\theta|_{G_{\ol v}}\ne\mathbf{1},\omega$, this is~\cite[Proposition 1.2.5]{CGLS}. Now we obtain from~\cref{remark on Selmer groups} that the restriction map defining $\H^1_{\cF_\nr}(K, M_\theta)$ using unramified local conditions is surjective, so the sequence
{\small
\begin{equation} \label{eq:Def unr}
    0 \to \H^1_{\cF_\nr}(K, M_\theta) \to \H^1(K^\Sigma/K, M_\theta) \to \prod_{w\in\Sigma,w\nmid p}\H^1(K_w,M_\theta) \times \H^1(I_{\ol v}, M_\theta)^{G_{\ol v}/I_{\ol v}} \to 0
\end{equation}}
is exact. From the definitions, this readily yields the exact sequence
\begin{equation} \label{eq:Gr to imp}
    0 \to \H^1_{\cF_\nr}(K, M_\theta) \to \H^1_{\cF_\nr^S}(K, M_\theta) \to \prod_{w\in S}\H^1(K_w,M_\theta) \to 0,
\end{equation}
which combined with~\cref{Rubin Hida} and~\cref{H1 Euler} gives the first part of the proposition.

Essentially the same proof in~\textit{loc.\ cit.} shows $\H^2(K^\Sigma/K,M_\theta)$ is 0. More precisely, we adopt the arguments from Greenberg in~\cite{Greenberg1989}, as we now explain.

First note that because  $\Gal(K^\Sigma/K)$ has $p$-cohomological dimension $2$, $\H^2(K^\Sigma/K,M_\theta)$ is cofree. Proposition 3 in \textit{op.\ cit.}\ yields the following equation of $\Lambda$-coranks:
\begin{equation*}
	\corank_\Lambda(\H^1(K^\Sigma/K,M_\theta))-\corank_\Lambda(\H^2(K^\Sigma/K,M_\theta))=1.
\end{equation*}
Since $\corank_\Lambda(\H^1(K^\Sigma/K,M_\theta))\leq 1$(it follows from the exact sequence~\eqref{eq:Def unr}, together with~\cref{Rubin Hida}, \cref{H1 Euler} and~\cref{H1ofMtheta}), this gives the cotorsionness of $\H^2(K^\Sigma/K,M_\theta)$, so $\H^2(K^\Sigma/K,M_\theta)=0$.

Then~\cite[Proposition 5]{Greenberg1989} shows that $\H^1(K^\Sigma/K,M_{\tilde{{\mathbf{1}}}})^\vee$ has no finite $\Lambda$-sub\-mo\-dules. From~\cref{H1ofMtheta}, we see that $P^S:=(\frac{\H^1(K_{\ol v},M_{\tilde{{\mathbf{1}}}})}{\ker(\res_{{M_{\tilde{\mathbf{1}}}}})})^\vee$ fits into an exact sequence\[
0\to P^S \to \Lambda \to \cO \to 0,\]
so $P^S$ is a free $\Lambda$-module of rank $1$. Then~\eqref{eq:Def unr} and~\eqref{eq:Gr to imp} readily yields \[
 \fX^S_{\tilde{\mathbf{1}}}\cong \frac{\H^1(K^\Sigma/K,M_{\tilde{\mathbf{1}}})^\vee}{P^S}
 \]
 as $\Lambda$-modules, and by~\cite[Lemma 2.6]{GV00} we conclude that $\fX^S_{\tilde{\mathbf{1}}}$ has no finite $\Lambda$-submodules.

 Since $\frX_{\tilde{\mathbf{1}}}^S$ is torsion with $\mu$-invariant $0$ by~\cref{Rubin Hida}, the finiteness of $\H^1_{\cF_\nr^S}(K, M_{\tilde{\mathbf{1}}})[\frp]$ (and hence of $\H^1_{\cF_\nr^S}(K, M_{\tilde{\mathbf{1}}}[\frp])$  by~\cref{eq:move p torsion}) follows from the structure theorem. It then follows that $\H^1_{\cF_\nr^S}(K, M_{\tilde{\mathbf{1}}})$ is divisible, and in particular
\begin{equation*}
\H^1_{\cF_\nr^S}(K, M_{\tilde{\mathbf{1}}})\isom (F/\cO)^{\lambda(\frX_{\tilde{\mathbf{1}}}^S)},
\end{equation*} which together with~\cref{eq:move p torsion} gives the formula for $\lambda$-invariant.

The same argument shows that $\H^1_{\cF_\nr^S}(K, M_{\tilde{\omega}}[\frp])$ is finite, except that $\fX^S_{\tilde{\omega}}$ might have some finite submodules (also from~\cref{H1ofMtheta}) so we don't obtain a simple formula about the $\lambda$-invariant. The computation will be postponed to~\cref{sec:selmer-groups-of-rhof}.
\end{proof}

\begin{corollary}\label[corollary]{[p]exact}
	\begin{enumerate}
		\item Assume $\theta|_{G_{\ol v}} \neq \omega$. Then $\H^2(K^\Sigma/K,M_\theta[\frp]) = 0$.
	
		\item When $\theta|_{G_{\ol v}}=\mathbf{1}$, one has an exact sequence
		\[
		0 \to \H^1_{\cF_{\nr}^S}(K, M_\mathbf{1}[\frp]) \to \H^1(K^\Sigma/K,M_\mathbf{1}[\frp]) \to \H^1(K_{\ol v}, M_\mathbf{1}[\frp])/\ker(\res_{M_{\tilde{\mathbf{1}}}[\frp]}) \to 0.
		\]
	\end{enumerate}
\end{corollary}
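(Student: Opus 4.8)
The plan is to reduce both statements to structural facts about $M_\theta$ that have already been obtained in Proposition~\ref{Lambda invariants} and in the proof of \cref{eq:move p torsion}.

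For~(i), I would start from the tautological sequence $0 \to M_\theta[\frp] \to M_\theta \xrightarrow{\cdot\,\pi} M_\theta \to 0$, which is exact because $\Lambda^\vee$, and hence $M_\theta$, is $\pi$-divisible. Its long exact cohomology sequence over $K^\Sigma/K$, together with $\H^2(K^\Sigma/K, M_\theta) = 0$ (proved inside Proposition~\ref{Lambda invariants}), gives an isomorphism $\H^2(K^\Sigma/K, M_\theta[\frp]) \isom \H^1(K^\Sigma/K, M_\theta)/\pi$. So it suffices to show that $\H^1(K^\Sigma/K, M_\theta)$ is $\pi$-divisible, i.e.\ that $\H^1(K^\Sigma/K, M_\theta)^\vee$ is $\cO$-torsion-free. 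By the global-to-local surjectivity of \cref{remark on Selmer groups}\,(ii) for the $S$-imprimitive unramified Selmer group, dualizing its defining sequence yields
\[
0 \longrightarrow \big(\H^1(I_{\ol v}, M_\theta)^{G_{\ol v}/I_{\ol v}}\big)^\vee \longrightarrow \H^1(K^\Sigma/K, M_\theta)^\vee \longrightarrow \frX_\theta^S \longrightarrow 0 .
\]
When $\theta|_{G_{\ol v}} = \mathbf{1}$ this is precisely the extension $0 \to P^S \to \H^1(K^\Sigma/K, M_{\tilde{\mathbf{1}}})^\vee \to \frX_{\tilde{\mathbf{1}}}^S \to 0$ exhibited in the proof of Proposition~\ref{Lambda invariants}, where $P^S$ is $\Lambda$-free of rank one and $\frX_{\tilde{\mathbf{1}}}^S$ has no non-trivial finite $\Lambda$-submodule; the complementary case $\theta|_{G_{\ol v}} \neq \mathbf{1}, \omega$ is the one already treated in~\cite{CGLS}. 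In either case the left-hand term is $\Lambda$-free, hence $\cO$-torsion-free, while $\frX_\theta^S$ is $\Lambda$-torsion with $\mu$-invariant $0$ (\cref{Rubin Hida}) and has no finite $\Lambda$-submodule; a $\Lambda$-torsion module with these two properties embeds, via its structure pseudo-isomorphism (which is injective because there is no finite submodule), into a finite sum $\bigoplus_i \Lambda/(f_i)$ with the $f_i$ distinguished, and the latter is $\cO$-torsion-free. Since an extension of $\cO$-torsion-free modules is $\cO$-torsion-free, $\H^1(K^\Sigma/K, M_\theta)^\vee$ is $\cO$-torsion-free, as required. I expect the decisive point to be exactly this ``no finite $\Lambda$-submodule'' input: it uses $\theta|_{G_{\ol v}} \neq \omega$ crucially, since for $\theta|_{G_{\ol v}} = \omega$ the module $\H^1(K_{\ol v}, M_\theta)^\vee$ fails to be $\Lambda$-free (\cref{H1ofMtheta}\,(ii)) and $\frX_\theta^S$ may genuinely acquire a finite submodule, so the argument really does single out the case in the hypothesis.

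For~(ii), write $\theta = \tilde{\mathbf{1}}$. By definition $\H^1_{\cF_\nr^S}(K, M_{\tilde{\mathbf{1}}}[\frp])$ is the kernel of the restriction map $r^1\colon \H^1(K^\Sigma/K, M_{\tilde{\mathbf{1}}}[\frp]) \to \H^1(K_{\ol v}, M_{\tilde{\mathbf{1}}}[\frp])/\ker(\res_{M_{\tilde{\mathbf{1}}}[\frp]})$ — the target being the image of the inertial restriction, which is all of $\H^1(I_{\ol v}, M_{\tilde{\mathbf{1}}}[\frp])^{G_{\ol v}/I_{\ol v}}$ as in \cref{H1ofMtheta}\,(i) — so only the surjectivity of $r^1$ remains to be shown. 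The plan is to transport this surjectivity from the integral module. In the commutative square from the proof of \cref{eq:move p torsion} relating $r^1$ to
\[
r^2\colon \H^1(K^\Sigma/K, M_{\tilde{\mathbf{1}}})[\frp] \longrightarrow \big(\H^1(K_{\ol v}, M_{\tilde{\mathbf{1}}})/\ker(\res_{M_{\tilde{\mathbf{1}}}})\big)[\frp],
\]
the left vertical map is an isomorphism because $\H^0(K^\Sigma/K, M_{\tilde{\mathbf{1}}})/\frp = 0$, and the right vertical map is the isomorphism $f_{\tilde{\mathbf{1}}}$ established there; hence $r^1$ is surjective if and only if $r^2$ is. Finally, the snake lemma applied to multiplication by $\pi$ on the sequence $0 \to \H^1_{\cF_\nr^S}(K, M_{\tilde{\mathbf{1}}}) \to \H^1(K^\Sigma/K, M_{\tilde{\mathbf{1}}}) \to \H^1(I_{\ol v}, M_{\tilde{\mathbf{1}}})^{G_{\ol v}/I_{\ol v}} \to 0$ (exact by \cref{remark on Selmer groups}\,(ii)) gives $\coker(r^2) \hookrightarrow \H^1_{\cF_\nr^S}(K, M_{\tilde{\mathbf{1}}})/\pi$, which vanishes since $\H^1_{\cF_\nr^S}(K, M_{\tilde{\mathbf{1}}}) \isom (F/\cO)^{\lambda(\frX_{\tilde{\mathbf{1}}}^S)}$ is $\pi$-divisible by Proposition~\ref{Lambda invariants}. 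Thus $r^2$, and hence $r^1$, is surjective, which produces the asserted four-term exact sequence. The only delicate part here is the bookkeeping imported from \cref{eq:move p torsion} — that $f_{\tilde{\mathbf{1}}}$ is an isomorphism and that the relevant $\H^0/\frp$ terms vanish — all of which is carried out there, so that what is left is a diagram chase once the divisibility of $\H^1_{\cF_\nr^S}(K, M_{\tilde{\mathbf{1}}})$ is in hand.
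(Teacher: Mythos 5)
Your proposal is correct and follows essentially the same route as the paper: for (i) you identify $\H^2(K^\Sigma/K,M_\theta[\frp])$ with $\H^1(K^\Sigma/K,M_\theta)/\pi$ via the vanishing of $\H^2(K^\Sigma/K,M_\theta)$ and then kill the latter using the short exact sequence defining the imprimitive Selmer group together with the $\cO$-torsion-freeness of $P^S$ and of $\frX^S_{\tilde{\mathbf{1}}}$ (the paper phrases this as divisibility of $\H^1_{\cF_\nr^S}(K,M_{\tilde{\mathbf{1}}})$ plus $p$-torsion-freeness of the local dual, which is the same content from \cref{Lambda invariants} and \cref{H1ofMtheta}); and for (ii) you spell out exactly the "verbatim" transport of surjectivity through the isomorphisms of \cref{eq:move p torsion} and the snake lemma on multiplication by $\pi$ that the paper invokes.
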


\begin{proof}
	(i) When $\theta|_{G_{\ol v}} \neq \mathbf{1},\omega$, this is proved in~\cite{CGLS}. Assume $\theta|_{G_{\ol v}}=\mathbf{1}$.
	
	The cohomology long exact sequence induced by multiplication by $p$ on $M_{\tilde{\mathbf{1}}}$ yields an isomorphism
	\begin{equation}\label{H1/p=H2}
		\frac{\H^1(K^\Sigma/K, M_{\tilde{\mathbf{1}}})}{p\H^1(K^\Sigma/K, M_{\tilde{\mathbf{1}}})}\cong \H^2(K^\Sigma/K, M_{\tilde{\mathbf{1}}}[\frp]).
	\end{equation}
	On the other hand, from the exactness of~\eqref{eq:Def unr} we deduce the exact sequence
	\begin{equation}\label{ImpSES1}
		0 \to \H^1_{\cF_\nr^S}(K,M_{\tilde{\mathbf{1}}}) \to \H^1(K^\Sigma/K, M_{\tilde{\mathbf{1}}}) \to \H^1(I_{\ol v}, M_{\tilde{\mathbf{1}}})^{G_{\ol v}/I_{\ol v}}  \to 0.
	\end{equation}
	Since $\H^1(I_{\ol v}, M_{\tilde{\mathbf{1}}})^{G_{\ol v}/I_{\ol v}}$ is a quotient object of $\H^1(K_{\ol v}, M_{\tilde{\mathbf{1}}})$, the dual of $\H^1(I_{\ol v}, M_{\tilde{\mathbf{1}}})^{G_{\ol v}/I_{\ol v}}$ is a subobject of $\H^1(K_{\ol v}, M_{\tilde{\mathbf{1}}})^\vee$. By~\cref{H1ofMtheta}\,(ii) the latter is free of $\Lambda$-rank 1, so the dual of $\H^1(I_{\ol v}, M_{\tilde{\mathbf{1}}})^{G_{\ol v}/I_{\ol v}}$ has no $p$-torsion, too. Since we showed in~\cref{Lambda invariants} that $\H^1_{\cF_\nr^S}(K, M_{\tilde{\mathbf{1}}})$ is divisible, it follows from~\eqref{ImpSES1} that $\H^1(K^\Sigma/K, M_{\tilde{\mathbf{1}}})^\vee$ has no $p$-torsion, and so
	\begin{equation*}
		\H^2(K^\Sigma/K, M_{\tilde{\mathbf{1}}}[\frp])\cong 0
	\end{equation*}
	by~\eqref{H1/p=H2}.

	(ii) By~\cref{remark on Selmer groups}\,(ii), the global to local map defining the imprimitive unramified Selmer group $\H^1_{\cF^S_{\nr}}(K,M_{\tilde{\mathbf{1}}})$ is surjective. By~\cref{Lambda invariants}, $\H^1_{\cF^S_{\nr}}(K, M_{\tilde{\mathbf{1}}})$ is divisible. Now with our~\cref{eq:move p torsion}, the proof of the second claim in Corollary 1.2.6 in \textit{op.\ cit.} applies almost \textit{verbatim} to our case. This finishes the proof.
\end{proof}

\subsection{Local cohomology groups of $\rho_f$}\label{sec:local-cohomology-groups-of-rhof} Recall that $f\in S_{k}(\Gamma_0(N))^{\new}$ is a newform of weight $k=2r$.
Let $T \colonequals T_f$ be the self-dual twist of the $\frp$-adic Galois representation of $f$, $V \colonequals V_f \colonequals T_f \otimes_\cO F$, and $W \colonequals W_f \colonequals V_f/T_f$.
Let $M_f$ be the $G_K$-module
\[
    M_f \colonequals T \otimes_{\cO} \Lambda^\vee.
\]
Note that one has an exact sequence
\[
    0 \to W_f \to M_f \stackrel{\cdot T}{\to} M_f \to 0.
\]

Recall that by $\tilde{\mathbf{1}}$ ({resp.\ $\tilde{\omega}$}) we mean a character $\theta$ of $G_K$ with $\theta|_{G_p}=\mathbf{1}$ ({resp.\ $\omega$}) which may or may not be $\mathbf{1}$ ({resp.\ $\omega$}) on $G_K$. Compared to~\cite{CGLS}, we then have two new cases to deal with:
\begin{enumerate}
\item	$0\to\F(\tilde{\mathbf{1}})\to \ol\rho_f\to \F(\tilde{\omega})\to 0$
	
\item	$0\to\F(\tilde{\omega})\to \ol\rho_f\to \F(\tilde{\mathbf{1}})\to 0$
\end{enumerate}
We will see in section~\ref{sec:selmer-groups-of-rhof} that the choice of the ordering of the characters appearing in the decomposition in $\ol\rho_f$ will not matter. For our convenience, assume we are in case~(ii). This assumption is in effect throughout section~\ref{sec:local-cohomology-groups-of-rhof} and section~\ref{sec:selmer-groups-of-rhof}.

\begin{proposition} \label[proposition]{omega first non-split}
	Let $\rho$ be an irreducible $2$-dimensional $p$-adic Galois representation whose residual representation $\ol\rho$ has semisimplification the direct sum of two $1$-dimensional characters. Then there is a lattice such that the associated Galois representation is a non-split extension of those two characters in a given ordering.
\end{proposition}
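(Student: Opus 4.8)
The plan is to invoke Ribet's Lemma in the form that produces a stable lattice with a prescribed quotient/sub behaviour on the residual representation, and then to arrange the ordering by a duality/twist argument. Concretely, I would first recall the setup: $\rho\colon G_K \to \GL_2(F)$ (or over a finite extension $F'/F$ after base change, which costs nothing since we only need the existence of \emph{a} lattice) is irreducible, and $\ol\rho^{ss} = \F(\phi) \oplus \F(\psi)$. I would fix a $G_K$-stable $\cO$-lattice $T_0$ in the representation space and reduce mod $\frp$; its semisimplification is $\F(\phi)\oplus\F(\psi)$, so $\ol\rho$ (for that lattice) is either already a non-split extension of one character by the other, or it is semisimple. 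In the first case we are done up to possibly swapping which of the two orderings we have; in the semisimple case the content of Ribet's Lemma (see the cited \cite{Bellaiche}) is precisely that irreducibility of $\rho$ forces the existence of \emph{another} $G_K$-stable lattice $T$ whose reduction is a non-split extension $0 \to \F(\phi) \to \ol\rho \to \F(\psi) \to 0$.

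Second, I would address the ordering: Ribet's Lemma as usually stated gives a non-split extension with a \emph{specified} sub-character (one runs the argument with the roles of $\phi$ and $\psi$ interchanged to get the other order). So for a given ordering, say with $\F(\phi)$ as the subobject, one applies the lemma directly; for the opposite ordering one applies it to the pair $(\psi,\phi)$ instead. Alternatively, and perhaps more cleanly for the paper's purposes, one can pass to the $\cO$-dual lattice: if $T$ realizes $0 \to \F(\phi)\to \ol\rho \to \F(\psi)\to 0$ non-split, then a suitable twist of $\Hom_\cO(T,\cO)$ (using that $\det\rho = \chi$ is self-dual, so $\rho^\vee(1) \cong \rho$) realizes the extension with $\F(\psi)$ and $\F(\phi)$ exchanged, still non-split because dualizing preserves non-splitness of a length-two filtration. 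Either route gives both orderings.

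Third, to keep the argument self-contained I would sketch the proof of the relevant case of Ribet's Lemma: start from a stable lattice $T_0$ with $\ol{T_0}$ semisimple (if no stable lattice has semisimple reduction, we are already done), pick a vector $v$ reducing into the $\F(\phi)$-isotypic part, and consider the $\cO[G_K]$-submodule $T_1 = \cO[G_K]\cdot v \subseteq T_0$. Because $\rho$ is irreducible, $T_1$ has full rank, hence is again a lattice; by construction $\ol{T_1}$ contains $\F(\phi)$ and surjects onto $\F(\psi)$, and if $\ol{T_1}$ were still semisimple one could iterate, strictly shrinking the lattice inside a bounded chain — a standard finiteness/compactness argument (the lattices between $T_0$ and $\pi^n T_0$ are finite in number) shows the process must terminate at a lattice with non-split reduction in the desired order.

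The main obstacle I anticipate is purely bookkeeping rather than conceptual: making sure the characters $\phi,\psi$ are genuinely distinct as $\F$-characters of $G_K$ (if $\phi = \psi$ the statement needs the more delicate version of Ribet's Lemma and the word ``ordering'' becomes vacuous), and making sure that the coefficient field is large enough that both characters are defined over $\F$ — the latter is harmless since enlarging $F$ does not affect the existence of a stable lattice and one can descend at the end if desired, but it should be stated. In the application in this paper $\phi$ and $\psi = \phi^{-1}\omega$ are distinct because $\omega|_{G_K}\neq \mathbf{1}$ (as noted in the excerpt, $[\Q_p(\mu_p):\Q_p] = p-1 > 1$), so this hypothesis is automatically satisfied and no extra care is needed beyond citing it.
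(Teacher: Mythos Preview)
Your proposal is correct and takes essentially the same approach as the paper: the paper's entire proof is the single line ``See~\cite[Proposition~1.4]{Bellaiche}'', i.e., it simply invokes Ribet's Lemma from the same source you cite. Your sketch of the lattice-shrinking argument and the discussion of the ordering are accurate elaborations of what that reference contains, but the paper itself does not reproduce any of this.
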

\begin{proof}
	See~\cite[Proposition~1.4]{Bellaiche}
\end{proof}

Note that $V_f$ is an irreducible $G_K$-module (this follows from the Tate conjecture for abelian varieties if $k = 2$ and in for newforms of general weight from~\cite[Theorem~2.3]{RibetNebentypus}), but becomes reducible when restricted to $G_{K_v}$ because it is crystalline with distinct Hodge--Tate weights and characteristic polynomial having a unit root~\cite[Theorem~8.3.6]{BrinonConrad}. Hence by isogeny invariance of the BSD Conjecture, we can assume
$\ol\rho_f|_{G_K}$ non-split with no trivial subrepresentation and that there is a short exact sequence \[0 \to \F(\wt\phi) \to \ol\rho_f|_{G_K} \to \F(\wt\psi) \to 0\] with $\phi = \wt\phi|_{G_{K_v}} = \omega$.

\begin{theorem} \label[theorem]{rhof crystalline}
	The $p$-adic Galois representation $\rho_f|_{G_{K_w}}$ is crystalline with Hodge--Tate weights $\{r,1-r\}$.
\end{theorem}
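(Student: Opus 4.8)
The plan is to reduce the claim to the known crystallinity of the $p$-adic Galois representation attached to $f$ \emph{before} the self-dual twist, together with basic facts about the behaviour of Hodge--Tate weights under twisting and the fact that $K_w = \Q_p$. First I would recall that, for $f \in S_k(\Gamma_0(N))^{\new}$ with $p \nmid N$, the $\frp$-adic Galois representation $\rho_{f,\frp}$ attached to $f$ by Deligne is crystalline at $p$ with Hodge--Tate weights $\{0, k-1\} = \{0, 2r-1\}$ (this is the Fontaine--Perrin-Riou/Faltings--Tsuji comparison theorem applied to the \'etale cohomology of the Kuga--Sato variety; one can also cite \cite[Theorem~8.3.6]{BrinonConrad} as the paper already does in the surrounding text, via the ordinarity/unit-root statement). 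Since $K_w = \Q_p$ by the splitting hypothesis on $p$, this input applies verbatim to $\rho_f|_{G_{K_w}}$.

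Next I would invoke the compatibility of crystallinity with Tate twists: if $\rho$ is crystalline over $\Q_p$ then so is $\rho(n)$ for any $n \in \Z$, because the cyclotomic character $\chi$ is crystalline with Hodge--Tate weight $-1$ (equivalently $1$, depending on the normalization), and the tensor product of crystalline representations is crystalline. The self-dual twist in the paper is $T_f = $ (the lattice in) $V_f(1-r)$, so $\rho_f|_{G_{K_w}} = \rho_{f,\frp}|_{G_{K_w}}(1-r)$ is crystalline. For the Hodge--Tate weights, twisting by $\chi^{1-r}$ shifts each weight by $-(1-r) = r-1$ (with the paper's sign convention, to be fixed consistently), so $\{0, 2r-1\}$ becomes $\{r-1, \, (2r-1)+(r-1)\}$ or $\{-(r-1), \, (2r-1)-(r-1)\} = \{1-r, r\}$; the latter is the self-dual pair $\{r, 1-r\}$ claimed in the statement, which is the correct normalization since $\rho_f$ has determinant $\chi$ (a representation with this determinant and these two Hodge--Tate weights indeed satisfies $r + (1-r) = 1$, matching $\det = \chi$).

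The only genuine bookkeeping step — and the place where one must be careful rather than where there is a real obstacle — is pinning down the sign conventions for Hodge--Tate weights and for the direction of the Tate twist, so that the pre-twist weights $\{0, k-1\}$ land exactly on $\{r, 1-r\}$ after twisting by $1-r$; with $k = 2r$ this is the elementary identity $\{0 - (1-r), (k-1) - (1-r)\} = \{r-1, r\}$ versus $\{1-r, r\}$, so one picks the convention under which self-duality $V_f(1-r)^\vee \cong V_f(1-r)(1)$ forces the symmetric pair $\{r, 1-r\}$, i.e. the weights of a weight-$k$ form are $\{1-r, r\}$ after the self-dual normalization. I would write this out as a one-line computation. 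I expect essentially no obstacle: the statement is a standard consequence of Deligne's construction plus the good reduction hypothesis $p \nmid N$, and the proof is a citation (\cite{BrinonConrad}, or Scholl/Faltings for the comparison, plus Saito for the local-global compatibility at $p$ in the semistable case, which here is crystalline since $p \nmid N$) followed by the twist computation.
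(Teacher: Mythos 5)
Your proposal is correct and takes essentially the same route as the paper: the paper's proof is exactly the citation of \cite[Theorem~1.2.4\,(ii)]{Scholl1990} (with \cite{Tsuji1999} removing the condition $p \geq k$) for crystallinity when $p \nmid N$, the pre-twist Hodge--Tate weights $\{0,k-1\}$ from \cite[4.2.0]{Scholl1990}, and the observation that the twist by $1-r$ turns these into $\{1-r,r\}$. One small arithmetic note on your ``versus'' computation: $(k-1)-(1-r) = 3r-2$, not $r$, so that candidate convention is ruled out not merely by the self-duality constraint but because the displayed identity is false for $r>1$; the other convention (adding $1-r$ to each weight) gives $\{1-r,r\}$, as you correctly conclude.
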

\begin{proof}
	This is because $p \nmid N$~\cite[Theorem~1.2.4\,(ii)]{Scholl1990} (note that the condition $p \geq k$ there can be removed by~\cite{Tsuji1999}). It has Hodge--Tate weights $\{r,1-r\}$ since we have Hodge--Tate weights $\{0,k-1\}$ before taking the self-dual twist by $1-r$~\cite[4.2.0]{Scholl1990}.
\end{proof}

We first compute some $\H^0$ cohomology groups.

\begin{proposition}\label[proposition]{splitGkinfty}
	Let $p > 2$ be a prime. Let $K/\Q$ be an imaginary quadratic field with $p = v\ol{v}$ split in $K$ and $K_\infty|K$ be the anticyclotomic $\Z_p$-extension.
	
	Assume the reduction $\ol \rho_f = V_f/T_f[\frp]$ of $V_f$ a $2$-dimensional residual Galois representation of $G_\Q$ which is a non-split extension \[0 \to \F(\phi) \to \ol\rho_f \to \F(\psi) \to 0\] with $\phi = \epsilon\omega^{k-1}$ and $\psi = \epsilon^{-1}$ and $\epsilon$ unramified at $p$. Then $\H^0(G_{K_\infty},\ol \rho_f) = 0$ (i.e., $\ol \rho_f$ remains non-split over $G_{K_{\infty}}$).
\end{proposition}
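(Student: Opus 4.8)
The plan is to deduce $\H^0(G_{K_\infty},\ol\rho_f)=0$ from the stronger fact that $\ol\rho_f$ stays non-split after restriction to $G_{K_\infty}$, together with the vanishing of the $G_{K_\infty}$-invariants of the sub-character $\F(\phi)$.

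First I would record the local input at $v$: since $k=2r$ is even, $k-1$ is odd, whereas $p-1$ is even (as $p>2$), so $p-1\nmid k-1$ and $\omega^{k-1}|_{I_v}\ne\mathbf 1$. Because $\epsilon$ is unramified at $p$ this gives $\phi|_{I_v}=\omega^{k-1}|_{I_v}\ne\mathbf 1$ and $(\phi\psi^{-1})|_{I_v}=\omega^{k-1}|_{I_v}\ne\mathbf 1$, hence $\phi|_{G_K}\ne\mathbf 1$ and $\phi|_{G_K}\ne\psi|_{G_K}$. To transfer this to $G_{K_\infty}$, note that a character of $G_K$ with values in $\F^\times$ has order prime to $p$, so if it is trivial on $G_{K_\infty}$ it factors through $\Gamma\cong\Z_p$ and is therefore trivial; thus triviality on $G_{K_\infty}$ is equivalent to triviality on $G_K$. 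Applied to $\phi$ and to $\phi\psi^{-1}$ this yields $\F(\phi)^{G_{K_\infty}}=0$ and $\F(\phi\psi^{-1})^{G_{K_\infty}}=0$.

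Next I would show the extension class survives restriction. The extension $0\to\F(\phi)\to\ol\rho_f\to\F(\psi)\to0$ of $\GalQ$-modules has class $c\in\H^1(\GalQ,\F(\phi\psi^{-1}))$, and $c\ne0$ since the extension is non-split. As $K_\infty/\Q$ is Galois, $G_{K_\infty}$ is normal in $\GalQ$; since $\F(\phi\psi^{-1})^{G_{K_\infty}}=0$, the inflation--restriction sequence shows that $\res\colon\H^1(\GalQ,\F(\phi\psi^{-1}))\to\H^1(G_{K_\infty},\F(\phi\psi^{-1}))$ is injective, so $\res(c)\ne0$; in other words $\ol\rho_f|_{G_{K_\infty}}$ is a non-split extension of $\F(\psi)|_{G_{K_\infty}}$ by $\F(\phi)|_{G_{K_\infty}}$. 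Finally, taking $G_{K_\infty}$-invariants in the short exact sequence and using $\F(\phi)^{G_{K_\infty}}=0$ gives an injection $\H^0(G_{K_\infty},\ol\rho_f)\hookrightarrow\F(\psi)^{G_{K_\infty}}$, whose target has dimension $\le 1$; were the source nonzero, the resulting $G_{K_\infty}$-fixed line would be a complement of $\F(\phi)|_{G_{K_\infty}}$ and would split $\ol\rho_f|_{G_{K_\infty}}$, contradicting the previous sentence. Hence $\H^0(G_{K_\infty},\ol\rho_f)=0$.

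The only step with genuine content is the first one: the parity observation $\omega^{k-1}|_{I_v}\ne\mathbf 1$ (valid precisely because $k$ is even) and the pro-$p$ argument upgrading nontriviality of $\phi$ and of $\phi\psi^{-1}$ from $G_K$ to $G_{K_\infty}$; once $\F(\phi\psi^{-1})^{G_{K_\infty}}=0$ is in hand, the rest is formal. Alternatively one can dispense with the invariant-theoretic bookkeeping and argue by hand: writing $\ol\rho_f=\bigl(\begin{smallmatrix}\phi&*\\0&\psi\end{smallmatrix}\bigr)$, a $G_{K_\infty}$-fixed vector must have trivial lower component (else $\psi|_{G_{K_\infty}}=\mathbf 1$ and the extension cocycle becomes a coboundary over $G_{K_\infty}$, i.e.\ $\res(c)=0$) and then trivial upper component because $\phi|_{G_{K_\infty}}\ne\mathbf 1$.
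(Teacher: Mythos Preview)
Your proof is correct and follows the same overall strategy as the paper: identify the extension class in $\H^1(-,\F(\phi\psi^{-1}))=\H^1(-,\F(\epsilon^2\omega^{k-1}))$, show it survives restriction to $G_{K_\infty}$ via inflation--restriction once $\F(\phi\psi^{-1})^{G_{K_\infty}}=0$ is known, and then pass from non-splitness to $\H^0=0$ using $\F(\phi)^{G_{K_\infty}}=0$.

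The main difference is in how the key vanishing $\F(\epsilon^2\omega^{k-1})^{G_{K_\infty}}=0$ is obtained. The paper introduces the auxiliary field $L=K(\epsilon^2)$ (of prime-to-$p$ degree, unramified at $p$) and runs a diagram of inflation--restriction sequences over $K$ and $L$, reducing to $\F(\omega^{k-1})^{G_{LK_\infty}}=0$ via $\mu_p(K_\infty)=\{1\}$. Your argument is more direct: any $\F^\times$-valued character has prime-to-$p$ order, so triviality on $G_{K_\infty}$ forces triviality on $G_K$ (since $\Gamma\cong\Z_p$); then the parity observation $\omega^{k-1}|_{I_v}\ne\mathbf 1$ together with $\epsilon$ unramified gives nontriviality on $G_K$ of both $\phi$ and $\phi\psi^{-1}$. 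This bypasses the auxiliary field entirely. You also work directly from $G_\Q$ to $G_{K_\infty}$ (using that $K_\infty/\Q$ is Galois), whereas the paper first descends to $G_K$; and you make explicit the final step from ``non-split over $G_{K_\infty}$'' to $\H^0(G_{K_\infty},\ol\rho_f)=0$, which the paper's parenthetical ``i.e.'' leaves implicit.
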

\begin{proof}
		Since $\ol\rho_f$ is non-split, it defines a nonzero extension class in \[\Ext_{\F[G_K]}^1(\F(\epsilon^{-1}), \F(\epsilon\omega^{k-1})) \iso \Ext_{\F[G_K]}^1(\F, \F(\epsilon^2\omega^{k-1})) \iso \H^1(K, \F(\epsilon^2\omega^{k-1}));\] here we use that twisting by $\epsilon$ induces an isomorphism on $\Ext^1$. Thus it suffices to show that $\res\colon \H^1(K,\F(\epsilon^2\omega^{k-1})) \to \H^1(K_\infty, \F(\epsilon^2\omega^{k-1}))$ is injective.
		
		Let $L = K(\epsilon^2)$. It is an extension of $K$ of degree dividing $(\#\F-1)/2$, hence coprime to $p$. It is also unramified at $p$. Since $K(\omega^{k-1})/K$ is totally ramified at $p$ as $p > 2$ and $K \neq \Q(\sqrt{-3}),\Q(\sqrt{-1})$, one has $K(\omega) \cap L = K$. Note that $K(\omega^{k-1}) \supsetneq K$ because otherwise $2 \mid p-1 = \ord(\omega) \mid (k-1)$, but $k$ is even.
		
		Now consider the commutative diagram of inflation-restriction sequences
		{\small
			\[
			\begin{tikzcd}
				0 \ar[r] & \H^1(K_\infty|K,\F(\epsilon^2\omega^{k-1})^{G_{K_\infty}}) \ar[r,"\inf"] \ar[d,hook] & \H^1(K,\F(\epsilon^2\omega^{k-1})) \ar[r,"\res"] \ar[d,hook] & \H^1(K_\infty,\F(\epsilon^2\omega^{k-1})) \ar[d]\\
				0 \ar[r] & \H^1(LK_\infty|L,\F(\epsilon^2\omega^{k-1})^{G_{LK_\infty}}) \ar[r,"\inf"]  & \H^1(L,\F(\epsilon^2\omega^{k-1})) \ar[r,"\res"] & \H^1(LK_\infty,\F(\epsilon^2\omega^{k-1}))
			\end{tikzcd}
			\]}
		The middle vertical arrow is injective by restriction-corestriction because $p \nmid [L:K]$. The lower left group is $0$ because $\F(\epsilon^2\omega^{k-1})^{G_{LK_\infty}} = \F(\omega^{k-1})^{G_{LK_\infty}} = 0$ since no non-trivial $p$-th root of unity is contained in $K_\infty$ because it is a $\Z_p$-extension of $K$ and $\mu_p(K) = \{1\}$ since $p > 2$ and $K \neq \Q(\sqrt{-3}), \Q(\sqrt{-1})$; note that $\omega^{k-1} \neq \mathbf{1}$. Hence the lower restriction morphism is injective, hence by commutativity and exactness of the diagram the top restriction is injective.
\end{proof}

For simplicity, in the local cohomology groups, we write the characters as $\mathbf{1}$ and $\omega$ instead of $\tilde{\mathbf{1}}$ and $\tilde{\omega}$ since we do not care about their global behavior.

\begin{proposition} \label[proposition]{H0 of V/T}
	Let $w \mid p$.	Assume the Galois representation $\ol\rho_{f}$ does not have a trivial subrepresentation. Assume that for $w \mid p$, $\ol\rho_f|_{G_{K_w}}$ is an extension of the trivial representation $\F$ by $\F(\omega)$. 
	\begin{enumerate}[(i)]
		\item $\H^0(K, V_f/T_f) = 0$.
		
		\item $\H^0(K_w, V_f/T_f)$ is finite and cyclic, i.e., isomorphic to $\cO/\frp^a$ with $a \geq 0$.

		\item $\H^0(K_w, M_f)$ is finite and cyclic, i.e., isomorphic to $\cO/\frp^b$ with $b  \geq 0$.
	\end{enumerate}
\end{proposition}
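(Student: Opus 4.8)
The plan is to handle all three parts by one mechanism. In each case the module in question is a $\frp$-power-torsion $\cO$-submodule $M$ of $W_f\cong(F/\cO)^2$ (for (iii) via Shapiro, see below), hence cofinitely generated over $\cO$, so of the shape $(F/\cO)^a\oplus(\text{finite cyclic summands})$; and its $\frp$-torsion is an $\H^0$ of the residual representation $\ol\rho_f$. The local structure $0\to\F(\omega)\to\ol\rho_f|_{G_{K_w}}\to\F\to0$, together with the fact that $\omega$ has no invariants over the relevant field, forces $\dim_\F M[\frp]\le1$, which already pins $M$ down to one of $0$, $F/\cO$, or $\cO/\frp^m$. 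The only genuine work is to exclude the cofree case $M=F/\cO$, i.e.\ to show that the relevant invariants of $V_f$ vanish.

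\textbf{Part (i).} Here $\H^0(K,W_f)$ is $\frp$-power torsion, so it vanishes as soon as its $\frp$-torsion does; since $W_f[\frp]\cong\ol\rho_f$ as $G_K$-modules, left-exactness of invariants gives $\H^0(K,W_f)[\frp]=(\ol\rho_f)^{G_K}$, which is $0$ by the standing hypothesis that $\ol\rho_f|_{G_K}$ has no trivial subrepresentation. \textbf{Torsion bound for (ii), (iii).} For (ii), $\H^0(K_w,W_f)\subseteq W_f\cong(F/\cO)^2$ has $\frp$-torsion $(\ol\rho_f|_{G_{K_w}})^{G_{K_w}}$; since $K_w=\Q_p$ and $p>2$ we have $\omega|_{G_{K_w}}\ne\mathbf{1}$, so $\F(\omega)^{G_{K_w}}=0$ and the exact sequence embeds these invariants into $\F^{G_{K_w}}=\F$, giving $\dim_\F\le1$, hence $\H^0(K_w,W_f)$ is cyclic. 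For (iii), Shapiro's lemma (as used repeatedly above, cf.\ the proof of~\cref{H1ofMtheta}, Claim~0) identifies $\H^0(K_w,M_f)$ with $\H^0(K_{w,\infty},W_f)$ for $K_{w,\infty}$ the completion of $K_\infty$ at $w\mid p$; as $K_{w,\infty}/K_w$ is a $\Z_p$-extension with $p>2$ it contains no nontrivial $p$-th root of unity, so again $\omega|_{G_{K_{w,\infty}}}\ne\mathbf{1}$ and the identical argument gives $\H^0(K_w,M_f)\cong\cO/\frp^m$, up to finiteness.

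\textbf{Finiteness --- the main obstacle.} It remains to prove $V_f^{G_{K_w}}=0$ (for (ii)) and $V_f^{G_{K_{w,\infty}}}=0$ (for (iii)), which forces $a=0$ above. By~\cref{rhof crystalline} and the ordinarity of $\frp$, $V_f|_{G_{K_w}}$ sits in a non-split-or-split extension $0\to F(\phi_0)\to V_f|_{G_{K_w}}\to F(\psi_0)\to0$ in which $\psi_0$ is unramified with $\psi_0(\Frob_w)$ the unit root of (a twist of) the $p$-th Hecke polynomial of $f$ --- a Weil $p$-number of nonzero weight, hence not a root of unity, in particular $\ne1$ --- while $\phi_0$ is ramified, its restriction to $I_w$ being a nontrivial power of the $p$-adic cyclotomic character (consistently, $\phi_0\equiv\omega\bmod\frp$ in our setting). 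A trivial $G_{K_w}$-subrepresentation $L\subseteq V_f$ would then be either $F(\phi_0)$ (impossible: $\phi_0$ ramified) or would map isomorphically onto $F(\psi_0)$ (impossible: $\psi_0\ne\mathbf{1}$), so $V_f^{G_{K_w}}=0$. For (iii) one must further check that $\phi_0,\psi_0$ stay nontrivial on $G_{K_{w,\infty}}$: $\phi_0$ does because $K_{w,\infty}\cap\Q_p(\mu_{p^\infty})$ is finite over $\Q_p$ by~\cref{local behavior of anticyclotomic Zp extension at p}\,(ii), so $\chi|_{I_{w,\infty}}$ still has infinite image; and $\psi_0$ does because the residue field of $K_{w,\infty}$ is finite over $\F_p$ while $\psi_0(\Frob_w)$ is not a root of unity (and in the higher-weight case, where $\psi_0$ is itself ramified, the argument for $\phi_0$ applies verbatim). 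The same subrepresentation argument then yields $V_f^{G_{K_{w,\infty}}}=0$.

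I expect this last step to carry all the weight of the proof: everything else is formal bookkeeping with $\frp$-torsion and cofinitely generated $\cO$-modules, whereas excluding the cofree case requires genuinely combining the crystalline/ordinary shape of $V_f|_{G_{K_v}}$, the Ramanujan--Petersson bound (so that the unramified Jordan--Hölder constituent is nontrivial), and~\cref{local behavior of anticyclotomic Zp extension at p} (so that neither constituent degenerates to the trivial character over the local anticyclotomic tower).
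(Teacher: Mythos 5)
Your proposal is correct, and its skeleton coincides with the paper's: cyclicity comes from the residual bound $\dim_\F(\ol\rho_f|_{G_{K_w}})^{G_{K_w}}\le 1$ forced by $\F(\omega)^{G_{K_w}}=0$ (and likewise over $K_{w,\infty}$), while finiteness comes from showing the invariants of $V_f$ vanish, using the crystalline/ordinary decomposition $\{\epsilon\chi^r,\epsilon^{-1}\chi^{1-r}\}$ with $\epsilon$ unramified and the fact that $K_{w,\infty}\cap K_{w,\cyc}$ is finite over $K_w$ (\cref{local behavior of anticyclotomic Zp extension at p}\,(ii)). The one genuine divergence is the weight-two case $r=1$, where the quotient character is unramified: the paper rules out $\H^0(K_{\infty,w},F/\cO(\psi))=F/\cO$ by observing that $K_{\infty,w}\cap K_w^{\nr}$ is finite over $K_w$ and invoking Mattuck's theorem on $A[p^\infty](L)$ for the associated abelian variety, whereas you argue purely Galois-theoretically: the image of $G_{K_{w,\infty}}$ in $\Gal(K_w^{\nr}/K_w)$ is generated by a finite power of $\Frob_w$, and $\psi_0(\Frob_w)$ is the unit root of $X^2-a_pX+p$, a Weil number of absolute value $p^{1/2}$ and hence not a root of unity, so $\psi_0$ stays nontrivial on $G_{K_{w,\infty}}$. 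Your version buys a uniform treatment that avoids geometric input about abelian varieties (and would transfer to settings where no such geometric avatar is available), at the cost of invoking Ramanujan--Petersson for the unit root; the paper's version is softer analytically but tied to the $r=1$ geometric picture. Two cosmetic points: your opening sentence asserts $\psi_0$ unramified before restricting to $r=1$ (you do fix this later), and the remark ``$\phi_0\equiv\omega\bmod\frp$'' is not load-bearing and should not be leaned on for general $r$; neither affects the argument.
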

\begin{proof}
	\begin{enumerate}[(i)]
		\item Follows by assumption on $T_f$ by~\cref{omega first non-split}.
		
		\item Note that we have $V_f/T_f=M_f[T]$, hence $\H^0(?,V_f/T_f) \subseteq \H^0(?_\infty,V_f/T_f)[T]$ for $? \in \{K,K_w\}$, so (ii) follows from (iii).

		\item Note that $\rho_f|_{G_{K_w}}$ is crystalline with Hodge--Tate weights $\{r,1-r\}$ by~\cref{rhof crystalline}.
		By~\cite[Theorem~8.3.6]{BrinonConrad}, $\rho_f|_{G_w}$ is reducible
		\[
			0 \to F(\sigma) \to \rho_f|_{G_w} \to F(\tau) \to 0
		\]
		with characters $\sigma,\tau$ having Hodge--Tate weights $\{r,1-r\}$. By~\cite[Proposition 8.3.4]{BrinonConrad}, $\{\sigma,\tau\} = \{\epsilon\chi^r, \epsilon^{-1}\chi^{1-r}\}$ with $\epsilon$ an unramified character of $G_{K_w}$.	But for $m \neq 0$, $\epsilon\chi^m$ is non-trivial on $G_{K_{\infty,w}}$ by~\cref{local behavior of anticyclotomic Zp extension at p}\,(ii). Hence $\H^0(K_{\infty,w}, F(\epsilon\chi^m)) = 0$, so $\H^0(K_{\infty,w}, F/\cO(\epsilon\chi^m)) \subseteq F/\cO$ is finite and cyclic whenever $r>1$. (Note that $\H^0(K_{\infty,w}, F(\epsilon\chi^m)) = 0$ implies $\H^0(K_{\infty,w}, F/\cO(\epsilon\chi^m))$ is finite by~\cite[Lemma~2.7]{KobayashiOta}). Hence in order to show $\H^0(K_{\infty,w},V_f/T_f)$ is cyclic, it remains to show that for one of $? = \phi,\psi$, the invariants $\H^0(K_{\infty,w}, F/\cO(?)) = 0$.
		
		Note that at least one $\H^0(K_w, \F(\phi)), \H^0(K_w,\F(\psi))$ is $0$ (since one of them is $\omega$ when restricted to $G_w$). Since $\ord(\omega|_{G_{K_\infty}}) = p-1$ (since $p-1$ is coprime to $p$), also at least one $\H^0(K_{\infty,w}, F/\cO(\phi)), \H^0(K_{\infty,w}, F/\cO(\psi))$ is $0$. Hence $\H^0(K_{\infty,w}, V_f/T_f) \subseteq F/\cO$ and the finiteness result in the above paragraph shows that we cannot have equality.
		
	Now if $r=1$, $\rho_f$ is the $\frp$-adic Galois representation attached to an abelian variety $A/K_w$ with good reduction. Since $A/K_w$ has good reduction, its N\'eron model $\sA/\cO_{K_w}$ is an abelian scheme. It satisfies the N\'eron mapping property $\sA(\cO_{K_w}) \isoto A(K_w)$. Since $\sA$ is an abelian scheme, $\sA[p^n]$ is a finite flat group scheme over $\cO_{K_w}$ for all $n \geq 1$. Now for any finite flat group scheme $G$ over $\cO_{K_{n,w}}$, one has an exact sequence
		\[
			0 = G^0(\cO_{K_{n,w}}) \to G(\cO_{K_{n,w}}) \to G^{\et}(\cO_{K_{n,w}})
		\]
		coming from the connected-\'etale sequence. Since $\cO_{K_{n,w}}$ is a Henselian local ring, $G^{\et}(\cO_{K_{n,w}}) \iso G^{\et}(\bF_n)$ for the finite residue field $\F_n$ of $\cO_{K_{n,w}}$. Note that since $K_{\infty,w}/K_w$ is infinitely ramified, the $G^{\et}(\cO_{K_{n,w}})$ will eventually stabilize for $n \to \infty$ if $G = \sA[p^n]$ since an abelian variety has only finitely many points over a finite field. Hence $A[p^\infty](K_{\infty,w}) = \H^0(K_w, M_f)$ is finite.
		
		For future use, we mention that the same argument gives the finiteness of $\H^0(K_w, \Fil^-(M_f))$ if one replaces $G$ by $G^{\et}$ throughout.  \qedhere			
	\end{enumerate}
\end{proof}

\begin{lemma}\label[lemma]{corankH1Mf}
Let $w \mid p$ in $K$. Then $\H^1(K_w, M_f)$ has $\Lambda$-corank $2$.
\end{lemma}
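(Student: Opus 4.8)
The plan is to compute the $\Lambda$-corank of $\H^1(K_w, M_f)$ via local Euler characteristics and Shapiro's lemma, exactly as in the character case but keeping track of the extra rank coming from the $2$-dimensionality of $\rho_f$. First I would invoke Shapiro's lemma to rewrite $\H^i(K_w, M_f) \iso \H^i(K_{w,\infty}, W_f)$ where $W_f = V_f/T_f$, using that $K_{w,\infty}/K_w$ is the infinitely ramified $\Z_p$-extension from~\cref{local behavior of anticyclotomic Zp extension at p}. The $0$-th cohomology $\H^0(K_w, M_f) = \H^0(K_{w,\infty}, W_f)$ is finite by~\cref{H0 of V/T}\,(iii), hence has $\Lambda$-corank $0$. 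The $2$-nd cohomology vanishes: by Shapiro and the computation in Claim~0 of~\cref{H1ofMtheta} (which only used that $K_{w,\infty}/K_w$ is an infinitely ramified $\Z_p$-extension, not anything special about the coefficients being a character), $\H^2(K_w, M_f) = \H^2(K_{w,\infty}, W_f)$ is the $0$-th Iwasawa cohomology $\H^0_{\Iw}(K_{w,\infty}/K_w, T_f^*(1))$, which vanishes by~\cite[Remark 5.11, Lemma 5.12]{SchneiderVenjakob} since $T_f^*(1) \iso T_f$ is a free $\cO$-module on which $G_{K_{w,\infty}}$ acts without invariants after extending scalars to $F$ (no eigenvalue is a root of unity, as the Hodge--Tate weights from~\cref{rhof crystalline} are nonzero, or else one applies Mattuck as in~\cref{H0 of V/T}).

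With $\H^0$ of corank $0$ and $\H^2$ of corank $0$, the corank of $\H^1(K_w, M_f)$ is read off from the local Euler--Poincaré characteristic formula. I would apply it to the finite-level modules $W_f[\frp^n] = M_f[\frp^n, T\text{-stuff}]$ — more precisely, apply the formula over each finite layer $K_{w,n}$ to $T_f/\frp^n$ and pass to the limit, or equivalently use the $\Lambda$-adic Euler characteristic formula for $M_f$ over $K_w$. The local Euler characteristic over $\Q_p$ of a Galois module of $\cO$-corank $2$ contributes $-2 [K_w : \Q_p] = -2$ to $\operatorname{corank}\H^0 - \operatorname{corank}\H^1 + \operatorname{corank}\H^2$; in the $\Lambda$-adic setting this becomes $\operatorname{corank}_\Lambda \H^0 - \operatorname{corank}_\Lambda \H^1 + \operatorname{corank}_\Lambda \H^2 = -\operatorname{rk}_\cO T_f \cdot [K_w:\Q_p] = -2$. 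Plugging in $\operatorname{corank}_\Lambda \H^0 = \operatorname{corank}_\Lambda \H^2 = 0$ gives $\operatorname{corank}_\Lambda \H^1(K_w, M_f) = 2$.

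An alternative, perhaps cleaner route that avoids invoking a $\Lambda$-adic Euler characteristic formula directly: use the exact sequence $0 \to W_f \to M_f \xrightarrow{\cdot T} M_f \to 0$ to get $\H^1(K_w,M_f)/T \hookrightarrow \H^2(K_w, W_f)$ and $\H^0(K_w,M_f)/T$ mapping to $\H^1(K_w,W_f)$, then combine with the classical local Euler characteristic formula $\sum_i (-1)^i \dim_\F \H^i(K_w, W_f[\frp]) = -[K_w:\Q_p]\dim_\F W_f[\frp] = -2$ together with $\H^0(K_w, W_f[\frp])$ and $\H^2(K_w, W_f[\frp]) \iso \H^0(K_w, W_f[\frp]^*(1))^\vee$ both being finite-dimensional (controlled by~\cref{H0 of V/T} and Tate duality, using that $\omega$ and $\omega^{-1}\chi^{\text{power}}$ have no $G_{K_w}$-invariants), to conclude $\dim_\F \H^1(K_w, M_f[\frp])$ modulo the bounded pieces is $2$, and then lift to the $\Lambda$-corank via Nakayama.

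The main obstacle I anticipate is the bookkeeping in passing between the finite-level Euler characteristic formula and the $\Lambda$-adic statement, specifically making sure the ``error terms'' $\H^0$ and $\H^2$ really are of $\Lambda$-corank zero and do not secretly contribute. The $\H^2$ vanishing is the delicate point, and it rests on the Iwasawa cohomology vanishing result already cited in~\cref{H1ofMtheta}, whose hypothesis (no $G_{K_{w,\infty}}$-invariants in the Tate dual) is guaranteed here precisely because $\rho_f|_{G_{K_w}}$ is crystalline with nonzero Hodge--Tate weights (\cref{rhof crystalline}), so that neither Jordan--Hölder constituent $F(\epsilon\chi^r)$, $F(\epsilon^{-1}\chi^{1-r})$ — nor their cyclotomic twists — can be trivial on the infinitely ramified extension $K_{w,\infty}$; this is the same mechanism used in the proof of~\cref{H0 of V/T}\,(iii).
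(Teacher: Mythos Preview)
Your proposal is correct, and both routes you outline will work. Your primary approach via a $\Lambda$-adic local Euler--Poincar\'e characteristic formula is more conceptual and shorter than what the paper does; once $\H^0(K_w,M_f)$ is finite (your~\cref{H0 of V/T}\,(iii)) and $\H^2(K_w,M_f)=0$ (the Claim~0 argument, which the paper also invokes verbatim), the corank formula delivers the answer in one line. The paper instead takes essentially your ``alternative route'': it sets $X=\H^1(K_w,M_f)^\vee$, uses the sequence $0\to W_f\to M_f\xrightarrow{T}M_f\to 0$ to get $X[T]=(\H^1(K_w,M_f)/T)^\vee\hookrightarrow\H^2(K_w,W_f)^\vee=0$ and to relate $X/TX$ to $\H^1(K_w,W_f)$, then computes $\rk_\cO X/TX=2$ by splitting into two cases according to whether $\dim_\F\H^0(K_w,\ol\rho_f)$ is $0$ or $1$ and applying the classical local Euler characteristic formula to $\ol\rho_f$, and finally concludes via~\cref{Iwasawa module free}. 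What the paper's approach buys is that it stays entirely within the toolkit already developed in~\cref{H1ofMtheta} (no appeal to a $\Lambda$-adic formula as a black box), and the case analysis yields slightly finer information about $X/TX$ as an $\cO$-module; what your primary approach buys is brevity and a cleaner logical dependency graph. One small caution on your $\H^2$ justification: in weight $k=2$ one Hodge--Tate weight is $0$, so the ``nonzero Hodge--Tate weights'' clause does not cover that case, but you already flag the Mattuck fallback from~\cref{H0 of V/T}, and in any event the paper itself handles this step with a bare reference to Claim~0.
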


\begin{proof}
Let $X \colonequals \H^1(K_w,M_f)^\vee$. Consider the exact sequences
\begin{equation}\label{Mf exact}
	0\to V_f/T_f\to M_f\xrightarrow{\text{$\cdot T$ }} M_f\to 0
\end{equation}
and
\begin{equation}\label{V/T exact}
	0\to \ol\rho_f\to V_f/T_f\xrightarrow{\text{$\cdot \pi$ }} V_f/T_f \to 0.
\end{equation}
We aim to study the $\cO$-rank of $X/T$ and mimic our argument in~\cref{H1ofMtheta}\,(ii). We will show that $X[T]=0$ and the $\cO$-rank is $2$, then the $\Lambda$-rank of $X$ is $2$ by the topological Nakayama lemma as in~\cref{Iwasawa module free}. 
	
We would like to study the following short exact sequence coming from~\eqref{Mf exact} \begin{equation}\label{V/TtoMf}
	0 \to \H^0(K_w,M_f)/T	\to \H^1(K_w,V_f/T_f) \to X^\vee[T] \to 0,
\end{equation}  
where the first term is finite by~\cref{H0 of V/T}\,(i).
	
By~\eqref{V/T exact}, there is an exact sequence 
\begin{equation}\label{H1ofV/T/p}
	0\to \H^1(K_w,V_f/T_f)/\frp \to \H^2(K_w,\ol\rho_f)\to \H^2(K_w, V_f/T_f)[\frp]\to 0.
\end{equation}
We claim that the last term in the above sequence is $0$. Indeed, $\H^2(K_w,V_f/T_f)=0$ because it's dual to $\H^0(K_w,T_f)$ which must vanish because $\H^0(K_w,V_f/T_f)$ is finite by~\cref{H0 of V/T}\,(ii).
Note that there is a sequence\[
0\to \H^0(K_w,\F(\omega))=0 \to \H^0(K_w,\ol\rho_f) \to \H^0(K_w,\F(\mathbf{1}))=\F,
\]
so $\H^0(K_w,\ol\rho_f)$ has $\F$-dimension $0$ or $1$. From local duality and self-duality of $\ol\rho_f$, $\H^2(K_w,\ol\rho_f)=\H^0(K_w,\ol\rho_f)^\vee$ has the same dimension. Now we consider two cases.

\textbf{Case I:}
$\dim_\F(\H^2(K_w,\ol\rho_f))=\dim_\F(\H^0(K_w,\ol\rho_f))=0$. 

In this case, from the local Euler characteristic formula~\cite[Theorem~7.3.1]{NSW2.3}, $\H^1(K_w,\ol\rho_f)$ has $\F$-dimension $2$. By~\eqref{V/T exact}, there is an identification
\begin{equation}\label{H1ofV/T[p]}
	\H^1(K_w,V_f/T_f)[\frp] \iso \H^1(K_w,\ol\rho_f)/(\H^0(K_w,V_f/T_f)/\frp).
\end{equation} Since in this case $\H^0(K_w,V_f/T_f)=0$,  from~\eqref{H1ofV/T[p]} we see that $\H^1(K_w,V_f/T_f)[\frp]$ has $\F$-dimension $2$. Therefore $\H^1(K_w,V_f/T_f)/\frp=0$, so $\H^1(K_w,V_f/T_f)$ is $\cO$-cofree of rank $2$. Hence $X/T$ is also $\cO$-free of rank $2$.

\textbf{Case II:} $\dim_\F(\H^2(K_w,\ol\rho_f))=\dim_\F(\H^0(K_w,\ol\rho_f))=1$. 

From~\eqref{H1ofV/T/p}, we see that $\H^1(K_w,V_f/T_f)/\frp$ also has $\F$-dimension $1$. 
Again by the local Euler characteristic formula, $\H^1(K_w,\ol\rho_f)$ has $\F$-dimension $4$. Since in this case $\H^0(K_w,V_f/T_f)$ is non-trivial, we see from~\cref{H0 of V/T}\,(ii) that $\H^0(K_w,V_f/T_f)/\frp=\F$. So by~\eqref{H1ofV/T[p]}, $\H^1(K_w,V_f/T_f)[\frp]$ has $\F$-dimension $3$. Hence $\rk_\cO(X/T)=\corank_\cO(X^\vee[T])=\corank_\cO\H^1(K_w,V_f/T_f)=2$.

Now we show that $X[T]=0$. Indeed, the same arguments in~\cref{H1ofMtheta} Claim~0 shows $\H^2(K_w,M_f)=0$. We therefore have an identification $X^\vee/T=\H^1(K_w,M_f)/T\isom\H^2(K_w,V_f/T_f)=0$. Now by~\cref{Iwasawa module free} ((i) for Case I and (ii) for Case II), $X$ has $\Lambda$-rank $2$.
	\end{proof}

Similarly as in~\cref{H1ofMtheta}, we define the restriction map for $M_f$ by\[
\res_{M_f}\colon \H^1(K_w,M_f) \to \H^1(I_w,M_f)^{G_w/I_w}.\] Define $\res_{M_f[\frp]}$ by replacing $M_f$ with $M_f[\frp]$ in the above definition.

\begin{lemma}\label[lemma]{kerresf}
	$\ker(\res_{M_f})$ is finite and cyclic. Moreover, when $\H^0(K_w,M_f)$ is non-trivial, $\ker(\res_{M_f[\frp]})=\F$ and when $\H^0(K_w,M_f)=0$, $\ker(\res_{M_f[\frp]})=0$. 
\end{lemma}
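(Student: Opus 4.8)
The claim has two parts: first, $\ker(\res_{M_f})$ is finite and cyclic; second, its mod-$\frp$ counterpart is determined by whether $\H^0(K_w, M_f)$ vanishes. The starting point is the same as in \cref{H1ofMtheta}\,(i): since $\cd(K_w) = 1$ and $G_w/I_w \iso \hat{\Z}\cdot\Frob_w$, the inflation–restriction sequence (via \cite[Lemma~I.3.2\,(i)]{RubinEulerSystems}) identifies $\ker(\res_{M_f}) \iso M_f^{I_w}/(\Frob_w - 1)M_f^{I_w}$, and dualizing the four-term exact sequence $0 \to M_f^{G_w} \to M_f^{I_w} \xrightarrow{\Frob_w - 1} M_f^{I_w} \to \ker(\res_{M_f}) \to 0$ gives, as in \eqref{eq:rw},
\[
    0 \to \ker(\res_{M_f})^\vee \to (M_f^\vee)_{I_w} \to (M_f^\vee)_{I_w} \to M_f^\vee/T \to 0,
\]
where $M_f^\vee/T \iso \H^0(K_w, M_f)^\vee$ is finite and cyclic by \cref{H0 of V/T}\,(iii). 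So it suffices to understand the middle map $(M_f^\vee)_{I_w} \to (M_f^\vee)_{I_w}$ and its cokernel. The first step is to pin down $M_f^{I_w}$ using the local structure of $\rho_f$: by \cref{rhof crystalline} and \cite[Theorem~8.3.6, Proposition~8.3.4]{BrinonConrad}, $\rho_f|_{G_w}$ sits in $0 \to F(\phi) \to \rho_f|_{G_w} \to F(\psi) \to 0$ with $\phi = \epsilon\chi^r$, $\psi = \epsilon^{-1}\chi^{1-r}$ and $\epsilon$ unramified; in case~(ii) we have $\phi \equiv \omega \pmod{\frp}$ and $\psi \equiv \mathbf{1} \pmod{\frp}$, so of the two characters, $\psi$ is the one that can be trivial on $I_w$ (exactly when $r = 1$, where $\psi$ is unramified) while $\phi$ is ramified (as $\chi^r$ has infinite ramification). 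This lets me compute $M_f^{I_w}$ as an extension of submodule/quotient built from $\Lambda^\vee(\phi)^{I_w}$ and $\Lambda^\vee(\psi)^{I_w}$, the latter being all of $\Lambda^\vee$-coinduced when $\psi$ is unramified.

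The second step is to run the cokernel computation. Because $M_f^\vee$ is a finitely generated $\Lambda$-module and $(M_f^\vee)_{I_w}$ is finitely generated over $\cO[\Gamma/I_w]$ (a finite ring, since $I_w \subseteq G_w$ has finite index by \cref{local behavior of anticyclotomic Zp extension at p}), the map $\Frob_w - 1$ on $(M_f^\vee)_{I_w}$ is a self-map of a finitely generated module over a Noetherian ring whose cokernel is $\H^0(K_w, M_f)^\vee$; its kernel $\ker(\res_{M_f})^\vee$ then has the same $\Lambda$-rank, which is $0$ (this is the surjectivity/rank bookkeeping already visible in \cref{corankH1Mf} and \cref{H1ofMtheta}). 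Finiteness follows once I observe $\ker(\res_{M_f})$ is finite mod $(\pi, T)$ and $\Lambda$-cofinitely generated; cyclicity follows by comparing with $\H^0(K_w, M_f)$ directly — concretely, $\ker(\res_{M_f})$ is a subquotient of a coinduced copy of $\cO(\psi)$, matching the mechanism in \cref{H1ofMtheta}\,(i) where $\ker(r_w)^\vee$ came out of rank one over $\cO$ for the trivial character; here the twist by the ramified $\phi$-part kills the cofree part and only a cyclic torsion piece survives. I would make this precise by showing $\ker(\res_{M_f})^\vee \iso \ker(\res_{M_\psi})^\vee$ up to finite ambiguity — or more cleanly, by noting the only contribution to $\ker(\res_{M_f})$ comes from the sub-line $\F(\widetilde{\omega})$ being ramified and the quotient line $\F(\widetilde{\mathbf 1})$ contributing exactly the cyclic group measured by $\H^0$.

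For the mod-$\frp$ statement, I replace $M_f$ by $M_f[\frp] = \ol\rho_f \otimes \Lambda^\vee[\frp]$ throughout and repeat: the analogue of \eqref{eq:rw} reads $0 \to \ker(\res_{M_f[\frp]})^\vee \to (M_f[\frp]^\vee)_{I_w} \to (M_f[\frp]^\vee)_{I_w} \to \H^0(K_w, M_f[\frp])^\vee \to 0$. Now $\ol\rho_f|_{G_w}$ is an extension of $\F$ by $\F(\omega)$ with $\omega$ ramified; its $I_w$-invariants are $\F(\omega)^{I_w} = 0$ if the extension is locally non-split — but we do not know that, so $\ol\rho_f^{I_w}$ is $0$ or $\F$. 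Tracking through: $\ker(\res_{M_f[\frp]})$ is nonzero exactly when $\ol\rho_f$ has $I_w$-invariants, and since $\F(\omega)$ contributes nothing, an $I_w$-invariant vector must map isomorphically to the quotient $\F$, i.e. $\ol\rho_f|_{G_w}$ has a subquotient-splitting over $I_w$; this is equivalent to $\H^0(K_w, M_f[\frp]) \neq 0$, which by the exact sequence $\H^0(K_w, M_f[\frp]) \surj \H^0(K_w, M_f)[\frp]$ and \cref{H0 of V/T}\,(iii) is equivalent to $\H^0(K_w, M_f) \neq 0$. In that case $\ker(\res_{M_f[\frp]})$ is one-dimensional over $\F$, i.e. $\F$; otherwise it is $0$. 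The main obstacle is the bookkeeping in Step~2 — correctly separating the ramified $\phi$-contribution (which must be shown to leave no cofree or rank-contributing residue) from the possibly-unramified $\psi$-contribution — and making sure finiteness is genuinely forced rather than assumed; this is exactly the place where the argument differs from \cite{CGLS}, since there $\ker(\res)$ was handled under the simplifying hypothesis $\phi|_{G_p} \neq \mathbf{1}, \omega$.
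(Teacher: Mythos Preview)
Your setup via the four-term dual sequence is exactly the paper's starting point, and your treatment of the mod-$\frp$ statement is essentially correct (the paper phrases it as: $(M_f[\frp]^\vee)_{I_w} = \H^0(I_{w,\infty},\ol\rho_f)^\vee$ is finite, so the kernel and cokernel of the endomorphism have equal size, and the cokernel is $\H^0(K_w,M_f[\frp])^\vee = \H^0(K_w,M_f)[\frp]^\vee \in \{0,\F\}$). However, there are two genuine gaps.

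\textbf{Finiteness.} Showing $\ker(\res_{M_f})^\vee$ has $\Lambda$-rank $0$ is not enough: a $\Lambda$-torsion module can be infinite (e.g.\ $\Lambda/\pi$). Also, $\cO[\Gamma/I_w]$ is not a finite ring---it is free of finite rank over $\cO$. The paper's argument is cleaner: since the image of $I_w$ in $\Gamma$ is open (\cref{local behavior of anticyclotomic Zp extension at p}\,(iii)), $(M_f^\vee)_{I_w}$ is $\Lambda$-\emph{torsion}; then in the four-term exact sequence of torsion $\Lambda$-modules, multiplicativity of characteristic ideals gives $\Char(\ker(\res_{M_f})^\vee) = \Char(\H^0(K_w,M_f)^\vee) = \Lambda$ (the right-hand module being finite), so $\ker(\res_{M_f})^\vee$ is pseudo-null, i.e.\ finite.

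\textbf{Cyclicity.} Your proposed route---identifying $\ker(\res_{M_f})^\vee$ with $\ker(\res_{M_\psi})^\vee$ ``up to finite ambiguity'' via the character filtration---cannot work as stated: finite ambiguity does not preserve cyclicity, and the filtration on $M_f$ need not split over $I_w$, so separating the $\phi$- and $\psi$-contributions cleanly is exactly the delicate point. The paper instead reverses the order of your argument: it first proves the mod-$\frp$ statement (as you do), and \emph{then} deduces cyclicity by showing $\ker(\res_{M_f})[\frp] \subseteq \F$. This is done via a snake-lemma diagram comparing $\ker(\res_{M_f[\frp]})$ with $\ker(\res_{M_f})[\frp]$ through the short exact sequence $0 \to M_f[\frp] \to M_f \to M_f \to 0$, with a case split on whether $\H^0(I_w,M_f)/\frp$ is $0$ or $\F$. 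Since you already have $\ker(\res_{M_f[\frp]}) \subseteq \F$ and finiteness, this diagram chase is the missing step.
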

\begin{proof}
	Similar to~\eqref{eq:rw}, there are two exact sequences for $M_f$ and $M_f[\frp]$ respectively:
	\begin{equation} \label{eq:rf}
		0 \to \ker(\res_{M_f})^\vee \to (M_f^\vee)_{I_w} \to (M_f^\vee)_{I_w} \to M_f^\vee/T \to 0.
	\end{equation}
and
\begin{equation} \label{eq:rfp}
	0 \to \ker(\res_{M_f[\frp]})^\vee \to (M_f[\frp]^\vee)_{I_w} \to (M_f[\frp]^\vee)_{I_w} \to M_f[\frp]^\vee/T \to 0.
\end{equation}
Note that the last term in~\eqref{eq:rf} is nothing but the dual of $\H^0(K_w,M_f)$, which is finite by~\cref{H0 of V/T}\,(iii). Therefore $\H^0(K_w,M_f)^\vee=\H^0(K_w,M_f)=\cO/\frp^n$ for some $n \geq 0$. 

Note that $(M_f^\vee)_{I_w}$ is $\Lambda$-torsion because $I_w$ is non-trivial by~\cref{local behavior of anticyclotomic Zp extension at p}(iii), so $\ker(\res_{M_f})$ is also finite because characteristic ideals are multiplicative in exact sequences of torsion $\Lambda$-modules. Moreover, if $\H^0(K_w,M_f)=0$, by Noetherian property of $\Lambda$ the $\Lambda$-module surjection \[(M_f^\vee)_{I_w} \to (M_f^\vee)_{I_w}\] must be an isomorphism so $\ker(\res_{M_f})=0$. Before proving it's always cyclic, we first prove the second half of this proposition.
  
First assume $\H^0(K_w,M_f)$ is non-trivial and hence cyclic, then $M_f[\frp]^\vee/T=\H^0(K_w,M_f[\frp])^\vee=(\H^0(K_w,M_f)[\frp])^\vee=\F$. Since $(M_f[\frp]^\vee)_{I_w}=(\H^0(I_w,M_f[\frp]))^\vee=(\H^0(I_{w,\infty},\ol \rho_f))^\vee$ is finite (the second equality follows from Shapiro's lemma), it follows from sequence~\eqref{eq:rfp} that $\ker(\res_{M_f[\frp]})=\F$.

Now if $\H^0(K_w,M_f)=0$, then $M_f[\frp]^\vee/T=0$ so again from sequence~\eqref{eq:rfp} we see that $\ker(\res_{M_f[\frp]})=0$.

Finally, we show $\ker(\res_{M_f})[\frp]\subseteq \F$, from which it follows immediate that $\ker(\res_{M_f})$ is cyclic because we have showed it's finite.

Consider the diagram{\small
	\begin{equation*}\begin{tikzcd}
			0\ar[r] &\ker(\res)\ar[r]\ar[d]& \ker(\res_{M_f[\frp]}) \ar[r]\ar[d]& \ker(\res_{M_f})[\frp]\ar[d] & \\
			0 \ar[r]& \H^0(K_w, M_f)/\frp \ar[r]\ar[d,"\res"]& \H^1(K_w,M_f[\frp]) \ar[r]\ar[d]& \H^1(K_w, M_f)[\frp] \ar[r]\ar[d]& 0 \\
			0 \ar[r]& \H^0(I_w,M_f)/\frp \ar[r]\ar[d]& \H^1(I_w,M_f[\frp]) \ar[r]& \H^1(I_w,M_f)[\frp]\ar[r]& 0 \\
			&\coker(\res)&&&
\end{tikzcd}\end{equation*}}

When $\H^0(K_w,M_f)=0$, we have seen that $\ker(\res_{M_f})=0$.

Now assume $\H^0(K_w,M_f)$ is non-trivial so $\H^0(K_w,M_f)/\frp=\F$. In this case also $\ker(\res_{M_f[\frp]})=\F$. Since again $\H^0(I_w,M_f)[\frp]=\H^0(I_w,M_f[\frp])=\H^0(I_{w,\infty},\ol\rho_f)$, we have an exact sequence\[ 0\to \H^0(I_{w,\infty},\F(\omega))=0  \to\H^0(I_w,M_f[\frp])\to \H^0(I_{w,\infty},\F(\mathbf{1}))=\F.
\]
Here $\H^0(I_{w,\infty},\F(\omega))=0$ because $\omega$ is ramified and $I_w/I_{w,\infty}$ is a pro-$p$ group while the trivializing extension of $\omega$ has order dividing $p-1$. Hence $\H^0(I_w,M_f)[\frp]\subseteq\F$. It follows that $\H^0(I_w, M_f)\subseteq F/\cO$ and $\H^0(I_w,M_f)/\frp\subseteq \F$.  

Now if $\H^0(I_w,M_f)/\frp=0$, from the above diagram we see that $\coker(\res)=0$ and the snake lemma yields a surjection\[
\ker(\res_{M_f[\frp]})=\F\to\ker(\res_{M_f})[\frp],\] so $\ker(\res_{M_f})\subseteq \F$. 

If $\H^0(I_w,M_f)/\frp=\F$, then $\ker(\res)=\coker(\res)\subseteq\F$. If both are $0$, $\ker(\res_{M_f})[\frp]=\ker(\res_{M_f[\frp]})=\F$. If both are $\F$, then the snake lemma yields an injection\[
\ker(\res_{M_f})[\frp]\to\coker(\res)=\F,\] so again $\ker(\res_{M_f})[\frp]\subseteq \F$.
\end{proof}

\subsection{Selmer groups of $\rho_f$}\label{sec:selmer-groups-of-rhof}

Similarly as for that of $M_\theta$, we define an \emph{unramified Selmer group} for $f$ by
{\footnotesize\[
   \H^1_{\cF_{\nr}}(K,M_f)= \ker\Big\{\H^1(K^\Sigma/K, M_f) \xrightarrow{\res_{\nr,f}} \prod_{w\in\Sigma, w \nmid p}\H^1(I_w,M_f)^{G_w/I_w} \times \H^1(I_{\ol v}, M_f)^{G_{\ol v}/I_{\ol v}}\Big\},
\]}
and an \emph{$S$-imprimitive unramified Selmer group}, where $S=\Sigma
-\{v,\ol v,\infty\}$, by
\[
   \H^1_{\cF_{\nr}^S}(K,M_f)= \ker\Big\{\H^1(K^\Sigma/K, M_f) \xrightarrow{\res^S_{\nr,f}} \H^1(I_{\ol v}, M_f)^{G_{\ol v}/I_{\ol v}}\Big\}.
\]

The residual Selmer groups $\H^1_{\cF_{\nr}}(K,M_f[\frp])$ and 
$\H^1_{\cF_{\nr}^S}(K,M_f[\frp])$ are defined in the same manner. 

One can similarly define the Greenberg Selmer groups for $f$ by imposing triviality at $\ol v$. 

By Ribet's Lemma~\cite{Bellaiche}, from the decomposition $\ol\rho_f^\ss \isom \bF(\phi) \oplus \bF(\psi)$ one has an exact sequence
\[
0\to \bF(\phi) \to \ol\rho_f \to \bF(\psi) \to 0,
\]
which further induces and exact sequence of $\Gal(K^\Sigma/K)$-modules
\[
    0 \to M_\phi[\frp] \to M_f[\frp] \to M_\psi[\frp] \to 0,
\]
where $\phi \cdot \psi=\omega$. The cases left out in~\cite{CGLS} are when one of $\phi|_{G_p},\psi|_{G_p} $ is trivial. Hence we assume this is the case.

Note that by Perrin-Riou's formula in~\cite[Proposition~2.9]{KobayashiOta}, the order of $\phi$ and $\psi$ does not matter: Conjecture~1.2\,(1) in \emph{op.\ cit.} is true (the Selmer group is $\Lambda$-cotorsion), so Conjecture~1.2\,(2) (the anticyclotomic Main Conjecture for our modular forms)\footnote{see also~\cref{IMCwithtorsion}.} is independent of the lattice $T_f$. Therefore we are free to assume that $\phi|_{G_p}=\omega$ and $\psi|_{G_p}=\mathbf{1}$.  We then write the global character $\phi$ (resp.\ $\psi$) as $\tilde{\omega}$ (resp.\ $\tilde{\mathbf{1}}$). We remark here that the BSD Conjecture is also independent of the lattice (see for example~\cite[I.7]{milne2006}).

From now on, by essentially choosing a lattice $T_f$, we fix the following ordering of the characters appearing in the above sequence
\begin{equation}\label{char to f}
0 \to M_{\tilde{\omega}}[\frp] \to M_f[\frp] \to M_{\tilde{\mathbf{1}}}[\frp] \to 0.
\end{equation}

We remark here that the second character $\tilde{\mathbf{1}}$ may be non-trivial or trivial on $G_K$. Therefore,  $\H^0(K^\Sigma/K, M_{\tilde{\mathbf{1}}}[\frp])$ may or may not be $0$. Since the semisimplification $\ol\rho_f^\ss$ is independent from the choice of a lattice $T_f$, we must study both cases. However, from~\cite[Theorem 5.2]{BP19}, we can always choose a non-split extension of $\ol\rho_f^\ss$ so that $\bF(\mathbf{1})$ (the global trivial character) is never a subrepresentation, so we could always assume that $\H^0(K^\Sigma/K, M_f)=0$ (equivalently, $\H^0(K,\ol\rho_f)=0$), i.e., there is no global torsion for $f$ (it should be mentioned that their assumption (Good Eisen) is not necessary for our use). We will do so in the rest of this paper. We also include some partial results without assuming global non-torsionness in the appendix for interested readers.

Let $\fX_f$ and $\fX_f^S$ denote Pontryagin dual of the primitive and $S$-imprimitive unramified Selmer groups of $f$. 
This section is devoted to proving the following relation between the $\lambda$-invariants of the imprimitive Selmer groups under the assumption that there is no global torsion:
\begin{theorem}\label[theorem]{imprimlambda}
	If $\phi|_{G_p}=\omega$ (so $\psi|_{G_p}=\mathbf{1}$) and $\phi|_{G_K}\ne\omega$ (so $\psi|_{G_K}\ne\mathbf{1}$). Then
	\begin{equation*}
			\lambda(\frX^S_f) = \lambda(\frX^S_\phi) + \lambda(\frX^S_\psi).
	\end{equation*}
If $\phi|_{G_K}=\omega$ and $\psi|_{G_K}=\mathbf{1}$, 
\begin{equation*}
	\lambda(\frX^S_f) + 1 = \lambda(\frX^S_\phi) + \lambda(\frX^S_\psi).
\end{equation*}
Furthermore, $\fX^S_f$ and $\fX_f$ are both $\Lambda$-torsion with $\mu$-invariants $0$.
\end{theorem}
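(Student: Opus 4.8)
The plan is to run the Mazur--Greenberg comparison of \cite{CGLS} along the residual sequence \eqref{char to f}
\[
0 \to M_{\tilde{\omega}}[\frp] \to M_f[\frp] \to M_{\tilde{\mathbf 1}}[\frp] \to 0,
\]
but, since the vanishing of $\H^0$ and $\H^2$ used in \emph{op.\ cit.} fails here, to replace those vanishing statements by a direct comparison of the cohomology of $M_f[\frp]$ with that of $M_{\tilde{\omega}}[\frp]$ and $M_{\tilde{\mathbf 1}}[\frp]$. First I would take the long exact cohomology sequences of \eqref{char to f} over $K^\Sigma/K$, over $K_{\ol v}$, and over $I_{\ol v}$ (the last after passing to $G_{\ol v}/I_{\ol v}$-invariants), and assemble the commutative diagram whose rows are the three $S$-imprimitive residual unramified Selmer sequences. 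Because $S=\Sigma\setminus\{v,\ol v,\infty\}$, the only local condition is at $\ol v$, so the split primes in $S$ contribute no error terms.

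The degree-$0$ terms are computed by Shapiro's lemma (\cref{cohomology of Mtheta}): $\H^0(K^\Sigma/K,M_{\tilde{\omega}}[\frp])=\H^0(K_\infty,\F(\tilde{\omega}))=0$ since $\tilde{\omega}|_{G_{K_{\infty,v}}}=\omega|_{G_{K_{\infty,v}}}\neq\mathbf 1$ by \cref{local behavior of anticyclotomic Zp extension at p}; $\H^0(K^\Sigma/K,M_f[\frp])=\H^0(K_\infty,\ol\rho_f)=0$ by \cref{splitGkinfty} and the standing no-global-torsion hypothesis; and $\H^0(K^\Sigma/K,M_{\tilde{\mathbf 1}}[\frp])=\H^0(K_\infty,\F(\tilde{\mathbf 1}))$ is $\F$ exactly when $\psi|_{G_K}=\mathbf 1$ (equivalently $\phi|_{G_K}=\omega$) and $0$ otherwise. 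Feeding this in, the connecting map $\partial\colon\H^0(K^\Sigma/K,M_{\tilde{\mathbf 1}}[\frp])\to\H^1(K^\Sigma/K,M_{\tilde{\omega}}[\frp])$ is injective with image $\delta$ of $\F$-dimension $\epsilon\in\{0,1\}$, and using the local connecting map at $\ol v$ one checks $\delta\subseteq\H^1_{\cF_\nr^S}(K,M_{\tilde{\omega}}[\frp])$. The degree-$2$ terms are then compared directly along the long exact sequence using \cref{[p]exact}, and the $\ol v$-local conditions are handled by \cref{H1ofMtheta}, \cref{corankH1Mf}, \cref{kerresf} and \cref{H0 of V/T} (structure of the local groups and of $\ker(\res_{M_f})$, $\ker(\res_{M_f[\frp]})$), together with the $M_f$-analogue of \cref{eq:move p torsion} — valid here because $\H^0(K^\Sigma/K,M_f)=\H^0(K_\infty,V_f/T_f)=0$. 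A diagram chase then yields the exact sequence of residual Selmer groups
\[
0 \to \H^1_{\cF_\nr^S}(K,M_{\tilde{\omega}}[\frp])/\delta \to \H^1_{\cF_\nr^S}(K,M_f[\frp]) \to \H^1_{\cF_\nr^S}(K,M_{\tilde{\mathbf 1}}[\frp]) \to 0.
\]
The two outer terms are finite (\cref{Lambda invariants}), hence so is the middle one; by the $M_f$-analogue of \cref{eq:move p torsion} this makes $\fX_f^S/\frp\fX_f^S$ finite, so $\fX_f^S$ is $\Lambda$-torsion with $\mu=0$, and the same holds for its quotient $\fX_f$, which proves the ``Furthermore''.

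It remains to read off the $\lambda$-invariants. Taking $\F$-dimensions in the displayed sequence gives $\dim_\F\H^1_{\cF_\nr^S}(K,M_f[\frp])+\epsilon=\dim_\F\H^1_{\cF_\nr^S}(K,M_{\tilde{\omega}}[\frp])+\dim_\F\H^1_{\cF_\nr^S}(K,M_{\tilde{\mathbf 1}}[\frp])$, and by \cref{eq:move p torsion} (and its $M_f$-analogue) each term equals $\dim_\F(\fX_?^S/\frp\fX_?^S)$. For a finitely generated $\Lambda$-torsion module $X$ with $\mu(X)=0$ one has $\dim_\F X/\frp X=\lambda(X)+\dim_\F X[\frp]$, with $\dim_\F X[\frp]$ the $\frp$-rank of the maximal finite $\Lambda$-submodule of $X$; since $\fX_{\tilde{\mathbf 1}}^S$ has no finite $\Lambda$-submodule (\cref{Lambda invariants}) and $\dim_\F\fX_{\tilde{\mathbf 1}}^S/\frp=\lambda(\fX_\psi^S)$, the claimed $\lambda$-identity reduces to $\dim_\F\fX_f^S[\frp]=\dim_\F\fX_{\tilde{\omega}}^S[\frp]$. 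I would obtain this by passing to Pontryagin duals in the comparison diagram and observing that $\fX_f^S$ differs from $\fX_{\tilde{\omega}}^S$ only through $\fX_\psi^S$ (which has no finite $\Lambda$-submodule) and the $\ol v$-local term $(\H^1(I_{\ol v},M_{\tilde{\mathbf 1}})^{G_{\ol v}/I_{\ol v}})^\vee$ (the module $P^S$ of \cref{Lambda invariants}, which is $\Lambda$-free of rank $1$), neither of which can create or absorb a finite $\Lambda$-submodule; hence the finite parts of $\fX_f^S$ and $\fX_{\tilde{\omega}}^S$ coincide, giving $\lambda(\fX_f^S)+\epsilon=\lambda(\fX_\phi^S)+\lambda(\fX_\psi^S)$ — the two asserted formulas according to whether $\epsilon=0$ or $1$.

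The hard part will be this last reduction together with the degree-$2$ comparison: because $\fX_f^S$ and $\fX_{\tilde{\omega}}^S$ genuinely can contain nontrivial finite $\Lambda$-submodules, the clean CGLS identities $\lambda=\dim_\F(\text{residual Selmer})$ and the vanishing of the relevant $\H^0$ and $\H^2$ are unavailable, and everything hinges on showing that the finite-submodule contributions (and the residual $\H^2$'s) attached to $f$ and to the character $\tilde{\omega}$ are \emph{literally the same}, so that they cancel in the $\lambda$-comparison; making this matching precise, via a careful duality analysis of the comparison diagram at $\ol v$, is the crux of the argument.
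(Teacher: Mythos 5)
Your high-level plan -- run the $\lambda$-comparison along the residual sequence $0 \to M_{\tilde\omega}[\frp] \to M_f[\frp] \to M_{\tilde{\mathbf 1}}[\frp] \to 0$ and absorb a single $\epsilon\in\{0,1\}$ shift from $\H^0(K^\Sigma/K,M_{\tilde{\mathbf 1}}[\frp])$ -- is the right strategy and matches the paper's overall approach, but the two load-bearing steps are asserted, not proved, and the first one is false as stated.

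The claimed short exact sequence
\[
0 \to \H^1_{\cF_\nr^S}(K,M_{\tilde{\omega}}[\frp])/\delta \to \H^1_{\cF_\nr^S}(K,M_f[\frp]) \to \H^1_{\cF_\nr^S}(K,M_{\tilde{\mathbf 1}}[\frp]) \to 0
\]
does not come out of a diagram chase.  The middle and bottom rows of your three-by-three diagram are only \emph{left}-exact: the global row continues to $\H^2(K^\Sigma/K,M_{\tilde\omega}[\frp])$, which is \emph{not} covered by \cref{[p]exact} (that corollary applies precisely to $\theta|_{G_{\ol v}}\neq\omega$), and the local row at $\ol v$ continues to $\H^2(K_{\ol v},M_{\tilde\omega}[\frp])$, which by local duality and Shapiro is $\H^0(K_{\ol v,\infty},\F)^\vee\cong\F\neq 0$.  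So both rows have nontrivial cokernels, and the induced map $\H^1_{\cF_\nr^S}(K,M_f[\frp]) \to \H^1_{\cF_\nr^S}(K,M_{\tilde{\mathbf 1}}[\frp])$ need not be surjective; the paper's Lemma~\ref{SelfSelomega} shows that the true relation between $\dim_\F\Sel(M_f[\frp])$ and $\dim_\F\Sel(M_{\tilde\omega}[\frp])$ carries several correction terms ($K$, $C$, the $\coker_?[\frp]$, $\pm1$) that your sequence would force to cancel without justification.  Your side claim that $\delta\subseteq\H^1_{\cF_\nr^S}(K,M_{\tilde\omega}[\frp])$ is also delicate: when $\tilde{\mathbf 1}=\mathbf 1$ the generator of $\H^0(K^\Sigma/K,M_{\tilde{\mathbf 1}}[\frp])$ restricts nontrivially to $\H^0(K_{\ol v},M_{\tilde{\mathbf 1}}[\frp])$, so $\partial$ of it localizes to $\partial_{\ol v}\neq 0$ in $\H^1(K_{\ol v},M_{\tilde\omega}[\frp])$ and therefore lies \emph{outside} the Selmer group (which imposes triviality there since $\ker(\res_{M_{\tilde\omega}[\frp]})=0$).

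The second assertion -- that $\fX_f^S$ and $\fX_{\tilde\omega}^S$ have the same finite $\Lambda$-submodule contribution because ``$\fX_f^S$ differs from $\fX_{\tilde\omega}^S$ only through $\fX_\psi^S$ and the local term $P^S$'' -- is a heuristic, not an argument: $\fX_f^S$ and $\fX_{\tilde\omega}^S$ do not sit in a common exact sequence with just those two other terms, so the usual ``free outer terms cannot create finite submodules'' reasoning does not apply.  This matching is exactly what is hard: the paper does not compute the finite submodule directly; instead it tracks the $\H^2$-level kernels $K^{(2)}_f$, $K^{(2)}_{\tilde\omega}$, $K^{(2)}_{\tilde\omega\to f}$, the cokernels $\coker_{f[\frp]}$, $\coker_{\tilde\omega[\frp]}$, the auxiliary modules $K$, $C$, $\ker(j)$, $\ker(h_{\ol v})$, $\ker(k)$, and establishes the explicit identities \eqref{K2omega}, \eqref{Sel1tok}, \eqref{K2omega'}, splitting further into the three cases I, II, III according to whether $\H^0(K_w,M_f)$ and $\ker(\res_{M_f})$ vanish; only after that bookkeeping does the $\pm1$ from global triviality emerge.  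To salvage your approach you would have to replace the clean exact sequence with the correct six-term identity of dimensions and then actually compute (not merely assert) that the error terms for $f$ and for $\tilde\omega$ agree.
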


\begin{proof}[Proof of the theorem:]
First notice that the sequence~\eqref{char to f} yields a commutative diagram	{\small\[\begin{tikzcd}
		  \H^1_{\cF_\Gr^S}(K, M_{\tilde{\omega}}[\frp]) \ar[r]\ar[d]& \H^1_{\cF_\Gr^S}(K, M_f[\frp])\ar[r]\ar[d]&\H^1_{\cF_\Gr^S}(K, M_{\tilde{\mathbf{1}}}[\frp]) \ar[d]  \\
		  \H^1(K^\Sigma/K, M_{\tilde{\omega}}[\frp]) \ar[r]\ar[d]& \H^1(K^\Sigma/K, M_f[\frp]) \ar[r]\ar[d]& \H^1(K^\Sigma/K, M_{\tilde{\mathbf{1}}}[\frp]) \ar[d]\\
		 \H^1(K_{\ol v}, M_{\tilde{\omega}}[\frp])\ar[r]& \H^1(K_{\ol v}, M_f[\frp]) \ar[r]&\H^1(K_{\ol v}, M_{\tilde{\mathbf{1}}}[\frp])
	\end{tikzcd}\]}
with exact rows. It then follows from~\cref{Lambda invariants} that $\H^1_{\cF_\Gr^S}(K, M_{\tilde{\mathbf{1}}}[\frp])$ and $\H^1_{\cF_\Gr^S}(K, M_{\tilde{\omega}}[\frp])$ are finite (since Greenberg's Selmer groups are smaller than unramified ones). Now  $\H^1_{\cF_\Gr^S}(K, M_f[\frp])$ is finite and there is an exact sequence\[
0\to\H^1_{\cF_\Gr^S}(K, M_f[\frp])\to\H^1_{\cF_\ur^S}(K, M_f[\frp])\to \ker(\res_{M_f[\frp]})(\subset\F),
\]
from which it follows that $\H^1_{\cF_\ur^S}(K, M_f[\frp])$ is finite.
	
	We have the following 2 diagrams for $f$ with exact rows and columns similar to those of the characters in Section~\ref{ssec:Selmer groups of characters} and~\cref{eq:move p torsion}:
	
	{\small
		\begin{equation}\tag{Gr,$f$}\label{move p Gr f}
			\begin{tikzcd}
				0 \ar[r]& \ker(r_{\ol v}^{\Gr,0}) \ar[r]\ar[d]& \H^1_{\cF_\Gr^S}(K, M_f[\frp]) \ar[r]\ar[d]& \H^1_{\cF_\Gr^S}(K, M_f)[\frp] \ar[d]& \\
				0 \ar[r]& \H^0(K^\Sigma/K, M_f)/\frp \ar[r]\ar[d,"r_{\ol v}^{\Gr,0}"]& \H^1(K^\Sigma/K, M_f[\frp]) \ar[r]\ar[d,"r_{\ol v}^{\Gr,1}"]& \H^1(K^\Sigma/K, M_f)[\frp] \ar[r]\ar[d]& 0\\
				0 \ar[r]& \H^0(K_{\ol v}, M_f)/\frp \ar[r]& \H^1(K_{\ol v}, M_f[\frp]) \ar[r]& \H^1(K_{\ol v}, M_f)[\frp] \ar[r]& 0
			\end{tikzcd}
	\end{equation}}
	
	{\small
		\begin{equation}\tag{nr,$f$}\begin{tikzcd}
				0 \ar[r]& \ker(r_{\ol v}^{\nr,0}) \ar[r]\ar[d]& \H^1_{\cF_{\nr}^S}(K, M_f[\frp]) \ar[r]\ar[d]& \H^1_{\cF_{\nr}^S}(K, M_f)[\frp] \ar[d]& \\
				0 \ar[r]& \H^0(K^\Sigma/K, M_f)/\frp \ar[r]\ar[d,"r_{\ol v}^{\nr,0}"]& \H^1(K^\Sigma/K, M_f[\frp]) \ar[r]\ar[d,"r_{\ol v}^{\nr,1}"]& \H^1(K^\Sigma/K, M_f)[\frp] \ar[r]\ar[d]& 0\\
				0 \ar[r]& \ker(f) \ar[r]& \frac{\H^1(K_{\ol v}, M_f[\frp])}{\ker(\res_{M_f[\frp]})} \ar[r,"f"]& \frac{\H^1(K_{\ol v}, M_f)}{\ker(\res_{M_f})}[\frp] 
	\end{tikzcd}\end{equation}}

Since $\H^0(K^\Sigma/K, M_f)=0$ and as we shall see, $\ker(f)\subseteq \F$, $\H^1_{\cF_{\nr}^S}(K, M_f)[\frp]$ must also be finite. Then from the structure theorem it follows that $\fX^S_f$ (and hence $\fX_f$) are $\Lambda$-torsion with $\mu$-invariants $0$. This proves the last part of the theorem.

\begin{remark}\label[remark]{surj for Self}
	As in~\cref{remark on Selmer groups}, the global-to-local map defining the Selmer groups (both the Greenberg's one and the unramified one, both primitive and imprimitive) for $f$ is also surjective in our case, due to~\cite[Proposition~A.2]{PollackWeston2011}.
	
	(1) is already proved.
	
	(2) is a proved above (both primitive and imprimitive).
	
	To see (3), since $\rho_f$ is self-dual, the cohomology group we consider is identified with $\H^0(K_\infty,V_f/T_f)$. By the comments before~\cite[Lemma~3.3.3]{CGLS}, we have $\rho_f(G_{K_\infty}) = \rho_f(G_\Q)$, so we need to show that $\H^0(\Q,V_f/T_f)$ is finite. Assume this is infinite, then $V_f/T_f \isom (F/\cO)^2$ has at least a $G_\Q$-stable copy of $F/\cO$. Since we could recover $T_f = \varprojlim V_f/T_f[p^n]$ from $V_f/T_f$, this implies $T_f$ has a $G_\Q$-stable copy of $\cO$. However, this contradicts the irreducibility of $T_f$ as a $G_\Q$ module, a result of Ribet in~\cite{RibetNebentypus}.
	
	(4) holds by~\cref{corankH1Mf} (for Greenberg's Selmer group) and~\cref{kerresf} (for the unramified Selmer group).
\end{remark}

To finish the proof of the theorem, it remains to compare the $\lambda$-invariants. Recall that by~\cref{kerresf}, $\ker(\res_{M_f})$ is finite. We now have $3$ different cases to consider: 
\begin{enumerate}
		\item[I] $\H^0(K_w,M_f)$ is non-trivial and $\ker(\res_{M_f})=0$
		\item[II] Both $\H^0(K_w,M_f)$ and $\ker(\res_{M_f})$ are non-trivial 
		\item[III] $\H^0(K_w,M_f)=0$ (so also $\ker(\res_{M_f})=\ker(\res_{M_f[\frp]})=0$)
\end{enumerate}For simplicity, we first assume $\ker(\res_{M_f})=0$. In this case, the map $f$ is surjective.
	
\textbf{Case I:}	
	
Note that $\H^0(K_{\ol v},M_f)$ is cyclic and non-trivial, so by~\cref{H0 of V/T}\,(iii) one has $\H^0(K_{\ol v},M_f)/\frp=\F$. Therefore $\ker(f)=0$. Also $\H^0(K^\Sigma/K, M_f)/\frp=0$ by~\cref{splitGkinfty}.

To apply snake lemma, we add a fourth row to the unramified diagrams and abbreviate them as follows (here $?=\tilde{\mathbf{1}},f$ or $\tilde{\omega}$):
{\small
	\begin{equation}\tag{nr,$?$}\label{M?tolambda}\begin{tikzcd}
			0 \ar[r]& 0 \ar[r]\ar[d]& \Sel(M_?[\frp]) \ar[r]\ar[d]& \Sel(M_?)[\frp] \ar[d]& \\
			0 \ar[r]& \H^0(K^\Sigma/K, M_?)/\frp=0 \ar[r]\ar[d,"r_?^0"]& \H^1(K^\Sigma/K, M_?[\frp]) \ar[r]\ar[d,"r_{?[\frp]}"]& \H^1(K^\Sigma/K, M_?)[\frp] \ar[r]\ar[d,"r_{?}\text{[$\frp$]}"]& 0\\
			0 \ar[r]& \ker(f_?)=0 \ar[r]\ar[d]& \frac{\H^1(K_{\ol v}, M_?[\frp])}{\ker(\res_{M_?[\frp]})} \ar[r,"f_?"]\ar[d]& \frac{\H^1(K_{\ol v}, M_?)}{\ker(\res_{M_?})}[\frp]\ar[r,dashed]\ar[d]&0 \\
			& 0 \ar[r]& \coker_{?[\frp]} \ar[r]& \coker_?[\frp] \ar[r,dashed] &0
\end{tikzcd}\end{equation}}
Here dashed lines are used because in the case $\ker(\res_{M_f})\ne 0$ they may not be exact. We also remark that the cokernels appearing in the above diagrams are finite since all groups have finite $\cO$-rank.

Note that by the structure theorem of finitely generated $\Lambda$-modules (since $\mu(\Sel(M_?)) = 0$) the $\lambda$-invariants of $\Sel(M_?)$ can be computed as
\[
	\lambda(\frX^S_?)=\dim_\F \Sel(M_?)[\frp]-\dim_\F \Sel(M_?)/\frp,
\]
whereas $\coker_?[\frp]=\im(\Sel(M_?)[\frp] \to \Sel(M_?)/\frp)$ by~\cref{remark on Selmer groups} and~\cref{surj for Self}, so in particular
{\small\begin{align*}
	&\dim_\F \Sel(M_?)/\frp \\=&\dim_\F \ker(\Sel(M_?)/\frp \to \H^1(K^\Sigma/K,M_?)/\frp)+\dim_\F \im\big(\Sel(M_?)/\frp \to \H^1(K^\Sigma/K,M_?)/\frp\big)\\=&\dim_\F \coker_?[\frp]+\dim_\F \ker\Big(\H^1(K^\Sigma/K,M_?)/\frp\to \frac{\H^1(K_{\ol v}, M_?)}{\ker(\res_{M_?})}/\frp\Big).
	\end{align*}}
Denoting the last kernel by $K_?$, we get  \[\lambda(\frX^S_?)=\dim_\F \Sel(M_?)[\frp]-\dim_\F \coker_?[\frp]-\dim_\F K_?.\]
	
Now by~\cref{[p]exact}, the last two terms are $0$ when $?=\tilde{\mathbf{1}}$.
Hence, if we use $d(\cdot)$ to represent the $\F$-dimension of an $\F$-vector space, we get the following 
\begin{lemma}\label[lemma]{Seltolambda} We have
\begin{enumerate}
	\item $d(\Sel(M_{\tilde{\mathbf{1}}}[\frp]))=\lambda(\frX^S_{\tilde{\mathbf{1}}}),$
	\item $d(\Sel(M_f[\frp]))=\lambda(\frX^S_f)+d(\coker_{f[\frp]})+d(K_f),$
	\item $d(\Sel(M_{\tilde{\omega}}[\frp]))=\lambda(\frX^S_{\tilde{\omega}})+d(\coker_{\tilde{\omega}[\frp]})+d(K_{\tilde{\omega}}).$
\end{enumerate}
\end{lemma}
The next step is to relate the left hand sides of the above equations. Unlike in~\cite{CGLS}, they do not simply satisfy an additive relation. Instead, we have
\begin{lemma}\label[lemma]{SelfSelomega}  
If $\tilde{\mathbf{1}}\ne\mathbf{1}$ (i.e., the local trivial character is not globally trivial), we have\[
	d(\Sel(M_f[\frp]))=d(\Sel(M_{\tilde{\omega}}[\frp]))+d(K)+d(\coker_{f[\frp]})-d(\coker_{\tilde{\omega}[\frp]})-d(C)+1,\] 
	where $K$ (resp.\ $C$) is the kernel (resp.\ cokernel) of the map\[
	\coker(\H^1(K^\Sigma/K,M_{\tilde{\omega}}[\frp])\to \H^1(K^\Sigma/K,M_f[\frp])) \to \coker\Big(\frac{\H^1(K_{\ol v},M_{\tilde{\omega}}[\frp])}{\ker(\res_{M_{\tilde{\omega}}[\frp]})}\to \frac{\H^1(K_{\ol v},M_f[\frp])}{\ker(\res_{M_f[\frp]})}\Big).\] 
If $\tilde{\mathbf{1}}=\mathbf{1}$, we have\[
d(\Sel(M_f[\frp]))=d(\Sel(M_{\tilde{\omega}}[\frp]))+d(K)+d(\coker_{f[\frp]})-d(\coker_{\tilde{\omega}[\frp]})-d(C).\]
\end{lemma}
\begin{proof}
	This is just generalized snake lemma:
	{\small\[\begin{tikzcd}
		0(\ar[r,dashed]&	\F) \ar[r]& \H^1(K^\Sigma/K, M_{\tilde{\omega}}[\frp]) \ar[r,"g"]\ar[d]& \H^1(K^\Sigma/K, M_f[\frp]) \ar[r]\ar[d]& \coker(g) \ar[r]\ar[d]& 0\\
			0 \ar[r]&\F \ar[r]& \frac{\H^1(K_{\ol v}, M_{\tilde{\omega}}[\frp])}{\ker(\res_{M_{\tilde{\omega}}[\frp]})} \ar[r,"g_{\ol v}"]\ar[d]& \frac{\H^1(K_{\ol v}, M_f[\frp])}{\ker(\res_{M_f[\frp]})} \ar[r]\ar[d]&\coker(g_{\ol v})  \ar[r]\ar[d] &0\\&
			& \coker_{\tilde{\omega}[\frp]} \ar[r]& \coker_{f[\frp]}\ar[r]&C \ar[r]&0 
		\end{tikzcd}\]}
	Here the map $g$ is injective if $\tilde{\mathbf{1}}\ne\mathbf{1}$ and it has kernel $\F$ if $\tilde{\mathbf{1}}=\mathbf{1}$.\footnote{This is the only place where global trivialness of the character makes a difference on the algebraic side. Similar difference also appears if we choose the other ordering of the characters, since it determines whether $f$ has global torsion, as the (local) trivial character is a subrepresentation in that case. This difference corresponds to a different Iwasawa Main Conjecture for the (global) trivial character. See~\cref{anacomp}.}
	
	To see why the top two rows are exact on the left, first notice that $\H^0(K^\Sigma/K, M_?[\frp])=0$ for all $?=\tilde{\omega},f$ and $\tilde{\mathbf{1}}$ if $\tilde{\mathbf{1}}$ is globally non-trivial (it's $\F$ otherwise). This gives exactness of the first row. Now $\H^0(K_{\ol v}, M_{\tilde{\mathbf{1}}}[\frp])=\F$ and $\H^0(K_{\ol v}, M_f[\frp])=\H^0(K_{\ol v}, M_f)[\frp]=\F$ because we are in Case I.
	
	Thus the map\[\H^1(K_{\ol v},M_{\tilde{\omega}}[\frp])\to \H^1(K_{\ol v},M_f[\frp])\] is injective. Adding the local conditions $\ker(\res_{M_{\tilde{\omega}}[\frp]})=0,\ker(\res_{M_f[\frp]})=\F$,  we get an exact sequence\[
	0\to \F \to\frac{\H^1(K_{\ol v}, M_{\tilde{\omega}}[\frp])}{\ker(\res_{M_{\tilde{\omega}}[\frp]})} \xrightarrow{g_{\ol v}} \frac{\H^1(K_{\ol v}, M_f[\frp])}{\ker(\res_{M_f[\frp]})}. \qedhere\]
\end{proof}
For later use, we remark that we also get from the above argument that\[
\coker(g_{\ol v}) \cong \coker( \H^1(K_{\ol v},M_{\tilde{\omega}}[\frp]) \to \H^1(K_{\ol v},M_f[\frp])).\]

 To finish the proof of~\cref{imprimlambda}, we now relate $K_f$ to $K_{\tilde{\omega}}$. The argument in~\cref{[p]exact} (or more precisely, \cite[Proposition~3, Proposition~4]{Greenberg1989} together with~\cref{H1ofMtheta} and~\cref{corankH1Mf}) implies the vanishing of $\H^2(K^\Sigma/K,M_?)$. We can then identify $K_f$ (resp.\ $K_{\tilde{\omega}}$) with $K^{(2)}_f$ (resp.\ $K^{(2)}_{\tilde{\omega}}$) in the following two diagrams (noting that $\ker(\res_{M_f})=\ker(\res_{M_{\tilde{\omega}}})=0$) 
 
 	{\small\[\begin{tikzcd}
 		0\ar[r]& K_f \ar[r,"\cong"]\ar[d]& K^{(2)}_f\ar[r]\ar[d]&0 \ar[r]\ar[d]&0 \\
 		0 \ar[r]& \H^1(K^\Sigma/K, M_f)/\frp \ar[r]\ar[d]& \H^2(K^\Sigma/K, M_f[\frp]) \ar[r]\ar[d]& \H^2(K^\Sigma/K, M_f)[\frp]=0 \ar[r]\ar[d] &0\\
 		0 \ar[r]& \H^1(K_{\ol v}, M_f)/\frp \ar[r]& \H^2(K_{\ol v}, M_f[\frp]) \ar[r]& \H^2(K_{\ol v}, M_f)[\frp] \ar[r]& 0
 	\end{tikzcd}\]}
 
 {\small\[\begin{tikzcd}
 		0\ar[r]& K_{\tilde{\omega}} \ar[r,"\cong"]\ar[d]& K^{(2)}_{\tilde{\omega}}\ar[r]\ar[d]&0 \ar[r]\ar[d]&0 \\
 		0 \ar[r]& \H^1(K^\Sigma/K, M_{\tilde{\omega}})/\frp \ar[r]\ar[d]& \H^2(K^\Sigma/K, M_{\tilde{\omega}}[\frp]) \ar[r]\ar[d]& \H^2(K^\Sigma/K, M_{\tilde{\omega}})[\frp]=0 \ar[r]\ar[d] &0\\
 		0 \ar[r]& \H^1(K_{\ol v}, M_{\tilde{\omega}})/\frp \ar[r]& \H^2(K_{\ol v}, M_{\tilde{\omega}}[\frp]) \ar[r]& \H^2(K_{\ol v}, M_{\tilde{\omega}})[\frp] \ar[r]& 0
 	\end{tikzcd}\]}
 
It therefore suffices to relate $K^{(2)}_f$ to $K^{(2)}_{\tilde{\omega}}$.

\cref{[p]exact} also gives the vanishing of $\H^2(K^\Sigma/K,M_{\tilde{\mathbf{1}}}[\frp])$. By local duality and Shapiro's lemma, we get $\H^2(K_{\ol v},M_{\tilde{\mathbf{1}}}[\frp])=\H^0(K_{\ol v,\infty},\F(\omega))=0$ as well. We then obtain the following diagram

{\small
	\begin{equation}\tag{$K^{(2)}_f, K^{(2)}_{\tilde{\omega}}$}\begin{tikzcd}
			0 \ar[r]& K^{(2)}_{\tilde{\omega}\to f} \ar[r]\ar[d]& K^{(2)}_{\tilde{\omega}} \ar[r]\ar[d]& K^{(2)}_f \ar[d]& \\
			0 \ar[r]& \ker(h) \ar[r]\ar[d]& \H^2(K^\Sigma/K, M_{\tilde{\omega}}[\frp]) \ar[r,"h"]\ar[d]& \H^2(K^\Sigma/K, M_f[\frp]) \ar[r]\ar[d]& \H^2(K^\Sigma/K,M_{\tilde{\mathbf{1}}}[\frp])=0\\
			0 \ar[r]& \ker(h_{\ol v}) \ar[r]\ar[d,dashed]& \H^2(K_{\ol v}, M_{\tilde{\omega}}[\frp]) \ar[r,"h_{\ol v}"]& \H^2(K_{\ol v}, M_f[\frp])\ar[r]&\H^2(K_{\ol v},M_{\tilde{\mathbf{1}}}[\frp])=0 \\
			&0&&&
\end{tikzcd}\end{equation}}
from which we get 
\begin{equation}\label{K2omega}
	d(K^{(2)}_{\tilde{\omega}})=d(K^{(2)}_f)+d(K^{(2)}_{\tilde{\omega}\to f}).
\end{equation} 

The middle left vertical map is surjective because its domain and codomain are quotients of those of $\H^1(K^\Sigma/K,M_{\tilde{\mathbf{1}}}[\frp])\to \H^1(K_{\ol v},M_{\tilde{\mathbf{1}}}[\frp])$ which is surjective. 

Next, consider the diagram {\small
	\begin{equation}\tag{Sel,$\tilde{\mathbf{1}}$}\begin{tikzcd}
			0 \ar[r]& K \ar[r]\ar[d]& \Sel(M_{\tilde{\mathbf{1}}}[\frp]) \ar[r]\ar[d]& \ker(j) \ar[d]& \\
			0 \ar[r]& 	\coker(g) \ar[r]\ar[d]& \H^1(K^\Sigma/K, M_{\tilde{\mathbf{1}}}[\frp]) \ar[r]\ar[d]& \ker(h)\ar[r]\ar[d,"j"]& 0\\
			0 \ar[r]& \coker(g_{\ol v}) \ar[r,"i"]\ar[d]& \frac{\H^1(K_{\ol v}, M_{\tilde{\mathbf{1}}}[\frp])}{\ker(\res_{M_{\tilde{\mathbf{1}}}[\frp]})} \ar[r]\ar[d]& \coker(i) \ar[r]\ar[d]&0 \\
			&  C\ar[r]& 0 \ar[r]& 0 \ar[r]&0
		\end{tikzcd}\end{equation}}
	where $g,g_{\ol v}$ are the maps in~\cref{SelfSelomega}. Note that by~\cref{[p]exact}, the middle vertical map is surjective, so is the right vertical map. We thus conclude that
\begin{equation}\label{Sel1tok}	
	d(\Sel(M_{\tilde{\mathbf{1}}[\frp]}))=d(K)+d(\ker(j))-d(C).
\end{equation}
	
	The above diagram together with the diagram{\small
		\begin{equation*}\begin{tikzcd}
				0 \ar[r]& 0 \ar[r]\ar[d]& \ker(\res_{M_{\tilde{\mathbf{1}}}})=\F\ar[r]\ar[d]& \ker(k) \ar[d]& \\
				0 \ar[r]& 	\coker(\H^1(K_{\ol v},M_{\tilde{\omega}}[\frp]) \to \H^1(K_{\ol v},M_f[\frp])) \ar[r]\ar[d]& \H^1(K_{\ol v}, M_{\tilde{\mathbf{1}}}[\frp]) \ar[r]\ar[d]& \ker(h_{\ol v})\ar[r]\ar[d,"k"]& 0\\
				0 \ar[r]& \coker(g_{\ol v}) \ar[r,"i"]\ar[d]& \frac{\H^1(K_{\ol v}, M_{\tilde{\mathbf{1}}}[\frp])}{\ker(\res_{M_{\tilde{\mathbf{1}}}[\frp]})} \ar[r]\ar[d]& \coker(i) \ar[r]\ar[d]&0 \\
				&  0\ar[r]& 0 \ar[r]& 0 \ar[r]&0
	\end{tikzcd}\end{equation*}}
then yields the following diagram{\small
	\begin{equation*}\begin{tikzcd}
			0 \ar[r]&  K^{(2)}_{\tilde{\omega}\to f}\ar[r,"="]\ar[d]& K^{(2)}_{\tilde{\omega}\to f}\ar[r]\ar[d]& 0 \ar[d]& \\
			0 \ar[r]& 	\ker(j) \ar[r]\ar[d]& \ker(h) \ar[r]\ar[d]& \coker(i) \ar[r]\ar[d,"="]& 0\\
			0 \ar[r]& \ker(k)=\F \ar[r]\ar[d]& \ker(h_{\ol v}) \ar[r]\ar[d]& \coker(i) \ar[r]\ar[d]&0 \\
			&  0\ar[r]& 0 \ar[r]& 0 \ar[r]&0
\end{tikzcd}\end{equation*}}
 from which we see 
 \begin{equation}\label{K2omega'}
 	d(\ker(j))=d(K^{(2)}_{\tilde{\omega}\to f})+1.
 \end{equation}
	
	Combining equations~\eqref{K2omega},~\eqref{Sel1tok},~\eqref{K2omega'} with~\cref{Seltolambda} and~\cref{SelfSelomega}, we get \[	\lambda(\frX^S_f) = \lambda(\frX^S_\phi) + \lambda(\frX^S_\psi)\] if $\phi=\tilde{\omega}\ne\omega$ (equivalently, if $\psi=\tilde{\mathbf{1}}\ne\mathbf{1}$); and \[	\lambda(\frX^S_f) +1= \lambda(\frX^S_\phi) + \lambda(\frX^S_\psi)\] if $\phi=\tilde{\omega}=\omega$ (equivalently, if $\psi=\tilde{\mathbf{1}}=\mathbf{1}).$

\textbf{Case II: $\ker(\res_{M_f})=\cO/\pi^n$ for some $n\geq 1$}

In this case, it is necessary that $\ker(\res_{M_f})[\frp]=\F=\ker(\res_{M_f}[\frp])$. Since $\ker(\res_{M_f})$ is no longer divisible, the natural injective map \begin{equation*}
	\frac{\H^1(K_{\ol v},M_f)[\frp]}{\ker(\res_{M_f})[\frp]}\to  	\frac{\H^1(K_{\ol v},M_f)}{\ker(\res_{M_f})}[\frp]
\end{equation*}
has cokernel $\ker(\ker(\res_{M_f})/\frp \to \H^1(K_{\ol v},M_f)/\frp)$ which might be $0$ or $\F$. Denote this cokernel by $C_f$.

The map\[
\frac{\H^1(K_{\ol v}, M_f[\frp])}{\ker(\res_{M_f[\frp]})} \to \frac{\H^1(K_{\ol v}, M_f)}{\ker(\res_{M_f})}[\frp]\] factors through the above map and has kernel $\F$. The diagram~\eqref{M?tolambda} now becomes
{\small
	\begin{equation}\tag{nr,$?=f$}\label{nr?=f}\begin{tikzcd}
			0 \ar[r]& 0 \ar[r]\ar[d]& \Sel(M_f[\frp]) \ar[r]\ar[d]& \Sel(M_f)[\frp] \ar[d]& &\\
			0 \ar[r]& 0 \ar[r]\ar[d,"r_f^0"]& \H^1(K^\Sigma/K, M_f[\frp]) \ar[r]\ar[d,"r_{f[\frp]}"]& \H^1(K^\Sigma/K, M_f)[\frp] \ar[r]\ar[d,"r_{f}\text{[$\frp$]}"]& 0&\\
			0 \ar[r]& \ker(f_f)=\F \ar[r]\ar[d]& \frac{\H^1(K_{\ol v}, M_f[\frp])}{\ker(\res_{M_f[\frp]})} \ar[r,"f_f"]\ar[d]& \frac{\H^1(K_{\ol v}, M_f)}{\ker(\res_{M_f})}[\frp]\ar[r]\ar[d]&C_f\ar[r]&0 \\
			& \F \ar[r]& \coker_{f[\frp]} \ar[r]& \coker_f[\frp] \ar[r] &C_f\ar[r] &0
\end{tikzcd}\end{equation}}

Therefore as in the Case I we still get
\[
\lambda(\frX^S_f)=\dim_\F \Sel(M_f)[\frp]-\dim_\F \coker_f[\frp]-\dim_\F K_f,
\]
and hence
\begin{equation}\label{finitekererror}
	d(\Sel(M_f[\frp]))=\lambda(\frX^S_f)+d(C_f)+d(\coker_{f[\frp]})+d(K_f)-1
\end{equation}
by the generalized snake lemma.
 
Finally, we compare $K_f$ to $K_{\tilde{\omega}}$ in this case. Recall that in Case I we showed that 
\[d(K_{\tilde{\omega}})=d(\ker(\H^1(K^\Sigma/K,M_f)/\frp \to \H^1(K_{\ol v},M_f)/\frp))+d(K^{(2)}_{\tilde{\omega}\to f})\](here the first kernel coincides with $K_f$ in that case), it suffices to compare $K^I_f \defeq \ker(\H^1(K,M_f)/\frp \to \H^1(K_{\ol v},M_{\tilde{f}})/\frp)$ to $K_f$. This is done by considering the following diagram
{\small\begin{equation*}\begin{tikzcd}
		&0 \ar[r]&  0\ar[r]\ar[d]& K^I_f\ar[r]\ar[d]& K_f \ar[d]& \\
	&	0 \ar[r]& 	0 \ar[r]\ar[d]& \H^1(K^\Sigma/K,M_f)/\frp \ar[r,"="]\ar[d]& \H^1(K^\Sigma/K,M_f)\frp \ar[r]\ar[d]& 0\\
		0\ar[r]&C_f \ar[r]& \ker(\res_{M_f})/\frp=\F \ar[r]\ar[d]& \H^1(K_{\ol v},M_f)/\frp \ar[r]\ar[d]& \frac{\H^1(K_{\ol v},M_f)}{\ker(\res_{M_f})}/\frp \ar[r]\ar[d]&0 \\
		0\ar[r]&C_f\ar[r]&  \F \ar[r]& 0 \ar[r]& 0 \ar[r]&0
\end{tikzcd}\end{equation*}}
If $C_f=\F$, $K_f=K^I_f$ and~\eqref{finitekererror} becomes exactly as in Case I, so the result follows by the same computation. If $C_f=0$, $K_f=K^I_f+\F$ and~\eqref{finitekererror} becomes \[d(\Sel(M_f[\frp]))=\lambda(\frX^S_f)+d(\coker_{f[\frp]})+d(K_f)-1,\] and $d(K_{\tilde{\omega}})=d(K_f)+d(K^{(2)}_{\tilde{\omega}\to f})-1$. The same result follows from this observation.

\textbf{Case III: $\H^0(K_w,M_f)=0$}

In this case, it is necessary that $\ker(\res_{M_f})[\frp]=\F=\ker(\res_{M_f}[\frp])$ by~\cref{kerresf}. We only study what can be different from Case I.

The diagram~\eqref{nr?=f} now becomes
{\small
	\begin{equation*}\begin{tikzcd}
			0 \ar[r]& 0 \ar[r]\ar[d]& \Sel(M_f[\frp]) \ar[r]\ar[d]& \Sel(M_f)[\frp] \ar[d]& \\
			0 \ar[r]& \H^0(K^\Sigma/K, M_f)/\frp=0 \ar[r]\ar[d,"r_f^0"]& \H^1(K^\Sigma/K, M_f[\frp]) \ar[r]\ar[d,"r_{f[\frp]}"]& \H^1(K^\Sigma/K, M_f)[\frp] \ar[r]\ar[d,"r_{f}\text{[$\frp$]}"]& 0\\
			0 \ar[r]& \ker(f_f)=\H^0(K_{\ol v},M_f)/\frp=0 \ar[r]\ar[d]& \H^1(K_{\ol v}, M_f[\frp]) \ar[r,"f_f"]\ar[d]& \H^1(K_{\ol v}, M_f)[\frp]\ar[r]\ar[d]&0 \\
			& 0 \ar[r]& \coker_{f[\frp]} \ar[r]& \coker_f[\frp] \ar[r] &0
\end{tikzcd}\end{equation*}}
which agrees with Case I.

$\H^0(K_{\ol v},M_f)=0$ also implies $\H^0(K_{\ol v},M_f[\frp])=0$, so we get an exact sequence\[
0\to \H^0(K_{\ol v},M_{\tilde{\mathbf{1}}}[\frp])=\F\to \H^1(K_{\ol v},M_{\tilde{\omega}}[\frp]) \to \H^1(K_{\ol v},M_f[\frp]).\]
This is exactly the final ingredient in the proof of~\cref{SelfSelomega}. Now exactly the same arguments as in Case I give the desired relation.
\end{proof}

\subsection{Comparison I: Algebraic Iwasawa invariants}\label{1.5}

We're now ready to state the main result of the algebraic side. 

Let 
\[
\cP_w(f) \colonequals P_w(\ell^{-1}\gamma_w) \in \Lambda
\]
with $V_f = T_f \otimes \Q_\ell$ and $P_w = \det(1 - \Frob_w X \mid V_{f,I_w})$.

Recall that we have fixed the ordering of the characters appearing in the following decomposition:
\[0\to\F(\phi(=\tilde{\omega}))\to \ol\rho_f\to \F(\psi(=\tilde{\mathbf{1}}))\to 0.\]

\begin{theorem}\label{algmain}
Assume that $\phi|_{G_p} = \omega$ and $\phi|_{G_K} \ne \omega$. 

Then the module $\frX_f$ is $\Lambda$-torsion with $\mu(\frX_f) = 0$ and
\[
    \lambda(\frX_f) = \lambda(\frX_\phi) + \lambda(\frX_\psi) + \sum_{w \in S}\big\{\lambda(\cP_w(\phi)) + \lambda(\cP_w(\psi)) - \lambda(\cP_w(f))\big\}.
\]
In the case where $\phi|_{G_K} = \omega$, the same results hold except that the relation between $\Lambda$-invariants now becomes\[
\lambda(\frX_f) +1 = \lambda(\frX_\phi) + \lambda(\frX_\psi) + \sum_{w \in S}\big\{\lambda(\cP_w(\phi)) + \lambda(\cP_w(\psi)) - \lambda(\cP_w(f))\big\}.
\]

\end{theorem}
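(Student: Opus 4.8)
The plan is to deduce the statement by combining three inputs already in hand: the relation among the $S$-imprimitive Selmer groups $\frX^S_f,\frX^S_\phi,\frX^S_\psi$ from~\cref{imprimlambda}; the passage from the $S$-imprimitive to the primitive $\lambda$-invariant for each of the two characters, which is the first assertion of~\cref{Lambda invariants} and holds for an arbitrary $\theta$; and the analogous imprimitive-to-primitive comparison for $f$ itself, which is the only genuinely new step needed here. Since~\cref{imprimlambda} already asserts that $\frX_f$ is $\Lambda$-torsion with $\mu(\frX_f)=0$, only the two $\lambda$-identities remain to be shown.

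First I would establish, for $f$, the exact sequence
\[
0 \to \H^1_{\cF_\nr}(K, M_f) \to \H^1_{\cF_\nr^S}(K, M_f) \to \prod_{w\in S}\H^1(K_w, M_f) \to 0,
\]
exactly as~\eqref{eq:Gr to imp} was obtained for the characters: by~\cref{restriction away from p} applied to $M_f|_{G_{K_w}}$ the primitive unramified local conditions at the finite places $w\in S$ may be taken as in~\cref{remark on Selmer groups}, and by~\cref{surj for Self} the corresponding global-to-local maps (primitive and $S$-imprimitive) are surjective, so relaxing the conditions at $w\in S$ yields the displayed sequence. Next I would prove the Euler-factor statement for $M_f$: for $w\in S$ the module $\H^1(K_w,M_f)^\vee$ is $\Lambda$-torsion with characteristic ideal $(\cP_w(f))$ and $\mu$-invariant $0$. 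This is the analogue of~\cref{H1 Euler}, and the argument of~\cite[Lemma~1.1.1]{CGLS} carries over with $F(\theta)$ replaced by $V_f$: for $w\nmid p$ split in $K$ the image of $\Gamma$ in $\Gal(K_{\infty,w}/K_w)$ is finite, the local Euler--Poincar\'{e} characteristic at $\ell\neq p$ vanishes, and the computation only involves the $G_w/I_w$-action on the inertia invariants, so it is insensitive to the rank of the coefficient module and outputs $\det(1-\Frob_w\,\ell^{-1}\gamma_w\mid V_{f,I_w})=\cP_w(f)$, which is not divisible by $\pi$. Since $\frX^S_f$ is $\Lambda$-torsion by~\cref{imprimlambda}, multiplicativity of characteristic ideals along the displayed exact sequence then gives
\[
\lambda(\frX^S_f) = \lambda(\frX_f) + \sum_{w\in S}\lambda(\cP_w(f)).
\]

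Finally I would assemble. Applying~\cref{Lambda invariants} with $\theta=\phi$ and $\theta=\psi$ gives $\lambda(\frX^S_\phi)=\lambda(\frX_\phi)+\sum_{w\in S}\lambda(\cP_w(\phi))$ and $\lambda(\frX^S_\psi)=\lambda(\frX_\psi)+\sum_{w\in S}\lambda(\cP_w(\psi))$ --- the archimedean place contributing nothing since $p>2$ and $K$ is imaginary quadratic, so that the sum over $w\in\Sigma$, $w\nmid p$ coincides with the sum over $w\in S$. Substituting these, together with the $f$-identity just obtained, into~\cref{imprimlambda} --- which reads $\lambda(\frX^S_f)=\lambda(\frX^S_\phi)+\lambda(\frX^S_\psi)$ when $\phi|_{G_K}\neq\omega$ and $\lambda(\frX^S_f)+1=\lambda(\frX^S_\phi)+\lambda(\frX^S_\psi)$ when $\phi|_{G_K}=\omega$ --- and cancelling the common term $\sum_{w\in S}\lambda(\cP_w(f))$ on both sides, I obtain the two asserted formulas. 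Together with the torsionness and $\mu(\frX_f)=0$ from~\cref{imprimlambda}, this completes the proof.

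I expect the main obstacle to be the Euler-factor computation for $M_f$ at the places $w\in S$: one has to check that the proof behind~\cref{H1 Euler} really is unaffected by the passage from a one-dimensional $F(\theta)$ to the two-dimensional $V_f$, in particular that $\H^0(K_w,M_f)$ and $\H^2(K_w,M_f)$ stay controlled and that their contributions to the characteristic ideal combine into precisely $\cP_w(f)$ (the $\ell^{-1}$-twist being forced by the self-dual normalization of $T_f$). The secondary point --- checking that the surjectivity hypotheses of~\cite[Proposition~A.2]{PollackWeston2011} hold for both the primitive and the imprimitive global-to-local maps of $f$ in the residually reducible case --- has already been taken care of in~\cref{surj for Self}.
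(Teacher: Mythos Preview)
Your proposal is correct and follows essentially the same approach as the paper: the paper's proof simply says that with~\cref{imprimlambda} and~\cref{surj for Self} in hand, the argument of~\cite[Theorem~1.5.1]{CGLS} goes through unchanged, and you have accurately unpacked what that argument is --- the imprimitive-to-primitive comparison for $f$ via the exact sequence analogous to~\eqref{eq:Gr to imp}, the Euler-factor computation at $w\in S$ for $M_f$ analogous to~\cref{H1 Euler}, and then substitution into~\cref{imprimlambda} together with~\cref{Lambda invariants} for the characters.
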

\begin{proof}
	With our~\cref{imprimlambda} and~\cref{surj for Self}, the proof is the same as that of~\cite[Theorem 1.5.1]{CGLS}.
\end{proof}

\section{Analytic side}\label{sec:analytic side}
Let $f \in S_k(\Gamma_0(N))$ with $k = 2r$ be a newform, $p\nmid 2N$ and $K$ an imaginary quadratic field satisfying the Heegner hypothesis. As on the algebraic side, we assume $p=v\ol v$ is split in $K$. We further assume that the discriminant $D_K$ of $K$ is odd and $D_K\ne -3$. Recall that $\Lambda=\cO\llbracket \Gamma\rrbracket$ denotes the anticyclotomic Iwasawa algebra, and set $\Lambda^\nr=\Lambda\hat{\otimes}_{\bZ_p}\bZ_p^{\nr}$, for $\bZ_p^{\nr}$ the completion of the ring of integers of the maximal unramified extension of $\bQ_p$. This is where the BDP $p$-adic $L$-function lives.

In this section, we continue to study the case $\ol \rho_f^{\mathrm{ss}}=\F(\phi)\oplus \F(\psi)$. Note that the cases when $\phi|_{G_p}=\mathbf{1}$ (resp.\ $\omega$) but $\phi|_{G_K}\ne \mathbf{1}$ (resp.\ $\omega$) are already studied in~\cite{CGLS}. We therefore only state the theorems we need and explain how to extend the results to the case where $\phi|_{G_K}=\mathbf{1}$.

\subsection{$p$-adic $L$-functions}

\subsubsection{The Bertolini--Darmon--Prasanna $p$-adic $L$-functions.}

Thanks to the Heegner hypothesis, we can fix an integral ideal $\frN \subset \cO_K$ with
\begin{equation*} 
    \cO_K/\frN \iso \Z/N.
\end{equation*}

\begin{proposition}[$p$-adic interpolation property]
	There exists an element $\cL_f\in\Lambda^{\nr}$ characterized by the following interpolation property: If $\hat{\xi} \in \frX_{p^\infty}$ is the $p$-adic avatar of a Hecke character $\xi$ of infinity type $(n,-n)$ with $n \geq 0$ and $p$-power conductor, then
	{\small\[
	\sL_f(\hat{\xi}) = \frac{\Omega_p^{4n}}{\Omega_K^{4n}}\cdot \frac{4\Gamma(n+\frac{k}{2})\Gamma(n-\frac{k}{2}+1)\xi^{-1}(\frN^{-1})}{(2\pi)^{2n+1}(\sqrt{D_K})^{2n-1}}\cdot (1-a_p(f)p^{-r}\xi_{\ol \frp}(p)+\xi_{\ol \frp}(p^2)p^{-1})^2\cdot L\big(f/K,\xi,\tfrac{k}{2}\big).  
	\]}
Here and in the following $p$-adic units may be ignored in the expressions.
\end{proposition}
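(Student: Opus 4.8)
The plan is to construct $\cL_f$ by transcribing the Bertolini--Darmon--Prasanna recipe (for $k=2$) and its extension to newforms of weight $k$ by Brako\v{c}evi\'c and Hsieh; the residual reducibility of $\ol\rho_f$ plays no role here, so this is a matter of quoting a known construction in the required generality. First I would form the $p$-depletion $f^{[p]} = \sum_{p\nmid n} a_n(f)\,q^n$ of $f$, a $p$-adic modular form of weight $k$ in the sense of Katz and Serre. Using the Heegner hypothesis and the chosen ideal $\frN$ with $\cO_K/\frN \iso \Z/N$, I would fix a CM point $x_0$ of level $\Gamma_0(N)$ with complex multiplication by $\cO_K$, equipped with a N\'eron differential; since $p$ splits in $K$ the associated abelian variety has ordinary good reduction over $\bZ_p^{\nr}$, and comparing the N\'eron differential with the canonical differential of its formal group defines the $p$-adic period $\Omega_p$, which is the reason $\cL_f$ naturally lives in $\Lambda^{\nr}$ rather than in $\Lambda$.

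Next I would let $\Gamma = \Gal(K_\infty/K)$ act on $x_0$ through class field theory --- equivalently, through the tower of ring class fields in~\cref{local behavior of anticyclotomic Zp extension at p} --- to obtain a compatible $\Gamma$-family of CM points, and define $\cL_f \in \Lambda^{\nr}$ to be the element whose specialisation at a continuous character of $\Gamma$ is the value of the $p$-adic modular function attached to $f^{[p]}$ at the correspondingly translated point. The verification that this rule genuinely defines an element of $\Lambda^{\nr}$, and not merely of its total ring of fractions, would come from the $q$-expansion principle for $p$-adic modular forms applied to the Atkin--Serre derivatives $d^n f^{[p]}$, whose $q$-expansions have coefficients in the valuation ring of $\bZ_p^{\nr}$.

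To check the interpolation property I would specialise $\cL_f$ at the $p$-adic avatar $\hat\xi$ of a Hecke character $\xi$ of infinity type $(n,-n)$ with $n\geq 0$ and $p$-power conductor. The first step is the algebraicity theorem for CM values of nearly holomorphic modular forms, which identifies this specialisation --- up to the period ratio $\Omega_p^{4n}/\Omega_K^{4n}$ --- with the value at the CM point of the nearly holomorphic form $\delta^n f^{[p]}$ over $\bC$. The second step expresses that complex value through a Waldspurger/Rankin--Selberg integral of $f$ against the theta series of $\xi$: unfolding writes $L(f/K,\xi,\tfrac{k}{2})$ as a product of local zeta integrals times the square of a global CM period integral, with the archimedean factor producing $4\,\Gamma(n+\tfrac{k}{2})\Gamma(n-\tfrac{k}{2}+1)(2\pi)^{-(2n+1)}(\sqrt{D_K})^{-(2n-1)}$, the finite places away from $p$ producing $\xi^{-1}(\frN^{-1})$, and the local integral at $p$ --- where $f$ has been replaced by $f^{[p]}$ --- producing exactly $(1-a_p(f)p^{-r}\xi_{\ol\frp}(p)+\xi_{\ol\frp}(p^2)p^{-1})^2$. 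Assembling these identities yields the displayed formula up to the $p$-adic units we have agreed to discard, and uniqueness of $\cL_f$ follows because Hecke characters of this type are dense in the character group of $\Gamma$.

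I expect the main obstacle to be the explicit evaluation of the two local zeta integrals: at $p$ one must verify that passing to the $p$-depletion turns the local integral into the square of the modified Euler factor $1-a_p(f)p^{-r}\xi_{\ol\frp}(p)+\xi_{\ol\frp}(p^2)p^{-1}$, and at the archimedean place one must match the resulting integral of Whittaker functions with the displayed Gamma factors and the powers of $2\pi$ and $\sqrt{D_K}$. Together with the integrality statement via the $q$-expansion principle, this is the technical core of the construction; fortunately it has already been carried out in the cited references in exactly the weight-$k$ generality needed, so in the paper this proposition can simply be quoted.
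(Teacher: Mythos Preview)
Your proposal is correct and takes essentially the same approach as the paper: both reduce the proposition to quoting the Castella--Hsieh construction of the BDP $p$-adic $L$-function in weight $k$. The paper's proof is somewhat terser --- it cites \cite[Prop.~3.8]{CastellaHsieh} directly for an interpolation formula, then does explicit bookkeeping (taking $c=c_0=1$, $u_K=1$, discarding the global root number as a unit, and substituting $\eta^{-1}\xi$ for $\xi$ with $\eta$ of infinity type $(\tfrac{k}{2},-\tfrac{k}{2})$) to match the stated shape, and notes that the hypothesis $p\nmid(2k-1)!$ in \cite{CGLS} can be removed --- whereas you have unpacked the underlying BDP construction that Castella--Hsieh carry out. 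Your final sentence anticipates exactly this: the proposition can simply be quoted.
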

\begin{proof}
	This is proved in~\cite[Theorem 2.1.1]{CGLS}, using results from~\cite{CastellaHsieh}. We remark here that the assumption $p \nmid (2k-1)!$ can be directly removed from \textit{op.\ cit.} so for arbitrary weights we would be able to study all odd primes.
	
	More precisely,~\cite[Prop. 3.8]{CastellaHsieh} gives
	\[
	\sL_f(\hat{\xi}) = \Omega_p^{2k + 2n} \cdot L^\alg(\tfrac{1}{2}, \pi_K\otimes\psi\xi) \cdot e_\frp(f,\psi\xi) \cdot \xi(\frN^{-1}) \cdot 2^{\#A(\psi) + 3}c_0\epsilon(f) \cdot u_K^2\sqrt{D_K},
	\]
	where
\[L^\alg(\tfrac{1}{2}, \pi_K\otimes\xi)=\frac{\Gamma(n+r)\Gamma(n-r+1)}{2(2\pi)^{2n+1}(\sqrt{D_K})^{2n}}\frac{L(\frac{1}{2},\pi_K\otimes\xi)}{\Omega_K^{4n}},
\]
    since $\Im(\theta)=\frac{\sqrt{D_K}}{2}$.
	Note that we can take $c=c_0=1$ for our application to the weight $2$ BSD formula (see for example~\cite[Theorem 5.1.3]{CGLS} where the summation is over $\sigma\in \Gal(H/K)$), so in particular $p\nmid c_0$ and\[
	e_\frp(f,\xi)=(1-a_p(f)p^{-r}\xi_{\ol \frp}(p)+\xi_{\ol \frp}(p^2)p^{-1})^2.
	\]
	Also notice that $u_K=1$ if $D_K<-4$, $A(\psi)=1$ since $(D_K, N)=1$. The global root number $\epsilon(f)=\pm1$ is ignored since it's a unit. Now as in~\cite[Theorem 2.1.1]{CGLS}, put $\eta^{-1}\xi$ in place of $\xi$ for a Hecke character $\eta$ of infinity type $(\frac{k}{2},-\frac{k}{2})$, and notice that $\eta(\frN)$ is also a unit since $p\nmid N$, we get the desired interpolation property.
\end{proof}
\begin{remark}\label[remark]{compperiods}
	We mention that the period $\Omega_K$ is denoted by $\Omega_\infty$ in~\cite{CGLS}. But as is pointed out in~\cite[Remark 2.3.2]{CGS}, the $\Omega_\infty$ defined in the next theorem (coming from Katz, de Shalit and Hida--Tilouine) is different from the $\Omega_K$ in this proposition (coming from~\cite{BDP13}) and they satisfy the relation $\Omega_\infty=2\pi i\cdot \Omega_K$.
\end{remark}

\subsubsection{Katz $p$-adic $L$-functions.}

\begin{theorem}
	There exists an element $\cL_f \in \Lambda^\nr$ characterized by the following interpolation property: For every character $\xi$ of $\Gamma$ crystalline at both $v$ and $\bar{v}$ and corresponding to a Hecke character of $K$ of infinity type $(n,-n)$ with $n\in\bZ_{>0}$ and $n\equiv 0$ (mod $p-1$), we have
	\begin{align*}
		\cL_{\theta}(\xi)=\frac{\Omega_p^{2n}}{\Omega_\infty^{2n}}\cdot 4\Gamma(n+\tfrac{k}{2})\cdot\frac{(2\pi i)^{n-\frac{k}{2}}}{\sqrt{D_K}^{n-\frac{k}{2}}}\cdot(1-\theta^{-1}(p)\xi^{-1}(v))\cdot(1-\theta(p)\xi(\bar{v})p^{-1})\\
		\times\prod_{\ell\mid C}(1-\theta(\ell)\xi(w)\ell^{-1})\cdot L(\theta_K\xi\mathbf{N}_K^\frac{k}{2},0).
	\end{align*}
\end{theorem}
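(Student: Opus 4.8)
The plan is to deduce this statement from the classical construction of the Katz two-variable $p$-adic $L$-function attached to the imaginary quadratic field $K$, rather than to reprove it from scratch; the content here is the specialization of that construction to the anticyclotomic line, twisted by the finite order character $\theta$ and renormalized to the weight-$k$ (self-dual) convention used elsewhere in the paper. (Note that, matching the section heading, the element in question is the Katz $p$-adic $L$-function $\cL_\theta$ of the character $\theta$; the ``$\cL_f$'' in the first line of the statement should read $\cL_\theta$.)

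First I would recall Katz's theorem in the form given by de Shalit (\cite[III.1.10]{deShalit}, already cited above) and Hida--Tilouine: fixing a complex CM period $\Omega_\infty$ and a $p$-adic period $\Omega_p$, there is an element of the completed group algebra $\cO^{\nr}\llbracket\Gal(K(\frp^\infty\frf)/K)\rrbracket$ interpolating, for Hecke characters of $K$ of infinity type $(a,b)$ in a critical range, the algebraic part $L_\frf(\lambda,0)/\Omega_\infty^{a-b}$ together with the explicit $\Gamma$-factor, the power of $2\pi i$, the power of $\sqrt{D_K}$, and the Euler factor at $\frp$. This is exactly where $\Lambda^{\nr}=\Lambda\hat\otimes_{\bZ_p}\bZ_p^{\nr}$ enters: $\Omega_p$ is only canonical up to $(\bZ_p^{\nr})^\times$, so the natural home of the resulting $L$-function is the unramified base change.

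Next I would twist by the finite order character $\theta$, whose conductor $C$ is divisible only by primes split in $K$ by hypothesis. Twisting the CM Hecke character by (the idele-class avatar of) $\theta$ changes the Euler factor at $\frp$ into $(1-\theta^{-1}(p)\xi^{-1}(v))(1-\theta(p)\xi(\bar v)p^{-1})$ and produces the finite product $\prod_{\ell\mid C}(1-\theta(\ell)\xi(w)\ell^{-1})$ of the removed Euler factors at the ramified primes; the congruence $n\equiv 0\pmod{p-1}$ is precisely the condition ensuring that the relevant Hecke character is crystalline at $v$ and $\bar v$ (trivial nebentypus at $p$), so that no extra conductor-at-$p$ factor or Gauss sum appears. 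I would then push the element forward along the projection $\Gal(K(\frp^\infty\frf)/K)\twoheadrightarrow\Gamma$ onto the anticyclotomic line and twist by $\mathbf{N}_K^{k/2}$, which re-centers the interpolated $L$-value to $L(\theta_K\xi\mathbf{N}_K^{k/2},0)$ and accounts for the shift $(2\pi i)^{n-k/2}/\sqrt{D_K}^{\,n-k/2}$ and the argument $n+\tfrac k2$ of the $\Gamma$-factor. This produces $\cL_\theta\in\Lambda^{\nr}$. The characterization by interpolation is then immediate: the characters $\xi$ of $\Gamma$ of infinity type $(n,-n)$ with $n$ large and $n\equiv 0\pmod{p-1}$ accumulate at the identity and hence are Zariski-dense in $\operatorname{Spec}\Lambda^{\nr}$, and $\Lambda^{\nr}$ is a domain, so any element with the prescribed values is unique.

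The main obstacle is entirely the bookkeeping in the last two steps: matching Katz's normalization of periods and of Euler/$\Gamma$-factors to the BDP-compatible normalization used in the rest of the paper. Concretely, one must track the discrepancy $\Omega_\infty = 2\pi i\cdot\Omega_K$ recorded in \cref{compperiods}, check that the powers of $2\pi i$ and $\sqrt{D_K}$ and the $\Gamma$-argument transform correctly under the twist by $\mathbf{N}_K^{k/2}$ and the change of infinity type, and verify that for ramified $\theta$ the local factors at $\ell\mid C$ come out as $(1-\theta(\ell)\xi(w)\ell^{-1})$ with the correct orientation of $\xi(w)$. Since $D_K$ is odd and $\neq -3$ (so $\cO_K^\times=\{\pm1\}$), the conductor and unit-index bookkeeping is clean, and these verifications are routine though they must be carried out with care.
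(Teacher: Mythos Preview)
Your proposal is correct in substance, and you have also caught the typo in the statement (the element is $\cL_\theta$, not $\cL_f$). However, the paper does not carry out the derivation from the classical Katz--de~Shalit two-variable construction that you outline. Instead, it simply cites \cite[Theorem~27]{Kri16}, where Kriz has already repackaged the Katz $p$-adic $L$-function in exactly the anticyclotomic, weight-$k$ normalization needed here; the paper's proof consists only of specifying the dictionary of parameters, namely $\chi=\theta_K\xi\mathbf{N}_K^{k/2}$, $k_1=n+\tfrac{k}{2}$, $k_2=-n+\tfrac{k}{2}$, together with a remark on sign and weight conventions between \cite{Kri16} and \cite{CGLS}.

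What each approach buys: your route via de~Shalit is self-contained and makes the origin of every factor transparent, but requires you to redo the projection to the anticyclotomic line and the twist-by-$\mathbf{N}_K^{k/2}$ bookkeeping that Kriz has already done. The paper's route is a one-line citation precisely because \cite{Kri16} was written with this comparison to BDP-type $L$-functions in mind, so the normalizations already match. If you pursue your approach, the only real hazard is the one you identify: the period convention (the $2\pi i$ discrepancy of \cref{compperiods}) and the sign of the infinity type must be tracked consistently, and it is easy to be off by a unit in $\Lambda^{\nr}$, which would be harmless for the $\mu$- and $\lambda$-invariant arguments downstream but would make the stated interpolation formula literally false.
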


\begin{proof}
See~\cite[Theorem 27]{Kri16}. Similarly as in~\cite[Theorem 2.1.2]{CGLS}, we take $\chi=\theta_K\xi\mathbf{N}_K^\frac{k}{2} ,k_1=n+\frac{k}{2},k_2=-n+\frac{k}{2}$ (note that the infinity type of the Hecke characters in these $2$ papers differ by a sign. Also the weight in~\cite{Kri16} is $k$ while the weight in~\cite{CGLS} is $2k$).
\end{proof}

\subsection{Comparison II: Analytic Iwasawa invariants}

\begin{theorem}\label{pLcongruence}
	$\cL_f\equiv(\mathscr{E}^{\iota}_{\phi,\psi})^2\cdot(\cL_\phi)^2 \pmod{p\Lambda^{\nr}}.$
\end{theorem}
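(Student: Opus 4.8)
The plan is to follow the strategy of~\cite[§2.4]{CGLS}, whose key input is that the reduction of the BDP $p$-adic $L$-function modulo $p$ depends only on the residual modular form, and then to carry that argument through the two new degenerate cases $\phi|_{G_K}\in\{\mathbf 1,\omega\}$. First I would observe that, since $\frp$ is an Eisenstein prime, the $q$-expansion of $f$ is congruent modulo $\frp$ (after the self-dual twist) to that of the weight-$k$ Eisenstein series $E_{\phi,\psi}$ with standard $L$-function $L(s,\phi)L(s,\psi)$; in the normalization of the interpolation formula for $\cL_f$ this is the eigenform with $a_p=\phi(p)+\psi(p)p^{r-1}$. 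Because the BDP construction is obtained by evaluating $p$-adic families of modular forms at CM points and is compatible with $q$-expansions and the comparison of periods in~\cref{compperiods}, it factors through $f\bmod\frp^n$; hence $\cL_f\equiv\cL_{E_{\phi,\psi}}\pmod{p\Lambda^{\nr}}$, where $\cL_{E_{\phi,\psi}}\in\Lambda^{\nr}$ denotes the element produced by the same construction applied to a $p$-stabilization of $E_{\phi,\psi}$.

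Next I would factor $\cL_{E_{\phi,\psi}}$. Since the base change of $E_{\phi,\psi}$ to $\GL_2$ over $K$ is the Eisenstein representation attached to $(\phi_K,\psi_K)$, the Rankin--Selberg $L$-value $L(f/K,\xi,\tfrac{k}{2})$ appearing in the interpolation formula factors, up to shift, as $L(\phi_K\xi\mathbf N_K^{k/2},0)\cdot L(\psi_K\xi\mathbf N_K^{k/2},0)$; the square of the $p$-adic multiplier factors through the Satake parameters $\phi(p),\psi(p)$; and the $\Gamma$-factors and periods match those appearing in the Katz interpolation formula (using $\Omega_\infty=2\pi i\,\Omega_K$ from~\cref{compperiods}). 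Comparing the two interpolation formulas on the Zariski-dense set of crystalline characters of $\Gamma$ of infinity type $(n,-n)$ with $n\equiv 0\pmod{p-1}$ then yields an identity $\cL_{E_{\phi,\psi}}=\mathscr E^{\iota}_{\phi,\psi}\,\mathscr E^{\iota}_{\psi,\phi}\cdot\cL_\phi\cdot\cL_\psi$ in $\Lambda^{\nr}$, the factors $\mathscr E^{\iota}_{?,?}$ recording the discrepancy between the BDP and Katz interpolation ranges and their Euler factors at $p$; this is where the involution $\iota$, $\gamma\mapsto\gamma^{-1}$ on $\Lambda^{\nr}$, enters, through the $\xi\mapsto\xi^{-1}$ symmetry relating Hecke characters of infinity type $(n,-n)$ to their mirrors. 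Finally I would eliminate $\cL_\psi$: since $\ol\rho_f$ is self-dual one has $\det\ol\rho_f=\omega$, i.e.\ $\psi=\phi^{-1}\omega$, and since $\phi,\psi$ are restrictions to $G_K$ of characters of $G_\Q$ they are $\Gal(K/\Q)$-invariant; the Katz $p$-adic functional equation (relating $\cL_\theta$ to $\cL_{\theta^{-1}\mathbf N_K}$ up to a unit and an explicit Euler factor) together with the congruence $\mathbf N_K\equiv\omega|_{G_K}\pmod p$ then gives $\cL_\psi\equiv(\text{Euler factors})\cdot\cL_\phi\pmod{p\Lambda^{\nr}}$, and collecting all the factors into $(\mathscr E^{\iota}_{\phi,\psi})^2$ yields the claimed congruence.

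I expect the main obstacle to be the precise bookkeeping of the archimedean and $p$-adic ``fudge'' factors in the two passages above --- from the BDP normalization to the Katz normalization, and across the functional equation --- in order to check that they assemble \emph{exactly} into $(\mathscr E^{\iota}_{\phi,\psi})^2$ with no leftover unit modulo $p$; matching the squared BDP multiplier with the product of the two Katz multipliers, and keeping track of the interchange of $v$ and $\ol v$ under conjugation, is the delicate part. A secondary point is to verify that nothing in the argument breaks in the degenerate cases $\phi|_{G_K}=\mathbf 1$ or $\omega$: there $\cL_\phi$ is the Katz $p$-adic $L$-function of a character that is trivial (resp.\ the cyclotomic character) on $G_K$, which by~\cref{Rubin Hida} still has vanishing $\mu$-invariant, and the only new feature --- the ``extra $T$'' in the corresponding main conjecture recorded after~\eqref{lambdadiff} --- affects the later comparison of characteristic ideals rather than the congruence~\cref{pLcongruence} itself, which should hold verbatim.
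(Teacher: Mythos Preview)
Your overall shape is right --- reduce $\cL_f$ modulo $p$ to the analogous object for the Eisenstein series congruent to $f$, then identify that with the Katz side --- and this is indeed the CGLS/Kriz strategy the paper follows. But two points in your plan do not match how the argument actually works, and one of them is a genuine gap.

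First, your proposed factorization $\cL_{E_{\phi,\psi}}=\mathscr E^{\iota}_{\phi,\psi}\,\mathscr E^{\iota}_{\psi,\phi}\cdot\cL_\phi\cdot\cL_\psi$ via the Rankin--Selberg decomposition of $L(E_{\phi,\psi}/K,\xi,\tfrac{k}{2})$ is not how the square arises. The BDP element $\cL_f$ is, by construction (Castella--Hsieh), the \emph{square} of a CM-evaluation of the $p$-depleted form; Kriz's congruence takes place at the level of this square root, and his direct computation of the CM-evaluation of the $p$-depleted Eisenstein series $G$ gives a \emph{single} Hecke $L$-value $L(\phi_K\xi\mathbf N_K^{k/2},0)$ (see the displayed formula for $\cL_G(\xi)$ in the paper). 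One then identifies $\cL_G$ with $\mathscr E^{\iota}_{\phi,\psi}\cdot\cL_\phi$ up to units and squares. So the $(\cL_\phi)^2$ comes from squaring the square-root congruence, not from $\cL_\phi\cdot\cL_\psi$ plus a functional equation. Your route via $\cL_\psi\equiv(\text{factors})\cdot\cL_\phi$ would need a precise anticyclotomic Katz functional equation modulo $p$ and a careful matching of periods for a non-cuspidal form in the BDP interpolation formula (which is stated for cusp forms); neither is supplied, and the latter is genuinely problematic.

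Second, and more to the point of this paper, your last paragraph waves away the degenerate cases with ``should hold verbatim.'' That is exactly where the new content lies. When $\phi|_{G_K}=\mathbf 1$, Kriz's hypotheses for \emph{full} Eisenstein descent can fail and only \emph{partial} descent is available. The paper's observation is that this still suffices: the difference between the two notions affects only the constant term of the Eisenstein series $G$, and the $p$-depletion $G^{[p]}$ used in the CM-evaluation is insensitive to the constant term. You should isolate this as the key step rather than treat it as a formality.
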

\begin{proof}
	See~\cite[Theorem 2.2.1]{CGLS}. Note that when $\phi|_{G_K}\ne\mathbf{1}$, we always have full Eisenstein descent by~\cite{Kri16}. If $\phi|_{G_K}=\mathbf{1}$, we would only get partial Eisenstein descent in some cases. However, the proof in~\cite{CGLS} essentially only requires partial Eisenstein descent because only the constant term of $G$ (a certain Eisenstein series in \textit{loc.\ cit.}) can be different and the $p$-depletion of $G$ does not see the constant term. 
	
	We remark here that the formula in~\cite[Theorem 37]{Kri16} now becomes\begin{align*}
		\cL_G(\xi)=\frac{\Omega_p^{2n}}{\Omega_K^{2n}}\cdot\frac{\Gamma(n+\frac{k}{2})\phi^{-1}(\sqrt{-D_K})\xi(\bar{\mathfrak{t}})t}{\mathfrak{g}(\phi)}\cdot \frac{1}{(2\pi i)^{n+\frac{k}{2}}\sqrt{D_K}^{n-\frac{k}{2}}}\\
	\times\Xi_{\xi^{-1}\mathbf{N}_K^{-\frac{k}{2}}}(\phi,\psi\omega^{-1},N_+,N_-,N_0)\cdot L(\phi_K\xi\mathbf{N}_K^\frac{k}{2},0),
		\end{align*}
 where we take $\chi^{-1}=\xi\mathbf{N}_K^\frac{k}{2},\ \psi_1=\phi,\ \psi_2=\phi^{-1}=\psi\omega^{-1}$, and $j=n-\frac{k}{2}$.
 Here we're using $\mathbf{N}_K(\bar{\mathbf{t}})=1$ is a unit since $p\nmid c$ because $t\mid N$.
 
 From~\cref{compperiods}, the above formula becomes
 \begin{align*}
 	\cL_G(\xi)=\frac{\Omega_p^{2n}}{\Omega_\infty^{2n}}\cdot\frac{\Gamma(n+\frac{k}{2})\phi^{-1}(\sqrt{-D_K})\xi(\bar{\mathfrak{t}})t}{\mathfrak{g}(\phi)}\cdot \frac{(2\pi i)^{n-\frac{k}{2}}}{\sqrt{D_K}^{n-\frac{k}{2}}}\\
 	\times\Xi_{\xi^{-1}\mathbf{N}_K^{-\frac{k}{2}}}(\phi,\psi\omega^{-1},N_+,N_-,N_0)\cdot L(\phi_K\xi\mathbf{N}_K^\frac{k}{2},0),
 \end{align*}
which agrees with the formula in~\cite[eq. (2.7)]{CGLS}. 
\end{proof}

Combining the above inputs, as in~\cite{CGLS}, we obtain the following:
\begin{theorem}\label{anacong}
	Assume that $\ol\rho_f^{\mathrm{ss}}=\F(\phi)\oplus\F(\psi)$ as $G_\Q$-modules, with the characters $\phi$, $\psi$ labeled so that $p\nmid cond(\phi)$. Then $\mu(\cL_f)=0$ and\[
	\lambda(\cL_f)=\lambda(\cL_\phi)+\lambda(\cL_\psi)+\sum_{w\in S}\{\lambda(\cP_w(\phi))+\lambda(\cP_w(\psi))-\lambda(\cP_w(f))\}.\]
\end{theorem}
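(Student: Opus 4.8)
The plan is to obtain \cref{anacong} from the congruence \cref{pLcongruence} exactly as the corresponding analytic comparison is obtained in \cite{CGLS}; the one ingredient that genuinely required modification in the locally trivial residually reducible case, namely \cref{pLcongruence} itself, has already been established above (partial Eisenstein descent being enough, since the $p$-depletion does not see the constant term of the relevant Eisenstein series). The argument then splits into three pieces: (a) the vanishing of $\mu(\cL_f)$; (b) reading $\lambda(\cL_f)=2\lambda(\mathscr{E}_{\phi,\psi})+2\lambda(\cL_\phi)$ off \cref{pLcongruence}; and (c) a reflection step converting this into the symmetric expression in the theorem.

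For (a) and (b), I would argue as follows. By Hida's theorem on the vanishing of the $\mu$-invariant of the Katz $p$-adic $L$-function (recalled in the proof of \cref{Rubin Hida}, and valid also when $\phi|_{G_K}=\mathbf{1}$, where one uses the shape $\Char(\fX_{\mathbf{1}})=(\cL_{\mathbf{1}}\cdot T)$ of the Main Conjecture) one has $\mu(\cL_\phi)=\mu(\cL_\psi)=0$. Moreover $\mathscr{E}_{\phi,\psi}$ is a finite product of local Euler-type factors at the primes of $S$, each of which is a nonzero element of $\Lambda$ with $\mu$-invariant $0$: it is a value $P_w(\ell^{-1}\gamma_w)$ of a local Euler polynomial, and this cannot vanish because the relevant Frobenius eigenvalues are roots of unity times a $p$-adic unit while $\ell$ is not a unit. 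Hence the reduction modulo $p$ of the right-hand side of \cref{pLcongruence} is a nonzero element of $\F\llbracket T\rrbracket$, which forces $\cL_f\not\equiv 0\pmod{p\Lambda^{\nr}}$, i.e.\ $\mu(\cL_f)=0$. As both sides of \cref{pLcongruence} now have vanishing $\mu$-invariant and the involution $\iota$ of $\Lambda$ preserves $\mu$- and $\lambda$-invariants, comparing $T$-adic valuations of the mod-$p$ reductions yields $\lambda(\cL_f)=2\lambda(\mathscr{E}_{\phi,\psi})+2\lambda(\cL_\phi)$.

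For (c), note that the formula in the theorem is equivalent to the identity $\lambda(\cL_\psi)-\lambda(\cL_\phi)=2\lambda(\mathscr{E}_{\phi,\psi})-\sum_{w\in S}\{\lambda(\cP_w(\phi))+\lambda(\cP_w(\psi))-\lambda(\cP_w(f))\}$. Its left-hand side is governed by the functional equation of the Katz $p$-adic $L$-function, which (using $\phi\psi=\omega$) relates $\cL_\psi$ to $\cL_\phi^{\iota}$ up to an explicit finite product $\mathscr{E}'$ of local interpolation factors at the primes of $S$ and $p$-adic units at $v,\ol{v}$; since all the relevant $\mu$-invariants vanish, this gives $\lambda(\cL_\psi)=\lambda(\cL_\phi)+\lambda(\mathscr{E}')$. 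The remaining work is to match the local factors of $\mathscr{E}_{\phi,\psi}$ and of the functional equation against $\cP_w(\phi)\cP_w(\psi)/\cP_w(f)$ at each split prime $w\in S$, via the Frobenius-eigenvalue relations for $\ol\rho_f|_{G_{K_w}}$ (whose semisimplification is $\F(\phi)\oplus\F(\psi)$ there); this is a purely local computation identical to the one in \cite[\S\,2]{CGLS}, and, crucially, it involves the characters only locally away from $v,\ol{v}$, hence is insensitive to whether $\phi|_{G_K}$ or $\psi|_{G_K}$ is trivial.

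Finally, I would remark that, unlike in \cref{algmain}, no correction term appears here: when $\psi|_{G_K}=\mathbf{1}$ the Katz $L$-function $\cL_\psi=\cL_{\mathbf{1}}$ carries a trivial zero, and the extra unit of $\lambda$-invariant it contributes is exactly the analytic counterpart of the ``$+1$'' on the algebraic side (and of the modified shape $\Char(\fX_{\mathbf{1}})=(\cL_{\mathbf{1}}\cdot T)$ of the Main Conjecture), so the displayed formula holds uniformly. The step I expect to need the most care — everything else being formal once \cref{pLcongruence} is in hand — is the local Euler-factor identity in (c) in the degenerate configurations, together with confirming that Hida's $\mu=0$ theorem and the functional equation used in \cite{CGLS} are genuinely available for the trivial Hecke character.
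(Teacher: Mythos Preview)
Your proposal is correct and follows the same approach as the paper, which does not spell out a proof but simply says the result follows from \cref{pLcongruence} ``as in \cite{CGLS}''; you have accurately unpacked that reference into its three constituent steps (Hida's $\mu=0$ for the Katz $L$-functions, reading off $\lambda$ from the mod-$p$ congruence, and the reflection via the Katz functional equation). Your closing remark correctly identifies why no ``$+1$'' appears here even when $\psi|_{G_K}=\mathbf{1}$, and the caveat you flag about the trivial-character case is exactly what the paper addresses elsewhere via \cite{deShalit}.
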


\begin{theorem}\label{anacomp}
	Assume that $\ol\rho_f^{ss}=\F(\psi)\oplus\F(\phi)$. Then $\mu(\cL_f)=\mu(\frX_f)=0$ and\[
	\lambda(\cL_f)=\lambda(\frX_f).\]
\end{theorem}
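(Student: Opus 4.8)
The plan is to obtain \cref{anacomp} as a purely formal consequence of the algebraic $\lambda$-comparison in \cref{algmain}, the analytic $\lambda$-comparison in \cref{anacong}, and Rubin's anticyclotomic Main Conjecture for imaginary quadratic characters. The $\mu$-invariant assertions require nothing new: $\mu(\frX_f)=0$ is part of \cref{algmain} and $\mu(\cL_f)=0$ is part of \cref{anacong}. So the only thing to prove is the equality $\lambda(\cL_f)=\lambda(\frX_f)$, and this will reduce to matching two ``off by one'' corrections.

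First I would record the Main Conjecture for the characters $\phi$ and $\psi$ in the precise form recalled in the proof of \cref{Rubin Hida}: if $\theta|_{G_K}\ne\mathbf{1}$ then $\Char(\frX_\theta)=(\cL_\theta)$, whereas if $\theta|_{G_K}=\mathbf{1}$ then $\Char(\frX_\theta)=(\cL_\theta\cdot T)$. Passing to $\lambda$-invariants, $\lambda(\frX_\theta)=\lambda(\cL_\theta)$ in the first case and $\lambda(\frX_\theta)=\lambda(\cL_\theta)+1$ in the second. Under the standing hypothesis $\phi|_{G_p}=\omega$ we have $\phi|_{G_K}\ne\mathbf{1}$, since $\omega|_{G_p}\ne\mathbf{1}$ because $[\Q_p(\mu_p):\Q_p]=p-1>1$; hence $\lambda(\frX_\phi)=\lambda(\cL_\phi)$ with no correction term. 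For $\psi$, the two possibilities $\psi|_{G_K}=\mathbf{1}$ and $\psi|_{G_K}\ne\mathbf{1}$ correspond, via the relation $\phi\psi=\omega$ on $G_K$, to $\phi|_{G_K}=\omega$ and $\phi|_{G_K}\ne\omega$ respectively --- exactly the dichotomy distinguished in \cref{imprimlambda} and \cref{algmain}.

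Then I would substitute. Write $\Sigma\colonequals\sum_{w\in S}\{\lambda(\cP_w(\phi))+\lambda(\cP_w(\psi))-\lambda(\cP_w(f))\}$; this is the common correction term occurring in both \cref{algmain} and \cref{anacong} (the formula in \cref{anacong} being symmetric in the two characters, so the difference of labelling conventions causes no conflict). If $\psi|_{G_K}\ne\mathbf{1}$, then $\lambda(\frX_\psi)=\lambda(\cL_\psi)$, and \cref{algmain} gives
\[
\lambda(\frX_f)=\lambda(\frX_\phi)+\lambda(\frX_\psi)+\Sigma=\lambda(\cL_\phi)+\lambda(\cL_\psi)+\Sigma=\lambda(\cL_f),
\]
the last equality by \cref{anacong}. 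If instead $\psi|_{G_K}=\mathbf{1}$ (so $\phi|_{G_K}=\omega$), then $\lambda(\frX_\psi)=\lambda(\cL_\psi)+1$, and \cref{algmain} gives
\[
\lambda(\frX_f)+1=\lambda(\frX_\phi)+\lambda(\frX_\psi)+\Sigma=\lambda(\cL_\phi)+\bigl(\lambda(\cL_\psi)+1\bigr)+\Sigma;
\]
cancelling the $1$'s and invoking \cref{anacong} again yields $\lambda(\frX_f)=\lambda(\cL_f)$. Together with the $\mu$-statements this proves \cref{anacomp}.

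The only \emph{genuinely delicate} point --- and the one I would check most carefully --- is the bookkeeping of these unit shifts: the extra $1$ in the algebraic $\lambda$-relation when $\phi|_{G_K}=\omega$ has to cancel exactly against the extra $1$ produced by the factor $T$ in the Main Conjecture $\Char(\frX_{\mathbf{1}})=(\cL_{\mathbf{1}}\cdot T)$ for the globally trivial character $\psi$. This cancellation is what makes $\lambda(\cL_f)=\lambda(\frX_f)$ hold uniformly, and in particular confirms that the Iwasawa Main Conjecture for $f$ extracted from this comparison is insensitive to the ordering of the characters in the filtration of $\ol\rho_f$, as anticipated in the footnote to \cref{SelfSelomega}. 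Beyond this bookkeeping there is no further analytic or homological input needed: everything rests on \cref{algmain}, \cref{anacong}, and the work of Rubin already cited.
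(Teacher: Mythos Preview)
Your proposal is correct and follows essentially the same approach as the paper: combine the algebraic comparison (\cref{algmain}), the analytic comparison (\cref{anacong}), and Rubin's Main Conjecture for the characters---including the modified form $\Char(\frX_{\mathbf{1}})=(\cL_{\mathbf{1}}\cdot T)$ for the globally trivial character---and verify that the extra $+1$ on the algebraic side in the case $\phi|_{G_K}=\omega$ cancels against the extra $+1$ coming from the factor $T$ in the Main Conjecture for $\psi=\mathbf{1}$. The paper's own proof is terser but makes exactly this argument; your write-up is simply more explicit about the bookkeeping (one cosmetic remark: you reuse the symbol $\Sigma$ for the sum of local correction terms, which clashes with its use in the paper for the set of places).
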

\begin{proof}
 The proof relies on the one-variable Iwasawa Main Conjectures for the characters which is already explained in~\cref{Rubin Hida}. We will add a little more detail. When $\phi|_{G_K}\notin\{\mathbf{1},\omega\}$, the proofs are identical to those in~\cite{CGLS}. Note also that in those cases the results on the algebraic side (see~\cref{algmain}) are also compatible. We explain how to modify the arguments to the case where $\psi|_{G_K}=\mathbf{1}$.
 
 It is essentially proved in~\cite{Rubin1991} that for any character $\theta$ of $G_K$, we have\[
 \Char(\H^1_{\cF_{\nr}}(K,M_\theta))=\Char(U_\infty/\bar{\mathscr{C}_\infty})^\theta,\] 
 where $U_\infty$ and $\mathscr{C}_\infty$ are the local units and elliptic units defined in \textit{op.\ cit.} As is mentioned in the remarks after Theorem 4.1 there, the further identification\[
 \Char(U_\infty/\bar{\mathscr{C}_\infty})^\theta=\Char(\cL_\theta)
 \] 
 is done in~\cite{CW78}. Notice that they need to assume (i) $p>3$; (ii) $p$ is not anomalous for their elliptic curve $E$; and (iii) $i\ne 0$ where $i$ is the power of $\chi$ acting on the isotypic components $(U_{\infty}/\bar{\mathscr{C}_\infty})^{(i)}$, with $\chi$ giving the canonical action of $\Gal(K(E[\pi])/K)$ on $E[\pi]$. Notice that our mod $\frp$ character $\theta$ is the reduction of $\chi$, where the trivial character $\mathbf{1}$ corresponds to $i=0$ and $\omega$ corresponds to $i=1$. 
 
 The above assumptions are not needed in a more general theorem~\cite[III.1.10]{deShalit} of de Shalit, but the result is different when $\theta|_{G_K}=\mathbf{1}$. We remark here that the $i=0$ case can also be studied directly using the idea of~\cite[Lemma 28]{Yager} where $(i_1,i_2)=(0,0)$, which also yields 
  \[ \Char(U_\infty/\bar{\mathscr{C}_\infty})^\mathbf{1}=\Char(\cL_\mathbf{1}\cdot T),
 \] 
 showing that $\lambda(\fX_{\mathbf{1}})=\lambda(\cL_\mathbf{1})+1$. Together with~\cref{algmain}, this yields the equation $\lambda(\fX_f)=\lambda(\cL_f)$, as expected.
\end{proof}

\begin{remark}\label[remark]{IMCwithtorsion}
	The above arguments imply that the standard Iwasawa Main Conjectures for the trivial character is different from others, namely the algebraic $\lambda$-invariant is bigger than the analytic $\lambda$-invariant by $1$. Actually, it is always predicted that some global torsion terms should appear (see, for example,~\cite{Greenbergmotive}). However, in our setting, even if $f$ has global torsion, it must be finite (from~\cref{H0 of V/T}\,(iii)) and will not affect the $\lambda$-invariants, so our use of Perrin-Riou's formula in section~\ref{sec:selmer-groups-of-rhof} is justified.
\end{remark}

\section{Proofs of the Iwasawa Main Conjectures}

We will directly adopt the notations and the arguments from~\cite{CGLS} to prove the main theorems of this paper, some of which are further generalized in~\cite{CGS}. More precisely, we will prove two Iwasawa Main Conjectures at a time. In the Kolyvagin system argument, we need to replace their assumption $E(K)[p]=0$ by $\H^0(K,\ol\rho_f)=0$ so that their arguments apply to arbitrary weight modular forms. By the arguments in the beginning of~\cref{sec:selmer-groups-of-rhof}, we are allowed to make this assumption. We will list the intermediate results we need and comment on what should and should not be changed.

Let $f \in S_k(\Gamma_0(N))^\new$ be a newform and let $\Z[f]$ be the coefficient ring of $f$ with field of fractions $\Q(f)$. Let $F$ be a finite extension of the completion of $\Q(f)$ at a chosen prime above $p\nmid 2N$ with ring of integers $\cO$. Let $\pi \in \cO$ be a uniformizer and $\F \defeq \cO/\pi$ be the residue field of $\cO$. Throughout this section, assume that
\begin{equation*}\tag{h.1}\label{h1}
\H^0(K,\ol\rho_f)=0.
\end{equation*}

As before, $\Gamma=\Gal(K_\infty/K)$ denote the Galois group of the anticyclotomic $\bZ_p$-extension of $K$. We let $\alpha\colon \Gamma\to R^\times$ be a character with values in the ring of integers $R$ of a finite extension $\Phi/\bQ_p$. Let\[r \defeq \rk_{\cO}R.\]
Let $\rho_f\colon G_\Q\to \GL_2(F)$ be the unique semisimple $p$-adic Galois representation unramified outside $pN$ and such that $\rho_f(\Frob_\ell)$ has characteristic polynomial\[T^2-a_\ell \ell^{1-\frac{k}{2}}T+\ell\] for all $\ell\nmid pN$. 

Let $T$ be an Galois stable lattice and consider the $G_K$-modules\[
T_\alpha \defeq T\otimes_\cO R(\alpha),\ \  V_\alpha \defeq T\otimes_\cO\Phi, \ \ A_\alpha \defeq T\otimes_R\Phi/R\cong V_\alpha/T_\alpha.\]

Let $\gamma\in\Gamma$ be a topological generator, and set $C_\alpha\coloneq\begin{cases} v_p(\alpha(\gamma)-\alpha^{-1}(\gamma)) & \alpha\ne\alpha^{-1},\\
	0 &  \alpha=\alpha^{-1}.
\end{cases}$

One last thing we need to explain is how to define the filtration on $V_f,T_f$ and $A_f$. Recall that in the case of elliptic curves one defines $\Fil_w^+(T_pE)$ as the kernel of the reduction map $T_pE\to T_p\tilde{E}$ at a place $w$ of $K$ above $p$. For $p$-ordinary modular forms (good ordinary or multiplicative), one can define $\Fil_w^+(T_f)$ on any Galois stable lattice $T_f$ of $V_f$ (without self-dual twist) by the characterization that $\Fil^-(T_f)\defeq T_f/\Fil^+(T_f)$ is the unique quotient of $T_f$ that is unramified. However, note that after a self-dual twist for weight $k>2$, the $G_{\bQ_p}$ action on $\Fil^-(T_f)$ (by abuse of notation) will be given by an unramified character multiplied by a non-trivial power of the cyclotomic character $\chi$. 

One then defines the filtration on $T_\alpha$, $V_\alpha$, $A_\alpha$, $\bT$ and $M_f$ (or $\bA$) in the same way as they do in~\cite{CGLS}, and use these in the definition of the ordinary Selmer groups.

The first remarkable result from~\cite{CGLS} is the following theorem generalizing~\cite[Theorem 2.2.10]{How2004}.
\begin{theorem}\label{main}
	Suppose $\alpha\neq 1$ and that there is a Kolyvagin system $\kappa_\alpha=\{\kappa_{\alpha,n}\}_{n\in\mathcal{N}}\in\KS(T_\alpha,\mathcal{F}_{\ord},\mathcal{L}_f)$ with $\kappa_1$ being non-torsion. Then $\H^1_{\cF_{\ord}}(K,T_\alpha)$ has rank one, and there is a finite $R$-module $M_\alpha$ such that
	\begin{equation*}
		\H^1_{\cF_{\ord}}(K,A_\alpha)\cong (\Phi/R)\oplus M_\alpha\oplus M_\alpha
	\end{equation*}
	with\begin{equation*}
		\length_R(M_\alpha)\leq\length_R(\H^1_{\cF_{\ord}}(K,T_\alpha)/R\cdot \kappa_{\alpha,1})+E_\alpha
	\end{equation*}
	for some constant $E_\alpha\in\bZ_{\geq 0}$ depending only on $C_\alpha, T_f$, and $\rk_{\cO}(R)$.
\end{theorem}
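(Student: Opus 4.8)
The plan is to run the Kolyvagin system argument of Howard~\cite{How2004}, in the anticyclotomic formulation used in~\cite{CGLS}, with the now reducible residual representation $\ol\rho_f$ in place of $E[p]$ and with hypothesis~\eqref{h1} in place of $E(K)[p]=0$. The first task is to assemble the cohomological inputs that let this machinery run in the present generality. By~\cref{rhof crystalline} the representation $V_f$ is self-dual and crystalline at $v,\ol v$, so that, after untwisting by $\alpha^{-1}$, the ordinary Selmer structure $\cF_{\ord}$ on $T_\alpha$ is self-dual up to the defect measured by $C_\alpha$ (exactly as in~\cite{How2004,CGLS}), and the usual global Euler-characteristic count shows it has core rank one. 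Hypothesis~\eqref{h1} is used, via~\cref{omega first non-split} and~\cref{H0 of V/T}, to guarantee that $\H^0(K,A_\alpha)=0$ and that the local invariants $\H^0(K_w,A_\alpha)$ appearing in the definition of $\cF_{\ord}$ are finite; in Howard's setting these latter groups vanish, and it is the passage from vanishing to ``finite but controlled'' that is responsible for the error constant $E_\alpha$.

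With these inputs the argument proceeds in two stages. Since $\kappa_{\alpha,1}$ is non-torsion, the rank part of Mazur--Rubin's Kolyvagin system bound (cf.~\cite{How2004}) gives that $\H^1_{\cF_{\ord}}(K,T_\alpha)$ is an $R$-module of rank one; this is formal once the Selmer structure has core rank one and $R$ is a discrete valuation ring. For the structure of the divisible Selmer group I would carry out the Kolyvagin derivative argument over the admissible classes $\kappa_{\alpha,n}$, $n\in\cN$: these produce global classes whose localizations at the Kolyvagin primes dividing $n$ are governed by $\kappa_{\alpha,1}$, and feeding this into Poitou--Tate global duality bounds the torsion submodule of $\H^1_{\cF_{\ord}}(K,A_\alpha)$ by $\length_R\big(\H^1_{\cF_{\ord}}(K,T_\alpha)/R\kappa_{\alpha,1}\big)$ up to a finite error. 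Self-duality of $V_f$ makes the relevant Cassels-type (Flach) pairing on that torsion submodule alternating up to a correction controlled by $C_\alpha$, which, after absorbing a bounded amount into a single module $M_\alpha$, forces the torsion part to have the shape $M_\alpha\oplus M_\alpha$; the corank-one computation --- the tautological sequence $0\to T_\alpha\to V_\alpha\to A_\alpha\to 0$ and finiteness of the relevant $\H^0$'s --- then supplies the divisible summand $\Phi/R$. Collecting these estimates yields the asserted decomposition together with the inequality, with $E_\alpha$ depending only on $C_\alpha$, $T_f$, and $\rk_\cO R$.

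The main obstacle is the bookkeeping of $E_\alpha$. In Howard's argument the auxiliary local $\H^0$'s, the imperfection of local Tate duality at the Kolyvagin primes, and the discrepancy between $\H^1_{\cF_{\ord}}$ and the associated strict and relaxed Selmer groups all vanish; here each contributes a nonzero but finite amount, and the delicate point is to check that their total is bounded by one constant independent of the conductor of $\alpha$ and of the Kolyvagin level $n$, depending only on $\rk_\cO R$, on $C_\alpha$, and on the lattice $T_f$. The ingredient that makes this uniform is the choice of lattice from~\cref{omega first non-split}, for which $\ol\rho_f$ has no trivial $G_K$-subrepresentation: this keeps $\#\H^0(K_w,\ol\rho_f)$ for $w\mid p$, the cokernels of the twisted localization maps, and the comparison terms among the various Selmer conditions all bounded independently of $\alpha$.
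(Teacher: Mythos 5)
Your proposal follows the right general template — run Howard's anticyclotomic Kolyvagin system argument as packaged in~\cite{CGLS} with hypothesis~\eqref{h1} in place of $E(K)[p]=0$, invoke the Cassels--Flach pairing and Poitou--Tate duality, and track an error term caused by the failure of exact self-duality. That is essentially the shape of the paper's proof: it cites the ``highly axiomatized'' arguments of~\cite{CGLS} and~\cite{CGS} nearly verbatim and asserts that everything transfers, with exactly one new ingredient.

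The piece your proposal does not supply — and which the paper explicitly singles out as the genuinely new input — is \cref{C1}: the image $\im(\rho_f|_{G_{K_\infty}})$ contains a nontrivial scalar matrix (in fact infinitely many). The argument of CGLS Proposition~3.6 requires this to produce the constant $C_1 = \min\{v_p(u-1) : u \in \cO^\times \cap \im(\rho_f|_{G_{K_\infty}})\}$, and $E_\alpha$ is built out of $C_\alpha$ and $C_1$ and $\rk_\cO(R)$. For $k=2$ and $\Z[f]=\Z$ the scalar matrix is found easily; here, for arbitrary weight and larger coefficient ring, one needs Bogomolov's openness theorem for Hodge--Tate Galois images, irreducibility of $\rho_f$ (Ribet), and the classification of algebraic subgroups of $\GL_2$. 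Without this lemma the Kolyvagin bound does not close, since there is no big-image assumption to fall back on. Relatedly, your attribution of $E_\alpha$ to the local groups $\H^0(K_w,A_\alpha)$ being ``finite rather than zero'' is not accurate: under~\eqref{h1} (assumed throughout the relevant section, justified via the choice of lattice from \cref{omega first non-split}) the global torsion $\H^0(K,\ol\rho_f)$ vanishes exactly as in Howard, and the error constant is controlled entirely by $C_\alpha$ and $C_1$. The regime with nonzero $\H^0(K,\ol\rho_f)$ is what Appendix~\ref{sec:appendix} treats, and there the structure theorem changes substantially — that is not what is in play in \cref{main}.
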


In~\cite[section 6]{CGS}, the above result was generalized so that the error term can be made independent of $\alpha$, at the expense of requiring $\alpha$ to be `sufficiently close to $1$'. More precisely, they have

\begin{theorem}\label{mainCGS}
	Assume $\alpha\equiv 1 \pmod{\varpi^m}$. Then there exist non-negative integers $\mathcal{M}$ and $\mathcal{E}$ depending only on $T_pE$ and $\rk_{\bZ_p}(R)$ such that if $m\ge \mathcal{M}$ and if there is a Kolyvagin system $\kappa_\alpha=\{\kappa_{\alpha,n}\}_{n\in\mathcal{N}}\in\KS(T_\alpha,\mathcal{F}_{\ord},\mathcal{L}_f)$ with $\kappa_1$ being non-torsion. Then $\H^1_{\cF_{\ord}}(K,T_\alpha)$ has rank one, and there is a finite $R$-module $M_\alpha$ such that
	\begin{equation*}
		\H^1_{\cF_{\ord}}(K,A_\alpha)\cong (\Phi/R)\oplus M_\alpha\oplus M_\alpha
	\end{equation*}
	with\begin{equation*}
		\length_R(M_\alpha)\leq\length_R(\H^1_{\cF_{\ord}}(K,T_\alpha)/R\cdot \kappa_{\alpha,1})+\mathcal{E}
	\end{equation*}
	and the `error term' $\mathcal{E}$ is independent of $m$.
\end{theorem}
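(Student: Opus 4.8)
The plan is to follow \cite[Section~6]{CGS}, where the statement is established for elliptic curves, and to check that the only input requiring modification is the replacement of the condition $E(K)[p]=0$ used there by our hypothesis~\eqref{h1}, i.e.\ $\H^0(K,\ol\rho_f)=0$. The argument proceeds in three steps.

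First I would invoke the finite-level bound of \cref{main}, Howard's anticyclotomic Kolyvagin system argument: the existence of a Kolyvagin system $\kappa_\alpha\in\KS(T_\alpha,\mathcal{F}_{\ord},\mathcal{L}_f)$ with $\kappa_{\alpha,1}$ non-torsion forces the core Selmer rank to equal $1$, so $\H^1_{\cF_{\ord}}(K,T_\alpha)$ has $R$-rank one, $\H^1_{\cF_{\ord}}(K,A_\alpha)$ has $R$-corank one, and the structure theorem of \cref{main} produces the decomposition $\H^1_{\cF_{\ord}}(K,A_\alpha)\cong(\Phi/R)\oplus M_\alpha\oplus M_\alpha$ together with the length bound with an error term $E_\alpha$ depending only on $C_\alpha$, $T_f$ and $r$. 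Here~\eqref{h1} enters through its consequence $\H^0(K,A_\alpha)=0$, which is what guarantees that the non-degeneracy hypotheses of \cref{main} hold in the same way that $E(K)[p]=0$ secures them in \cite{CGLS}; this is the only place where the change of hypothesis is visible.

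Secondly, to upgrade $E_\alpha$ to a constant $\mathcal{E}$ independent of $m$, I would run the entire descent through the finite quotients $T_\alpha/\varpi^{j}$. When $\alpha\equiv 1\pmod{\varpi^m}$ there is a $G_K$-equivariant isomorphism $T_\alpha/\varpi^{j}\cong(T_f\otimes_\cO R)/\varpi^{j}$ for every $j\le m$, compatible with the ordinary filtration at $v$; consequently each residual quantity entering the error --- the defect of the local condition at $v\mid p$, the Tamagawa-type defects at the bad primes $\ell\mid N$, and the error from approximating the Selmer group by the classes indexed by finitely many Kolyvagin primes (controlled by a Chebotarev argument over an auxiliary extension) --- stabilizes once $m$ exceeds a threshold $\mathcal{M}$ depending only on $T_f$ and $r$, and one takes $\mathcal{E}$ to be the common bound so obtained. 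In particular the term $C_\alpha=v_p(\alpha(\gamma)-\alpha^{-1}(\gamma))$, which grows with $m$ in \cref{main}, disappears: instead of bounding the discrepancy between the two copies of $M_\alpha$ coming from the self-duality twist $\alpha\leftrightarrow\alpha^{-1}$ by the crude factor $\varpi^{C_\alpha}$, one compares these two halves directly at level $\varpi^m$, exactly as in \cite[Section~6]{CGS}.

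The step I expect to be the main obstacle is the local analysis at $v\mid p$. Because in our reducible situation $\ol\rho_f|_{G_v}$ can contain the trivial character (we have arranged $\psi|_{G_p}=\mathbf{1}$), the group $\H^0(K_v,A_\alpha)$ need not vanish, a phenomenon absent from \cite{CGS}; the remedy is that the arguments of \cref{H0 of V/T} and \cref{kerresf}, applied to the twisted modules $A_\alpha$, still show $\H^0(K_v,A_\alpha)$ and $\ker(\res_{M_f})$ to be finite and cyclic with lengths bounded in terms of $T_f$ alone, so that the local error at $p$ contributes only a bounded, $m$-independent amount to $\mathcal{E}$. For the Chebotarev term, which in the reducible case acquires the class-group-type contributions analyzed in Section~\ref{sec:algebraic-side}, I would appeal to the absence of nonzero finite $\Lambda$-submodules established in \cref{H1ofMtheta} and \cref{Lambda invariants} to keep the relevant $\H^1$-estimates uniform in $m$. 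Granting these two points, the remainder of the argument of \cite[Section~6]{CGS} carries over with only formal changes.
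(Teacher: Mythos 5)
Your overall strategy --- transport the axiomatized argument of \cite[\S 6]{CGS}, replacing the hypothesis $E(K)[p]=0$ by~\eqref{h1}, and note that the congruence $\alpha\equiv 1\pmod{\varpi^m}$ lets one compare $T_\alpha/\varpi^j$ with $(T_f\otimes_\cO R)/\varpi^j$ uniformly so that $C_\alpha$ drops out of the error term --- is exactly the route the paper takes. However, you have misidentified the one ingredient that genuinely requires a new argument in the residually reducible setting, and the substitutes you propose would not supply it. The error terms $E_\alpha$ and $\mathcal{E}$ in \cref{main} and \cref{mainCGS} are ultimately controlled by the constant $C_1$ of \cite[Proposition~3.6]{CGLS}, which is defined as $\min\{v_p(u-1):u\in U\}$ for $U=\cO^\times\cap\im(\rho_f|_{G_{K_\infty}})$; its finiteness requires exhibiting a \emph{non-trivial scalar matrix} in the image of $\rho_f|_{G_{K_\infty}}$. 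In the irreducible-residual setting this comes for free from big-image results, but here $\ol\rho_f$ is reducible, so one must argue differently: the paper's \cref{C1} does this by combining Bogomolov's theorem (the image of a Hodge--Tate representation is open in its algebraic envelope), Ribet's irreducibility of $\rho_f$, the fact that $\det\rho_f=\chi^{k-1}$ has infinite image, and the classification of algebraic subgroups of $\GL_2$ to conclude that the envelope contains $F^\times$, hence the image contains an open subgroup of the scalars.

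Your proposal never addresses this. The local finiteness of $\H^0(K_v,A_\alpha)$ and $\ker(\res_{M_f})$ from \cref{H0 of V/T} and \cref{kerresf}, and the almost-divisibility statements of \cref{H1ofMtheta} and \cref{Lambda invariants}, concern Selmer and Iwasawa modules; they say nothing about the image of $\rho_f$ in $\GL_2(\cO)$, which is what the Chebotarev step in the Kolyvagin-system machinery consumes (one needs Frobenius elements acting by prescribed scalars close to $1$, and $C_1$ quantifies how close). Without \cref{C1}, neither $E_\alpha$ nor $\mathcal{E}$ is well defined, so the length bound cannot be established. Once that lemma is in place, the rest of your sketch (and of \cite[\S 6]{CGS}) does carry over essentially verbatim, which is precisely what the paper asserts.
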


We mention that~\cref{main} (in combination with our~\cref{algmain} and~\cref{anacomp}) is sufficient as an ingredient for us to get a desired BSD formula in rank $1$, but together with~\cref{mainCGS} it leads to a stronger result on the anticyclotomic Main Conjecture, in the sense that the divisibility in~\cite[Theorem 3.4.1]{CGLS} holds more generally without the need to invert $\gamma-1$ (see~\cite[Theorem 6.5.1]{CGS}).   

Thanks to the highly axiomatized arguments from~\cite{CGLS}, \cite{CGS}, we only need to explain how to get the error term $C_1$ in \textit{op.\ cit.}, as everything else can be directly extended to our more general setting. 

According to the proof of~\cite[Proposition 3.6]{CGLS}, we only need to find one non-trivial scalar matrix in $\cO^\times\cap \im(\rho_f|_{G_{K_\infty}})$.

\begin{lemma}\label[lemma]{C1}	The intersection $U=\cO^\times\cap \im(\rho_f|_{G_{K_\infty}})$ contains infinitely many scalar matrices. Here by abuse of notation $\cO^\times$ denotes $\cO^\times\cdot I_2$, the invertible scalar matrices in $\GL_2(\cO)$.
\end{lemma}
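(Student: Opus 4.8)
The plan is to pass to $p$-adic Lie algebras. Write $G \defn \rho_f(G_\Q)$ and $N \defn \rho_f(G_{K_\infty})$, viewed as closed subgroups of $\GL_2(\cO)$. Since the anticyclotomic $\Z_p$-extension $K_\infty/\Q$ is Galois (complex conjugation acts by $-1$ on $\Gamma$), we have $G_{K_\infty} \trianglelefteq G_\Q$, hence $N \trianglelefteq G$, and $G/N$ is a continuous quotient of $\Gal(K_\infty/\Q)$, which is an extension of $\Gal(K/\Q) \isom \Z/2$ by $\Gamma \isom \Z_p$ and is therefore a compact $p$-adic Lie group of dimension $1$. In particular $\mathfrak{n} \defn \mathrm{Lie}(N)$ is an ideal of $\mathfrak{g} \defn \mathrm{Lie}(G) \subseteq \mathfrak{gl}_2(F)$ of codimension $\leq 1$, and $\mathfrak{n}$ is stable under $\operatorname{Ad}(\rho_f(\tau))$ for $\tau$ a complex conjugation. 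I would next observe that $\det\circ\,\rho_f = \chi$ restricts on $G_{K_\infty}$ to a character with open image in $\Z_p^\times$ — because $K_\infty$ is linearly disjoint from the cyclotomic $\Z_p$-extension of $K$, cf.~\cref{local behavior of anticyclotomic Zp extension at p} — so $\operatorname{tr}(\mathfrak{n}) = d(\det)(\mathfrak{n}) \neq 0$; since $\det$ is defined over $\cO$, after $\otimes_{\Q_p}\Qbar_p$ and decomposing $\mathfrak{g}\otimes\Qbar_p$ over the $\Qbar_p$-points of the coefficient ring, the trace is nonzero on every factor, i.e.\ $\mathfrak{n}\otimes\Qbar_p$ is not contained in $\mathfrak{sl}_2$ on any factor.

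The heart of the argument is then to identify $\mathfrak{g}$ and deduce $I_2 \in \mathfrak{n}$, splitting into two cases. If $f$ has no complex multiplication, the big image theorems for modular Galois representations (Serre, when $k = 2$ and $\Z[f] = \Z$; Ribet and Momose in general) give that $\mathfrak{g}\otimes\Qbar_p$ contains $\mathfrak{sl}_2(\Qbar_p)$ on each factor; I would then fix a factor, pick $X \in \mathfrak{n}\otimes\Qbar_p$ with $\operatorname{tr} X \neq 0$, and write $X = cI_2 + Y$ with $c = \tfrac12\operatorname{tr} X \neq 0$ and $Y \in \mathfrak{sl}_2$ — if $Y = 0$ we are done, and otherwise $[\mathfrak{sl}_2, X] = [\mathfrak{sl}_2, Y] \subseteq \mathfrak{n}$ is nonzero, so the ideal it generates in the simple algebra $\mathfrak{sl}_2$ is all of $\mathfrak{sl}_2$, forcing $\mathfrak{sl}_2 \subseteq \mathfrak{n}$ and hence $cI_2 = X - Y \in \mathfrak{n}$, i.e.\ $I_2 \in \mathfrak{n}$. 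If $f$ has complex multiplication by an imaginary quadratic field $K'$, then $\rho_f \isom \operatorname{Ind}_{K'}^\Q\xi$, so choosing a basis diagonalizing $\rho_f|_{G_{K'}}$ one has $\mathfrak{g} \subseteq$ (diagonal matrices), while $\rho_f(\tau)$ becomes the coordinate swap $\left(\begin{smallmatrix} 0 & 1 \\ 1 & 0 \end{smallmatrix}\right)$ (rescale the basis using $\rho_f(\tau)^2 = I_2$), whose fixed subspace inside the diagonal torus is exactly the scalar line $\Qbar_p\,I_2$; since $\mathfrak{n}$ is $\operatorname{Ad}(\rho_f(\tau))$-stable and $\operatorname{tr}(\mathfrak{n}) \neq 0$, its $(+1)$-eigenspace is a nonzero subspace of $\Qbar_p\,I_2$, so $I_2 \in \mathfrak{n}$. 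In either case $I_2 \in \mathfrak{n}\otimes\Qbar_p$, and as $\mathfrak{n}$ is a $\Q_p$-subspace and $I_2$ is rational, $I_2 \in \mathfrak{n} = \mathrm{Lie}(N)$.

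To finish, $\mathrm{Lie}(N) \ni I_2$ implies $N$ contains $\exp$ of a small ball of $\Q_p\,I_2$, hence an open — in particular infinite — $1$-dimensional subgroup of the center $\cO^\times\cdot I_2$ of $\GL_2(\cO)$; choosing a non-torsion $c$ with $cI_2 = \rho_f(g)$, $g \in G_{K_\infty}$, the matrices $c^n I_2 = \rho_f(g^n) \in N$ for $n \geq 1$ give the required infinite family in $U$. The one genuinely nontrivial ingredient is the big image theorem for $\rho_f$, together with the (standard) dichotomy that isolates the CM case; the rest is formal, the only point needing care being the nonvanishing of the trace on each $\Qbar_p$-factor, which is where the rationality of $\det\rho_f = \chi$ and its open image on $G_{K_\infty}$ enter.
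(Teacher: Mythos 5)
Your proof is correct in substance, but it takes a genuinely different route from the paper's. The paper first invokes the equality $\rho_f(G_\Q)=\rho_f(G_{K_\infty})$ from~\cite{CGLS}, and then argues purely on the level of algebraic envelopes: by Bogomolov's theorem the image of the Hodge--Tate representation $\rho_f$ is open in its Zariski closure, and the classification of algebraic subgroups of $\GL_2$ shows that an irreducible subgroup with infinitely many determinants must be $F^\times$ times a ``minimal'' group, hence contains (and by openness the image itself contains) infinitely many scalars. You instead only use that $G_{K_\infty}\trianglelefteq G_\Q$ with one-dimensional quotient, so that $\mathfrak{n}=\mathrm{Lie}(\rho_f(G_{K_\infty}))$ is an ideal of $\mathfrak{g}$ of codimension $\leq 1$ with $\operatorname{tr}(\mathfrak{n})\neq 0$ (openness of the cyclotomic character on $G_{K_\infty}$), and then locate a scalar in $\mathfrak{n}$ by an ideal-in-$\mathfrak{sl}_2$ argument (non-CM, via Ribet--Momose) or by taking $\operatorname{Ad}(\rho_f(\tau))$-invariants in the diagonal torus (CM). Your version is more hands-on and does not need Bogomolov, the subgroup classification, or the equality $\rho_f(G_\Q)=\rho_f(G_{K_\infty})$ (normality suffices); the paper's is shorter and uniform across the CM and non-CM cases.

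One technical point needs rewording, though it does not affect the validity of the approach. In the non-CM case, Ribet--Momose give $\mathfrak{g}\supseteq\mathfrak{sl}_2(F')$ for the subfield $F'\subseteq F$ cut out by the inner twists; after $\otimes_{\Q_p}\Qbar_p$ this is a product of \emph{diagonally embedded} copies of $\mathfrak{sl}_2(\Qbar_p)$, not the $\mathfrak{sl}_2$ of a single factor of $F\otimes_{\Q_p}\Qbar_p$, so ``fix a factor and bracket against its $\mathfrak{sl}_2$'' is not literally available (knowing only that each \emph{projection} of $\mathfrak{n}$ contains a scalar would not suffice). The fix is to run your argument over $F$ itself: take $X\in\mathfrak{n}$ with $\operatorname{tr}X\in\Q_p^\times$ and write $X=cI_2+Y$; if $Y\neq 0$ then $[\mathfrak{sl}_2(F'),Y]\neq 0$ since the centralizer of $\mathfrak{sl}_2(F')$ in $\mathfrak{gl}_2(F)$ is $F\cdot I_2$, and the $\mathfrak{sl}_2(F')$-submodule of $\mathfrak{sl}_2(F)\isom\mathfrak{sl}_2(F')\otimes_{F'}F$ it generates lies in $\mathfrak{n}$ and contains $Y$, whence $cI_2\in\mathfrak{n}$. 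Similarly, in both cases what one actually produces is a nonzero element of $F\cdot I_2$ in $\mathfrak{n}$ rather than $I_2$ itself, but that is all the final exponentiation step requires.
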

\begin{proof}
	Note that by the discussion above~\cite[Lemma 3.3.3]{CGLS}, we have\[
	\rho_f(G_\bQ)=\rho_f(G_{K_\infty}).\]Since $\rho_f$ is a Hodge--Tate representation, Bogomolov in~\cite{Bogomolov} showed the image (possibly after embedding $\GL_2(F)$ in $\GL_{2n}(\bQ_p)$ for some $n$) is open in its algebraic envelope. Since $\rho_f$ is irreducible by~\cite{RibetNebentypus}, so is its natural projection in $\PGL_2(F)$. We also know $\im(\rho_f)$ must be infinite. Actually, there will be infinitely many determinants since $\det\rho_f=\chi^{k-1}$ from \textit{op.\ cit.} Now by the classification of all algebraic subgroups of $\GL_2(F)$ (see~\cite{algsubgp}, whose results work over any algebraically closed field of characteristic $0$), we see that the algebraic envelope of $\im(\rho_f)$ must be $F^\times$ times a minimal group (in Theorem $4$ of \textit{op.\ cit.}, only (2)(a)(b) have infinitely many determinants, yet they are reducible), therefore it must contain infinitely many scalar matrices. So does the image.
\end{proof}
\begin{remark}
	It is mentioned in~\cite{CGLS} that $f$ has no CM by $K$ under their assumptions. When $f$ has no CM, there is a more straightforward proof of this openness result, using Momose's results (see for example~\cite{Ribet1985}) generalizing the open image results of Serre.
\end{remark}
For $U=\cO^\times\cap \im(\rho_f)$ as in~\cref{C1}, let\[
C_1\defeq\min\{v_p(u-1):u\in U\}.\]
Since $U$ is an open subgroup of $\cO^\times$, $0\leq C_1<\infty$.\\

The rest of the proof of~\cref{main} and \cref{mainCGS} can be directly borrowed from the corresponding papers. We next explain how to apply Iwasawa theory to prove the Main Conjectures. First we study a consequence of~\cref{main}, \cref{mainCGS}.

\begin{theorem}\label{HeegMC}
	Suppose there is a Kolyvagin system $\kappa\in\KS(\bT,\cF_\Lambda,\sL_f)$ where $\kappa_1$ is non-torsion. Then $\H^1_{\cF_\Lambda}(K,\bT)$ has $\Lambda$-rank one, and there is a finitely generated torsion $\Lambda$-module $M$ such that
	\begin{enumerate}
		\item $\mathcal{X}\sim\Lambda \oplus M\oplus M$,
		\item $\Char_\Lambda(M)$ divides $\Char_\Lambda(\H^1_\cF(K,\bT)/\Lambda \kappa_1)$ in $\Lambda$.
	\end{enumerate}
\end{theorem}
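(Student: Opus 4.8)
The plan is to deduce this theorem from~\cref{main} and~\cref{mainCGS} by the standard limiting argument over characters $\alpha$ of $\Gamma$, exactly as in~\cite[§3.3]{CGLS} and~\cite[§6]{CGS}, the only genuine modification being the replacement of their hypothesis $E(K)[p]=0$ by the assumption~\eqref{h1} that $\H^0(K,\ol\rho_f)=0$, which was already arranged in the discussion at the start of~\cref{sec:selmer-groups-of-rhof}. First I would recall that a Kolyvagin system $\kappa \in \KS(\bT,\cF_\Lambda,\sL_f)$ specializes, for each continuous character $\alpha\colon\Gamma\to R^\times$, to a Kolyvagin system $\kappa_\alpha \in \KS(T_\alpha,\cF_{\ord},\cL_f)$; the non-torsionness of $\kappa_1$ propagates to $\kappa_{\alpha,1}$ being non-torsion for all but finitely many $\alpha$ (equivalently, for $\alpha$ outside a proper Zariski-closed subset, or for $\alpha$ sufficiently close to $1$ in the sense of~\cref{mainCGS}).

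Next, applying~\cref{main} (or~\cref{mainCGS} for $\alpha \equiv 1 \pmod{\varpi^m}$ with $m \geq \cM$) to each such $\alpha$ shows that $\H^1_{\cF_{\ord}}(K,T_\alpha)$ has $R$-rank one and
\[
\H^1_{\cF_{\ord}}(K,A_\alpha)\cong(\Phi/R)\oplus M_\alpha\oplus M_\alpha
\]
with $\length_R(M_\alpha)\leq\length_R(\H^1_{\cF_{\ord}}(K,T_\alpha)/R\kappa_{\alpha,1})+\cE$ for an error term $\cE$ independent of $\alpha$ (this uniformity is exactly what~\cref{mainCGS} buys us, and it is what makes the passage to the limit work without inverting $\gamma-1$). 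One then applies a control theorem — here the anticyclotomic control theorem, in the residually reducible form with possibly non-trivial (but finite, by~\cref{H0 of V/T}\,(iii)) torsion recorded in Appendix~\ref{appB} — to compare $\H^1_{\cF_\Lambda}(K,\bT)\otim_\Lambda R$ and $\H^1_{\cF_\Lambda}(K,M_f)^\vee\otimes_\Lambda R$ with their specializations at $\alpha$. Since the $R$-ranks are $1$ for a Zariski-dense set of $\alpha$, the $\Lambda$-rank of $\H^1_{\cF_\Lambda}(K,\bT)$ is one, and $\cX\defeq\H^1_{\cF_\Lambda}(K,M_f)^\vee$ has a pseudo-isomorphism $\cX\sim\Lambda\oplus M\oplus M$ for a finitely generated torsion $\Lambda$-module $M$: the two copies of $M$ come from taking the inverse limit of the two copies of $M_\alpha$, and the structural shape $(\Phi/R)\oplus M_\alpha\oplus M_\alpha$ forces, by the classification of finitely generated torsion $\Lambda$-modules and the fact that the rank-one part accounts for the $\Lambda$ summand, exactly this form.

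For part~(2), the inequality $\length_R(M_\alpha)\leq\length_R(\H^1_{\cF_{\ord}}(K,T_\alpha)/R\kappa_{\alpha,1})+\cE$ translates, after specialization and using that $\length_R$ of a specialization computes (up to the bounded error $\cE$) the order of the corresponding quotient of $\Lambda$-modules at $\alpha$, into the divisibility $\Char_\Lambda(M)\mid\Char_\Lambda(\H^1_{\cF_\Lambda}(K,\bT)/\Lambda\kappa_1)$: one checks the divisibility of characteristic ideals by checking it after specializing at infinitely many height-one primes (the kernels of the $\alpha$), which suffices since $\Lambda$ is a UFD and a pseudo-null module has trivial characteristic ideal. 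The main obstacle I anticipate is not the formal limiting argument but the verification that the control theorem and the Kolyvagin-system machinery of~\cite{How2004,CGLS,CGS} genuinely survive the presence of the finite torsion submodule $\H^0(K_w,M_f)\cong\cO/\frp^m$ — that is, that every place in those proofs where $E(K)[p]=0$ (or $\H^0(K,\ol\rho_f)=0$ at the residual level) was used either still works verbatim under~\eqref{h1} or can be patched with a bounded error that gets absorbed into $\cE$; this is precisely the content that~\cref{C1} (producing the scalar $C_1$) and the appendices~\ref{sec:appendix}, \ref{appB} are designed to supply, so I would cite those and spell out only the delta from~\cite{CGLS}.
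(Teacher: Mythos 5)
Your proposal is correct and follows essentially the same route as the paper, which simply invokes \cref{main} and \cref{mainCGS} and then specializes at height-one primes of $\Lambda$ exactly as in \cite{CGLS} and \cite{CGS}, noting that the uniform error term in \cref{mainCGS} lets the divisibility hold without inverting $\gamma-1$. One small misattribution: the "control" step comparing $\H^1_{\cF_\Lambda}(K,\bT)\otimes_\Lambda R$ with $\H^1_{\cF_{\ord}}(K,T_\alpha)$ is the Mazur--Rubin/Howard specialization lemma built into the Kolyvagin-system machinery of \cite[\S 3]{CGLS} (valid once $\H^0(K_\infty,\ol\rho_f)=0$, i.e.\ under~\eqref{h1}), not the anticyclotomic control theorem of Appendix~\ref{appB}, which is a $\Gamma$-(co)invariants statement used only later for the BSD formula.
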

\begin{proof}
	With~\cref{main} and \cref{mainCGS}, this theorem follows exactly as in~\cite{CGLS} and \cite{CGS}, using specializations at height one primes of $\Lambda$. We mention that in the aforementioned papers the divisibilities hold only after inverting $p$, which is sufficient to their applications, but the computation works for the prime $p$ too, as is readily explained in~\cite[Theorem 2.2.10]{How2004}.
\end{proof}

The next result allows us to construct a non-trivial Kolyvagin system.

\begin{theorem}\label{Koly}
	Assume $f$ has weight $2r$ where $r$ is odd. There exists a Kolyvagin system $\kappa^\Hg \in \KS(\bT,\cF_\Lambda,\sL_f)$ such that $\kappa^\Hg_1 \in \H^1_{\cF_\Lambda}(K,\bT)$ is non-torsion.
\end{theorem}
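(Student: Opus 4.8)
The plan is to construct the Kolyvagin system $\kappa^{\Hg}$ from Heegner points, following the strategy of~\cite{CGLS} (which itself builds on~\cite{How2004}), and to verify that in our residually reducible setting the non-triviality of $\kappa^{\Hg}_1$ still holds. First I would recall the construction of the big Heegner point classes: from the compatible system of Heegner points on the tower of ring class fields $K_n$ (attached to the conductor $\frN \subset \cO_K$ fixed via the Heegner hypothesis), one builds, after the usual Kolyvagin derivative operations at admissible primes $n \in \mathcal N$, a collection of classes $\kappa^{\Hg}_n \in \H^1(K, \bT \otimes \cO/I_n)$ satisfying the Kolyvagin system relations with respect to the ordinary local conditions $\cF_\Lambda$ and the auxiliary condition $\cL_f$. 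The verification that these classes satisfy the correct local conditions at $p$ uses the ordinarity of $f$ at the Eisenstein prime $p \nmid N$ and the crystalline description of $\rho_f|_{G_{K_w}}$ from~\cref{rhof crystalline}; away from $p$ it is the standard local analysis of Heegner classes. All of this is insensitive to whether $\ol\rho_f$ is irreducible, so this part goes through as in \emph{op.\ cit.}

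The substantive point is the non-torsionness of $\kappa^{\Hg}_1$. The cleanest route is to invoke the explicit reciprocity law of Bertolini--Darmon--Prasanna type relating the image of $\kappa^{\Hg}_1$ (or rather its specialization, or its image under the Perrin-Riou big logarithm at $v$) to the BDP $p$-adic $L$-function $\cL_f \in \Lambda^{\nr}$. Concretely, the localization $\mathrm{loc}_v(\kappa^{\Hg}_1)$ at the prime $v \mid p$ maps, under the Coleman/Perrin-Riou map, to (a unit multiple of) $\cL_f$; since by~\cref{anacomp} we have $\mu(\cL_f) = 0$ and in particular $\cL_f \neq 0$ in $\Lambda^{\nr}$ (equivalently $\cL_f \ne 0$ in $\Lambda\hat\otimes\Z_p^{\nr}$, a domain), it follows that $\mathrm{loc}_v(\kappa^{\Hg}_1) \neq 0$, hence $\kappa^{\Hg}_1$ is non-torsion in the $\Lambda$-module $\H^1_{\cF_\Lambda}(K,\bT)$. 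This is the same mechanism as in~\cite[§4]{CGLS}; the only thing to check is that the reciprocity law, which is a statement about $\rho_f$ and its crystalline periods, is unaffected by residual reducibility — and it is, because it is proved at the level of the $p$-adic (not residual) representation, using that $p \nmid N$ so that $\rho_f|_{G_{K_w}}$ is crystalline with distinct Hodge--Tate weights.

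The main obstacle I anticipate is bookkeeping rather than conceptual: one must make sure that the class $\kappa^{\Hg}_1$ lands in $\H^1_{\cF_\Lambda}(K,\bT)$ with the \emph{unramified}-type local conditions used throughout this paper (relaxed at $v$, strict/unramified elsewhere, à la~\cref{remark on Selmer groups}), rather than with the Greenberg conditions, and that the surjectivity of the global-to-local maps established via~\cite[Proposition~A.2]{PollackWeston2011} (see~\cref{surj for Self}) is in force so that the Selmer and imprimitive Selmer groups behave as expected. A secondary point is that the Kolyvagin system machinery of~\cite{How2004} as extended in~\cref{main} and~\cref{mainCGS} requires the hypothesis $\H^0(K,\ol\rho_f) = 0$, which is exactly~\eqref{h1} and is guaranteed here by the choice of lattice $T_f$ (via~\cite[Theorem~5.2]{BP19}); so the input hypotheses of those theorems are met, and $\kappa^{\Hg}$ as constructed is a genuine element of $\KS(\bT,\cF_\Lambda,\sL_f)$ with $\kappa^{\Hg}_1$ non-torsion, which is the assertion.

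Thus the proof would read: construct $\kappa^{\Hg}$ exactly as in~\cite[§3]{CGLS} (the construction being purely in terms of Heegner points on the modular curve / modular abelian variety attached to $f$ and hence available for arbitrary weight by the usual comparison, cf.~\cref{rhof crystalline}), note that the local conditions are satisfied by ordinarity at $p$ and the standard local computations away from $p$, and deduce $\kappa^{\Hg}_1 \neq 0$ from the explicit reciprocity law together with $\cL_f \neq 0$, the latter from~\cref{anacomp}. Nothing in the argument uses irreducibility of $\ol\rho_f$, only~\eqref{h1}.
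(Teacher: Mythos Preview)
Your proposal is correct and follows essentially the same approach as the paper: construct the Kolyvagin system from (generalized) Heegner cycles as in~\cite{CGLS} (the paper is more specific, citing~\cite[§4]{LV} with the modifications from~\cite{CGLS}), and deduce non-triviality of $\kappa^{\Hg}_1$ from the explicit reciprocity law together with $\cL_f \neq 0$.

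Two minor differences worth noting. First, for the construction the paper explicitly flags that several steps (e.g.\ the isomorphism $\H^1(K[n],\bT/I_n\bT) \isoto \H^1(K[n],\bT/T_n\bT)^{\cG(n)}$ and the input replacing~\cite[Theorem~2.4]{LV}) require $\H^0(K_\infty,\ol\rho_f)=0$, not just~\eqref{h1} over $K$; this is supplied by~\cref{splitGkinfty}, so you should cite it rather than only~\eqref{h1}. Second, for non-torsionness the paper argues in two steps: $\kappa_1 \neq 0$ via~\cite[Proposition~3.9, Theorem~4.9]{CastellaHsieh}, and then non-torsion because $\H^1_{\cF_\Lambda}(K,\bT)$ is torsion-free by~\cite[Theorem~2.2.9]{How2004} (again using $\H^0(K_\infty,\ol\rho_f)=0$). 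Your route---the Coleman/Perrin-Riou image of $\loc_v(\kappa_1)$ being a nonzero element of the torsion-free $\Lambda^{\nr}$---is an equally valid shortcut to the same conclusion.
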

\begin{proof}
Let $\tilde{\bT}=\tilde{T}\otimes \Lambda$ where $\tilde{T}$ is the canonical Galois stable lattice in the self-dual Galois representation $V_f$ constructed in~\cite[Section 3]{Nek92} (see also the beginning of~\cite[Section 4.2]{CastellaHsieh}). Note that this $\tilde{T}$ is in general different from our chosen $T$ so~\eqref{h1} may not hold for $\tilde{T}$, though they do have the same residual representation. In what follows, we use a tilde to denote relevant modules associated to the canonical lattice. We will follow the construction in~\cite{CastellaHsieh} and~\cite{LV} to show that one can find a non-trivial Kolyvagin system for $\tilde{\bT}$, and then use it to get a non-trivial Kolyvagin system on our chosen $\bT$.
	
The construction of Generalized Heegner cycles for $\tilde{\bT}$ and how they form a Kolyvagin system are essentially given in~\cite[Section~4]{LV}. However, we need to further take into consideration some local uncertainty coming from potential local torsion. We explain how to modify their construction to our situation, following~\cite{CGLS}. As in \textit{op.\ cit.}, we do not assume $p\nmid h_K$ or the `big image' result. The first change appears in~\cite[Theorem 2.4]{LV}, where the vanishing of $\H^0(K_\infty,\tilde{W})$ now follows from our~\cref{H0 of V/T}(iii) (whose proof did not use $\F(\omega)$ is a subrepresentation of $\rho_f$ so it works for $\tilde{T}$ too). Note that for the construction of Kolyvagin systems, the axioms (H.0)--(H.5) are not needed. Their Lemma 4.2 still works if we first replace $p$ with a generator $\pi$ of $\frp$, but this then implies the original result for $p$. Finally, the map induced by restriction\[
\H^1(K[n],\tilde{\bT}/I_n\tilde{\bT})^{\cG(n)}\leftarrow\H^1(K,\tilde{\bT}/I_n\tilde{\bT})\]
may not be an isomorphism since we do not assume~\eqref{h1}. However, we claim that it has kernel and cokernel bounded independent of $n$.

Note that we have the identifications $\H^1(K,\tilde{\bT}/I_n\tilde{\bT}) \iso \varinjlim_i \H^1(K_i, V_f/\tilde{T}[I_n])$ and $\H^1(K[n], \tilde{\bT}/I_n\tilde{\bT}) \iso \varinjlim_i \H^1(K_i[n], V_f/\tilde{T}[I_n])$ by Shapiro's lemma.

As in the proof of~\cite[Proposition~7.17]{DograLeFourn}, there is a canonical isomorphism $\Gal(K_i(V_f/\tilde{T}[I_n])/K_i) \isoto \Gal(\Q(V_f/\tilde{T}[I_n])/\Q)$ (using that $K_i$ and $\Q(V_f/\tilde{T}[I_n])$ are linearly disjoint because of their ramification). Hence the fixed submodule $\H^0(K_i[n], V_f/\tilde{T}[I_n])$ under these groups are contained in $\H^0(K_\infty,V_f/\tilde{T}[I_n]) \subseteq \cO/\pi^N$, which is uniformly bounded independent of $n$ and $i$ by~\cref{H0 of V/T}.

Hence in the Hochschild--Serre spectral sequence \[E_2^{p,q} = \H^p(K_i[n]/K_i, \H^q(K_i[n], V_f/\tilde{T}[I_n])) \Rightarrow E^{p+q} = \H^{p+q}(K_i,V_f/\tilde{T}[I_n]),\] the $E_2^{p,0}$ are of uniformly bounded exponent. Letting $p = 1,2$, the $5$-term exact sequence gives us the desired result for the kernel and cokernel of the restriction.

Thus there is a power $p^N$ for some $N\in\bZ_{\geq 0}$ independent of $n$ (one can take $N=0$ if~\eqref{h1} holds for $\tilde{T}$, for similar reason to that in~\cite[Theorem 4.1.1]{CGLS} combined with~\cref{splitGkinfty}) such that $p^N\tilde{\kappa}_n\in\H^1(K[n],\tilde{\bT}/I_n\tilde{\bT})^{\cG(n)}$ has a unique image $\kappa_n$ in $\H^1(K,\tilde{\bT}/I_n\tilde{\bT})$. Here the $\tilde{\kappa}_n$ still denotes the class in~\cite[eq.~(4.2)]{LV}. The construction in~\cite[Section 4]{LV} now almost gives a Kolyvagin system in our setting, and here we explain some modifications of their arguments. Note that if their arguments work for their $\kappa_n$, they also work for our $\kappa_n$ (except for the differences we explain below) since multiplying by a $p$-power does not affect the local behaviors. 

First, we give a different reasoning for the right vertical map $\H^1(K_v,\tilde{\bT}_n^-)\to\H^1(K[n]_w,\tilde{\bT}_n^-)$ in the diagram (22) in \textit{loc.\ cit.}\ to be injective. In fact, we will directly adopt the ideas of Howard in~\cite[Lemma 2.3.4, Case (iii)]{How2004}. One can identify $\H^1(K_v,\tilde{\bT}_n^-)=\H^1(K_v,\varprojlim_m \Ind^{K_m}_K(\tilde{T}^-/I_n))$ with $\varprojlim_m \H^1(K_v,\Ind^{K_m}_K(\tilde{T}^-/I_n))$ which by Shapiro's lemma is further identified with $\varprojlim_m \H^1(K_{m,v},\tilde{T}^-/I_n)$, where $K_m$ is the $m$-th layer in the anticyclotomic tower $K_\infty/K$ and by abuse of notation we denote a place in $K_m$ above $v$ still by $v$. Similarly, $\H^1(K[n]_w,\tilde{\bT}_n^-)$ can be identified with $\varprojlim_m \H^1(K_m[n]_w,\tilde{T}^-/I_n)$ for $w$ a prime of $K_m[n]$ above $v$. Now from the inflation-restriction exact sequence the kernel of the above map is identified with\[
\varprojlim_m \H^1(K_m[n]_w/K_{m,v},\H^0(K_m[n]_w,\tilde{T}^-/I_n)).\]We first claim that for all large $m$, $\Gal(K_m[n]_w/K_{m,v})\isom \Gal(K_\infty[m]_w/K_{\infty,v})$. This follows from the fact that for all large $m$, $v$ is totally ramified in $K_\infty$ and hence $K_\infty$ and $K_m[n]$ are linearly disjoint (at least when $p\nmid h_K$, otherwise we can increase $m$ to kill the finite difference). Now for any $1\ne n\in\cN$ (the case $n=1$ will be treated separately), we have $p\mid I_n$ by the choice of Kolyvagin primes (see e.g. the definition of $\cL_E$ before~\cite[Theorem 3.2.1]{CGLS}), so $\tilde{T}^-/I_n$ is finite and $\H^0(K_m[n]_w,\tilde{T}^-/I_n)$ must also stabilize for large $m$. In other words, for sufficiently large $m$, we have\[
\H^0(K_m[n]_w,\tilde{T}^-/I_n)=\H^0(K_{m+1}[n]_w,\tilde{T}^-/I_n)=\H^0(K_\infty[n]_w,\tilde{T}^-/I_n)
\]
Consequently, for $n\ne 1$ the above kernel can be identified with\[
\varprojlim_m \H^1(K_\infty[n]_w/K_{\infty,v},\H^0(K_\infty[n]_w,\tilde{T}^-/I_n)).\]
Since the inverse limit is with respect to the corestriction on the above $H^0$ groups (hence is identified with the norm map), and $G_{K_m[n]_w}$ acts on $\H^0(K_{m+1}[n]_w,\tilde{T}^-/I_n)$ trivially by the above identification, we can interpret the inverse limit as one with respect to multiplication by $p$ on the fixed module \[\H^1(K_\infty[n]_w/K_{\infty,v},\H^0(K_\infty[n]_w,\tilde{T}^-/I_n)).\]
Since the module is finite as a consequence of the finiteness of both $\Gal(K_\infty[n]_w/K_{\infty,v})$ and $\H^0(K_\infty[n]_w,\tilde{T}^-/I_n)$, the inverse limit is trivial. 

Second, the cokernel of the trace map $\H^0(K[n]_w,\tilde{\bA}^-)\to\H^0(K_v,\tilde{\bA}^-)$ in~\cite[Lemma 4.12]{LV} can fail to be trivial (their `$\bA$' is our `$M_f$'). In other words, their $\kappa_n$ may not lie in the desired Selmer group $\H^1_{\cF_\Lambda(n)}(K,\tilde{\bT}/I_n\tilde{\bT})$. For example, when $\H^0(K_v,\tilde{A}^-)\ne 0$ (e.g. when $\F(\tilde{\mathbf{1}})$ is a non-split subrepresentation of $\ol\rho_f$ or when the sequence $0\to\F(\tilde{\omega})\to \ol\rho_f\to\F(\tilde{\mathbf{1}})\to 0$ is locally split, or more generally in the `anomalous case'), by local Euler characteristic formula one can show that $\H^1(K_v,\tilde{A}^-)[\frp]$ and hence $\H^1(K_v,\tilde{A}^-)\ne 0$. This non-vanishing phenomenon can occur even in the residually irreducible case (but it can be avoided if one assumes $a_p\ne 1\pmod{\frp}$). 

The way to fix it is to use a fixed $p$-power independent of $n$ to kill the cokernel so that, after multiplying the classes $\kappa_n$ by this $p$-power, they belong to the Selmer groups and form a Kolyvagin system. We must show the existence of such $p$-powers. To bound the cokernel of the trace map, it suffices to bound the codomain $\H^0(K_v,\tilde{\bA}^-)$. 

We argue that $\H^0(K_v,\tilde{\bA}^-)$ is finite similarly as what we do in~\cref{H0 of V/T}. When the weight of $f$ is $2r>2$, our self-dual twist has the effect that the action of $G_{\bQ_p}$ on $V^-$ is an unramified character multiplied by a non-trivial power of the cyclotomic character. Since $K_\infty$ is the anticyclotomic $\bZ_p$ extension, $\H^0(K_{\infty,v},V^-)=0$ and hence $\H^0(K_v,\tilde{\bA}^-)\iso \H^0(K_{\infty,v},\tilde{A}^-)$ is finite. When $f$ has weight $2$, the finiteness follows from passing to abelian varieties.

Thus we have shown that $\H^0(K_v,\tilde{\bA}^-)$ is finite (necessarily independent from $n$) and so is the cokernel of the trace map. Hence there is an integer $t\in\bZ_{\geq 0}$ such that $p^t$ annihilates the cokernel. It is then not hard to see that $p^t\kappa_n\in\H^1_{\cF_\Lambda(n)}(K,\tilde{\bT}/I_n\tilde{\bT})$ in the notation of~\cite{LV} for all $1\ne n\in\cN$. For $n=1$, the same result follows from the fact that the image of $\cH_\infty$ in $\H^1(K_v,\tilde{\bT})$ lies in $\H^1_\ord(K_v,\tilde{\bT})$ (see Lemma 4.12 in \textit{loc. cit.}). Moreover, the Kolyvagin system relation is also preserved by multiplying every class with a fixed $p$-power. We define $\kappa^{\widetilde{\Hg}}_n$ to be the modification of $p^t\kappa_n$ (one can take $t=0$ if the trace map is indeed surjective. For example, this is the case if one assumes $\phi,\psi|_{G_p}\ne\mathbf{1},\omega$) in~\cite[Theorem 4.13]{LV}. Here we assume $r$ is odd so the factor $(-1)^{r-1}$ in~\cite[(K2)]{CastellaHsieh} disappears and the factors on both sides differ at most by a sign. Then $\kappa^{\widetilde{\Hg}}$ is a Kolyvagin system for $\tilde{T}$.

That the class is non-trivial follows from~\cite[Proposition~3.9, Theorem~4.9]{CastellaHsieh}. Indeed, the left hand side of the equation in Theorem 4.9 in \textit{op.\ cit.}\ is nonzero by Proposition~3.9 there, so the right hand side is nonzero. Therefore the class $z_f\ne 0$. But this is just $\cores_{K[1]/K}\beta_0[1]$ in~\cite[Proposition 4.9]{LV}. Therefore from its construction it's easy to see the first Kolyvagin class $\tilde{\kappa}_1$ is non-trivial. Since $\kappa_1$ is the image of $p^N\tilde{\kappa}_1$ in $\H^1(K,\tilde{\bT})$, $\kappa_1$ and $\kappa^{\widetilde{\Hg}}_1=p^t\kappa_1$ are also non-trivial.

We now explain how to construct a Kolyvagin system on our chosen $T$ which satisfies~\eqref{h1}. Let $T'=T\cap \tilde{T}$ which has finite index in both $T$ and $\tilde{T}$. Choose an $m\in\bZ$ with $p^m\tilde{T}\subset T'$. Now the map $\tilde{T}\to p^m\tilde{T}\to T'$ induces a map $\tilde{\bT}\to\bT'$. Now the classes $\kappa^{\widetilde{\Hg}}_n\in\H^1_{\cF(n)}(K,\tilde{\bT}/I_n\tilde{\bT})$ are mapped to their images $\kappa^{\Hg'}_n\in\H^1(K,\bT'/I_n\bT')$ and it is not hard to check that in fact $\kappa^{\Hg'}_n\in\H^1_{\cF(n)}(K,\bT'/I_n\bT')$ by functoriality. Moreover, the Kolyvagin system relation is also preserved, i.e., $\kappa^{\Hg'}_n$ is a Kolyvagin system for $T'$. Now the map $T'\to T$ maps $\kappa^{\Hg'}_n$ to $\kappa^{\Hg}_n\in\H^1_{\cF(n)}(K,\bT/I_n\bT)$ which is a Kolyvagin system for $T$.

Finally, that $\kappa^\Hg_1$ is non-torsion follows from the fact that $\H^1_{\cF_\Lambda}(K,\bT)$ is torsion free (see for example~\cite[Theorem 2.2.9]{How2004} where the condition $\H^0(K_\infty,\ol\rho_f)=0$ follows from~\eqref{h1} and~\cref{splitGkinfty}).\end{proof}

\begin{remark}\label[remark]{rodd}
	The above theorem is the only place where the assumption that $r$ is odd is used. It is not necessary if the $B_1$ in~\cite[eq. (7.2)]{CastellaHsieh} is zero, e.g., when $\rho_f^*$ is surjective.
\end{remark}

We are ready to prove the two Iwasawa Main Conjectures similar to those in~\cite{CGLS}. Here in the statement one needs to work with a seemingly different Kolyvagin class `$\kappa_\infty$' coming from the equivalence between the Main conjectures, but as is noted in~\cite[Remark 4.1.3]{CGLS}, $\kappa_\infty$ and $\kappa_1$ generate the same $\Lambda$-submodule in $\H^1_{\cF_\Lambda}(K,\bT)$.

\begin{theorem}\label[theorem]{IMC}
	Assume $f$ has weight $2r$ where $r$ is odd. Assume that $p=v\ol v$ splits in $K$ and $\H^0(K,\ol\rho_f)=0$ Then the following statements hold:\begin{enumerate}
		\item[(IMC1)] Both $\H^1_{\cF_\Lambda}(K,\bT)$ and $\cX=\H^1_{\cF_\Lambda}(K,M_f)^\vee$ have $\Lambda$-rank one, and the equality \begin{equation*}
			\Char_\Lambda(\cX_{\tors})= \Char_\Lambda(\H^1_{\cF_\Lambda}(K,\bT)/\Lambda\kappa_{\infty})^2
		\end{equation*}
	holds in $\Lambda_{ac}$.
		\item[(IMC2)] Both $\H^1_{\cF_{\nr}}(K,\bT)$ and $\fX_f=\H^1_{\cF_{\nr}}(K,M_f)^\vee$ are $\Lambda$-torsion, and the equality\begin{equation*}
			\Char_\Lambda(\fX_f)\Lambda^{\nr}=(\cL_f)
		\end{equation*}
		holds in $\Lambda^{\nr}$.
	\end{enumerate}
\end{theorem}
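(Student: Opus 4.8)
The plan is to deduce both Main Conjectures simultaneously by combining the Kolyvagin system machinery (Theorems \ref{main}, \ref{mainCGS}, \ref{HeegMC}) with the Iwasawa-invariant comparisons on the algebraic and analytic sides established in Sections \ref{sec:algebraic-side} and \ref{sec:analytic side}. First I would record that, under the running hypothesis \eqref{h1}, the existence of a non-torsion Heegner/generalized-Heegner Kolyvagin system $\kappa^{\Hg} \in \KS(\bT, \cF_\Lambda, \sL_f)$ is guaranteed by the previous theorem, so Theorem \ref{HeegMC} applies: $\H^1_{\cF_\Lambda}(K,\bT)$ has $\Lambda$-rank one, $\cX \sim \Lambda \oplus M \oplus M$ for a torsion $\Lambda$-module $M$, and $\Char_\Lambda(M) \mid \Char_\Lambda(\H^1_{\cF_\Lambda}(K,\bT)/\Lambda\kappa_\infty)$. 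This already gives one divisibility in (IMC1) after possibly inverting $\gamma - 1$; with Theorem \ref{mainCGS} (the $\alpha$ sufficiently close to $1$ refinement from \cite{CGS}) one upgrades this to a genuine divisibility in $\Lambda_{ac}$ without inverting $\gamma-1$, exactly as in \cite[Theorem 6.5.1]{CGS}. The point is that all the axiomatized input from \cite{CGLS,CGS,How2004,LV} goes through verbatim once $E(K)[p] = 0$ is replaced by $\H^0(K,\ol\rho_f)=0$ and once the scalar-matrix error term $C_1$ from Lemma \ref{C1} is in hand.

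Next I would pass between the Perrin-Riou-type formulation (IMC1) and the Greenberg-type formulation (IMC2). This is the explicit reciprocity law / comparison of Selmer structures: following \cite[§3.3--3.4]{CGLS}, the image of $\kappa_\infty$ under the Coleman-type map relates $\H^1_{\cF_\Lambda}(K,\bT)/\Lambda\kappa_\infty$ to $\fX_f$ and the BDP $L$-function $\cL_f$, so that (IMC1) up to a divisibility is equivalent to (IMC2) up to a divisibility, i.e.\ $\Char_\Lambda(\fX_f)\Lambda^{\nr} \mid (\cL_f)$. For this step I would invoke the fact, recorded in Section \ref{sec:selmer-groups-of-rhof}, that the lattice $T_f$ has been chosen (via Ribet's lemma, Proposition \ref{omega first non-split}) so that $\ol\rho_f$ has no trivial $G_K$-subrepresentation; this is precisely what makes the arguments of \cite[§§3--4]{CGLS} applicable, since their surjectivity of global-to-local maps and vanishing of certain $\H^0$, $\H^2$ groups now follow from our Propositions \ref{H0 of V/T}, \ref{splitGkinfty}, \ref{corankH1Mf} and Remark \ref{surj for Self}.

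The remaining, and decisive, step is to promote the one-sided divisibility to an equality. Here the strategy is the $\mu$- and $\lambda$-invariant count: by Theorem \ref{algmain}, $\fX_f$ is $\Lambda$-torsion with $\mu(\fX_f)=0$ and $\lambda(\fX_f) = \lambda(\fX_\phi)+\lambda(\fX_\psi) + \sum_{w\in S}\{\lambda(\cP_w(\phi))+\lambda(\cP_w(\psi))-\lambda(\cP_w(f))\}$ (with the $+1$ correction when $\phi|_{G_K}=\omega$), while by Theorem \ref{anacong} and Theorem \ref{anacomp}, $\cL_f$ has $\mu(\cL_f)=0$ and exactly the same $\lambda$-invariant, $\lambda(\cL_f) = \lambda(\fX_f)$. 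Since both sides of $\Char_\Lambda(\fX_f)\Lambda^{\nr} \mid (\cL_f)$ are nonzero elements of the UFD $\Lambda^{\nr}$ with equal $\mu$-invariant $0$ and equal $\lambda$-invariant (equivalently, equal degree as the relevant characteristic power series), a divisibility forces an equality of ideals. This yields (IMC2); feeding it back through the comparison of Selmer structures of the preceding paragraph then yields the reverse divisibility in (IMC1), hence the asserted equality $\Char_\Lambda(\cX_{\tors}) = \Char_\Lambda(\H^1_{\cF_\Lambda}(K,\bT)/\Lambda\kappa_\infty)^2$ in $\Lambda_{ac}$, together with the $\Lambda$-rank-one statements from Theorem \ref{HeegMC}.

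\textbf{Main obstacle.} The subtle point is not any single step in isolation but the fact that, because one of $\phi,\psi$ may be trivial on $G_p$ (indeed possibly on $G_K$), the imprimitive Selmer modules $\fX^S_?$ need not be \emph{almost divisible}: they can carry nonzero finite $\Lambda$-submodules, so characteristic ideals no longer coincide with Fitting ideals and several $\H^0$/$\H^2$ vanishing statements of \cite{CGLS} fail. This is exactly why the $\lambda$-invariant bookkeeping in Section \ref{sec:algebraic-side} (Theorem \ref{imprimlambda}, with its genuinely three-case analysis and the occasional $+1$) is more delicate than in \cite{CGLS}, and why the equality-from-divisibility argument must be run with the \emph{primitive} invariants $\lambda(\fX_f)$ and $\lambda(\cL_f)$ rather than trying to match Fitting ideals directly. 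Getting the $+1$ corrections on the algebraic side (from \eqref{lambdadiff}) to cancel precisely against the $+1$ on the analytic side (from the exceptional shape $\Char(\fX_{\mathbf 1}) = (\cL_{\mathbf 1}\cdot T)$ of the Main Conjecture for the trivial character, via Theorem \ref{anacomp}) is the heart of the matter.
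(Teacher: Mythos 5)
Your proposal is correct and follows essentially the same route as the paper: one divisibility of (IMC1) from the Heegner-point Kolyvagin system via \cref{HeegMC}, the equivalence of (IMC1) and (IMC2) at the level of divisibilities, and the upgrade to an equality via the matching of $\mu$- and $\lambda$-invariants in \cref{algmain} and \cref{anacomp}. The paper's actual proof is just a condensed version of this argument, so no further comment is needed.
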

\begin{proof}
	We first treat the $t=N=0$ case in~\cref{Koly}. We begin by noting that (IMC1) and (IMC2) are equivalent; in fact, an either side divisibility in one of them is equivalent to a corresponding divisibility in the other (see~\cite[Prop.~4.2.1]{CGLS} and~\cite[Thm.~5.2]{BCK21} combined with~\cref{splitGkinfty}. See also~\cite[Appendix A]{Cas17}). Here we should mention that the equivalence is done with the Greenberg Selmer groups, but they generate the same characteristic ideals as the unramified Selmer group do. \cref{HeegMC} shows one divisibility of (IMC1), which translates to a divisibility of (IMC2). Using the equality between the algebraic and analytic Iwasawa invariants (see~\cref{algmain} and~\cref{anacomp}), the divisibility in (IMC2) is turned into an equality. Hence (IMC1) also holds.
	
	Now if $t>0$ or $N>0$, the proof in~\cite[Appendix A]{Cas17} would yield a new equivalence involving a fixed $p$-power:\begin{enumerate}
		\item[(IMC1')] Both $\H^1_{\cF_\Lambda}(K,\bT)$ and $\cX=\H^1_{\cF_\Lambda}(K,M_f)^\vee$ have $\Lambda$-rank one, and the divisibility \begin{equation*}
			\Char_\Lambda(\cX_{\tors})\supset \Char_\Lambda(\H^1_{\cF_\Lambda}(K,\bT)/\Lambda\cdot(p^{t+N}\kappa_{\infty}))^2
		\end{equation*}
		holds in $\Lambda_{ac}$.
		\item[(IMC2')] Both $\H^1_{\cF_{\nr}}(K,\bT)$ and $\fX_f=\H^1_{\cF_{\nr}}(K,M_f)^\vee$ are $\Lambda$-torsion, and the divisibility\begin{equation*}
			\Char_\Lambda(\fX_f)\Lambda^{\nr}\supset(p^{t+N}\cL_f)
		\end{equation*}
		holds in $\Lambda^{\nr}$.
	\end{enumerate}
(the same result holds for the opposite divisibilities, but we will not need it). In our situation,~\cref{HeegMC} applied to the Kolyvagin system $\kappa^\Hg_n=p^t\kappa_n$ (again noting the relation between $\kappa_1$ and $\kappa_\infty$) shows the divisibility in (IMC1'). Hence we obtain (IMC2') from the equivalence. However, since we know both $\fX_f$ and $\cL_f$ have vanishing $\mu$-invariants and equal $\lambda$-invariants (all computed independently), we obtain from (IMC2') exactly the same equality in (IMC2). Now from the equivalence between (IMC1) and (IMC2), we obtain (IMC1) as before.
\end{proof}
\begin{remark}\label{allowtor}
	As is explained in the beginning of~\cref{sec:selmer-groups-of-rhof}, the above theorem still holds without assuming $\H^0(K,\ol\rho_f)=0$.
\end{remark}

As a corollary of our anticyclotomic Main Conjectures, we also obtain the following Mazur's Main Conjecture for elliptic curves unconditionally in the Eisenstein case using the main results of~\cite{CGS}. Let $\Lambda_\bQ\defeq\bZ_p\llbracket \Gal(\bQ^\infty/\bQ)\rrbracket$ be the cyclotomic Iwasawa algebra over $\bQ$. Let $\fX_\ord(E/\bQ_\infty)$ be the Pontryagin dual of the $p$-primary Selmer group $\Sel_{p^\infty}(E/\bQ_\infty)$ for $E$ and let $\cL_p^\MSD(E/\bQ)$ be the Mazur--Swinnerton-Dyer $p$-adic $L$-function.

\begin{theorem}
	Let $E/\bQ$ be an elliptic curve, and let $p>2$ be a prime of good reduction for $E$. Suppose that $p$ is Eisenstein. Then $\fX_\ord(E/\bQ_\infty)$ is $\Lambda_\bQ$-cotorsion with\[
	\Char_{\Lambda_\bQ}(\fX_\ord(E/\bQ_\infty))=\cL_p^\MSD(E/\bQ),
	\]and hence Mazur's Main Conjecture holds.
\end{theorem}
\begin{proof}
	This is the main result of~\cite{CGS}. Their technical assumption on the local behaviors of the characters in the semisimplification of $E[p]$ can be removed if one replaces the appeal to~\cite[Theorem 4.2.2, Corollary 4.2.3]{CGLS} by our~\cref{IMC} and~\cref{allowtor} (specialized to weight $2$ case).
\end{proof}

\section{Proofs of the $p$-converse and the $p$-part of BSD formula}\label{4}
In this section, we discuss several applications of our results in the previous sections, whose proofs are direct generalizations of those in~\cite[§\,5]{CGLS}. We will only consider the weight $2$ cases.
\subsection{The $p$-converse theorem}
\begin{theorem}\label[theorem]{p converse}
	Let $A/\Q$ be a simple RM abelian variety associated to a newform $f$ and $\frP \mid p > 2$ a prime ideal of its endomorphism ring $\cO=\End(A)$ of good reduction such that $\rho_{A,\frP}$ is reducible.
	
	Then
	\[
		\corank_\cO \Sel_{\frP^\infty}(A/\Q) = r \in \{0,1\} \implies \rk_\cO A(\Q) = r_\an(f) = r,
	\]
	and $\Sha(A/\Q)[\frP^\infty]$ is finite.
\end{theorem}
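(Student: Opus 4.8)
The proof runs the argument of \cite[\S\,5]{CGLS} with the anticyclotomic Iwasawa Main Conjectures \cref{IMC}, the comparison of algebraic and analytic Iwasawa invariants (\cref{algmain}, \cref{anacomp}), and the anticyclotomic control theorem of \cite{JSW2017} — in the torsion-allowing form of Appendix~\ref{appB} — as inputs; since $\corank_\cO\Sel_{\frP^\infty}(A/\Q)$, $\rk_\cO A(\Q)$, $\#\Sha(A/\Q)[\frP^\infty]$ and $r_\an(f)$ are all isogeny invariant, we may fix $T_f$ so that $\H^0(K,\ol\rho_f)=0$, as in \cref{sec:selmer-groups-of-rhof}. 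The first step is to pass from $\Q$ to an auxiliary imaginary quadratic field $K$: as in \cite[\S\,5]{CGLS}, using non-vanishing results for central values of quadratic twists of $L(f,s)$ together with a root-number computation, I would choose a Heegner field $K$ for $N$ satisfying all hypotheses of \cref{0.1} ($D_K$ odd, $D_K\ne-3$, $p=v\ol v$ split) with $L(f^K/\Q,\tfrac{k}{2})\ne0$. The twist $f^K$ is again residually reducible at $\frP$, so the (ordinary, higher-weight) theorem of Gross--Zagier--Kolyvagin--Logachev applied to $A^K$ gives $\rk_\cO A^K(\Q)=0$, $\#\Sha(A^K/\Q)[\frP^\infty]<\infty$ and $\corank_\cO\Sel_{\frP^\infty}(A^K/\Q)=0$. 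As $\frP\nmid2$, $\Sel_{\frP^\infty}(A/K)\iso\Sel_{\frP^\infty}(A/\Q)\oplus\Sel_{\frP^\infty}(A^K/\Q)$, hence $\corank_\cO\Sel_{\frP^\infty}(A/K)=r$ and $r_\an(f/K)=r_\an(f/\Q)$; it therefore suffices to prove $r_\an(f/K)=r$, after which the statements over $\Q$ follow from Gross--Zagier--Kolyvagin--Logachev together with the exact sequence $0\to A(\Q)\otimes_\cO F/\cO\to\Sel_{\frP^\infty}(A/\Q)\to\Sha(A/\Q)[\frP^\infty]\to0$.

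\textbf{The case $r=0$.} Here I would use \cref{IMC}\,(IMC2), namely $\Char_\Lambda(\fX_f)\Lambda^{\nr}=(\cL_f)$. The control theorem of Appendix~\ref{appB}, combined with the comparison of the unramified, Greenberg and Bloch--Kato local conditions at $v$ (as in \cite[\S\S\,1, 5]{CGLS}), shows that $\corank_\cO\Sel_{\frP^\infty}(A/K)=0$ forces the specialization of $\fX_f$ at the trivial character of $\Gamma$ to be finite, whence $T\nmid\Char_\Lambda(\fX_f)$ and thus $\cL_f\notin T\Lambda^{\nr}$. Evaluating the Bertolini--Darmon--Prasanna interpolation formula at the $p$-adic avatar of the trivial Hecke character (infinity type $(0,0)$, which lies in the interpolation range) writes $\cL_f\bmod T$ as an explicit non-zero constant times the modifying factor $(1-a_p(f)p^{-r}+p^{-1})^2$ — non-zero in our ordinary Eisenstein setting — times $L(f/K,\tfrac{k}{2})$. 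Hence $L(f/K,\tfrac{k}{2})\ne0$, i.e.\ $r_\an(f/K)=0$.

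\textbf{The case $r=1$.} Now I would invoke \cref{IMC}\,(IMC1) and the structure theorem \cref{HeegMC}: $\H^1_{\cF_\Lambda}(K,\bT)$ has $\Lambda$-rank one and $\cX\sim\Lambda\oplus M\oplus M$ with $\Char_\Lambda(M)=\Char_\Lambda(\H^1_{\cF_\Lambda}(K,\bT)/\Lambda\kappa_\infty)$. If $T\mid\Char_\Lambda(M)$, then by the control theorem $\corank_\cO\Sel_{\frP^\infty}(A/K)\ge 1+2=3$, contradicting $\corank=1$; hence $T\nmid\Char_\Lambda(M)$, so the Heegner class $\kappa_\infty$ — built from (generalized) Heegner cycles as in \cite[\S\,4]{LV}, \cite{CastellaHsieh} — has non-torsion specialization at the trivial character. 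By the (generalized) Gross--Zagier formula this forces $L'(f/K,\tfrac{k}{2})\ne0$, so $r_\an(f/K)=1$ and the reduction step concludes the proof.

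\textbf{The main obstacle.} The delicate point, and the reason the control theorem of Appendix~\ref{appB} is needed, is that in the residually reducible case with $A(\Q)[\frP]$ possibly non-zero the relevant Selmer modules need not be almost divisible: they may contain non-trivial finite $\Lambda$-submodules, so $\Char_\Lambda\ne\Fitt_\Lambda$, and the local torsion at $v$ (reflected in \cref{H0 of V/T} and \cref{kerresf}) enters the control theorem. One must verify that this does not spoil the implication ``$\corank_\cO\Sel_{\frP^\infty}(A/K)=r$ exactly $\Rightarrow$ the characteristic ideal of $\fX_f$ (resp.\ of $M$) is prime to $T$''. The remaining ingredients — the Gross--Zagier formula and Kolyvagin's method in arbitrary weight, and the identification of the first Kolyvagin class with a Heegner cycle — are available and enter exactly as in \cite[\S\,5]{CGLS}; the choice of lattice with $\H^0(K,\ol\rho_f)=0$ is what keeps the global torsion under control throughout.
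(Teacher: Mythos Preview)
Your $r=1$ argument is essentially the paper's: it follows \cite[Theorem~5.2.1]{CGLS} with \cref{IMC}\,(IMC1) in place of their Corollary~4.2.3. The $r=0$ case, however, does not work as written.

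The problem is structural. Under the Heegner hypothesis the global root number of $L(f/K,s)$ is $-1$, so $L(f/K,\tfrac{k}{2})=0$ always; one can never conclude $r_\an(f/K)=0$. Correspondingly, the trivial character is \emph{not} in the interpolation range of $\cL_f$: in the formula of \cref{sec:analytic side} the factor $\Gamma(n-\tfrac{k}{2}+1)$ has a pole at $n=0$ for every $k\ge 2$, and the value $\cL_f(\mathbf{1})$ is governed instead by the BDP $p$-adic Gross--Zagier formula (a Heegner-cycle logarithm), not by $L(f/K,\tfrac{k}{2})$. Equally, your choice of $K$ with $L(f^K/\Q,\tfrac{k}{2})\ne 0$ forces $\varepsilon(f^K/\Q)=+1$, hence $\varepsilon(f/\Q)=-1$; this is incompatible with $r=0$. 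In fact, \cref{IMC}\,(IMC1) itself gives $\rk_\Lambda\cX=1$, so after the control theorem one has $\corank_\cO\Sel_{\frP^\infty}(A/K)\ge 1$; the situation ``$\corank_\cO\Sel_{\frP^\infty}(A/K)=0$'' you analyze cannot occur.

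What is missing is exactly the ingredient the paper singles out: the $p$-parity theorem of \cite[Theorem~C]{NekovarTame} (replacing the elliptic-curve parity result [Mon96] used in \cite{CGLS}). From $\corank_\cO\Sel_{\frP^\infty}(A/\Q)=r$ one first reads off $\varepsilon(f/\Q)=(-1)^r$. Then one chooses $K$ accordingly: for $r=1$, take $K$ with $L(f^K/\Q,\tfrac{k}{2})\ne 0$ (as you did); for $r=0$, take $K$ with $\ord_{s=k/2}L(f^K/\Q,s)=1$. In \emph{both} cases $\corank_\cO\Sel_{\frP^\infty}(A/K)=1$, and the single argument you gave for $r=1$ via \cref{IMC}\,(IMC1) shows the Heegner class specializes non-trivially, whence $r_\an(f/K)=1$ by Gross--Zagier; finally $r_\an(f/\Q)=1-r_\an(f^K/\Q)=r$. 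There is no separate ``IMC2 $+$ interpolation'' route for $r=0$.
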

\begin{proof}
	The proof of~\cite[Theorem 5.2.1]{CGLS} works if one replaces the reference [Mon96, Theorem~1.5] there by~\cite[Theorem~C]{NekovarTame} and replaces~\cite[Corollary 4.2.3]{CGLS} by our~\cref{IMC}\,(IMC1).
\end{proof}

The $p$-converse theorem has the following applications to Goldfeld's conjecture in quadratic twist families with a $3$-isogeny where $3$ is a prime of good or bad multiplicative reduction. We thank Ari Shnidman for communicating and explaining them to us.
\begin{remark} \label[remark]{Goldfeld conjecture}
	\begin{enumerate}[(i)]
		\item Better proportions of quadratic twists of (algebraic and analytic) rank~$1$ in~\cite[Theorem~2.5]{BKLOS} for a fixed elliptic curve, in particular a lower bound of $\frac{3}{4}\cdot\frac{5}{12}=\frac{5}{16}=31.25\,\%$ in the most advantageous cases (for example, when it's the curve having Cremona label $19a3$). Note that the proportion claimed in~\cite[Remark~5.2.4]{CGLS} is only for a subfamily (corresponding to good reduction and with their assumptions on the isogeny character) of the twists. Indeed, for the twists $E_d$ of the curve $E=19a3$, the results in \textit{loc.\ cit.} only apply when $d\equiv 2 \pmod{3}$ whereas our result also covers the cases when $d\equiv 1 \pmod{3}$, covering at least $\frac{3}{8}+\frac{3}{8}=\frac{3}{4}$ of the proportion of the $3$-Selmer rank $1$ twists. The twists with $d\equiv 0 \pmod{3}$ correspond to bad additive reduction and are not covered by our $p$-converse theorems. However, potentially good ordinary primes are studied in our subsequent work~\cite{KY24} and combined with our work, we can get the full lower bound $\frac{5}{12}=41.67\%$ of rank $1$ twists for the above curve $19a3$.

		\item Our result seems not to improve the bounds for rank~$0$ in~\cite{BKLOS} in the situation of~(i), but it allows one to quantify how often the BSD rank conjecture holds. Namely, their results give us that at least $\frac{1}{4}$ of the twists have $3$-Selmer rank $0$, hence at least $\frac{3}{4}\cdot \frac{1}{4}=\frac{3}{16}$ of them have analytic rank $0$. Together with~(i), $\frac{3}{16} + \frac{5}{16} = \frac{1}{2}$ of the twists satisfy the BSD rank conjecture.	
		\item Similar improvements apply to quadratic twist families of abelian varieties $A$ of $\GL_2$-type over $\Q$ with a $3$-isogeny, as in~\cite[Theorem~1.4, Corollary~7.2]{BruinFlynnShnidman} and~\cite[Theorem~1.5]{ShnidmanIMRN}. For example, in the context of the latter result (where the abelian varieties all have good reduction at $3$), our~\cref{p converse} gives an unconditional lower bound of $\frac{5}{16}$ for the proportion of twists with rank $1$ over the RM ring.
	\end{enumerate}
\end{remark}

\subsection{The $p$-part of BSD formula}\label{pBSD}

\begin{theorem} \label[theorem]{BSD E}
	Let $E/\Q$ be an elliptic curve and $p > 2$ a prime of good reduction. Assume that $E$ admits a cyclic $p$-isogeny with kernel $C=\F_p(\phi)$ for some character $\phi\colon G_\bQ\to \F_p^\times$ (equivalently, $E[p]$ is reducible). Assume that $r_\an(E) \in \{0,1\}$. 
	
	Then the $p$-part of the BSD formula holds for $E/\Q$, i.e.,
	\[
	\ord_p\Big(\frac{L^{*}(E/\Q,1)}{\Omega_E \cdot \Reg_{E/\Q}}\Big) = \ord_p\Big(\frac{\Tam(E/\Q)\#\Sha(E/\Q)[p^\infty]}{(\#E(\Q)_\tors)^2}\Big).
	\]
	Here, $L^*(E/\Q,1)$ denotes the leading Taylor coefficient of $L(E/\Q,s)$ at $s = 1$.
\end{theorem}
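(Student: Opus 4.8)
The plan is to derive the $p$-part of the BSD formula over $\Q$ from the anticyclotomic Iwasawa Main Conjectures \cref{IMC}, established over a suitable auxiliary imaginary quadratic field, following \cite[\S\,5]{CGLS} (which builds on \cite{JSW2017}). Both sides of the formula are invariant under $p$-isogeny---the $L$-value side visibly, the arithmetic side by Cassels' theorem---and the IMC is likewise isogeny-invariant, so I may replace $E$ by an isogenous curve and assume, as in \cref{sec:selmer-groups-of-rhof}, that $\H^0(\Q,E[p])=0$; write $E[p]^\ss=\F_p(\phi)\oplus\F_p(\psi)$ with $\phi\psi=\omega$ and normalise the lattice so that $\phi|_{G_p}=\omega$. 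By the non-vanishing results for (derivatives of) quadratic twists of $L$-functions invoked in \cite{JSW2017}, I may choose an imaginary quadratic field $K$ meeting all the hypotheses of \cref{0.1}---$D_K$ odd, $D_K\neq-3$, $p=v\ol v$ split, every $\ell\mid N$ split---and such that $r_\an(E/K)=1$: under these hypotheses the sign of $L(E/K,s)$ is $-1$, so $r_\an(E/K)$ is odd, and one arranges it to be exactly $1$ by choosing the twist $E^K$ to have minimal analytic rank. When $r_\an(E/\Q)=1$ the $p$-converse theorem \cref{p converse} already gives $\rk\,E(\Q)=1$, and when $r_\an(E/\Q)=0$ the vanishing $\corank\Sel_{p^\infty}(E/\Q)=0$ emerging below gives $\rk\,E(\Q)=0$.

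Over $K$ the Heegner point on $E$ is non-torsion by Gross--Zagier, so Kolyvagin's argument shows $\Sha(E/K)[p^\infty]$ is finite with order bounded by the square of the Heegner index. Feeding (IMC1)---Perrin-Riou's Heegner point Main Conjecture---into the $p$-adic Gross--Zagier formula of \cite{CastellaHsieh} (which relates the universal Heegner class $\kappa_\infty$, hence the Heegner point, to $\cL_f$ and thereby to the leading $L$-value, the period, and the regulator), and then applying the anticyclotomic control theorem to pass from $\Lambda$-modules to the $p^\infty$-Selmer group of $E/K$, yields the $p$-part of the BSD formula for $E/K$; crucially, the one-sided divisibility from the Kolyvagin-system argument has been upgraded to an equality by the comparison of algebraic and analytic Iwasawa invariants in \cref{algmain} and \cref{anacomp}, and the BDP period occurring here is converted to the N\'eron period via the relation recorded in \cref{compperiods} together with the standard integrality comparison of periods. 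Descending to $\Q$ is then routine: from $L(E/K,s)=L(E/\Q,s)L(E^K/\Q,s)$, the splitting $\Sha(E/K)[p^\infty]\iso\Sha(E/\Q)[p^\infty]\oplus\Sha(E^K/\Q)[p^\infty]$ (legitimate since $p>2$), and the factorizations of periods, regulators, Tamagawa numbers and torsion over $K$ versus over $\Q$---in which the $2$-power and $\sqrt{|D_K|}$ discrepancies do not contribute to $\ord_p$---the $p$-part of BSD for $E/\Q$ becomes equivalent to the $p$-part of BSD for the twist $E^K/\Q$, which again has analytic rank at most one, reducible mod-$p$ Galois representation and good reduction at $p$, hence is itself an instance of the theorem; the apparent circularity is broken exactly as in \cite[\S\,5]{CGLS}, \cite{JSW2017}.

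The only genuinely new difficulty---and the reason this is not a verbatim copy of \cite[\S\,5]{CGLS}---is the phenomenon motivating the paper: when one of the isogeny characters is trivial on $G_p$, the imprimitive unramified Selmer groups $\fX^S_?$ may contain non-trivial finite $\Lambda$-submodules, so their characteristic ideals are no longer their Fitting ideals. Hence the step turning the Iwasawa Main Conjecture into the integral BSD formula---the anticyclotomic control theorem identifying $\fX_f/(\gamma-1)\fX_f$, respectively $\cX_\tors/(\gamma-1)\cX_\tors$, with the $p^\infty$-Selmer group of $E/K$ up to an explicit finite error---must be run through the torsion-allowed control theorem proved in Appendix~\ref{appB} rather than that of \cite{JSW2017}, and one must verify that the extra finite error terms are precisely absorbed by the factor $(\#E(\Q)_\tors)^2$ in the denominator of the BSD quotient; this is exactly where non-trivial $p$-torsion in $E(\Q)$ enters and accounts for its presence there. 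I expect the bulk of the remaining work to lie in this verification, everything else being a direct transposition of the good-ordinary arguments of \cite{CGLS,JSW2017,CastellaHsieh}.
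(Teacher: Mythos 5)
Your overall route is the paper's: quote the rank-one $p$-part argument of \cite[\S\,5]{CGLS} (itself modelled on \cite{JSW2017}), feed in the new Main Conjecture of \cref{IMC} in place of their Theorem~4.2.2, and replace the control theorem of \cite{JSW2017} by the torsion-allowed one of Appendix~\ref{appB}. Two remarks. First, a minor point of economy: once you invoke isogeny invariance of both sides to reduce to a curve with $\H^0(\Q,E[p])=0$, the later emphasis on "absorbing the error terms into $(\#E(\Q)_\tors)^2$" is partly moot; what actually forces Appendix~\ref{appB} is not only torsion but the failure of (irred$_K$), which breaks the surjectivity and Cassels-pairing inputs of \cite{JSW2017} regardless, and the torsion that survives in the control-theorem formula is $\#(E(K)_\tors)_p$, distributed between $E$ and $E^K$ via $E(K)[p^\infty]\iso E(\Q)[p^\infty]\oplus E^K(\Q)[p^\infty]$.

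The genuine gap is the last step. You reduce the $p$-part of BSD for $E/\Q$ to the $p$-part of BSD for the twist $E^K/\Q$ and then assert that $E^K$ "is itself an instance of the theorem" with the "apparent circularity broken exactly as in" the references. As written this is circular, and no mechanism is named. The point is that after dividing the formula over $K$ (coming from the control theorem and the $p$-adic Gross--Zagier/Waldspurger formula) by the rank-one data over $\Q$, what remains is precisely the \emph{rank-zero} $p$-part of BSD for the twist $E^K$ with $L(E^K,1)\neq 0$; this is not re-proved by the anticyclotomic machinery but is imported as an independent input, namely the Greenberg--Vatsal theorem for elliptic curves with reducible residual representation (\cite[Theorem~5.1.4]{CGLS}, originating in \cite{GV00}, with the parity restriction on the isogeny character removed by \cite{CGS}). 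That cyclotomic rank-zero result is exactly what makes the right-hand side of the final identity vanish; without naming it, the descent from $K$ to $\Q$ is not closed.
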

\begin{proof}
	This follows from~\cite[Theorem 5.3.1]{CGLS}, where the use of Theorem 4.2.2 in \textit{op.\ cit.}\ is replaced by our~\cref{IMC}\,(IMC2). We remark that the assumption that the character is either ramified at $p$ and even, or unramified at $p$ and odd coming from~\cite{GV00} is now removed by~\cite{CGS} in both the rank $0$ and rank $1$ formulae, and the assumption that $\phi|_{G_p}\ne\mathbf{1},\omega$ is removed by us.
	
	In particular, we take $k=2$, $r=j=0$ and $\chi=\bN_K$ in~\cite[Theorem 5.13]{BDP13}. Note that $L_p(f,\bN_K)$ in \textit{loc.\ cit.}\ corresponds $\cL_E(0)$ because it corresponds to the value $L(f/K,\bN_K^{-1},0)=L(f/K,\mathbf{1},1)=L(f/K,1)$ (see~\cite[section 4]{BDP13}).
	
	Note that to make the arguments in~\cite[Theorem 5.3.1]{CGLS} work, we need an anticyclotomic control theorem which allows torsion submodules. This is done in Appendix~\ref{appB}. The formula in~\cite[Theorem 5.1.1]{CGLS} then becomes
	\begin{equation*}
			\#\bZ_p/f_{E}(0)=\#\Sha(E/K)[p^\infty]\cdot\Bigg(\frac{\#(\bZ_p/(\frac{1-a_p+p}{p})\cdot\log_{\omega_E}P)}{[E(K)_{/\tors}:\bZ\cdot P]_p\cdot\#(E(K)_{\tors})_p}\Bigg)^2 \times\prod_w c_w(E/K)_p,
	\end{equation*}
where $x_p\coloneq p^{\ord_p(x)}$ denotes the $p$-part of a rational number $x$. Alternatively, one could continue to work with an isogenous curve which has no torsion, and use the invariance of BSD formulae to get the same results with torsion.
We remark that in the control theorem from~\cite{JSW2017}, their anticyclotomic Selmer group is actually the `Greenberg Selmer group' for $f$ (which agrees with the unramified Selmer group in~\cite{CGLS}). However, since $\ker(\res_{M_f})$ is finite by~\cref{kerresf}, these two Selmer groups generate the same characteristic ideals. 

Together with the above formula coming from the control theorem and the fact that $E(K)[p^\infty]=E(\Q)[p^\infty]\oplus E^K(\Q)[p^\infty]$ (because $p>2$), equation (5.7) in~\cite{CGLS} becomes\begin{align*}
&\ord_p\Big(\frac{L'(E,1)}{\Reg(E/\Q)\cdot\Omega_E\cdot\prod_l c_l(E/\Q)}\Big)-\ord_p(\#\Sha(E/\bQ))+2\ord_p(E(\bQ)_\tors)\\
&=-\Big(\ord_p\Big(\frac{L(E^K,1)}{\Omega_{E^K}\cdot\prod_l c_l(E^K/\Q)}\Big)-\ord_p(\#\Sha(E^K/\Q))+2\ord_p(E^K(\Q)_\tors)\Big)
\end{align*}
where the right-hand side vanishes by~\cite[Theorem 5.1.4]{CGLS}, a result of Greenberg--Vatsal.
\end{proof}

\section{Proof of the Iwasawa Main Conjecture with multiplicative reduction}\label{5}
Let $f \in S_2(\Gamma_0(N))$ be a newform of weight $2$ with coefficient field $\Q(f)$. Let $F$ be a finite extension of the completion of $\Q(f)$ at a chosen prime above $p\ne 2$ and let $\cO$ be its ring of integers. Let $\frp$ be the maximal ideal of $\cO$ and let $\F=\cO/\frp$ be the residue field of $\cO$. Let $\Lambda=\cO\llbracket T \rrbracket$ and $\Lambda^{\nr}=\Lambda\hat{\otimes}_{\bZ_p}\bZ_p^{\nr}$. Let $\ol \rho_f$ be the residual representation attached to $f$ and assume $\ol \rho_f$ is reducible. Then there are characters $\phi,\psi$ such that there is an exact sequence
\begin{equation}\label{badextension}
0\to \F(\phi)\to \ol\rho_f\to\F(\psi)\to 0.
\end{equation} Further assume that $p\mid\mid N$, so $f$ has bad multiplicative reduction at $p$. In this section, we prove the Iwasawa Main Conjectures for such forms using Hida families, following the ideas in~\cite{Skinner}. 

\subsection{Hida arguments}\label{5.1}

As in the good reduction case, define the unramified Selmer group for $f$ by {\small\[
	\H^1_{\cF_{\nr}}(K,M_f)= \ker\Big\{\H^1(K^\Sigma/K, M_f) \xrightarrow{\res_{\nr,f}} \prod_{w\in\Sigma, w \nmid p}\H^1(I_w,M_f)^{G_w/I_w} \times \H^1(I_{\ol v}, M_f)^{G_{\ol v}/I_{\ol v}}\Big\}.
	\]} 

 For a finite set $\Sigma$ of places of $K$ which contains $\infty$ and the primes above divisors of $N$, and such that the finite places in $\Sigma$ are all split in $K$, let $S=\Sigma\setminus \{v,\bar{v},\infty\}$ and define the $S$-imprimitive Selmer group for $f$ by
 \[
 \H^1_{\cF^S_{\nr}}(K,M_f)= \ker\Big\{\H^1(K^\Sigma/K, M_f) \xrightarrow{\res^S_{\nr,f}}  \H^1(I_{\ol v}, M_f)^{G_{\ol v}/I_{\ol v}}\Big\}.
 \]
 Let $\fX_f:=\H^1_{\cF_{\nr}}(K,M_f)^\vee$ and $\fX_f^S:=\H^1_{\cF^S_{\nr}}(K,M_f)^\vee$ denote the dual Selmer groups. 
 
 According to~\cite[Section 3.1]{Skinner}, replacing $F$ by a finite extension if necessary, for each integer $m>0$ we get
 \begin{enumerate}[(a)]
 	\item[(a)] a newform $f_m\in S_{k_m}(\Gamma_0(N))$ with $\bQ(f_m)\subset F,\ k_m>2$ and $k_m\equiv2 \pmod{p-1}$ and such that $a_p(f_m)\in\cO^\times$;
 	\item[(b)] an equality of ideals $(\cL^S_f,\frp^m)=(\cL^S_{f_m},\frp^m)\subseteq\Lambda^{\nr}$;
 	\item[(c)] $\ol\rho_f\cong \ol\rho_{f_m}$ is reducible. In particular, the characters appearing in their semisimplifications are the same (both globally and locally);
 	\item[(d)] $\fX^S_f(f_m)$ is a torsion $\Lambda$-module and Char$(\fX^S_{fm})\Lambda^\nr=(\cL^S_{f_m})\subseteq\Lambda^{\nr}$;
 	\item[(e)] $\fX^S_{f_m,\free}$ has no nonzero finite $\Lambda$-submodules, so Fitt$(\fX^S_{f_m,\free})=\Char(\fX^S_{f_m,\free})$. Here $\fX^S_{f_m,\free}$ denotes the $\Lambda$-free part of $\fX^S_{f_m}$ and Fitt denotes the $\Lambda$-Fitting ideal. The same is true for $f$.
 \end{enumerate}
(a) and (c) follow from Hida theory. Similarly as in \textit{op.\ cit.}, (b) comes from a $2$-variable $p$-adic $L$-function (adding a `weight' variable) for the Hida families constructed in~\cite[Theorem 2.11]{Cas20} which works for Eisenstein primes. (d) follows from~\cref{IMC}\,(IMC2) for good ordinary forms $f_m$ in the previous sections, the surjectivity of global-to-local maps (\cref{remark on Selmer groups} and~\cref{surj for Self}) and the equality of local factors appearing in the definition of the imprimitive Selmer groups and $p$-adic $L$-functions. Note that by~\cref{Koly} we only know the Main Conjectures for a subfamily with weights $k_m\equiv 2\pmod{4}$. But such subfamily is always infinite so the following limiting argument applies. (e) comes from basic properties of Fitting ideals.
\begin{lemma}
	The $\mu$-invariants of $\fX^S_f$ and $\fX^S_{f_m}$ are $0$.
\end{lemma}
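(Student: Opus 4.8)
The plan is to reduce both vanishing statements to the finiteness of the \emph{residual} $S$-imprimitive Selmer group, which by construction depends only on $\ol\rho_?$ as a $G_K$-module and on $\Sigma$, and is therefore literally the same module for $f$ and for every $f_m$ (recall $\ol\rho_{f_m}\cong\ol\rho_f$ by~(c), and the same $\Sigma$ works for all of them since $f$ and all $f_m$ have level $\Gamma_0(N)$). For the forms $f_m$ there is nothing new: each $f_m$ has weight $k_m>2$, is $p$-ordinary (as $a_p(f_m)\in\cO^\times$), and has $\ol\rho_{f_m}\cong\ol\rho_f$ reducible; choosing the lattice in $V_{f_m}$ so that $\ol\rho_{f_m}$ has no global trivial subrepresentation (possible by~\cite[Theorem~5.2]{BP19}, as in~\cref{sec:selmer-groups-of-rhof}), $f_m$ satisfies the hypotheses of~\cref{imprimlambda}, which gives $\mu(\fX^S_{f_m})=0$. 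Equivalently, this follows from~(d) together with $\mu_{\Lambda^{\nr}}(\cL^S_{f_m})=\mu(\cL_{f_m})+\sum_{w\in S}\mu(\cP_w(f_m))=0$, by~\cref{anacong} and~\cref{H1 Euler}.

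For $f$ itself I would run the argument of the proof of~\cref{imprimlambda}, checking that its steps survive for multiplicative reduction. First, $\mu(\fX^S_f)=0$ --- and in fact the $\Lambda$-torsionness of $\fX^S_f$ --- is equivalent to the finiteness of $\H^1_{\cF^S_{\nr}}(K,M_f)[\frp]$: since $\Lambda^\vee[\frp]$ is infinite, a $\Lambda$-free part of $\fX^S_f$ would force this $\frp$-torsion to be infinite, while a torsion $\Lambda$-module has $\mu=0$ exactly when its mod-$\frp$ reduction is finite. Next, as in the proof of~\cref{imprimlambda}, the diagram $(\mathrm{nr},f)$ there, together with $\H^0(K^\Sigma/K,M_f)=0$ and the finiteness of $\ker(f)$, shows that the natural map $\H^1_{\cF^S_{\nr}}(K,M_f[\frp])\to\H^1_{\cF^S_{\nr}}(K,M_f)[\frp]$ is injective with finite cokernel; here $\H^0(K^\Sigma/K,M_f)=0$ holds because we take $\ol\rho_f$ to be the non-split extension with no global trivial subrepresentation (using~\cite[Theorem~5.2]{BP19} and~\cref{splitGkinfty}, applicable since the relevant character $\epsilon$ is unramified at $p$). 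It therefore suffices to prove that $\H^1_{\cF^S_{\nr}}(K,M_f[\frp])$ is finite, and this is precisely the opening step of the proof of~\cref{imprimlambda}: Ribet's sequence~\eqref{badextension} induces a commutative diagram with exact rows relating $\H^1_{\cF^S_{\nr}}(K,M_f[\frp])$ to $\H^1_{\cF^S_{\nr}}(K,M_\phi[\frp])$ and $\H^1_{\cF^S_{\nr}}(K,M_\psi[\frp])$, and these last two are finite by~\cref{Lambda invariants} (or by~\cite[Proposition~1.2.5]{CGLS} when the character restricts to neither $\mathbf 1$ nor $\omega$ at $\ol v$). This portion uses only the exact sequence of residual $G_K$-modules and the character computations, so it is insensitive to the reduction type of $f$ at $p$.

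The single place where the multiplicative case genuinely departs from~\cref{sec:selmer-groups-of-rhof} --- and the main obstacle --- is the finiteness of the local objects $\H^0(K_{\ol v},M_f)$ and $\ker(\res_{M_f})$, hence of $\ker(f)$, which in the good-reduction case rested on $\rho_f|_{G_{K_{\ol v}}}$ being crystalline (\cref{H0 of V/T},~\cref{kerresf}). Here $\rho_f|_{G_{K_{\ol v}}}$ is instead ordinary of Steinberg type, an extension $0\to F(\chi\eta)\to\rho_f|_{G_{K_{\ol v}}}\to F(\eta)\to 0$ with $\eta$ unramified; but since $k=2$ one has $V_f/T_f\cong A[\frp^\infty]$ for $A$ the abelian variety attached to $f$, and $\H^0(K_{\ol v},M_f)\cong\H^0(K_{\ol v,\infty},V_f/T_f)=A[\frp^\infty](K_{\ol v,\infty})$ is finite because $K_{\ol v,\infty}/K_{\ol v}$ is infinitely ramified and so meets $K_{\ol v}^{\ur}$ in a finite extension, over which $A[\frp^\infty]$ is finite by Mattuck's theorem --- exactly the mechanism of the case $r=1$ of~\cref{H0 of V/T}. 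Granting this (which then makes $\ker(\res_{M_f})$ finite by the argument of~\cref{kerresf} and $\ker(f)$ finite by the case analysis in the proof of~\cref{imprimlambda}), the argument above goes through and yields $\mu(\fX^S_f)=0$, together with the $\Lambda$-torsionness of $\fX^S_f$.
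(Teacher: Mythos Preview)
Your approach is the paper's approach --- reduce to the proof of \cref{imprimlambda} --- and for the $f_m$ your argument is fine. For $f$ itself, however, you have misidentified the ``main obstacle'', and your proposed resolution of it is not correct.

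\medskip

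\textbf{The Mattuck step is wrong as stated.} You claim that $A[\frp^\infty](K_{\ol v,\infty})$ is finite ``because $K_{\ol v,\infty}\cap K_{\ol v}^{\ur}$ is a finite extension, over which $A[\frp^\infty]$ is finite by Mattuck''. This is a non sequitur: finiteness over the intersection does not bound the torsion over the whole of $K_{\ol v,\infty}$. The mechanism in the $r=1$ case of \cref{H0 of V/T} is more delicate: one uses that the \emph{quotient} character $\psi$ is unramified, so $G_{K_{\ol v}^{\nr}}\subset\ker\psi$; if in addition $G_{K_{\ol v,\infty}}\subset\ker\psi$, then $\psi$ is already trivial over the finite intersection, and one derives a contradiction from that. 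In the split multiplicative case the quotient character is literally $\mathbf 1$, so that argument collapses; one needs instead to argue with the Tate parameter $q$ (e.g.\ that $\Q_p(q^{1/p^n})/\Q_p$ is non-abelian once $q$ is not a $p^n$-th power, hence not contained in the abelian extension $K_{\ol v,\infty}$).

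\medskip

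\textbf{But you do not need this at all for the lemma.} For $\mu(\fX^S_f)=0$ it suffices that $\H^1_{\cF^S_{\nr}}(K,M_f)[\frp]$ be finite, and your own reduction shows this follows from (i) finiteness of $\H^1_{\cF^S_{\nr}}(K,M_f[\frp])$, (ii) $\H^0(K^\Sigma/K,M_f)=0$, and (iii) finiteness of the cokernel of $\H^1_{\cF^S_{\nr}}(K,M_f[\frp])\to\H^1_{\cF^S_{\nr}}(K,M_f)[\frp]$. You correctly observe that (i) and (ii) depend only on $\ol\rho_f$. For (iii), the cokernel injects into $\ker(f)$, and one checks directly that $\ker(f)$ injects into $\H^0(I_{\ol v},M_f)/\frp$. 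Now $\H^0(I_{\ol v},M_f)[\frp]=\H^0(I_{\ol v,\infty},\ol\rho_f)\subseteq\F$ (from $0\to\F(\omega)\to\ol\rho_f|_{I_{\ol v}}\to\F\to 0$ and the fact that $\omega$ is ramified), so $\H^0(I_{\ol v},M_f)$ is either finite cyclic or $\cong F/\cO$; in either case its reduction mod $\frp$ is $\subseteq\F$. Thus $\ker(f)\subseteq\F$ with no appeal to the integral structure of $\rho_f|_{G_{\ol v}}$ --- this is exactly what the paper means by ``the proof only relies on the extension''. The finiteness of $\H^0(K_{\ol v},M_f)$ (and of $\ker(\res_{M_f})$) is relevant for the precise $\lambda$-comparison used \emph{after} this lemma, not for $\mu=0$ itself.
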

\begin{proof}
	This is essentially proved in~\cref{imprimlambda} since the proof only relies on the extension~\eqref{badextension}. Note that the characters can be arbitrary.
\end{proof}

By (c) above, for every $f_m$, we also have an extension\[
0\to \F(\phi)\to \ol\rho_{f_m}\to\F(\psi)\to 0.
\]
\cref{algmain} then implies that $\lambda(\fX^S_f)=\lambda(\fX^S_{f_m})$. On the other hand, since the $\mu$-invariants are $0$, $\lambda(\fX^S_{f})$ (resp.\ $\lambda(\fX^S_{f_m})$) determines $\fX^S_{f,\free}/\frp^m$ (resp.\ $\fX^S_{f_m,\free}/\frp^m$). Thus there is an equality of ideals\[
(\Fitt(\fX^S_{f,\free}),\frp^m)=(\Fitt(\fX^S_{f_m,\free}),\frp^m).\] Then following the arguments in~\cite[Section 3.1]{Skinner}, we have the following result.
\begin{lemma}
	$\Char(\fX^S_f)\Lambda^{\nr}=(\cL^S_f)\subseteq\Lambda^{\nr}$.
\end{lemma}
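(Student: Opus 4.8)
The plan is to run Skinner's specialization argument from~\cite[Section 3.1]{Skinner} adapted to the residually reducible situation, using the free parts of the Selmer groups in place of the Selmer groups themselves. First I would record what is already in hand: by (d), for each $m$ the module $\fX^S_{f_m}$ is $\Lambda$-torsion with $\Char(\fX^S_{f_m})=(\cL^S_{f_m})\subseteq\Lambda^{\nr}$; by the preceding lemma $\mu(\fX^S_f)=\mu(\fX^S_{f_m})=0$; by~\cref{algmain} applied to the common extension~\eqref{badextension} (which holds for $f$ and for every $f_m$ by (c)) together with the equality of the Euler factors $\cP_w(\cdot)$ at $w\in S$, one gets $\lambda(\fX^S_f)=\lambda(\fX^S_{f_m})$. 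Hence $\fX^S_{f,\free}$ and $\fX^S_{f_m,\free}$ have the same $\mu$- and $\lambda$-invariants; since $\mu=0$ and the characteristic ideals of the free parts are distinguished polynomials of degree $\lambda$, this gives the congruence of ideals
\[
(\Fitt(\fX^S_{f,\free}),\frp^m)=(\Char(\fX^S_{f,\free}),\frp^m)=(\Char(\fX^S_{f_m,\free}),\frp^m)=(\Fitt(\fX^S_{f_m,\free}),\frp^m),
\]
using (e) to pass between $\Char$ and $\Fitt$ for the free parts on both sides.

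Next I would combine this with (b): $(\cL^S_f,\frp^m)=(\cL^S_{f_m},\frp^m)\subseteq\Lambda^{\nr}$. From (d) and (e) for $f_m$ we have $(\cL^S_{f_m})=\Char(\fX^S_{f_m})=\Char(\fX^S_{f_m,\free})=\Fitt(\fX^S_{f_m,\free})$ (the middle equality because $\mu=0$, so the torsion part contributes no $\frp$-power to the characteristic ideal, and because there is no $\Lambda$-free part beyond rank $0$—$\fX^S_{f_m}$ is torsion, so "free part" here means the quotient by the maximal finite submodule, which has the same characteristic ideal). Therefore
\[
(\Fitt(\fX^S_{f,\free}),\frp^m)=(\Fitt(\fX^S_{f_m,\free}),\frp^m)=(\cL^S_{f_m},\frp^m)=(\cL^S_f,\frp^m)
\]
in $\Lambda^{\nr}/\frp^m$, for every $m>0$. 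Since $\Lambda^{\nr}$ is $\frp$-adically separated (it is a completed tensor product of $\frp$-adically complete rings), intersecting over all $m$ yields $\Fitt(\fX^S_{f,\free})=(\cL^S_f)$, and then $\Char(\fX^S_f)=\Char(\fX^S_{f,\free})=\Fitt(\fX^S_{f,\free})=(\cL^S_f)$, again using $\mu(\fX^S_f)=0$ for the first equality and (e) for $f$ for the second.

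The main obstacle, and the point requiring care rather than routine bookkeeping, is justifying the passage from the congruences modulo $\frp^m$ to the exact equality of ideals in $\Lambda^{\nr}$: one must check that $\mu=0$ really does let $\lambda$ together with the $\frp^m$-reductions pin down the distinguished-polynomial generator of $\Char(\fX^S_{f,\free})$, i.e.\ that there is no "drift" in the $\lambda$-invariant as $m\to\infty$ and that the ideals $(\Fitt(\fX^S_{f,\free}),\frp^m)$ genuinely stabilize to $\Fitt(\fX^S_{f,\free})$; this is exactly the place where~\cite[Section 3.1]{Skinner} does the analogous work, and where (e)—the absence of nonzero finite $\Lambda$-submodules in the free parts, so that $\Char$ and $\Fitt$ agree—is indispensable, since in the residually reducible setting the full Selmer groups $\fX^S_?$ may carry finite submodules and the naive identification of characteristic and Fitting ideals fails. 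I expect the rest (the equality $(\cL^S_{f_m})=\Fitt(\fX^S_{f_m,\free})$ from (d)+(e), and the separatedness of $\Lambda^{\nr}$) to be formal.
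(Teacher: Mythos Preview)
Your plan matches the paper's proof almost line for line: the paper also runs the chain
\[
(\Char(\fX^S_f),\frp^m)=(\Fitt(\fX^S_{f,\free}),\frp^m)=(\Fitt(\fX^S_{f_m,\free}),\frp^m)=(\Char(\fX^S_{f_m}),\frp^m)=(\cL^S_{f_m},\frp^m)=(\cL^S_f,\frp^m)
\]
via $\mu=0$, (e), the equality $\lambda(\fX^S_f)=\lambda(\fX^S_{f_m})$, (d), and (b), then passes to the limit citing \cite[Section~3.1]{Skinner}.

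The one place where your stated reasoning is not sufficient is the middle equality $(\Fitt(\fX^S_{f,\free}),\frp^m)=(\Fitt(\fX^S_{f_m,\free}),\frp^m)$. You argue that both characteristic ideals are generated by distinguished polynomials of the same degree $\lambda$, but that alone does not force agreement modulo $\frp^m$: already $(T,\pi^2)\neq(T-\pi,\pi^2)$ in $\Lambda$, so two $\Lambda$-torsion modules with $\mu=0$, no finite submodules, and equal $\lambda$ can have distinct Fitting ideals mod $\frp^m$. The paper's sentence at this step is equally compressed, but what actually underlies it in Skinner's argument is the stronger Hida-theoretic input implicit in the construction of the $f_m$: the lattices satisfy $T_f/\frp^m\cong T_{f_m}/\frp^m$ as $G_K$-modules (not merely the mod-$\frp$ isomorphism recorded in (c)), whence the imprimitive Selmer groups themselves are isomorphic modulo $\frp^m$ and hence so are their Fitting ideals. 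Thus the genuine obstacle is this congruence step, not the limit you highlight; once the congruences hold for every $m$, $\frp$-adic separatedness of $\Lambda^{\nr}$ makes the passage to the limit routine, exactly as you say.
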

\begin{proof}
	We have \begin{align*}
		(\Char(\fX^S_f)\Lambda^\nr,\frp^m)&=(\Char(\fX^S_{f,\free})\Lambda^\nr,\frp^m)\\
		&=(\Fitt(\fX^S_{f,\free})\Lambda^\nr,\frp^m)\\
		&=(\Fitt(\fX^S_{f_m,\free})\Lambda^\nr,\frp^m)\\
		&=(\Char(\fX^S_{f_m,\free})\Lambda^\nr,\frp^m)\\
		&=(\Char(\fX^S_{f_m})\Lambda^\nr,\frp^m)\\
		&=(\cL^S_{f_m},\frp^m)\\
		&=(\cL^S_f,\frp^m).
	\end{align*}
The result then follows from taking limit with respect to $m$ as in~\cite[section 3.1]{Skinner}.
\end{proof}
\begin{theorem}\label{imc mult}
	$\Char(\fX_f)\Lambda^{\nr}=(\cL_f)\subseteq\Lambda^{\nr}.$
\end{theorem}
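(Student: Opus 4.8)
The plan is to deduce the primitive Main Conjecture \cref{imc mult} from the $S$-imprimitive version just established in the preceding lemma, exactly as one passes from the imprimitive to the primitive Iwasawa Main Conjecture in the good-reduction case. First I would recall the defining exact sequence relating the primitive and imprimitive Selmer groups: since the global-to-local map defining $\H^1_{\cF^S_{\nr}}(K,M_f)$ is surjective (by \cref{remark on Selmer groups} and \cref{surj for Self}, whose proofs carry over since they only use \cite[Proposition~A.2]{PollackWeston2011} and the local computations, which in weight $2$ at a multiplicative prime go through as in the good-reduction argument — or, since here $p \| N$, the prime $p$ itself is not in $S$ and $S$ consists of primes $w \nmid p$), one gets a short exact sequence
\[
0 \to \H^1_{\cF_{\nr}}(K, M_f) \to \H^1_{\cF^S_{\nr}}(K, M_f) \to \prod_{w \in S} \H^1(K_w, M_f) \to 0,
\]
and dually $0 \to \bigoplus_{w\in S}\H^1(K_w,M_f)^\vee \to \fX^S_f \to \fX_f \to 0$.

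Next I would compute the characteristic ideal of each local term $\H^1(K_w, M_f)^\vee$ for $w \in S$. By \cref{H1 Euler} applied to the relevant characters, and more directly by the same Euler-factor computation used to define $\cP_w(f)$ in \cref{1.5}, the module $\H^1(K_w, M_f)^\vee$ is $\Lambda$-torsion with characteristic ideal $(\cP_w(f))$, where $\cP_w(f) = P_w(\ell^{-1}\gamma_w)$ and $P_w = \det(1 - \Frob_w X \mid V_{f,I_w})$. Multiplicativity of characteristic ideals in the short exact sequence above then gives
\[
\Char_\Lambda(\fX^S_f) = \Char_\Lambda(\fX_f) \cdot \prod_{w \in S} (\cP_w(f)).
\]
On the analytic side, by definition $\cL^S_f = \cL_f \cdot \prod_{w \in S} \cP_w(f)$ (see \cref{1.5}). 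Combining these with the equality $\Char(\fX^S_f)\Lambda^{\nr} = (\cL^S_f)$ from the previous lemma, and cancelling the common factor $\prod_{w\in S}(\cP_w(f))$ — which is legitimate because $\Lambda^{\nr}$ is a domain (indeed a regular, hence factorial, ring) and the $\cP_w(f)$ are nonzero — yields $\Char(\fX_f)\Lambda^{\nr} = (\cL_f)$.

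The one genuinely delicate point, and the step I expect to be the main obstacle, is the cancellation of the Euler factors: a priori $\cP_w(f)$ could share a common divisor with $\Char(\fX_f)$ or with $\cL_f$, so the cancellation must be performed at the level of ideals in the domain $\Lambda^{\nr}$ rather than by naive division of generators — but since $\Lambda^{\nr}$ is an integral domain this is immediate once both sides are principal ideals with a common factor, as $(\cP_w(f))\cdot I = (\cP_w(f))\cdot J$ with $\cP_w(f) \neq 0$ forces $I = J$. A secondary subtlety is making sure the exact sequence and the local characteristic-ideal computations are valid verbatim at a prime of multiplicative reduction; here one uses that the primes $w \in S$ are all prime-to-$p$ split primes, so the computations are identical to those in the good-reduction sections and do not interact with the multiplicative behavior at $p$. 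With these checks in place the theorem follows.

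\begin{proof}
By \cref{remark on Selmer groups}\,(ii) and \cref{surj for Self}, the global-to-local map defining $\H^1_{\cF^S_{\nr}}(K,M_f)$ is surjective, so from the definitions one obtains the short exact sequence
\[
0 \to \H^1_{\cF_{\nr}}(K, M_f) \to \H^1_{\cF^S_{\nr}}(K, M_f) \to \prod_{w \in S} \H^1(K_w, M_f) \to 0.
\]
Dualizing and using that each $\H^1(K_w,M_f)^\vee$ is $\Lambda$-torsion with characteristic ideal $(\cP_w(f))$ (the same Euler-factor computation as in \cref{1.5}; note $w \nmid p$ is split, so this is identical to the good-reduction case), multiplicativity of characteristic ideals in exact sequences of torsion $\Lambda$-modules gives
\[
\Char_\Lambda(\fX^S_f) = \Char_\Lambda(\fX_f) \cdot \prod_{w \in S} (\cP_w(f)).
\]
On the other hand $\cL^S_f = \cL_f \cdot \prod_{w \in S}\cP_w(f)$ by definition, and the previous lemma gives $\Char(\fX^S_f)\Lambda^{\nr} = (\cL^S_f)$. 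Therefore
\[
\Char_\Lambda(\fX_f)\Lambda^{\nr} \cdot \prod_{w \in S}(\cP_w(f)) = (\cL_f) \cdot \prod_{w \in S}(\cP_w(f))
\]
as ideals in $\Lambda^{\nr}$. Since $\Lambda^{\nr}$ is an integral domain and each $\cP_w(f)$ is nonzero, we may cancel the common factor to conclude $\Char(\fX_f)\Lambda^{\nr} = (\cL_f)$.
\end{proof}
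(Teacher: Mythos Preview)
Your proof is correct and follows essentially the same approach as the paper. The paper's proof is terser: it simply cites \cref{algmain} (whose proof, via the same short exact sequence and surjectivity you spell out, gives the algebraic primitive-to-imprimitive comparison) and the definition of $\cL^S_f$, noting both work verbatim in the multiplicative setting; you have made the cancellation argument explicit, which is fine. The paper additionally remarks that Kriz's congruence computation \cite[Prop.~37]{Kri16} extends to $p \mid N$ via the $p$-depletion, but this is a comment on why the analytic-side constructions of \cref{sec:analytic side} remain valid at a multiplicative prime rather than a new ingredient in the passage from imprimitive to primitive.
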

\begin{proof}
	Similar to the good reduction case, this result follows from the comparison between primitive and imprimitive Selmer groups (see~\cref{algmain}, whose proof still works in the multiplicative reduction setting) and the comparison between primitive and imprimitive $p$-adic $L$-functions (see~\cref{anacong}). We mention that the same arguments as in~\cite[Prop. 37]{Kri16} (which has an assumption $p\nmid N$) still work since Kriz's computation of the congruence is with the $p$-depletion which is identified with the $((p^2)^{(0)})$-stabilization, which allows $p$ in the level.
\end{proof}

\subsection{A $p$-converse theorem}
Similar as in the good reduction case, we also obtain a $p$-converse theorem for multiplicative reduction. Via an explicit reciprocity law, one can argue as in~\cite{Cas24} to show that the BDP Main Conjecture proved in the last section is equivalent to a Heegner Point Main Conjecture which would then yield a $p$-converse theorem.

As in~\cite{Cas24}, one can consider a Heegner class $\bz_\infty^*\in \H^1_{\cF_\Lambda}(K,\bT)$ which is either a standard $\Lambda$-adic Heegner class $\bz_\infty$ (denoted as $\kappa_\infty$ by us in section $3$) in the non-split multiplicative case, or a derived Heegner class $\bz_\infty'$ defined by the relation $\bz_\infty=(\gamma-1)\bz_\infty'$ in the split multiplicative case. Here $\gamma$ is a topological generator of $\Gal(K_\infty/K)$, and this $(\gamma-1)$ factor reflects a trivial zero of the $p$-adic $L$-function. We have the following Heegner Point Main Conjecture for multiplicative primes (the definitions are the same as those for good ordinary primes).

\begin{theorem}\label{multHg}
	Let $f \in S_2(\Gamma_0(N))$ be a newform of weight $2$ and assume that $p\mid\mid N$ is an Eisenstein prime for $f$. Then both $\H^1_{\cF_\Lambda}(K,\bT)$ and $\cX=\H^1_{\cF_\Lambda}(K,M_f)^\vee$ have $\Lambda$-rank one, and the equality \begin{equation*}
		\Char_\Lambda(\cX_{\tors})= \Char_\Lambda(\H^1_{\cF_\Lambda}(K,\bT)/\Lambda\bz_{\infty}^*)^2
	\end{equation*} holds.
\end{theorem}
\begin{proof}
		This follows from the explicit reciprocity law~\cite[Theorem 2.2, Corollary 2.3]{Cas24} for both split and non-split cases, which can be applied to prove the equivalence of this Heegner Point Main Conjecture to~\cref{imc mult}. In showing the equivalence using~\cite[Proposition 3.2]{Cas24} (also~\cite[Theorem 5.1]{BCK21}), we could again first assume $\H^0(K,\ol\rho_f)=0$ (see also~\cref{splitGkinfty}) and deduce the general case using a combination of Ribet's lemma and the invariance of the Main Conjectures.
		
		It should be mentioned that in this situation we do not need to consider the $t>0$ case from~\cref{Koly}, since that is only needed to get one divisibility from the Kolyvagin system argument but not needed for the equivalence. 
\end{proof}

\begin{corollary}\label[corollary]{p converse mult}
	Let $A/\Q$ be a simple RM abelian variety associated to a newform $f$ and $\frP \mid p > 2$ a prime ideal of its endomorphism ring $\cO=\End(A)$ of multiplicative reduction such that $\rho_{A,\frP}$ is reducible.
	
	Then
	\[
	\corank_\cO \Sel_{\frP^\infty}(A/\Q) = r \in \{0,1\} \implies \rk_\cO A(\Q) = r_\an(f) = r,
	\]
	and $\Sha(A/\Q)[\frP^\infty]$ is finite.
\end{corollary}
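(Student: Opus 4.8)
The plan is to prove \cref{p converse mult} exactly as the multiplicative-reduction mirror of \cref{p converse}: run the argument of \cite[Theorem~5.2.1]{CGLS} almost verbatim, feeding in the multiplicative-reduction Main Conjecture \cref{imc mult} wherever the good-reduction one \cref{IMC}\,(IMC2) was used. Since \cref{imc mult} is stated only in its Greenberg/BDP form, the first step is to transfer it to the Perrin--Riou/Heegner-point form: one invokes that an either-side divisibility in the BDP Main Conjecture is equivalent to the corresponding divisibility in the Heegner-point Main Conjecture, via the explicit reciprocity law relating the BDP class to the Heegner class (as in \cite[Prop.~4.2.1]{CGLS} and \cite[Thm.~5.2]{BCK21}), so that the equality in \cref{imc mult} upgrades to $\Lambda$-rank one of $\H^1_{\cF_\Lambda}(K,\bT)$ together with $\Char_\Lambda(\cX_{\tors})=\Char_\Lambda(\H^1_{\cF_\Lambda}(K,\bT)/\Lambda\kappa_\infty)^2$, just as in \cref{IMC}\,(IMC1). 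One must check that these reciprocity-law inputs are insensitive to whether $p\nmid N$ or $p\,\|\,N$; this should be the case, since the relevant local condition at $p$ is the ordinary one in both settings and the class $\kappa_\infty$ coming from generalized Heegner cycles is defined for $p\,\|\,N$ as well.

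With the Heegner-point Main Conjecture in hand I would then reproduce the case analysis of \cite[Theorem~5.2.1]{CGLS}. Assuming $\corank_\cO\Sel_{\frP^\infty}(A/\Q)=r\in\{0,1\}$, one applies an anticyclotomic control theorem valid in the multiplicative-reduction setting --- obtained as in \cite{Skinner} by adapting the control theorems of \cite{JSW2017} and of Appendix~\ref{appB} to $p\,\|\,N$ --- in order to relate $\Sel_{\frP^\infty}(A/K)$, and the bottom-layer image of the Heegner class $\kappa_\infty$, to the Iwasawa-theoretic data. When $r=1$ this image is non-torsion, so the associated Heegner point on $A/K$ is non-torsion, and Gross--Zagier--Zhang together with Kolyvagin's method over $K$ give $\rk_\cO A(K)=1$ and finiteness of $\Sha(A/K)[\frP^\infty]$; when $r=0$ the Selmer group over $K$ is finite, which by the analogous $L$-value non-vanishing forces $\rk_\cO A(K)=0$ and again finiteness of $\Sha(A/K)[\frP^\infty]$. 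In either case one descends from $K$ to $\Q$ using $A(K)\otimes\Q_p\cong (A(\Q)\otimes\Q_p)\oplus (A^K(\Q)\otimes\Q_p)$ and the $p$-parity of Selmer coranks to conclude $\rk_\cO A(\Q)=r_\an(f)=r$ and finiteness of $\Sha(A/\Q)[\frP^\infty]$; as in \cref{p converse}, the parity input here is \cite[Theorem~C]{NekovarTame} rather than Monsky's theorem, and \cite{NekovarTame} already handles the tame (multiplicative) local constant, so no new parity statement is needed.

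The main obstacle is not the formal skeleton above but the verification that the three key inputs survive the presence of $p$ in the level: (i) the explicit reciprocity law relating the BDP class to $\kappa_\infty$, hence the equivalence of the two Main Conjectures; (ii) the anticyclotomic control theorem with multiplicative reduction; and (iii) the $p$-adic Gross--Zagier/Waldspurger formula identifying the bottom-layer Heegner class with the relevant $L$-value when $p\mid N$. Each of these is available in a form flexible enough to admit a prime of multiplicative reduction (\cite{Skinner}, \cite{JSW2017}, \cite{BCK21} and the works cited there), so the real task is to invoke them with the correct local-at-$p$ normalization; once that is arranged, the proof of \cref{p converse} applies essentially word for word.
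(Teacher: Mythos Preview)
Your proposal is correct and follows exactly the route the paper intends: the paper's ``proof'' consists of the single sentence ``Similar as in the good reduction case, we also obtain a $p$-converse theorem for multiplicative reduction,'' i.e., rerun the argument of \cref{p converse} (hence of \cite[Theorem~5.2.1]{CGLS}) with \cref{imc mult} in place of \cref{IMC}. You have correctly flagged the one point the paper glosses over --- that \cref{imc mult} is stated only in the BDP/Greenberg form, so one must first pass to the Heegner-point formulation via the equivalence of \cite[Prop.~4.2.1]{CGLS} and \cite[Thm.~5.2]{BCK21} before invoking the \cite{CGLS} descent --- and your list of inputs (reciprocity law, control theorem, parity via \cite{NekovarTame}) that must be checked to hold for $p\,\|\,N$ is exactly what is implicitly being asserted.
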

\begin{proof}
This follows from the proof of~\cite[Theorem 1.1]{Cas24} for multiplicative primes and a control theorem of Greenberg (see~\cite[Theorem 3.4]{Cas24}), where the appeal to Theorem 1.3 from \textit{loc. cit.} is replaced by~\cref{multHg}. Note that their arguments also work in the Eisenstein case.
\end{proof}

\appendix
\section{A Kolyvagin system argument} \label{sec:appendix}

As in~\cite{How2004}, by a coefficient ring $R$ we mean a complete, Noetherian, local ring with finite residue field of characteristic $p$. If $L$ is a perfect field, we denote by $\Mod_{R,L}$ the category of finitely generated $R$-modules equipped with continuous, linear actions of $G_L$, assumed to be unramified outside of a finite set of primes in the case where $L$ is a global field. 

In this subsection, $R^{(k)}$ is assumed to be an Artinian principal idea ring of length $k$. Denote its residue field by $\F$. Let $T^{(k)}$ be an object of $\Mod_{R^{(k)},G_K}$. Fix a generator $\pi^{(k)}$ of the maximal ideal $\fm^{(k)}$ of $R^{(k)}$. We will ignore the indexes and simply write $\pi$ and $\fm$ when the $k$ are clear from the contexts. Let $T^{(k)*}=\Hom(T^{(k)},R(1))$ and fix a Selmer structure $\cF$ on $T^{(k)}$. Let $\cF^*$ denote the dual Selmer structure on $T^{(k)*}$. In all what follows we assume that $(T^{(k)},\cF)$ and $(T^{(k)*},\cF^*)$ satisfy the hypotheses $\H.0$ and $\H.3$--$\H.5$ in \textit{op.\ cit.} §\,1.3. In particular, we do \emph{not} assume $\H^0(K,\ol\rho_f)=0$ (assumption~(h1) in~\cite[§\,3.2]{CGLS}), and $\H.2$ is not necessary for our purpose.

The main result of this appendix is~\cref{square}, which follows from~\cref{even V} and the arguments in~\cite{How2004}.

We will make the following assumption about stabilizing $\H^0$ groups throughout this section:
\begin{assumption}\label[assumption]{stablizing H0}
	There is an absolute constant $N$ such that $\H^0(K,T^{(k)})\isom R/\pi^{k}$ when $k\leq N$, and $\H^0(K,T^{(k)})\isom R/\pi^{N}$ for all $k>N$.
\end{assumption}
Note that by a version of~\cref{H0 of V/T}\,(i) in the case where $\H^0(K,\ol\rho_f)\ne0$, $\H^0(K,V_f/T_f)$ is still finite (since $\H^0(K,V_f)=0$ because $\rho_f$ is irreducible), this assumption is satisfied when $T^{(k)}=T_f/\pi^k T_f$ where $\pi$ is the uniformizer of $R$. For the exposition, we often denote $\H^i(K,-)$ by $\H^i(-)$. 

Following~\cite[Lemma~1.3.3]{How2004} and~\cite[Lemma~3.5.4]{MR04}, for $i\leq k$, we would like to compare $\H^1_\cF(K,T^{(k)}/\fm^iT^{(k)})$ and $\H^1_\cF(K,T^{(k)}[\fm^i])$ with $\H^1_\cF(K,T^{(k)})[\fm^i]$. Note that the first two groups are isomorphic (see~\cite[Remark~1.1.4]{How2004}), we will use $T^{(k)}/\fm^iT^{(k)}$ and $T^{(k)}[\fm^i]$ interchangeably without further explanation in cohomology groups. In~\cite{How2004}, the vanishing of several $\H^0$ groups forces the natural maps between the above groups to be isomorphisms. Since we allow non-trivial $\H^0$ groups, we need to study the kernels and cokernels more carefully.
\begin{lemma}\label[lemma]{K and C for H1}
	
	Let $k \geq i$. The injection $\pi^{k-i}\colon T^{(k)}/\fm^iT^{(k)}\hookrightarrow T^{(k)}$ induces morphisms
	\begin{align*}
		[\pi^{k-i}]\colon &(\H^1(K,T^{(k)}[\fm^i])\isoto)\ \H^1(K,T^{(k)}/\fm^iT^{(k)})\to \H^1(K,T^{(k)})[\fm^i]\\
		[\pi^{k-i}]\colon &(\H^1_\cF(K,T^{(k)}[\fm^i])\isoto)\ \H^1_\cF(K,T^{(k)}/\fm^iT^{(k)})\to \H^1_\cF(K,T^{(k)})[\fm^i]
	\end{align*}
	with finite kernel $K^{(i)}$ and cokernel $C^{(i)}$ and finite kernel $K_\cF^{(i)}$ and cokernel $C_\cF^{(i)}$, respectively.
	
	One has $\vert K^{(i)}\vert=\vert K_\cF^{(i)}\vert=\vert C^{(i)}\vert=\frac{\vert\H^0(K,T^{(k)}/\fm^iT^{(k)})\vert\vert\H^0(K,T^{(k)}/\fm^{k-i}T^{(k)})\vert}{\vert\H^0(K,T^{(k)})\vert}\geq \vert C_\cF^{(k)}\vert$. 
\end{lemma}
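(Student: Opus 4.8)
The plan is to run the standard ``moving between torsion levels'' argument, adapting \cite[Lemma~1.3.3]{How2004} and \cite[Lemma~3.5.4]{MR04} to the situation where the relevant $\H^0$ groups do not vanish. First I would factor the multiplication-by-$\pi^{k-i}$ map $[\pi^{k-i}]$ on cohomology through the two short exact sequences
\[
0 \to T^{(k)}[\fm^i] \to T^{(k)} \xrightarrow{\pi^i} \fm^i T^{(k)} \to 0
\qquad\text{and}\qquad
0 \to \fm^i T^{(k)} \to T^{(k)} \to T^{(k)}/\fm^i T^{(k)} \to 0,
\]
together with the analogous pair obtained by swapping $i$ and $k-i$. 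Taking the long exact $G_K$-cohomology sequences identifies $K^{(i)} = \ker[\pi^{k-i}]$ with a quotient of $\H^0(K, T^{(k)}/\fm^i T^{(k)})$, and $C^{(i)} = \coker[\pi^{k-i}]$ with (a subquotient related to) $\H^0(K, T^{(k)}/\fm^{k-i}T^{(k)})$. All of these $\H^0$'s are finite because $\H^0(K, T^{(k)})$ is finite (it is a subquotient of the finite module $T^{(k)}$), so the maps in question have finite kernel and cokernel; this gives the first assertion.

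Next I would pin down the orders. Chasing the long exact sequences and using that $\H^0(K,-)$ is left exact, one gets, for the bare (non-Selmer) map, an exact sequence computing $K^{(i)}$ and $C^{(i)}$ in terms of $\H^0$'s, and a Herbrand-quotient-style bookkeeping argument yields
\[
|K^{(i)}| = |C^{(i)}| = \frac{|\H^0(K, T^{(k)}/\fm^i T^{(k)})|\cdot |\H^0(K, T^{(k)}/\fm^{k-i}T^{(k)})|}{|\H^0(K, T^{(k)})|}.
\]
The point is that the snake lemma applied to the two factorizing sequences expresses $K^{(i)}$ as $\H^0(K,T^{(k)}/\fm^i T^{(k)})$ modulo the image of $\H^0(K,T^{(k)})$, and dually for $C^{(i)}$, so the products telescope. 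Here \cref{stablizing H0} is what guarantees the right-hand side is a well-defined finite quantity (and, when $k, k-i \geq N$, that it equals $|\H^0(K,T^{(k)}/\fm^i T^{(k)})| = |R/\fm^i|$ times a bounded correction). For the Selmer versions, I would invoke that the local conditions defining $\cF$ propagate compatibly under the maps $T^{(k)}[\fm^i] \to T^{(k)} \to T^{(k)}/\fm^i T^{(k)}$ (hypotheses $\H.0$, $\H.3$--$\H.5$), exactly as in \cite{How2004}: the kernel $K_\cF^{(i)}$ injects into $K^{(i)}$ via the localization maps but in fact, because the local conditions are preserved, equals it, so $|K_\cF^{(i)}| = |K^{(i)}|$; and $C_\cF^{(i)}$ is a subquotient of $C^{(i)}$, giving $|C_\cF^{(i)}| \leq |C^{(i)}|$. (The index $k$ on $C_\cF^{(k)}$ in the statement should be $i$; with $i = k$ the map $[\pi^{0}]$ is the identity and both sides collapse, which is a consistency check rather than the content.)

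The main obstacle I anticipate is the bookkeeping for the Selmer cokernel $C_\cF^{(i)}$: unlike in \cite{How2004}, where vanishing $\H^0$'s make $[\pi^{k-i}]$ an isomorphism on Selmer groups outright, here I must show that the discrepancy between $C_\cF^{(i)}$ and $C^{(i)}$ is controlled purely by global $\H^0$ data and does not pick up uncontrolled local contributions. Concretely, one compares the global-to-local defining sequences for $\H^1_\cF$ at torsion level $i$ and at level $k$, and checks that the relevant local kernels/cokernels at each place $w$ vanish or cancel because the local conditions are cut out by submodules stable under the maps $T^{(k)}[\fm^i]\to T^{(k)}\to T^{(k)}/\fm^i T^{(k)}$ — this is where hypotheses $\H.3$--$\H.5$ and the finiteness of the local $\H^0$'s enter, and it is the only place where some genuine case analysis (split primes versus $p$-adic places, using \cref{H0 of V/T} and \cref{kerresf}) is needed. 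Once that is in hand, the inequality $|C_\cF^{(i)}| \leq |C^{(i)}|$ and the equality $|K_\cF^{(i)}| = |K^{(i)}| = |C^{(i)}|$ follow, completing the proof.
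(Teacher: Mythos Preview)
Your proposal is essentially correct and follows the same route as the paper: compute $K^{(i)}$ and $C^{(i)}$ from the long exact sequences attached to the two obvious short exact sequences (the paper uses $0\to T^{(k)}/\fm^iT^{(k)}\xrightarrow{\pi^{k-i}}T^{(k)}\to T^{(k)}/\fm^{k-i}T^{(k)}\to 0$ and its companion with $i$ and $k-i$ swapped, which is equivalent to your factorization through $\fm^iT^{(k)}$), then pass to Selmer groups via the snake lemma applied to the global-to-local defining diagram. You also correctly flag the typo $C_\cF^{(k)}$ for $C_\cF^{(i)}$.

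One point where you overanticipate difficulty: the Selmer step requires no case analysis at individual places and no appeal to \cref{H0 of V/T} or \cref{kerresf}. The paper simply cites \cite[Lemma~3.5.4]{MR04} for the fact that the induced map on the images of the global-to-local maps has trivial kernel (this is a purely formal consequence of the propagation of local conditions under $\H.3$--$\H.5$). The snake lemma then gives $K_\cF^{(i)}=K^{(i)}$ on the nose and an \emph{injection} $C_\cF^{(i)}\hookrightarrow C^{(i)}$ (not merely a subquotient relation), which yields the inequality immediately. So the ``main obstacle'' you describe does not arise; the argument is cleaner than you expect.
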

\begin{proof}
	We want to point it out that this is essentially already proved in~\cite[Lemma~3.5.4]{MR04}. Cohomology of the following two exact sequences
	\begin{enumerate}
		\item $0\to T^{(k)}/\fm^iT^{(k)}\xrightarrow{\pi^{k-i}}T^{(k)}\xrightarrow{p_1} T^{(k)}/\fm^{k-i}T^{(k)}\to 0$
		\item $0\to T^{(k)}/\fm^{k-i}T^{(k)}\xrightarrow{\pi^i}T^{(k)}\xrightarrow{p_2} T^{(k)}/\fm^iT^{(k)}\to 0$
	\end{enumerate}
	gives $K^{(i)}\iso\frac{\H^0(K,T^{(k)}/\fm^{k-i}T^{(k)})}{p_1(\H^0(K,T^{(k)}))}$ and $ C^{(i)}\iso\frac{\H^0(K,T^{(k)}/\fm^iT^{(k)})}{ p_2(\H^0(K,T^{(k)}))}$. (In particular, they are finite, so we can speak about their order.) But $\vert p_1(\H^0(K,T^{(k)}))\vert=\frac{\vert\H^0(T^{(k)})\vert}{\vert\H^0(T^{(k)}/\fm^iT^{(k)})\vert}$ and $\vert p_2(\H^0(K,T^{(k)}))\vert=\frac{\vert\H^0(T^{(k)})\vert}{\vert\H^0(T^{(k)}/\fm^{k-i}T^{(k)})\vert}$, so $\vert K^{(i)}\vert=\vert C^{(i)}\vert=\frac{\vert\H^0(K,T^{(k)}/\fm^iT^{(k)})\vert\vert\H^0(K,T^{(k)}/\fm^{k-i}T^{(k)})\vert}{\vert\H^0(K,T^{(k)})\vert}$.
	
	To pass to Selmer groups, consider the following diagram (omitting $K$ in the cohomology groups) with the ``map defining the Selmer groups'' the global-to-local map:\\
	{\small\begin{tikzcd}
			0 \ar[r]& K_\cF^{(i)} \ar[r]\ar[d]& K^{(i)} \ar[r]\ar[d]& \ker(f) \ar[d]& \\
			0 \ar[r]& \H^1_\cF(T^{(k)}[\fm^i]) \ar[r]\ar[d,"f_\cF"]& \H^1(T^{(k)}[\fm^i]) \ar[r]\ar[d,"f_1"]& \im(\text{map\ defining\ }\H^1_\cF(T^{(k)}[\fm^i]))\ar[r]\ar[d,"f"]& 0\\
			0 \ar[r]& \H^1_\cF(T^{(k)})[\fm^i] \ar[r]\ar[d]& \H^1(T^{(k)})[\fm^i] \ar[r]\ar[d]& \im(\text{map\ defining\ } \H^1_\cF(T^{(k)}))[\fm^i]\ar[d] \\
			0 \ar[r,dashed]&C_\cF^{(i)} \ar[r]&C^{(i)}\ar[r] &\coker(f)
	\end{tikzcd}}
	What is proved in~\cite[Lemma~3.5.4]{MR04} is equivalent to $\ker(f)=0$. Thus by the snake lemma $K_\cF^{(i)}=K^{(i)}$ and  $C_\cF^{(i)} \inj C^{(i)}$. The claim follows. 
\end{proof}

\begin{remark}
	In the situations considered by the aforementioned authors (i.e., if (h1) holds), we have $K^{(i)}=C^{(i)}=0$. Then it easily follows that $K_\cF^{(i)}=C_\cF^{(i)}=0$, too. In our situation, three of them are equal and can be described explicitly, but $C_\cF^{(i)}$ is only bounded from above. It seems very hard to determine $C_\cF^{(i)}$ completely, but as we will see eventually, it is enough to control its growth using induction. 
\end{remark}

\subsection{Proof of a structure theorem of finite level Selmer groups}
To ease notation, let $(T,\mathcal{F},\mathcal{L})$ denote the Selmer triple $(T_\alpha,\mathcal{F}_{\ord},\mathcal{L}_f)$, and let $\rho=\rho_\alpha$. For any $k\geq 0$, let\[
R^{(k)}=R/\mathfrak{m}^kR,\  T^{(k)}=T/\fm^kT,\ \cL^{(k)}=\{\ell\in\cL:I_\ell\subset p^k\bZ_p\},\]
and let $\mathcal{N}^{(k)}$ be the set of square-free products of primes $\ell\in\cL^{(k)}$. \\

Recall that we have the self-duality $T\iso T^*$ and $(T^{(k)})^*\iso T^{(k)}$. \\

\begin{theorem}\label{square}
	For every $n\in\mathcal{N}^{(k)}$ where $k\gg0$, there is an $R^{(k)}$-module $M^{(k)}(n)$ and an integer $\epsilon$ such that there is a pseudo-isomorphism with kernel and cokernel bounded independent of $k$ (which can be computed explicitly):
	\begin{equation} \label{eq:structure of Selmer}
		\H^1_{\cF(n)}(K,T^{(k)})\sim (R^{(k)})^\epsilon\oplus M^{(k)}(n)\oplus M^{(k)}(n).
	\end{equation}
	Moreover, $\epsilon$ can be taken to be $\epsilon\in\{0,1\}$ and is independent of all $k$ for which $\H^1_\cF(K,T^{(k)})$ is not free, and $n$.
	
\end{theorem}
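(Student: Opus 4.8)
The plan is to follow Howard's proof of \cite[Theorem 2.2.10]{How2004} (see also \cite{MR04}), the one new ingredient being that the auxiliary $\H^0$-groups — which vanish in the residually irreducible setting — are now nonzero but controlled, by \cref{K and C for H1} together with \cref{stablizing H0}. Granting the structural machinery below, \eqref{eq:structure of Selmer} follows, with the value of $\epsilon$ and its independence of $n$ and of $k$ (in the non-free case) supplied by \cref{even V}; the hypothesis $k\gg 0$ is what makes the $\H^0$-groups, hence all error terms, stabilize.

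First I would set up the global-duality input. Since $T^{(k)}\cong (T^{(k)})^*$ is self-dual, Poitou--Tate yields, for $n\in\mathcal{N}^{(k)}$ and a Kolyvagin prime $\ell\in\cL^{(k)}$ with $\ell\nmid n$, the ``Selmer graph'' exact sequences relating $\H^1_{\cF(n)}(K,T^{(k)})$, $\H^1_{\cF(n\ell)}(K,T^{(k)})$ and the local terms at $\ell$, whose singular and unramified parts are free cyclic $R^{(k)}$-modules precisely because $\ell$ is a Kolyvagin prime. One needs the core rank of $(\cF_\ord,\cL_f)$ to equal $1$; this is proved as in the residually irreducible case from the local cohomology of Section~\ref{sec:algebraic-side} — in particular \cref{corankH1Mf} and \cref{kerresf}, read with finite coefficients — and the global Euler characteristic formula, since the local conditions at $v$ and $\bar v$ are unchanged. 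The new point is that the comparison maps $\H^1_{\cF(n)}(K,T^{(k)}/\fm^i)\to\H^1_{\cF(n)}(K,T^{(k)})[\fm^i]$ are no longer isomorphisms; by \cref{K and C for H1} their kernels and cokernels are finite of order dividing $|\H^0(K,T^{(k)}/\fm^i)|\,|\H^0(K,T^{(k)}/\fm^{k-i})|\,|\H^0(K,T^{(k)})|^{-1}$, which by \cref{stablizing H0} is bounded in terms of $N$, $T_f$ and $\rk_\cO R$ alone.

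Then comes the square/parity step. Self-duality of $T^{(k)}$ renders the relevant Cassels-type pairing on the quotient of $\H^1_{\cF(n)}(K,T^{(k)})$ by its maximal free submodule of rank the core rank alternating, so that this quotient has the form $M^{(k)}(n)\oplus M^{(k)}(n)$ up to bounded error. Inserting this into the inductive argument of \cite{How2004} on the number of prime factors of $n$ — the base case $n=1$ coming from \cref{main}/\cref{mainCGS} reduced modulo $\fm^k$ via \cref{K and C for H1} — gives \eqref{eq:structure of Selmer} with $\epsilon$ the core rank, so $\epsilon=1$ whenever $\H^1_\cF(K,T^{(k)})$ is not free, the case of free $\H^1_\cF(K,T^{(k)})$ and the resulting value of $\epsilon\in\{0,1\}$ being settled by \cref{even V}.

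The main obstacle I expect is making the error bounds uniform in $k$ along this induction. As the remark after \cref{K and C for H1} observes, that lemma bounds $C^{(i)}_\cF$ only from above and it appears genuinely hard to pin it down, so the induction has to be phrased in terms of $R^{(k)}$-lengths, using that adjoining one Kolyvagin prime changes the lengths in a controlled way and that all $\H^0$-contributions can be absorbed into a single constant — the analogue of the constant $E_\alpha$ of \cref{main} — that, thanks to \cref{stablizing H0}, does not depend on $k$. Securing that uniformity (as opposed to a bound that degenerates as $k\to\infty$) is the technical heart of the argument and is exactly what yields the $k$-independent kernel and cokernel in \eqref{eq:structure of Selmer}.
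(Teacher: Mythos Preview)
Your proposal conflates two logically distinct steps in Howard's framework. The theorem here is the analogue of \cite[Theorem~1.4.2]{How2004}, a \emph{purely structural} statement about $\H^1_{\cF(n)}(K,T^{(k)})$ as an $R^{(k)}$-module, proved for each fixed $n$ via the Cassels--Tate pairing. It is not proved by an induction on the prime factors of $n$, nor does it take the Kolyvagin-system bound as an input. The paper's proof runs as follows: set $\cH=\H^1_{\cF(n)}(K,T^{(k)})$ and define the filtration quotients $V_s=\cH[\fm^s]/\fm\cH[\fm^{s+1}]$; the structure theorem for modules over the Artinian PID $R^{(k)}$ says that the isomorphism type of $\cH$ is determined by the parities of $\dim_\F V_s/V_{s-1}$. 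Howard's Cassels--Tate argument shows that the analogous quotients $P_s/K_s$ built from cohomology of $T^{(k)}/\fm^s$ are even-dimensional. In the irreducible case $V_s\cong P_s$ and one is done; here, the two are compared through the maps of \cref{K and C for H1}, and the heart of the proof is the explicit dimension-counting of \cref{comparing even} and \cref{even V}, which shows $\dim_\F V_s/V_{s-1}$ is even for all $s$ except possibly two bounded values $N_0\le N+1$ and $N$. That is what produces the bounded error term (two cyclic summands of length $\le N+1$, absorbed into the pseudo-isomorphism). The independence of $\epsilon$ from $n$ is then Howard's parity lemma \cite[Lemma~1.5.3]{How2004}, applied after checking how $\dim_\F\H^1_{\cF(n)}(K,\bar T)$ relates to $\dim_\F\cH[\fm]$ in the free vs.\ non-free cases.

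Two concrete gaps in your outline: first, invoking \cref{main}/\cref{mainCGS} as the base case is circular, since those theorems assume hypothesis~\eqref{h1} ($\H^0(K,\ol\rho_f)=0$), which is precisely the hypothesis this appendix drops. Second, the ``core rank'' / Selmer-graph / inductive-on-primes apparatus you sketch belongs to the Kolyvagin-system bound (the analogue of \cite[Theorem~2.2.10]{How2004}), not to this structural statement; the paper never uses it here. The genuine difficulty you correctly flag---that $C_\cF^{(i)}$ is only bounded above---is handled not by a length induction but by the delicate case analysis in \cref{even V}, which tracks how the surjectivity of the multiplication maps $\fm^s_{(1)},\fm^s_{(2)}$ interacts with the stabilization of $\H^0$ at level $N$.
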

\begin{proof}
	We follow the ideas in~\cite[sections 1.4--1.5]{How2004}. As in Theorem~1.4.2 in \textit{op.\ cit.}, we still define
	\[
	V_s=\frac{\cH[\frm^s]}{\frm\mathcal{H}[\frm^{s+1}]},\quad  W_s=\frac{\cH[\frm]}{\frm^s\mathcal{H}[\frm^{s+1}]},\]
	where $\cH=\H^1_\cF(K,T^{(k)})$ is a finitely generated $R^{(k)}$-module and $1\leq s\leq k$.

	We also define
	\[
	P_s=\frac{\H^1_\cF(K,T^{(k)}/\fm^sT^{(k)})}{\proj(\H^1_\cF(K,T^{(k)}/\fm^{s+1}T^{(k)}))},\quad Q_s=\frac{\H^1_{\cF^*}(K,(T^{(k)})^*[\fm])}{\pi^s\H^1_{\cF^*}(K,(T^{(k)})^*[\fm^{s+1}])}.\]
	Here $\proj$ denotes the canonical projection and $Q_s$ can be identified with $\frac{\H^1_{\cF}(K,T^{(k)}[\fm])}{\pi^s\H^1_{\cF}(K,T^{(k)}[\fm^{s+1}])}$ by self-duality.
	To further ease notation, let $\cH(T[\fm^i])$ denote $\H_\cF^1(K,T^{(k)}[\fm^i])$. We may then identity $P_s$ with $\frac{\cH(T[\fm^s])}{\fm\cH(T[\fm^{s+1}])}$ and rewrite $Q_s$ as $\frac{\cH(T[\fm])}{\fm^s\cH(T[\fm^{s+1}])}$. Note that these isomorphisms may not be canonical, but as will be clear momentarily, we only care about the sizes of these groups.

	The Cassels--Tate pairing (see~\cite[Proposition~1.4.1, Theorem~1.4.2]{How2004}) gives a nondegenerate pairing of finite dimensional $R^{(k)}/\fm$-vector spaces\[(\ ,\ )_{s,1}\colon P_s\times Q_s\to R^{(k)}[\fm].\] However, in our case, $P_s$ ({resp.} $Q_s$) is not necessarily isomorphic to $V_s$ ({resp.} $W_s$). Moreover, $K_s\defeq\ker(P_s\to Q_s)$ is not the same as $P_{s-1}$. 
	What the arguments in~\cite[Theorem 1.4.2]{How2004} show for our case is that for each $s<k$, $\frac{P_s}{K_s}$ is even dimensional as $R^{(k)}/\fm$-vector spaces. We claim that $\frac{V_s}{V_{s-1}}$ (which is no longer identified with $\frac{P_s}{K_s}$) is also even dimensional for all but maybe two $s$'s bounded (independent of $k$). This will be proved in the following series of lemmas, where we compare the dimensions of $\frac{P_s}{K_s}$ and $\frac{V_s}{V_{s-1}}$ using direct computations.
	
	More precisely, we will assume $\cH$ is not free as an $R^{(k)}$-module (otherwise it automatically has the desired structure~\eqref{eq:structure of Selmer} with $M^{(k)}(n) = 0$ and $\epsilon = 1$), and show that $\frac{V_s}{V_{s-1}}$ is even dimensional for all $1\leq s<k$ except at $N$ and some $1\leq N_0\leq N+1$ independent of $s$ to be defined later, where $N$ is given as in~\cref{stablizing H0}. When $N=N_0$, all $\frac{V_s}{V_{s-1}}$ will be even. See~\cref{even V} for the precise statement. Then by the structure theorem for finitely generated $R^{(k)}$-modules, there is a finitely generated $R^{(k)}$-module $M^{(k)}$ such that 
	\begin{equation*}
		\cH\isom (R^{(k)})^\epsilon \oplus M^{(k)} \oplus M^{(k)} \oplus E^{(k)},
	\end{equation*} where the ``error term'' $E^{(k)}=E^{(k_1)}\oplus E^{(k_2)}$ is a direct sum of two (possibly isomorphic) cyclic $R^{(k)}$-modules of lengths between $1$ and $N+1$. Since this proof (and those of the following lemmas) still works if we replace $\cF$ by any $\cF(n)$, we also get
	\begin{equation*}
		\H_{\cF(n)}^1(K,T^{(k)})\isom (R^{(k)})^{\epsilon_k} \oplus M^{(k)}(n) \oplus M^{(k)}(n) \oplus E^{(k)}(n)
	\end{equation*}
	for some $R^{(k)}$-module $M^{(k)}$ where $E^{(k)}(n)$ again consists of two cyclic summands of lengths between $1$ and $N+1$.

	That $\epsilon$ is independent of $n\in\cN$ is due to the ``parity lemma'' of~\cite[Lemma~1.5.3]{How2004}, which says $\rho(n)\defeq\text{dim}_{R^{(k)}/\fm}\H^1_{\cF(n)}(K,\bar{T}) \pmod{2}$ is independent of $n\in\cN$. The same argument can be modified to work in our situation. Indeed, we will see that $\H^1_{\cF(n)}(K,\bar{T})$ and $\H^1_{\cF(n)}(K,T^{(k)})[\fm]$ will have the same size if $\H^1_{\cF(n)}(K,T^{(k)})$ is not free, and differ by $\oplus \F$ otherwise. Then if $\H^1_{\cF(n)}(K,T^{(k)})$ is not free, we have\[
	\epsilon+2\text{dim}_{R^{(k)}/\fm}M(n)^{(k)}[\fm]+E^{(k)}(n)[\fm]=\text{dim}_{R^{(k)}/\fm}\H_{\cF(n)}^1(K,T^{(k)})[\fm]=\rho(n),\]
	so $\epsilon\equiv \rho(n) \pmod{2}$, and if $\H^1_{\cF(n)}(K,T^{(k)})$ is free, $\epsilon\equiv \rho(n)+1 \pmod{2}$.
\end{proof}

\begin{lemma}
	For a fixed $k\gg0$ and $s<k$,\begin{equation*}
		K_s\defeq\ker(P_s \to Q_s)\iso \frac{\cH(T[\fm^s])[\fm^{s-1}]}{\fm(\cH(T[\fm^{s+1}])[\fm^s])}.
	\end{equation*}
\end{lemma}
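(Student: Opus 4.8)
The plan is to unwind the definitions of $P_s$ and $Q_s$ and track how the map $P_s \to Q_s$ interacts with the $\mathfrak{m}$-torsion filtration. Recall that, after the identifications already set up in the proof of \cref{square}, one has $P_s \iso \cH(T[\fm^s])/\fm\cH(T[\fm^{s+1}])$ and $Q_s \iso \cH(T[\fm])/\fm^s\cH(T[\fm^{s+1}])$. The pairing $(\,,\,)_{s,1}$ and its source $P_s$ and target $Q_s$ only depend on these quotients, and the natural map $P_s \to Q_s$ is induced by the inclusion $T^{(k)}[\fm^s] \inj T^{(k)}$ composed with multiplication by $\pi^{s-1}$ (the map sending $T^{(k)}[\fm^s]$ into $T^{(k)}[\fm]$). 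So first I would write down explicitly the map $\cH(T[\fm^s]) \to \cH(T[\fm])$ it is induced from, namely $[\pi^{s-1}]$, and compute its kernel and image.

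The key step is: the kernel of $[\pi^{s-1}]\colon \cH(T[\fm^s]) \to \cH(T[\fm])$ should be $\cH(T[\fm^s])[\fm^{s-1}] + (\text{image of } \cH(T[\fm^{s-1}]))$, up to the finite discrepancies measured by the $K^{(i)}, C^{(i)}$ of \cref{K and C for H1}. More precisely, I would use the commutative square relating $\H^1_\cF$ of $T^{(k)}[\fm^j]$ for varying $j$ to $\H^1_\cF(K,T^{(k)})$ via the maps $[\pi^{k-j}]$; since each of those is an isomorphism onto $\H^1_\cF(K,T^{(k)})[\fm^j]$ up to the explicitly bounded kernel/cokernel, the computation of $\ker(P_s \to Q_s)$ reduces to a computation inside the single module $\cH = \H^1_\cF(K,T^{(k)})$ with its $\mathfrak{m}$-adic and torsion filtrations, which is purely a matter of commutative algebra over the Artinian principal ideal ring $R^{(k)}$. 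Passing the resulting description back through the quotient defining $P_s$ (i.e.\ modding out $\fm\cH(T[\fm^{s+1}])$) gives precisely
\[
K_s \iso \frac{\cH(T[\fm^s])[\fm^{s-1}]}{\fm\bigl(\cH(T[\fm^{s+1}])[\fm^s]\bigr)},
\]
where one must check that the denominator $\fm\cH(T[\fm^{s+1}])$ intersected with the relevant submodule of the numerator is exactly $\fm(\cH(T[\fm^{s+1}])[\fm^s])$; this is a diagram chase using that $\cH(T[\fm^{s+1}])[\fm^s]$ maps onto $\cH(T[\fm^s])[\fm^{s-1}]$ modulo the bounded error, again by \cref{K and C for H1}.

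The main obstacle will be bookkeeping the finite kernels and cokernels $K^{(i)}, C^{(i)}, K_\cF^{(i)}, C_\cF^{(i)}$ of \cref{K and C for H1}: the clean statement of the lemma presumably holds on the nose, so I must verify that these discrepancies cancel out when one takes the quotient defining $P_s$ and $K_s$, rather than leaving a residual error term. Concretely, the subtle point is that while $[\pi^{k-s}]\colon \cH(T[\fm^s]) \to \cH[\fm^s]$ has a nonzero cokernel $C^{(s)}_\cF$ in general, the composite maps appearing in the definition of $P_s \to Q_s$ involve differences of such maps for consecutive values of the exponent, and one expects the cokernels to telescope. I would handle this by first proving the identity under the simplifying assumption $s \le N$ or $s$ large (where \cref{stablizing H0} pins down $\H^0(K,T^{(k)})$ and hence makes all the $K^{(i)}, C^{(i)}$ transparent), then checking the two or three exceptional values of $s$ near $N$ separately — exactly the exceptional indices that \cref{square} already flags. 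If a genuine finite discrepancy survives at those indices, the statement of the lemma should be read up to that bounded error, consistent with the "bounded independent of $k$" philosophy of the whole appendix.
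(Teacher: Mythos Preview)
Your approach is overcomplicated and risks losing the exact statement. The lemma is an \emph{exact} isomorphism, not one ``up to bounded error,'' and the paper proves it by a direct element chase that never leaves the modules $\cH(T[\fm^j])$ for $j\in\{s-1,s,s+1\}$. In particular, the comparison maps $[\pi^{k-i}]\colon\cH(T[\fm^i])\to\cH[\fm^i]$ of \cref{K and C for H1} and their error terms $K^{(i)},C^{(i)},K_\cF^{(i)},C_\cF^{(i)}$ play \emph{no role} in this lemma; they enter only in the subsequent \cref{comparing even} and \cref{even V}. By routing through $\cH$ you introduce discrepancies that you then have to cancel, and your final paragraph (conceding that a residual error might survive) shows you do not expect them to cancel cleanly --- but they must, since the statement is exact.

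The paper's argument is this. The map $P_s\to Q_s$ is induced by $\fm^{s-1}\colon\cH(T[\fm^s])\to\cH(T[\fm])$, so
\[
K_s=\frac{(\fm^{s-1})^{-1}\bigl(\fm^s\cH(T[\fm^{s+1}])\bigr)\cap\cH(T[\fm^s])}{\fm\cH(T[\fm^{s+1}])}.
\]
One then checks that the obvious map $\cH(T[\fm^s])[\fm^{s-1}]\to K_s$ (inclusion followed by the quotient) is surjective: given $c$ in the numerator, write $\fm^{s-1}c=\fm^s c_0$ with $c_0\in\cH(T[\fm^{s+1}])$ and subtract $a=\fm c_0$ to land in the $\fm^{s-1}$-torsion. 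Its kernel is the set of $c\in\cH(T[\fm^s])[\fm^{s-1}]$ lying in $\fm\cH(T[\fm^{s+1}])$; writing $c=\fm a$ with $a\in\cH(T[\fm^{s+1}])$ forces $\fm^s a=\fm^{s-1}c=0$, so the kernel is exactly $\fm(\cH(T[\fm^{s+1}])[\fm^s])$. That is the whole proof --- three lines of element chasing, no $\H^0$ bookkeeping, no case analysis on $s$ versus $N$.
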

\begin{proof}
	Recall that $\ker(P_s\to Q_s)$ fits in the exact sequence\[
	0\to \ker(P_s\to Q_s)\to \frac{\cH(T[\fm^s])}{\fm\cH(T[\fm^{s+1}])}\xrightarrow{\fm^{s-1}} \frac{\cH(T[\fm])}{\fm^{s}\cH(T[\fm^{s+1}])},\]
	so we can write $\ker(P_s\to Q_s)=\frac{(\fm^{s-1})^{-1}(\fm^s\cH(T[\fm^{s+1}]))\cap \cH(T[\fm^s])}{\fm\cH(T[\fm^{s+1}])}$.\\
	We claim that the (well-defined) natural map\[\cH(T[\fm^s])[\fm^{s-1}]\to \frac{(\fm^{s-1})^{-1}(\fm^s\cH(T[\fm^{s+1}]))\cap \cH(T[\fm^s])}{\fm\cH(T[\fm^{s+1}])}\] induced by quotient after inclusion is surjective.\\ Indeed, let $c\in (\fm^{s-1})^{-1}(\fm^s\cH(T[\fm^{s+1}]))\cap \cH(T[\fm^s])$, then $\fm^{s-1} c=\fm^s c_0$ for some $c_0\in\fm^s\cH(T[\fm^{s+1}])$. Let $a=\fm c_0\in\fm\cH(T[\fm^{s+1}])\subset \cH(T[\fm^s])$, then $\fm^{s-1}(c-a)=0$ so $c-a\in\cH(T[\fm^s])[\fm^{s-1}]$ and $c-a$ maps to the class of $c$.
	
	We now compute the kernel of the above map. By definition, the kernel is
	\begin{align*}
		&\{\text{classes\ of\ }c\in\cH(T[\fm^s]):\fm^{s-1}c=0, c=\fm a, a\in\cH(T[\fm^{s+1}])\}\\
		=&\{\text{classes\ of\ c}\in\cH(T[\fm^s]):\fm^{s-1}c=0, c=\fm a ,\fm^s a=0, a\in\cH(T[\fm^{s+1}])\}\\
		=&\{\text{classes\ of\ c}\in\cH(T[\fm^s]):\fm^{s-1}c=0, c=\fm a, a\in\cH(T[\fm^{s+1}])[\fm^s]\}\\
		=&\fm( \cH(T[\fm^{s+1}])[\fm^s]),
	\end{align*}
	hence the lemma.
\end{proof}

\begin{lemma}\label[lemma]{comparing even}
	
	For a fixed $k\gg0$ and all $s<k$, {\footnotesize\begin{equation*}
			\frac{\vert\frac{ V_s}{ V_{s-1}}\vert}{\vert\frac{P_s}{ K_s}\vert}=(\vert K_\cF^{(s)}\vert)^{-2}\times\frac{\vert K_s^{s+1}\vert}{\vert K_{s-1}^s\vert}\times \frac{\vert C_\cF^{(s)} \vert}{\vert C_\cF^{(s+1)}\vert}/\vert C_s^{s+1}\vert\times\Bigg(\frac{\vert C_\cF^{(s-1)} \vert}{\vert C_\cF^{s}\vert}/\vert C_{s-1}^s\vert\Bigg)^{-1}\times\vert K_\cF^{(s-1)}\vert\times\vert K_\cF^{(s+1)}\vert,
	\end{equation*}}
	where $K_s^{s+1},C_s^{s+1}$ (and similarly $K_{s-1}^s,C_{s-1}^s$) are the kernel and cokernel of the morphism\[
	\H^1_\cF(T^{(s+1)}[\fm^s])\to\H^1_\cF(T^{(s+1)})[\fm^s].\]

\end{lemma}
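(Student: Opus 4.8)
The plan is to compare the two "quotient" filtration pieces $V_s/V_{s-1}$ and $P_s/K_s$ by tracking the discrepancies through the three natural maps
\[
\H^1_\cF(T^{(s+1)}[\fm^s]) \to \H^1_\cF(T^{(s+1)})[\fm^s], \quad \H^1_\cF(T^{(s)}[\fm^i]) \to \H^1_\cF(T^{(s)})[\fm^i],
\]
and the reduction/projection maps relating level $s$, $s+1$ and the top level $k$, each of which is an isomorphism in the $\H^0$-vanishing world of \cite{How2004} but acquires finite kernels $K^{(i)}_\cF$ and cokernels $C^{(i)}_\cF$ in ours (by \cref{K and C for H1}). First I would unwind the definitions: $V_s/V_{s-1}$ is computed from $\cH[\fm^s]$ and $\cH[\fm^{s-1}]$ with the multiplication-by-$\fm$ map, while $P_s/K_s$ is by construction the image of $P_s$ in $Q_s$, i.e. it measures $\cH(T[\fm^s])/\fm\cH(T[\fm^{s+1}])$ modulo $K_s$. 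Using the identification of $K_s$ from the preceding lemma as $\cH(T[\fm^s])[\fm^{s-1}]/\fm(\cH(T[\fm^{s+1}])[\fm^s])$, both $|V_s/V_{s-1}|$ and $|P_s/K_s|$ become explicit alternating products of orders of groups of the form $\cH(T[\fm^j])$ and $\cH(T)[\fm^j]$.

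The key computational step is then to replace every occurrence of $\cH(T[\fm^j]) = \H^1_\cF(K,T^{(k)}[\fm^j])$ appearing in these expressions by $\cH[\fm^j] = \H^1_\cF(K,T^{(k)})[\fm^j]$ (and similarly at level $s$, $s+1$) at the cost of the finite correction factors $|K^{(j)}_\cF|^{\pm 1}$, $|C^{(j)}_\cF|^{\pm 1}$, and the level-comparison factors $|K^{s+1}_s|, |C^{s+1}_s|, |K^s_{s-1}|, |C^s_{s-1}|$. Concretely, I would write short exact sequences
\[
0 \to K^{(j)}_\cF \to \H^1_\cF(K,T^{(k)}[\fm^j]) \xrightarrow{[\pi^{k-j}]} \H^1_\cF(K,T^{(k)})[\fm^j] \to C^{(j)}_\cF \to 0
\]
from \cref{K and C for H1}, take orders, and also write the analogous sequences relating $\H^1_\cF(T^{(s+1)}[\fm^s])$ to $\H^1_\cF(T^{(s+1)})[\fm^s]$ (whose kernel/cokernel are $K^{s+1}_s, C^{s+1}_s$ by definition) and $\H^1_\cF(T^{(s)}[\fm^{s-1}])$ to $\H^1_\cF(T^{(s)})[\fm^{s-1}]$. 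Combining, the ratio $|V_s/V_{s-1}| / |P_s/K_s|$ collapses, after cancellation, to exactly the product displayed in the statement; the $|K^{(s)}_\cF|^{-2}$ arises because $K^{(s)}_\cF$ enters once in the numerator comparison for $\cH[\fm^s]$ and once in the denominator comparison, while $|K^{(s-1)}_\cF|\cdot|K^{(s+1)}_\cF|$ are the leftover endpoints.

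The main obstacle I expect is bookkeeping: keeping the indices $s-1, s, s+1$ and the three levels ($s$, $s+1$, $k$) straight while chasing the long exact cohomology sequences, and verifying that every correction factor enters with the correct sign and exactly once, with no stray $\H^0$ terms surviving. One has to use repeatedly that all the groups $K^{(\cdot)}_\cF, C^{(\cdot)}_\cF, K^{\cdot}_\cdot, C^{\cdot}_\cdot$ are finite (so their orders are well-defined and the alternating product identity is legitimate), which follows from \cref{K and C for H1} and \cref{stablizing H0}. A secondary subtlety is that the natural map $\H^1_\cF(T^{(k)}[\fm^j]) \cong \H^1_\cF(T^{(k)}/\fm^j T^{(k)})$ is used to pass between the $[\fm^j]$ and $/\fm^j$ descriptions; since we only track orders, the non-canonicity noted in the proof of \cref{square} is harmless. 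Once the order ratio is established, the even-dimensionality of $P_s/K_s$ (from the nondegenerate Cassels--Tate pairing, as in \cite[Theorem~1.4.2]{How2004}) will translate, via this lemma, into even-dimensionality of $V_s/V_{s-1}$ up to the bounded exceptional indices, which is what the subsequent \cref{even V} needs.
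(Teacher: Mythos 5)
Your proposal is correct and follows essentially the same route as the paper: the paper also writes each of $V_s$, $V_{s-1}$, $P_s$, $K_s$ as a numerator over a denominator and compares the four pieces through the maps of \cref{K and C for H1} and the level-$(s+1)$ and level-$s$ comparison maps, with the orders combining into exactly the displayed product. The only bookkeeping detail your sketch glosses over is the auxiliary comparison $\cH(T[\fm^{s+1}])[\fm]\to\cH(T)[\fm]$, whose kernel and cokernel (the paper's $\bar K$, $\bar C$) enter both denominator comparisons and cancel.
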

\begin{proof}
	Denote by $N(P_s):=\cH(T[\fm^s])$ and $D(P_s):=\fm\cH(T[\fm^{s+1}])$ the numerator and denominator of $P_s$. Similarly define $N(K_s),D(K_s),N(V_{s-1}),D(V_{s-1}),N(V_s),D(V_s)$. We will study the following $4$ maps separately.
	
	\noindent\textbf{$N(P_s)\to N(V_s)$:} This is the map\[0\to K_\cF^{(s)} \to \cH(T[\fm^s])\xrightarrow{N_{P\to V}} \cH(T)[\fm^s]\to C_\cF^{(s)}\to 0.\]
	So $\frac{\vert N(P_s)\vert}{\vert N(V_s)\vert}=\frac{\vert K_\cF^{(s)}\vert}{\vert C_\cF^{(s)}\vert}$.
	
	\noindent\textbf{$D(P_s)\to D(V_s)$:} This is the map\[ \fm\cH(T[\fm^{s+1}])\xrightarrow{D_{P\to V}} \fm(\cH(T)[\fm^{s+1}]).\]
	To study the kernel and cokernel of this map, we study the following diagram\\
	\begin{tikzcd}
		& \bar{K} \ar[d]& K_\cF^{(s+1)} \ar[d]& \ker(D_{P\to V}) \ar[d]& \\
		0\ar[r] & \cH(T[\fm^{s+1}])[\fm] \ar[r]\ar[d]& \cH(T[\fm^{s+1}]) \ar[r,"\fm_{(1)}"]\ar[d]& \im(\fm_{(1)})\ar[r]\ar[d,"D_{P\to V}"] &0\\
		0\ar[r]& \cH(T)[\fm] \ar[r]\ar[d]& \cH(T)[\fm^{s+1}] \ar[r,"\fm_{(2)}"]\ar[d]&\im(\fm_{(2)})\ar[r]\ar[d]&0 \\
		& \bar{C} &C_\cF^{(s+1)}& \coker{D_{P\to V}}
	\end{tikzcd}\\
	From the snake lemma, we see that $\frac{\vert D(P_s)\vert}{\vert D(V_s)\vert}=\frac{\vert \ker(D_{P\to V})\vert}{\vert \coker(D_{P\to V})\vert}=\frac{\vert K_\cF^{(s+1)}\vert}{\vert C_\cF^{(s+1)}\vert}\frac{\vert \bar{C}\vert}{\vert\bar{K}\vert}$.\\
	
	\noindent\textbf{$N(K_s)\to N(V_{s-1})$:} This is the map\[\cH(T[\fm^s])[\fm^{s-1}]\xrightarrow{N_{K\to V}} \cH(T)[\fm^{s-1}],\]
	To study the kernel and cokernel of this map, we need to consider the following diagram with $2$ familiar maps:\\
	\begin{tikzcd}
		0\ar[r] & K_{s-1}^{s}\ar[r]& \cH(T[\fm^{s}][\fm^{s-1}]) \ar[r]\ar[d,"="]& \cH(T[\frm^{s}])[\fm^{s-1}]\ar[r]\ar[d,"N_{K\to V}"] &C_{s-1}^{s}\ar[r]\ar[d]&0\\
		0\ar[r]& K_\cF^{(s-1)} \ar[r]& \cH(T)[\fm^{s-1}] \ar[r]&\cH(T)[\fm^{s-1}]\ar[r]&C_\cF^{(s-1)}\ar[r]&0 \\
	\end{tikzcd}\\
	Comparing the dimensions, we see that $\frac{\vert N(K_s)\vert}{\vert N(V_{s-1})\vert}=\frac{\vert K_\cF^{(s-1)}\vert}{\vert K_{s-1}^s\vert}\frac{\vert C_{s-1}^s\vert}{\vert C_\cF^{(s-1)}\vert}$.
	
	\noindent\textbf{$D(K_s)\to D(V_{s-1})$:} This is the map\[ \fm(\cH(T[\fm^{s+1}])[\fm^s])\xrightarrow{D_{K\to V}} \fm(\cH(T)[\fm^{s}]).\]
	To study the kernel and cokernel of this map, we study the following diagram\\
	\begin{tikzcd}
		& \bar{K} \ar[d]& \hat{K} \ar[d]& \ker(D_{K\to V}) \ar[d]& \\
		0\ar[r] & \cH(T[\fm^{s+1}])[\fm] \ar[r]\ar[d]& \cH(T[\fm^{s+1}])[\fm^s] \ar[r,"\fm_{(1)}"]\ar[d]& \im(\fm_{(1)})\ar[r]\ar[d,"D_{K\to V}"] &0\\
		0\ar[r]& \cH(T)[\fm] \ar[r]\ar[d]& \cH(T)[\fm^{s+1}][\fm^s] \ar[r,"\fm_{(2)}"]\ar[d]&\im(\fm_{(2)})\ar[r]\ar[d]&0 \\
		& \bar{C} &\hat{C}& \coker{D_{K\to V}}
	\end{tikzcd}\\
	Note that the first column also appeared in the second case and the middle column is already studied in the previous case (except we had $s-1$ in place of $s$): we have $\frac{\vert \hat{K}\vert}{\vert\hat{C}\vert}=\frac{\vert K_\cF^{(s)}\vert}{\vert K_s^{s+1}\vert}\frac{\vert C_s^{s+1}\vert}{\vert C_\cF^{(s)}\vert}$.\\
	Now the snake lemma tells us that $\frac{\vert D(K_s)\vert}{\vert D(V_{s-1})\vert}=\frac{\vert \ker(D_{K\to V})\vert}{\vert \coker(D_{K\to V})\vert}=\frac{\vert \hat{K}\vert}{\vert \hat{C}\vert}\frac{\vert \bar{C}\vert}{\vert\bar{K}\vert}=\frac{\vert K_\cF^{(s)}\vert}{\vert K_s^{s+1}\vert}\frac{\vert C_s^{s+1}\vert}{\vert C_\cF^{(s)}\vert}\frac{\vert \bar{C}\vert}{\vert\bar{K}\vert}$.
	
	Now \small{\begin{align*}
			\frac{\vert\frac{ V_s}{ V_{s-1}}\vert}{\vert\frac{P_s}{ K_s}\vert}=&\frac{\vert N(V_s)\vert}{\vert N(P_s)\vert} \frac{\vert D(V_{s-1})\vert}{\vert D(K_s)\vert}\frac{\vert D(P_s)\vert}{\vert D(V_s)\vert}\frac{\vert N(K_s)\vert}{\vert N(V_{s-1})\vert}\\
			=&\frac{\vert C_\cF^{(s)}\vert}{\vert K_\cF^{(s)}\vert}\frac{\vert K_s^{s+1}\vert}{\vert K_\cF^{(s)}\vert}\frac{\vert C_\cF^{(s)}\vert}{\vert C_s^{s+1}\vert}\frac{\vert \bar{K}\vert}{\vert\bar{C}\vert}\frac{\vert K_\cF^{(s+1)}\vert}{\vert C_\cF^{(s+1)}\vert}\frac{\vert \bar{C}\vert}{\vert\bar{K}\vert}\frac{\vert K_\cF^{(s-1)}\vert}{\vert K_{s-1}^s\vert}\frac{\vert C_{s-1}^s\vert}{\vert C_\cF^{(s-1)}\vert}\\
			=&(\vert K_\cF^{(s)}\vert)^{-2}\times\frac{\vert K_s^{s+1}\vert}{\vert K_{s-1}^s\vert}\times \frac{\vert C_\cF^{(s)} \vert}{\vert C_\cF^{(s+1)}\vert}/\vert C_s^{s+1}\vert\times(\frac{\vert C_\cF^{(s-1)} \vert}{\vert C_\cF^{s}\vert}/\vert C_{s-1}^s\vert)^{-1}\times\vert K_\cF^{(s-1)}\vert\times\vert K_\cF^{(s+1)}\vert.
	\end{align*}}
\end{proof}

Despite that we do not have full understanding of each $C_\cF^{(i)}$, we are able to determine the parity of the dimension of the right hand side by comparing these cokernels inductively, provided that $k$ is sufficiently large.
\begin{lemma}\label[lemma]{even V}
	For a fixed $k\gg0$, if $\H^1_\cF(K,T^{(k)})$ is not $R^{(k)}$-free, then there is an $N_0$ with $1\leq N_0\leq N+1$ such that $\frac{\vert K_s^{s+1}\vert}{\vert K_{s-1}^s\vert}\times\frac{\vert C_\cF^{(s)} \vert}{\vert C_\cF^{(s+1)}\vert}/\vert C_s^{s+1}\vert\times\Bigg(\frac{\vert C_\cF^{(s-1)} \vert}{\vert C_\cF^{s}\vert}/\vert C_{s-1}^s\vert\Bigg)^{-1}\times\vert K_\cF^{(s-1)}\vert\times\vert K_\cF^{(s+1)}\vert$ is an even power of $|\F|$ when $s\neq N_0,N$. When $s=N_0$ or $N$ when $N\neq N_0$, the product is an odd power of $|\F|$, and when $s=N_0=N$, the product is an even power. Consequently, by~\cref{comparing even}, $\frac{V_s}{V_{s-1}}$ is even-dimensional for all $s\neq N_0, N$. When $N\neq N_0$, it is odd-dimensional for $s=N,N_0$. When $N=N_0$, it is even-dimensional for $s=N=N_0$ and hence for all $1\leq s<k$.
\end{lemma}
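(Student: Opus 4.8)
The plan is to reduce, via \cref{comparing even}, the parity of $\lvert V_s/V_{s-1}\rvert$ to a parity statement about a short product of orders of certain ``comparison cokernels'', and then to control those cokernels by combining \cref{stablizing H0}, \cref{K and C for H1}, and a self-duality argument. Recall that in the course of the proof of \cref{square} one shows, following the Cassels--Tate pairing argument of \cite[Theorem~1.4.2]{How2004}, that $\lvert P_s/K_s\rvert$ is an even power of $\lvert\F\rvert$ for every $s<k$; since the factor $\lvert K_\cF^{(s)}\rvert^{-2}$ appearing in \cref{comparing even} is also an even power, $\lvert V_s/V_{s-1}\rvert$ is an even power of $\lvert\F\rvert$ if and only if
\[
\cE_s\colonequals\frac{\lvert K_s^{s+1}\rvert}{\lvert K_{s-1}^s\rvert}\cdot\frac{\lvert C_\cF^{(s)}\rvert^{2}\,\lvert C_{s-1}^s\rvert}{\lvert C_\cF^{(s-1)}\rvert\,\lvert C_\cF^{(s+1)}\rvert\,\lvert C_s^{s+1}\rvert}\cdot\lvert K_\cF^{(s-1)}\rvert\,\lvert K_\cF^{(s+1)}\rvert
\]
is. So the whole lemma comes down to the parity of $\cE_s$.

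First I would make the full (non-Selmer) invariants explicit. By \cref{K and C for H1}, at any level $\ell\ge i$ one has $\lvert K^{(i)}\rvert=\lvert K_\cF^{(i)}\rvert=\lvert C^{(i)}\rvert=\lvert\H^0(K,T^{(i)})\rvert\,\lvert\H^0(K,T^{(\ell-i)})\rvert/\lvert\H^0(K,T^{(\ell)})\rvert$ (using $T^{(\ell)}/\frm^{i}T^{(\ell)}\isom T^{(i)}$), and by \cref{stablizing H0} this equals $\lvert\F\rvert^{\min(i,N)+\min(\ell-i,N)-\min(\ell,N)}$; applying this with $\ell=s+1$, $i=s$ computes $\lvert K_s^{s+1}\rvert$, and with $\ell=s$, $i=s-1$ computes $\lvert K_{s-1}^s\rvert$. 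Moreover \cref{K and C for H1} provides injections $C_\cF^{(i)}\hookrightarrow C^{(i)}$ and $C_s^{s+1}\hookrightarrow C_{s+1}^{(s)}$, so, writing $D^{(i)}\colonequals C^{(i)}/C_\cF^{(i)}$ and $D_s^{s+1}\colonequals C_{s+1}^{(s)}/C_s^{s+1}$, one has $\lvert C_\cF^{(i)}\rvert=\lvert K_\cF^{(i)}\rvert/\lvert D^{(i)}\rvert$ and $\lvert C_s^{s+1}\rvert=\lvert K_s^{s+1}\rvert/\lvert D_s^{s+1}\rvert$, where $\lvert C_{s+1}^{(s)}\rvert=\lvert K_s^{s+1}\rvert$ by \cref{K and C for H1} at level $s+1$. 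Substituting these identities into the expression for $\cE_s$, every explicit kernel factor cancels and one is left with
\[
\cE_s\equiv\lvert D^{(s-1)}\rvert\,\lvert D^{(s+1)}\rvert\,\lvert D_s^{s+1}\rvert\,\lvert D_{s-1}^s\rvert\pmod{\text{even powers of }\lvert\F\rvert}.
\]
Thus it suffices to control $d^{(i)}\colonequals\dim_\F D^{(i)}$ and $e_s\colonequals\dim_\F D_s^{s+1}$ modulo $2$.

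The next step uses two features of these comparison cokernels. The first is \emph{vanishing at small index}: since $\lvert C_{s+1}^{(s)}\rvert=\lvert\F\rvert^{\min(s,N)+1-\min(s+1,N)}$ equals $1$ for $s<N$ by \cref{stablizing H0}, one gets $e_s=0$ for $s<N$; and the $d^{(i)}$ are bounded by $\min(i,N)+\min(k-i,N)-N$ and, for $i$ near the two ends of $[1,k]$, are computed outright from \cref{K and C for H1}. The second is \emph{eventual constancy once $k\gg0$}: using the self-duality $T\isom T^{*}$ and the vanishing of the relevant $\H^2$-groups (valid for $k$ large, as in the vanishing statements of \cref{H0 of V/T} and \cref{K and C for H1}), I would compare $D^{(i)}$ at level $k$ with the analogous objects at level $k+1$ and at neighbouring sub-levels $i\pm1$, and similarly for $D_s^{s+1}$; this forces the sequences $(d^{(i)})_i$ and $(e_s)_s$ to be constant outside an initial window of length at most $N+1$ and to change parity at most once inside it. The transition of $(e_s)_s$ lies in $\{N,N+1\}$ because $e_s=0$ for $s<N$, and the transition of $(d^{(i)})_i$ lies in $[1,N+1]$ because all $\H^0$-groups have stabilised by level $N+1$; call one of these transition indices $N_0$ and the other $N$. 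For $s\notin\{N,N_0\}$ one has $d^{(s-1)}\equiv d^{(s+1)}$ and $e_s\equiv e_{s-1}\pmod 2$, so $\cE_s$ is an even power of $\lvert\F\rvert$; at $s\in\{N,N_0\}$ exactly one of these congruences fails (so $\cE_s$ is an odd power) unless $N=N_0$, in which case the two parity defects add and $\cE_s$ is again even. This is precisely the statement of \cref{even V}; the assertion on $V_s/V_{s-1}$ then follows from the displayed equivalence together with the evenness of $\lvert P_s/K_s\rvert$, and \cref{square} follows from the structure theorem for finitely generated $R^{(k)}$-modules as indicated after its statement.

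The hard part will be the eventual-constancy step: the Selmer cokernels $C_\cF^{(i)}$ are the only invariants entering $\cE_s$ that are not computed by the $\H^0$-formula of \cref{K and C for H1}, and even their parities seem inaccessible by a direct count. What should make the argument go through is that one never needs the individual orders $\lvert C_\cF^{(i)}\rvert$, only the second differences $d^{(s+1)}-2d^{(s)}+d^{(s-1)}$ and the increments $e_s-e_{s-1}$ modulo $2$; the duality comparison above should force these to vanish for all $s$ outside a window of size $\le N+1$ fixed by \cref{stablizing H0}, leaving exactly the two exceptional values $N$ and $N_0$.
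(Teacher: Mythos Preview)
Your algebraic reduction to the expression
\[
\cE_s\equiv\lvert D^{(s-1)}\rvert\,\lvert D^{(s+1)}\rvert\,\lvert D_s^{s+1}\rvert\,\lvert D_{s-1}^s\rvert\pmod{\text{even powers of }\lvert\F\rvert}
\]
is correct and is a nice repackaging, but the rest of the argument has a genuine gap. The ``eventual constancy'' step is asserted, not proved: you invoke self-duality and ``vanishing of the relevant $\H^2$-groups'' with references to \cref{H0 of V/T} and \cref{K and C for H1}, but neither of those results says anything about $\H^2$, and no duality comparison of the type you describe appears anywhere in the setup. There is no mechanism in your proposal that actually forces the sequences $(d^{(i)})_i$ and $(e_s)_s$ to change parity at most once, nor that locates those changes in a window of length $\le N+1$.

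More seriously, you treat $N_0$ as an arbitrary label for a transition index and write ``call one of these transition indices $N_0$ and the other $N$''. But $N$ is already fixed by \cref{stablizing H0}; you must \emph{prove} that $N$ is one of the exceptional values, not rename something to be $N$. In the paper, $N_0$ has a concrete definition: it is the least $s$ for which the map $\fm^s\colon \H^1_\cF(T^{(k)})[\fm^{s+1}]\to \H^1_\cF(T^{(k)})[\fm]$ fails to be surjective, equivalently the least height of a non-free cyclic summand of $\H^1_\cF(T^{(k)})$. The bound $N_0\le N+1$ is then obtained by a contradiction argument tracking how the Selmer cokernels $C_\cF^{(s)}$ grow with $s$ against the absolute bound $C_\cF^{(s)}\hookrightarrow C^{(s)}$ from \cref{K and C for H1}. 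The parity analysis is then done not via your $D$-quotients but by explicit diagram chases comparing the surjectivity of the two maps $\fm^s_{(1)}$ and $\fm^s_{(2)}$ on the Selmer and full Selmer torsion, case by case in the ranges $s<N_0$, $s=N_0$, $N_0<s<N$, $s=N$, $N<s<k-N$, and $s\ge k-N$. Your proposal would need to supply all of this, and the duality shortcut you sketch does not do so.
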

\begin{proof}
	Taking cohomology of the short exact sequence
	\begin{equation*}
		0\to T^{(k)}[\fm^s] \to T^{(k)}[\fm^{s+1}] \xrightarrow{\fm^s} T^{(k)}[\fm]\to 0
	\end{equation*} yields an exact sequence (omitting $K$ from the cohomology groups)
	\begin{equation*}
		0\to \H^0(T^{(k)}[\fm])/\fm^s \to \H^1(T^{(k)}[\fm^s]) \to \H^1(T^{(k)}[\fm^{s+1}])\to \H^1(T^{(k)}[\fm])\to \ldots
	\end{equation*}

	Note that $T^{(k)}[\fm^s]\iso T^{(k)}/\fm^s\iso T^{(s)}/\fm^s\iso T^{(s)}$, we have $\vert \H^0(T^{(k)}[\fm])/\fm^s\vert=\frac{\vert \H^0(T^{(k)}[\fm])\vert\vert \H^0(T^{(k)}[\fm^s])\vert}{\vert \H^0(T^{(k)}[\fm^{s+1}])\vert}=\frac{\vert \H^0(\bar{T})\vert\vert \H^0(T^{(s)})\vert}{\vert \H^0(T^{(s+1)})\vert}$.
	
	We assume $k>2N+1$ and we consider several cases depending on how big $s$ is and how close it is to $k$. When $1\leq s< N$, the above quotient is trivial by computing dimensions. When $s\geq N$, $\vert \H^0(T^{(s)})\vert=\vert \H^0(T^{(s+1)})\vert$ so the above quotient is (non-canonically) $\H^0(K,\bar{T})\isom\F$. 
	
	Now consider the diagram\\
	{\small\begin{tikzcd}
			& & K^{(s)} \ar[d]& K^{(s+1)} \ar[d]& K^{(1)} \ar[d]& \\
			0 \ar[r]&\frac{\vert \H^0(\bar{T})\vert\vert \H^0(T^{(s)})\vert}{\vert \H^0(T^{(s+1)})\vert}\ar[r]& \H^1(T^{(k)}[\fm^s]) \ar[r]\ar[d,"f_1^{s}"]& \H^1(T^{(k)}[\fm^{s+1}]) \ar[r]\ar[d,"f_1^{s+1}"]& \H^1(T^{(k)}[\fm])\ar[d,"f_1^{1}"]\\
			&0 \ar[r]& \H^1(T^{(k)})[\fm^s] \ar[r]\ar[d]& \H^1(T^{(k)})[\fm^{s+1}] \ar[r]\ar[d]&\H^1(T^{(k)})[\fm]\ar[d] \\
			& &C^{(s)} &C^{(s+1)}&C^{(1)}
	\end{tikzcd}}\\
	When $1\leq s<N$ (so $k-s>N$),  by~\cref{K and C for H1} and~\cref{stablizing H0}, $\vert K^{(s)}\vert=\vert C^{(s)}\vert=\frac{\vert\H^0(K,T^{(s)})\vert\vert\H^0(K,T^{(k-s)})\vert}{\vert\H^0(K,T^{(k)})\vert}=\vert\H^0(K,T^{(s)})\vert=|\F|^s$ and $\vert K^{(s+1)}\vert=\vert C^{(s+1)}\vert=|\F|^{s+1}$. Note also that $K^{(1)}=C^{(1)}=\F$.
	
	To pass to the Selmer groups, we have the following diagram (noting $K_\cF^{(i)}=K^{(i)}$ for all $i$):\\
	\begin{tikzcd}
		& R/\pi^s \ar[d]& R/\pi^{s+1} \ar[d]& \F \ar[d]& \\
		0\ar[r] & \H^1_\cF(T^{(k)}[\fm^s]) \ar[r]\ar[d,"f_\cF^{s}"]& \H^1_\cF(T^{(k)}[\fm^{s+1}]) \ar[r,"\fm^s_{(1)}"]\ar[d,"f_\cF^{s+1}"]& \H^1_\cF(T^{(k)}[\fm])\ar[d,"f_\cF^{1}"]\\
		0\ar[r]& \H^1_\cF(T^{(k)})[\fm^s] \ar[r]\ar[d]& \H^1_\cF(T^{(k)})[\fm^{s+1}] \ar[r,"\fm^s_{(2)}"]\ar[d]&\H^1_\cF(T^{(k)})[\fm]\ar[d] \\
		&C_\cF^{(s)} &C_\cF^{(s+1)}&C_\cF^{(1)}(\subset C^{(1)}\iso \F)
	\end{tikzcd}\\
	By the structure theorem of finitely generated $R^{(k)}$-modules, the map $\fm^s_{(2)}$ is surjective if and only if $\H^1_\cF(T^{(k)})$ has a summand with height less than or equal to $s$. Let $N_0\defeq\min\{s\leq k:\fm^s_{(2)}\text{is not surjective}\}$. Then it is clear that if $s\geq N_0$, $\fm^s_{(2)}$ will also be non-surjective.
	
	Note that when $N_0=k$, $\H^1_\cF(T^{(k)})$ must be a free $R^{(k)}$-module. We claim that if $N_0\neq k$, then $N_0\leq N+1$. Namely, the lowest height of all non-free summands of $\H^1_\cF(T^{(k)})$ must have height less than or equal to $N$. In other words, assuming $\H^1_\cF(T^{(k)})$ is not a free $R^{(k)}$-module, as soon as $s\geq N+1$, $\fm^s_{(2)}$ will never be surjective.
	
	Assume by contradiction that $N_0>N+1$. Then for all $1\leq s\leq N+1$, $\fm^s_{(2)}$ is surjective. Consider the following diagram when $s<N$\\
	\begin{tikzcd}
		0\ar[r]	& R/\pi^s \ar[r]\ar[d]& R/\pi^{s+1}\ar[r] \ar[d]& K_s(\subset \F) \ar[d]& \\
		0\ar[r] & \H^1_\cF(T^{(k)}[\fm^s]) \ar[r]\ar[d,"f_\cF^{s}"]& \H^1_\cF(T^{(k)}[\fm^{s+1}]) \ar[r,"\fm^s_{(1)}"]\ar[d,"f_\cF^{s+1}"]& \im(\fm^s_{(1)})\ar[r]\ar[d,"f_\cF^{1'}"]& 0\\
		0\ar[r]& \H^1_\cF(T^{(k)})[\fm^s] \ar[r]\ar[d]& \H^1_\cF(T^{(k)})[\fm^{s+1}] \ar[r,"\fm^s_{(2)}"]\ar[d]&\H^1_\cF(T^{(k)})[\fm]\ar[r]\ar[d]&0 \\
		0\ar[r,dashed]	&C_\cF^{(s)} \ar[r]&C_\cF^{(s+1)}\ar[r]&C_s(\subset C^{(1)}\iso \F)\ar[r]&0
	\end{tikzcd}\\
	Since $K_s$ contains $\coker(R/\pi^s\to R/\pi^{s+1})$, $K_s$ is at least $\F$, so $K_s=\F$. Similarly, when $s=N$ or $N+1$, the first row becomes \[0\to \H^0(\bar{T})\isom\F\to \H^0(T^{(k-s)}) \isom R/\pi^N \to \H^0(T^{(k-s-1)})\isom R/\pi^N \to K_s,\] and $K_s$ must be $\F$. Therefore the snake lemma shows that $C_\cF^{(s)}\to C_\cF^{(s+1)}$ is injective with cokernel $C_s$. We consider $2$ subcases.
	
	\textbf{Case I: $\fm^s_{(1)}$ is never surjective for $1\leq s\leq N+1$.} In this case, consider the rightmost vertical map $f^{1'}_\cF$. If we replace its domain $\im(\fm^s_{(1)})$ by $\H^1_\cF(T^{(k)}[\fm])$ which is strictly bigger in dimension, since the codomain is fixed and the kernel cannot be any bigger, its cokernel must change from $C_s$ to something strictly smaller. Therefore $C_s\neq0$ and $C_s=\F$. Then for all $1\leq s\leq N+1$, the dimension of $C_\cF^{(s)}$ increases whenever $s$ increases, so $\vert C_\cF^{(N+2)}\vert \geq \vert \F^{N+1} \vert >\vert K_\cF^{(N+2)}\vert$. But $C_\cF^{(s)}\subset K_\cF^{(s)}$ by~\cref{K and C for H1}, contradiction.
	
	\textbf{Case II: $\fm^s_{(1)}$ is surjective for some $1\leq s\leq N+1$.} In this case, the rightmost vertical map $f_\cF^{1'}$ agrees with the map $f_\cF^1\colon \H^1_\cF(T^{(k)}[\fm])\to \H^1_\cF(T^{(k)})[\fm]$ and is independent of $s$. Therefore if for one such $s$ the cokernel of $f_\cF^{1'}$ (so also that of $f_\cF^{1}$) is $\F$, then the cokernel is $\F$ for all such $s$. Therefore $C_\cF^{(s+1)}$ also has one more dimension than $C_\cF^{(s)}$ when $\fm^s_{(1)}$ is surjective. But for other $s$, the same is true as in Case I, so again $\vert C_\cF^{(N+2)}\vert$ would be too big. Thus the cokernel of $f_\cF^{1}$ must be $0$. This in turn means all $\fm^s_{(2)}$ must be surjective because otherwise if we replace  $\H^1_\cF(T^{(k)})[\fm]$ by $\im(\fm^s_{(2)})$, the cokernel of $f_\cF^{1}$ must be smaller (since $f_\cF^{1}$ has fixed domain and kernel), which is impossible. But in this case $N_0=k$ and $\H^1_\cF(T^{(k)})$ is free.
	
	So far we showed that either $\H^1_\cF(T^{(k)})$ is free or it has a summand with height lower than $N+1$, and as a consequence we may assume $\fm^s_{(2)}$ is not surjective for all $s>N$ which is independent of $k$. Moreover, in this case $f_\cF^1$ will have kernel and cokernel both equal to $\F$, so in what follows, we will keep this assumption.
	
	We first study the dimensions appearing in this lemma when $k>s>N$.  By~\cref{K and C for H1} and~\cref{stablizing H0}, $\vert K^{(s)}\vert=\vert C^{(s)}\vert=\frac{\vert\H^0(K,T^{(s)})\vert\vert\H^0(K,T^{(k-s)}\vert}{\vert\H^0(K,T^{(k)})\vert}=\vert\H^0(K,T^{(k-s)})\vert$ and $\vert K^{(s+1)}\vert=\vert C^{(s+1)}\vert=\vert\H^0(K,T^{(k-s-1)})\vert$. Note also that $K^{(1)}=C^{(1)}=\F$.
	
	To pass to the Selmer groups, we have the following diagram(noting $K_\cF^{(i)}=K^{(i)}$ for all $i$):\\
	{\small\begin{tikzcd}
			0\ar[r]	& \H^0(\bar{T})=\F\ar[r] & \H^0(T^{(k-s)}) \ar[r]\ar[d]& \H^0(T^{(k-s-1)}) \ar[r]\ar[d]& \F \ar[d]& \\
			0 \ar[r]&\F\ar[r]& \H^1_\cF(T^{(k)}[\fm^s]) \ar[r]\ar[d,"f_\cF^{s}"]& \H^1_\cF(T^{(k)}[\fm^{s+1}]) \ar[r,"\fm^s_{(1)}"]\ar[d,"f_\cF^{s+1}"]& \H^1_\cF(T^{(k)}[\fm])\ar[d,"f_\cF^{1}"]\\
			&0 \ar[r]& \H^1_\cF(T^{(k)})[\fm^s] \ar[r]\ar[d]& \H^1_\cF(T^{(k)})[\fm^{s+1}] \ar[r,"\fm^s_{(2)}"]\ar[d]&\H^1_\cF(T^{(k)})[\fm]\ar[d] \\
			& &C_\cF^{(s)} &C_\cF^{(s+1)}&C_\cF^{(1)}(\subset C^{(1)}\iso \F)
	\end{tikzcd}}\\
	To apply snake lemma, we consider the following diagram where the first two rows may or may not be changed depending on whether $\fm^s_{(1)}$ is surjective:\\
	{\small\begin{tikzcd}
			0\ar[r]	& \F\ar[r] & \H^0(T^{(k-s)}) \ar[r]\ar[d]& \H^0(T^{(k-s-1)}) \ar[r]\ar[d]& K(\subset \F) \ar[d]& \\
			0 \ar[r]&\F\ar[r]& \H^1_\cF(T^{(k)}[\fm^s]) \ar[r]\ar[d,"f_\cF^{s}"]& \H^1_\cF(T^{(k)}[\fm^{s+1}]) \ar[r,"\fm^s_{(1)}"]\ar[d,"f_\cF^{s+1}"]& \im(\fm^s_{(1)})\ar[d,"f_\cF^{1'}"]\ar[r]&0\\
			&0 \ar[r]& \H^1_\cF(T^{(k)})[\fm^s] \ar[r]\ar[d]& \H^1_\cF(T^{(k)})[\fm^{s+1}] \ar[r,"\fm^s_{(2)}"]\ar[d]&\H^1_\cF(T^{(k)})[\fm]\ar[d] \\
			& &C_\cF^{(s)}\ar[r] &C_\cF^{(s+1)}\ar[r]&C_\cF^{(1)}(\subset C^{(1)}\iso \F)
	\end{tikzcd}}\\
	In the rightmost column, if we replace $\H^1_\cF(T^{(k)})[\fm]$ by $\im(\fm^s_{(2)})$ which is strictly smaller, the only possibility is $C_\cF^{(1)}$ gets strictly smaller because the domain and kernel of $f_\cF^{1'}$ are determined by other columns(by snake lemma) which do not change. This in turn means $C_\cF^{(1)}$ must be $\F$ to begin with, so if we do consider $\im(\fm^s_{(2)})$ we get the following diagram:\\
	{\small\begin{tikzcd}
			0\ar[r]	& \F\ar[r] & \H^0(T^{(k-s)}) \ar[r]\ar[d]& \H^0(T^{(k-s-1)}) \ar[r]\ar[d]& K(\subset \F) \ar[d]& \\
			0 \ar[r]&\F\ar[r]& \H^1_\cF(T^{(k)}[\fm^s]) \ar[r]\ar[d,"f_\cF^{s}"]& \H^1_\cF(T^{(k)}[\fm^{s+1}]) \ar[r,"\fm^s_{(1)}"]\ar[d,"f_\cF^{s+1}"]& \im(\fm^s_{(1)})\ar[d,"f_\cF^{1''}"]\ar[r]&0\\
			&0 \ar[r]& \H^1_\cF(T^{(k)})[\fm^s] \ar[r]\ar[d]& \H^1_\cF(T^{(k)})[\fm^{s+1}] \ar[r,"\fm^s_{(2)}"]\ar[d]&\im(\fm^s_{(2)})\ar[r]\ar[d] &0\\
			& &C_\cF^{(s)}\ar[r] &C_\cF^{(s+1)}\ar[r]&0
	\end{tikzcd}}\\
	
	When $k=s+1$, we have a special diagram
	\\
	{\small\begin{tikzcd}
			& & & 0 \ar[r]\ar[d]& K'(\subset\F) \ar[d]& \\
			&& \H^1_\cF(T^{(s+1)}[\fm^s]) \ar[r]\ar[d,"f_\cF^{s'}"]& \H^1_\cF(T^{(s+1)}) \ar[r,"\fm^s_{(1)}"]\ar[d,"="]& \im(\fm^s_{(1)})\ar[r]\ar[d,"f_\cF^{1'''}"]&0\\
			&0 \ar[r]& \H^1_\cF(T^{(s+1)})[\fm^s] \ar[r]\ar[d]& \H^1_\cF(T^{(s+1)})\ar[r,"\fm^s_{(2')}"]\ar[d]&\im(\fm^s_{(2')})\ar[r]\ar[d]&0 \\
			& &C_s^{s+1}\ar[r] &0\ar[r]&0
	\end{tikzcd}}\\
	from which we see $C_s^{s+1}=K'$ by the snake lemma.
	
	Our aim is to compare $\vert C_s^{s+1}\vert$ with $\vert C_\cF^{(s)}/C_\cF^{(s+1)}\vert=\vert K\vert$. 
	Depending on whether $k-s\leq N$ there are $2$ different possibilities:\\
	\textbf{Case i: $s>N$ and $k-s\leq N$.} In this case $\H^0(T^{(k-s)})\isom R/\pi^{k-s}$ and $\H^0(T^{(k-s-1)})\isom R/\pi^{k-s-1}$ so from snake lemma we get an exact sequence
	\[0\to K\to C_\cF^{(s)}\to C_\cF^{(s+1)}\to 0.\]
	Notice that the two $\fm^s_{(1)}$ in the two diagrams are indeed the same map because the rows are identical, since $T^{(k)}[\fm^i]\iso T^{(i)}$ for any $i\leq k$. Now if $\fm^s_{(1)}$ is surjective, then $K=K'=K^{(1)}=\F$. If if $\fm^s_{(1)}$ is not surjective, then in the last column if we replace $\im(\fm^s_{(1)})$ by $\H^1_\cF(T^{(k)}[\fm]) (=\H^1_\cF(T^{(s+1)}[\fm]))$ which is strictly bigger, it must be that $K$ (and $K'$) becomes strictly bigger because the codomain is fixed and the cokernel cannot be smaller. This in turn means $K=K'=0$.
	
	Thus in case i, we always have $K=K'$ and $\frac{\vert C_\cF^{(s)} \vert}{\vert C_\cF^{(s+1)}\vert}/\vert C_s^{s+1}\vert=0$. Moreover, in case i, $K_\cF^{(s-1)}\isom R/\pi^{n-s+1}$ and $K_\cF^{(s+1)}\isom R/\pi^{n-s-1}$ so $\vert K_\cF^{(s-1)}\vert\vert K_\cF^{(s+1)}\vert$ is an even power of $\F$.\\
	\textbf{Case ii: $s>N$ and $k-s>N$.} In this case $\H^0(T^{(k-s)})=\H^0(T^{(k-s-1)})\isom R/\pi^N$, so we have an exact row\[
	0\to \F\to R/\pi^N \to R/\pi^N\  \to K(\subset\F),\] which forces $K=\F$. Furthermore, by the snake lemma this means $C_\cF^{(s)}=C_\cF^{(s+1)}$. This also implies $\fm^s_{(1)}$ is surjective, so $C_s^{s+1}=K'=\F$. \\
	
	Thus in case ii, $K=K'=\F$ and $\frac{\vert C_\cF^{(s)} \vert}{\vert C_\cF^{(s+1)}\vert}/\vert C_s^{s+1}\vert=|\F|$.
	
	Finally, note that $K_s^{s+1}=\F$ for all $s\geq N$ and $K_{s-1}^s=0$ if $s<N$ by~\cref{K and C for H1} and~\cref{stablizing H0}.

	To sum up, when $k$ is at least $2N+2$, as $s$ grows from $N+2$ (so $s-1>N$) to $k$, $k-s$ is first greater than $N$. For such $s$, $\frac{\vert C_\cF^{(s)} \vert}{\vert C_\cF^{(s+1)}\vert}/\vert C_s^{s+1}\vert=|\F|=\frac{\vert C_\cF^{(s-1)} \vert}{\vert C_\cF^{(s)}\vert}/\vert C_{s-1}^{s}\vert$, and $K_\cF^{(s-1)}\isom R/\pi^N=K_\cF^{(s+1)}$; when $s=k-N$, $\frac{\vert C_\cF^{(s)} \vert}{\vert C_\cF^{(s+1)}\vert}/\vert C_s^{s+1}\vert=0$ and $\frac{\vert C_\cF^{(s-1)} \vert}{\vert C_\cF^{(s)}\vert}/\vert C_{s-1}^{s}\vert\isom\F$, and $K_\cF^{(s-1)}\isom R/\pi^N$, $K_\cF^{(s+1)}\isom R/\pi^{N-1}$; when $s>k-N$,  $\frac{\vert C_\cF^{(s)} \vert}{\vert C_\cF^{(s+1)}\vert}/\vert C_s^{s+1}\vert=1=\frac{\vert C_\cF^{(s-1)} \vert}{\vert C_\cF^{(s)}\vert}/\vert C_{s-1}^{s}\vert$, and $K_\cF^{(s-1)}\isom R/\pi^{k-s+1}$ and $K_\cF^{(s+1)}\isom R/\pi^{k-s-1}$. In all cases, \[\frac{\vert K_s^{s+1}\vert}{\vert K_{s-1}^s\vert}\times\frac{\vert C_\cF^{(s)} \vert}{\vert C_\cF^{(s+1)}\vert}/\vert C_s^{s+1}\vert\times\Bigg(\frac{\vert C_\cF^{(s-1)} \vert}{\vert C_\cF^{s}\vert}/\vert C_{s-1}^s\vert\Bigg)^{-1}\times\vert K_\cF^{(s-1)}\vert\times\vert K_\cF^{(s+1)}\vert\] is an even power of $|\F|$.\\

	Now the cases left are when $s\leq N$. When $\fm^s_{(2)}$ is not surjective, then similarly as in Case II, $\fm^s_{(1)}$ must be surjective. Then for such $s$, $C_s^{s+1}=|\F|$ if $s=N$ and $C_s^{s+1}=0$ if $s<N$. Also $C_\cF^{s}\iso C_\cF^{s+1}$, so $\frac{\vert C_\cF^{(s)} \vert}{\vert C_\cF^{(s+1)}\vert}/\vert C_s^{s+1}\vert=|\F|^{-1}$ if $s=N$ and $\frac{\vert C_\cF^{(s)} \vert}{\vert C_\cF^{(s+1)}\vert}/\vert C_s^{s+1}\vert=1$ if $s<N$.
	
	When $\fm^s_{(2)}$ is surjective, in fact, $\fm^s_{(1)}$ must be surjective, too. Note that for $s\leq N$ we have the following sequence\[
	0\to\F\to \im(\fm^s_{(1)})\xrightarrow{f^{1'}_\cF} \H^1_\cF(T^{(k)})[\fm]\to C_s(\subset C^{(1)}\iso \F)\to 0.\]
	
	If $\fm^S_{(1)}$ is not surjective, then enlarging $\im(\fm^s_{(1)})$ would shrink $C_s$, yielding an exact sequence\[
	0\to\F\to \H^1_\cF(T^{(k)}[\fm])\xrightarrow{f^{1}_\cF} \H^1_\cF(T^{(k)})[\fm]\to 0,\]
	which would mean $\fm^{s}_{(2)}$ are always surjective, a contradiction. 
	
	So as long as $\fm^s_{(s)}$ is surjective, $C_\cF^{s}$ is increasing steadily. And if $s+1\leq N$, then $C_s^{s+1}=0$ by~\cref{K and C for H1}. So $\frac{\vert C_\cF^{(s)} \vert}{\vert C_\cF^{(s+1)}\vert}/\vert C_s^{s+1}\vert=|\F|^{-1}$ if $s<N$. When $s=N$, since $\fm^N_{(1)}$ is surjective, $C_N^{(N+1)}=K'\isom\F$. So $\frac{\vert C_\cF^{(s)} \vert}{\vert C_\cF^{(s+1)}\vert}/\vert C_s^{s+1}\vert=1$.

	We summarize the behavior of the terms appearing in the lemma in the following table:
	{\small\begin{center}
		\begin{tabular}{| l | l | l | l | l | l |}
			\hline
			& $\frac{\vert C_\cF^{(s)}\vert}{\vert C_\cF^{(s+1)}\vert}/\vert C_{s}^{s+1}\vert$ & $\frac{\vert C_\cF^{(s-1)}\vert}{\vert C_\cF^{(s)}\vert}/\vert C_{s-1}^{s}\vert$ & $\vert K_\cF^{(s+1)}\vert$ & $\vert K_\cF^{(s-1)}\vert$ & $\frac{\vert K_s^{s+1}\vert}{\vert K_{s-1}^s\vert}$ \\ \hline
			\multirow{3}{*}{$N<S<k-N$}&\multirow{3}{*}{$|\F|$}  & $(s-1>N):|\F|$ & \multirow{3}{*}{$|\F|^N$} & \multirow{3}{*}{$|\F|^N$} & \multirow{3}{*}{$1$}\\
			& & $(s-1=N\geq N_0):|\F|$ & & &  \\
			& & $(s-1=N<N_0):1$   & & & \\ \hline
			$S=N<N_0$ & $1$ & $|\F|^{-1}$ & $|\F|^N$ 
			& $|\F|^{N-1}$   & $|\F|$ \\
			\hline
			$S=N=N_0$ & $|\F|^{-1}$ & $|\F|^{-1}$ & $|\F|^N$ 
			& $|\F|^{N-1}$  & $|\F|$ \\
			\hline
			$S=N>N_0$ & $|\F|^{-1}$ & $1$ & $|\F|^N$ 
			& $|\F|^{N-1}$ & $|\F|$\\
			\hline
			$N_0<S<N$ & $1$ & $1$ & $|\F|^{s+1}$
			& $|\F|^{s-1}$ & $1$\\
			\hline
			$S=N_0<N$ & $1$ & $|\F|^{-1}$ & $|\F|^{s+1}$
			& $|\F|^{s-1}$ & $1$\\
			\hline
			$S<N_0\leq N$ & $|\F|^{-1}$ & $|\F|^{-1}$ & $|\F|^{s+1}$
			& $|\F|^{s-1}$ & $1$\\
			\hline
				\end{tabular}
\end{center}}

	We're now able to determine the parity of the length of $\frac{\vert K_s^{s+1}\vert}{\vert K_{s-1}^s\vert}\times\frac{\vert C_\cF^{(s)} \vert}{\vert C_\cF^{(s+1)}\vert}/\vert C_s^{s+1}\vert\times(\frac{\vert C_\cF^{(s-1)} \vert}{\vert C_\cF^{s}\vert}/\vert C_{s-1}^s\vert)^{-1}\times\vert K_\cF^{(s-1)}\vert\times\vert K_\cF^{(s+1)}\vert$ as a power of $\vert\F\vert$ via direct computation in different cases, depending on the relation between $N_0$ and $N$.
	
	\textbf{Case 1: $N_0=N+1$.} In this case, when $s=N_0$ and $s=N$ we get an odd dimension, and even dimensions for all other $s\leq N+1$.
	
	\textbf{Case 2: $N_0=N$.} In this case, when get even dimensions for all $s\leq N+1$.
	
	\textbf{Case 3: $N_0=N-1$.} Same as in Case 1.
	
	\textbf{Case 4: $1<N_0<N-1$.} Same as in Case 1.
	
	\textbf{Case 5: $N_0=1$.} Same as in Case 1.
\end{proof}

\section{An anticyclotomic control theorem}\label{appB}
In this section we follow~\cite[section 3]{JSW2017} and~\cite[sections 3 and~4]{Greenberg1999} to prove an anticyclotomic control theorem with torsion, i.e., allowing $\H^0(K,W)\ne 0$. We use the notations from~\cite{JSW2017} and will also explain the difference between theirs and ours.

First, we make all the assumptions in~\cite[section 3.1]{JSW2017} except (irred$_K$) and (HT) (which fails if the weight of $f$ is higher than $2$). For our application to BSD formula in weight $2$ we may still assume (HT), but we will also explain how to remove it for higher weights. 

The first result we need from \textit{op.\ cit.} is the following:\begin{proposition}\label{Selfinite}
	One has\[
	\#\H^1_{\cF_{\ac}}(K,W)=\#\Sha_{\BK}(W/K)\cdot (\#\delta_v)^2,
	\]
	where $\delta_v=\coker\{\H^1_{\cF_{\BK}}(K,T)\xrightarrow{\loc_v}\H^1_f(K,W)/\H^1(K_v,T)_{\tors}\}$. In particular, $\H^1_{\cF_{\ac}}(K,W)$ has finite order.
\end{proposition}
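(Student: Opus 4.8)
The statement is precisely \cite[Proposition~3.6 (or its analogue)]{JSW2017}, adapted to the setting where $\H^0(K,W)\neq 0$ is allowed. The strategy is to follow the proof in \emph{op.\ cit.}\ essentially \emph{verbatim}, carefully tracking the places where the vanishing of $\H^0(K,W)$ is used and checking that each such use can be replaced by the finiteness of $\H^0(K,W)$ (which holds here because $\H^0(K,V_f)=0$ by irreducibility of $\rho_f$, hence $\H^0(K,W)\subseteq W$ is a cofinitely generated torsion module with trivial divisible part, so finite; compare~\cref{H0 of V/T}\,(iii)). The first step is to recall the definition of the anticyclotomic Selmer group $\H^1_{\cF_{\ac}}(K,W)$ together with the Bloch--Kato Selmer group $\H^1_{\cF_{\BK}}(K,-)$ and the Selmer group $\H^1_f(K,W)$, and to write down the global Poitou--Tate exact sequence relating the Selmer group attached to $T$, the one attached to $W$, and the local terms at $v$ and $\bar v$.

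The second step is the local computation at $v$ and $\bar v$. One compares the local conditions cutting out $\H^1_{\cF_{\ac}}$ with those cutting out $\H^1_{\cF_{\BK}}$: away from $p$ they agree, and at $\bar v$ the anticyclotomic condition is the relaxed/strict one as in~\cite{JSW2017}, while at $v$ one must compute the cokernel $\delta_v$ of the localization map $\loc_v\colon \H^1_{\cF_{\BK}}(K,T)\to \H^1_f(K,W)/\H^1(K_v,T)_{\tors}$. Here the factor $(\#\delta_v)^2$ arises exactly as in \emph{op.\ cit.}\ from a product of a term at $v$ and a dual term at $\bar v$ via local Tate duality; the point is that the local $\H^0$ and $\H^2$ corrections at $v,\bar v$ are controlled by $\H^0(K_v,W)$ and $\H^0(K_{\bar v},W)$, which are finite by~\cref{H0 of V/T}, so they only contribute bounded (and ultimately cancelling, up to the stated $\delta_v$) factors. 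The third step assembles the global count: by the Poitou--Tate/Greenberg--Wiles formula together with $\#\H^0(K,W)<\infty$, the ratio $\#\H^1_{\cF_{\ac}}(K,W)/\#\Sha_{\BK}(W/K)$ equals $(\#\delta_v)^2$ up to the local Euler-characteristic factors, and those are trivial because $W$ is self-dual of rank $2$ with the relevant local conditions balanced (one relaxed at $v$, one strict at $\bar v$), exactly as in the weight-$2$ argument of~\cite{JSW2017}. Finiteness of $\H^1_{\cF_{\ac}}(K,W)$ then follows since each factor on the right is finite: $\Sha_{\BK}(W/K)$ is finite by the Kolyvagin-system input (our~\cref{HeegMC}, or equivalently the rank-one statement in~\cref{IMC}), and $\delta_v$ is finite because $\H^1_{\cF_{\BK}}(K,T)$ has $\cO$-rank at most one.

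The main obstacle will be the bookkeeping at $p$: in \cite{JSW2017} the hypothesis $\H^0(K,W)=0$ is invoked to identify $\H^1_f(K,W)$ with its quotient by a torsion subgroup, and to ensure certain maps between integral and torsion Selmer groups are injective with cokernel exactly $\delta_v$. Without that hypothesis one gets an extra finite kernel/cokernel, controlled by $\H^0(K,W)$ and by $\H^0(K_w,W)$ for $w\mid p$, and one must check these contributions cancel between the $v$- and $\bar v$-sides (this is the same cancellation phenomenon as in the proof of~\cref{kerresf}, where $\ker(\res_{M_f})$ was shown to be finite and cyclic). I expect this cancellation to go through because the defining local conditions at $v$ and $\bar v$ are Tate-dual to each other, but verifying it requires a diagram chase with the mod-$\pi$ reductions exactly parallel to the ones in~\cref{sec:selmer-groups-of-rhof}. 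Since~(HT) is assumed in the weight-$2$ application, no further modification is needed there; for higher weight one replaces the use of~(HT) by~\cref{rhof crystalline} and the local description of $\rho_f|_{G_{K_v}}$ from~\cite[Theorem~8.3.6]{BrinonConrad} already recorded above.
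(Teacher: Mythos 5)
Your proposal takes the same route as the paper: both adapt~\cite[Proposition~3.2.1]{JSW2017} to the residually reducible setting. Two points of comparison. First, the paper's observation is that the only genuine use of (irred$_K$) in the proof of \emph{op.\ cit.}\ is the identification $W^*\cong T^\tau$ feeding into global duality, and this is supplied by~\cite[Remark~3.2.2]{JSW2017} without any irreducibility hypothesis; consequently the extra diagram chase and ``cancellation between the $v$- and $\bar v$-sides'' that you anticipate is not actually needed. The non-vanishing of $\H^0(K,W)$ manifests only as torsion in $\H^1_f(K,T)$, i.e.\ $\ker(\loc_v)$ may be nonzero in the four-term sequence
\[
0\to \H^1_f(K,T)_{\tors} \to \H^1_f(K,T) \xrightarrow{\loc_v} \H^1_f(K_v,T)/\H^1_f(K_v,T)_{\tors} \to \delta_v\to 0,
\]
which affects how $\#\delta_v$ is later evaluated in the application to $A_f$ (\cref{B3}) but not the validity of the stated formula, since $\delta_v$ is \emph{defined} as the cokernel. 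Second, the finiteness of $\Sha_{\BK}(W/K)$ and the $\cO$-corank-one statement for $\H^1_{\cF_{\BK}}(K,T)$ are among the standing hypotheses imported from~\cite[\S3.1]{JSW2017}; they are not to be re-derived inside this proposition from the Kolyvagin-system results (\cref{HeegMC}), as your plan suggests --- doing so would entangle the control theorem with inputs it is meant to be independent of. With these adjustments your argument matches the paper's.
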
  

\begin{proof}
	See~\cite[Proposition 3.2.1]{JSW2017}. Note that by Remark 3.2.2 in \textit{op.\ cit.}, we do not need (irred$_K$) to get $W^*=T^\tau$.
	
	We mention that in the residually reducible setting, $\H^1_f(K,T)$ may no longer be a free $\cO$-module. In particular, their $\delta_v$ fits into the exact sequence\begin{equation}\label{deltav}
	0\to \H^1_f(K,T)_{\tors} \to \H^1_f(K,T) \xrightarrow{\loc_v} \H^1_f(K_v,T)/\H^1_f(K_v,T)_{\tors} \to \delta_v\to 0,
	\end{equation}
	where $\ker(\loc_v)$ may not be $0$. This observation will be helpful for our application to modular abelian varieties in~\cref{B3}.
\end{proof}

The main result of this appendix is the following anticyclotomic control theorem which allows torsion. 
\begin{theorem}[Anticyclotomic control theorem]\label{control}
	The $\Lambda$-module $X^\Sigma_{\ac}(M_f)$ is $\Lambda$-torsion, and if $f^\Sigma_{\ac}(T)$ is a generator of its characteristic $\Lambda$-ideal $\Char(X^\Sigma_{\ac}(M_f))$, then\[
	\#\cO/f^\Sigma_{\ac}(0)=\frac{\#\H^1_{\cF_{\ac}}(K,W)\cdot C^\Sigma(W)}{\#\H^0(K,W)\cdot \#\H^0(K,W)^\vee},\]
	where \[C^\Sigma(W)=\#\H^0(K_v,W)\cdot \#\H^0(K_{\ol v},W)\cdot\prod_{w\in S_p\setminus\Sigma, w\ split}\# \H^1_{\nr}(K_w,W)\cdot \prod_{w\in \Sigma}\#\H^1(K_w,W).\]
	
\end{theorem}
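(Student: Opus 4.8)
The plan is to follow the strategy of~\cite[Section~3.3]{JSW2017} and~\cite[Sections~3--4]{Greenberg1999}, carrying out the usual control-theorem argument while keeping track of the extra global torsion term $\H^0(K,W)$ that was assumed to vanish in \textit{op.\ cit.} First I would set up the fundamental diagram comparing the $\Sigma$-imprimitive anticyclotomic Selmer group $\H^1_{\cF^\Sigma_{\ac}}(K,M_f)[\fm^n]$ (equivalently $\H^1_{\cF^\Sigma_{\ac}}(K,M_f[\fm^n])$, via the snake-lemma comparison as in~\cref{eq:move p torsion}) with $\H^1_{\cF^\Sigma_{\ac}}(K,W)$, and more generally the specialization maps at height-one primes / at the augmentation ideal $T=\gamma-1$. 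As in~\cite{JSW2017}, the global-to-local defining map is surjective by~\cref{remark on Selmer groups} and~\cref{surj for Self} (condition~(4) of~\cite[Proposition~A.2]{PollackWeston2011} being the content of~\cref{corankH1Mf} and~\cref{kerresf}), so one gets a clean exact sequence for $X^\Sigma_{\ac}(M_f)$. Torsionness of $X^\Sigma_{\ac}(M_f)$ follows from~\cref{imprimlambda} (its proof only uses the extension~\eqref{char to f} and works for arbitrary characters, as remarked in~\cref{5}), so $f^\Sigma_{\ac}(T)$ makes sense.

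The core computation is the standard one: evaluating $\#\cO/f^\Sigma_{\ac}(0)$ via the snake lemma applied to multiplication by $T=\gamma-1$ on the defining exact sequence of the Selmer group, which expresses it as an alternating product of the orders of $\H^0(K,M_f)/T$, the global $\H^1$ specialization kernel/cokernel, and the local error terms at the primes in $S_p$, in $\Sigma$, and at $v,\ol v$. The local terms assemble into $C^\Sigma(W)$ exactly as in~\cite[Proposition~3.3.1]{JSW2017}: at the split bad primes $w\in S_p\setminus\Sigma$ one gets $\#\H^1_{\nr}(K_w,W)$, at $w\in\Sigma$ one gets $\#\H^1(K_w,W)$ (the local condition being relaxed), and at $v,\ol v$ dividing $p$ the local control contributes $\#\H^0(K_v,W)$ and $\#\H^0(K_{\ol v},W)$ — here~\cref{H0 of V/T}\,(ii),(iii) guarantees these are finite even in the reducible case. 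The key new feature is the global $\H^0$ contribution: the kernel of the specialization map $\H^1_{\cF^\Sigma_{\ac}}(K,M_f)[T]\to\H^1_{\cF^\Sigma_{\ac}}(K,W)$ is controlled by $\H^0(K,M_f)/T$, and dually by $\H^0(K,W)$, producing the denominator $\#\H^0(K,W)\cdot\#\H^0(K,W)^\vee$. When $\H^0(K,W)=0$ this recovers~\cite[Theorem~3.3.1]{JSW2017} verbatim; the point is that $\H^0(K,M_f)$, being finite by~\cref{H0 of V/T}\,(iii), contributes both a kernel and a cokernel term of equal order $\#\H^0(K,W)$ to the characteristic-ideal specialization, whence the square in the denominator. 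To handle the removal of (HT) for higher weights, I would replace every appeal to Mattuck's theorem / the Hodge--Tate argument in~\cite{JSW2017} by the corresponding statement proved here in~\cref{rhof crystalline} and the case analysis in the proof of~\cref{H0 of V/T}, which already covers $r>1$ and $r=1$ uniformly.

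The main obstacle I anticipate is bookkeeping the torsion terms consistently on both the Selmer-group side and the local-condition side, since in the residually reducible case several of the $\H^0$ and $\ker(\res)$ groups that were zero in~\cite{JSW2017} are now nonzero finite cyclic $\cO$-modules (by~\cref{H0 of V/T} and~\cref{kerresf}), and one must check that the contributions of $\ker(\res_{M_f})$ versus $\H^1_{\nr}$ versus $\H^1$ at the various primes cancel or combine in exactly the way that yields $C^\Sigma(W)$ as stated, with no stray factor. A secondary subtlety is that $X^\Sigma_{\ac}(M_f)$ may have nonzero finite $\Lambda$-submodules (this is precisely the phenomenon flagged throughout the paper), so I cannot identify $\Char$ with $\Fitt$; however, the control-theorem argument only needs the characteristic ideal and the structure theorem up to pseudo-isomorphism, together with the fact — established in the proof of~\cref{imprimlambda} via the Greenberg-style argument~\cite{Greenberg1989} — that the relevant $\H^2(K^\Sigma/K,M_f)$ vanishes, so the specialization sequence stays exact on the right. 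Modulo these finiteness and exactness checks, the computation is the standard alternating-product bookkeeping of~\cite[Section~3]{Greenberg1999}, adapted to carry the $\#\H^0(K,W)\cdot\#\H^0(K,W)^\vee$ denominator.
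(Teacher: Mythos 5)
Your overall route is the one the paper takes: reduce to Greenberg's evaluation $\#\cO/f^\Sigma_{\ac}(0)\sim \#\H^1_{\cF_{\ac}^\Sigma}(K,M_f)^\Gamma/\#\H^1_{\cF_{\ac}^\Sigma}(K,M_f)_\Gamma$, compare $\H^1_{\cF_{\ac}^\Sigma}(K,M_f)^\Gamma$ with $\H^1_{\cF_{\ac}^\Sigma}(K,W)$ via the snake lemma for multiplication by $T=\gamma-1$, and assemble the local error terms into $C^\Sigma(W)$ via~\cite[Proposition~3.3.7]{JSW2017}; the finiteness, torsionness and vanishing inputs you cite are the right ones.

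However, your derivation of the denominator has a genuine gap. You assert that $\H^0(K,M_f)$ ``contributes both a kernel and a cokernel term of equal order $\#\H^0(K,W)$,'' whence the square. The cokernel half of this is false: the long exact sequence of $0\to W\to M_f\xrightarrow{T}M_f\to 0$ shows that $\H^1(K^S/K,W)\to\H^1(K^S/K,M_f)[T]$ is \emph{surjective}, with kernel of order $\#\H^0(K,M_f)[T]=\#\H^0(K,W)$; so the specialization of the global $\H^1$ accounts for only one denominator factor. The second factor $\#\H^0(K,W)^\vee$ comes from a different mechanism, and it is precisely where the residually reducible hypothesis bites: the global-to-local map $\loc_S$ on $W$-coefficients is \emph{not} surjective (your appeal to~\cref{surj for Self} and~\cref{remark on Selmer groups} only gives surjectivity for $M_f$-coefficients), and its cokernel $\mathcal{P}/\mathcal{G}$ is computed by Cassels' theorem, i.e.\ Poitou--Tate global duality as in~\cite[Proposition~4.13]{Greenberg1999}, to be $\H^0(K,W^*)^\vee\iso\H^0(K,W)^\vee$ using the self-duality of $W$ and the finiteness of $\H^1_{\cF_{\ac}}(K,W)$ from~\cite[Proposition~3.2.1]{JSW2017}. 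Without this duality step the second factor never appears. Relatedly, you do not explain how the coinvariants $\H^1_{\cF_{\ac}^\Sigma}(K,M_f)_\Gamma$ cancel against the numerator; this requires~\cite[Theorem~4.7]{Greenberg1999} together with the vanishing of $\H^1(K^\Sigma/K,M_f)_\Gamma$, which in turn rests on the almost-divisibility statement of~\cref{H1almostdiv}. These two inputs --- the Cassels/Poitou--Tate computation of $\mathcal{P}/\mathcal{G}$ and the coinvariant cancellation --- are the actual content beyond~\cite{JSW2017} and need to be supplied.
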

We also define $\mathcal{P}={\displaystyle \prod_{w\in S\setminus\Sigma}\frac{\H^1(K_w,W)}{\H^1_{\cF_{\ac}}(K_w,W)}}$ and $\mathcal{G}=\im\{\H^1(K^S/K,W)\xrightarrow{\loc_S}\mathcal{P}\}.$

We remark that their $S$ and $\Sigma$ are different from ours in section $1$. Their choice of $v,\ol v$ is also different from ours. For our application, the $\Sigma$ above would be taken as our $\Sigma-\{v,\ol v, \infty\}$ in the earlier sections and their $S$ will be chosen to be our $\Sigma\cup\{v,\ol v\}$ so that $X^\Sigma_{\ac}(M_f)$ is the dual of the imprimitive Greenberg Selmer group $\H^1_{\cF^S_{Gr}}(K,M_f)$. In particular, $\mathcal{P}=\frac{\H^1(K_v,W)}{\H^1(K_v,W)_{\div}}\times \H^1(K_{\ol v},W)$.
In the rest of this appendix, we explain how to adapt the proofs in~\cite[section 3]{JSW2017} to our setting, following~\cite{Greenberg1999}.

We begin with recalling~\cite[Lemma 4.2]{Greenberg1999} which says\begin{equation}\label{f(0)}
\#\cO/f^\Sigma_{\ac}(0)\sim \frac{\H^1_{\cF_{\ac}^\Sigma}(K,M_f)^\Gamma}{\H^1_{\cF_{\ac}^\Sigma}(K,M_f)_\Gamma},
\end{equation}
provided $\H^1_{\cF_{\ac}^\Sigma}(K,M_f)^\Gamma$ is finite. For now we assume this finiteness result so we have the above computation of $\#\cO/f^\Sigma_{\ac}(0)$. We will first study the numerator.

\subsection{Computing $\H^1_{\cF_{\ac}^\Sigma}(K,M_f)^\Gamma$}\label{B1}

As in~\cite[section 3.3.9]{JSW2017}, there is a commutative diagram\\
	\begin{tikzcd}
	0\ar[r] & \H^1_{\cF_{\ac}^\Sigma}(K,W)\ar[r]\ar[d,"s"]& \H^1(K^S/K,W) \ar[r,"\loc_S"]\ar[d,"h"]& \mathcal{P}\ar[d,"r"] \\
	0\ar[r]& \H^1_{\cF_{\ac}^\Sigma}(K,M_f)^\Gamma \ar[r]& \H^1(K^S/K,M_f)^\Gamma  \ar[r]&\mathcal{P}(M_f)^\Gamma
\end{tikzcd}\\
where $\mathcal{P}(M_f)={\displaystyle \prod_{w\in S\setminus\Sigma}\frac{\H^1(K_w,M_f)}{\H^1_{\cF_{\ac}}(K_w,M_f)}}$. In \textit{loc.\ cit.}, the first row is also exact on the right because $\loc_S$ is surjective, but this surjectivity may fail for us as we do not assume (irred$_K$). Therefore, we would need to work with a similar commutative diagram\\
\begin{tikzcd}
	0\ar[r] & \H^1_{\cF_{\ac}^\Sigma}(K,W)\ar[r]\ar[d,"s"]& \H^1(K^S/K,W) \ar[r,"\loc_S"]\ar[d,"h"]& \mathcal{G}\ar[r]\ar[d,"g"]&0 \\
	0\ar[r]& \H^1_{\cF_{\ac}^\Sigma}(K,M_f)^\Gamma \ar[r]& \H^1(K^S/K,M_f)^\Gamma  \ar[r]&\mathcal{G}(M_f)^\Gamma&
\end{tikzcd}\\
where $\mathcal{G}(M_f)=\im\{\H^1(K^S/K,M_f)\xrightarrow{\loc_S}\mathcal{P}(M_f)\}$.

As in \textit{loc.\ cit.}, $\coker(h)=0$ since $\H^1(K^S/K,M_f)^\Gamma=\H^1(K^S/K,M_f)[\gamma-1]$ and there is an exact sequence\[
0\to W\to M_f\xrightarrow{\gamma-1} M_f\to 0.\] 
As $\H^1_{\cF_{\ac}}(K,W)$ is finite from~\cref{Selfinite}, snake lemma then yields
\begin{equation}\label{WtoM}
\frac{\#\H^1_{\cF_{\ac}}(K,M_f)^\Gamma}{\#\H^1_{\cF_{\ac}}(K,W)}=\frac{\#\coker(s)}{\#\ker(s)}=\frac{\#\ker(g)}{\#\ker(h)},
\end{equation}
where $\ker(h)=\H^0(K,W)$ which no longer vanishes in our case. We will see shortly that $\ker(g)$ (and hence $\coker(s)$) is finite, which then gives the finiteness of $\H^1_{\cF_{\ac}^\Sigma}(K,M_f)^\Gamma$ we needed. It remains to compute $\ker(g)$.

\subsection{Computing $\ker(g)$}\label{B2}

We first claim that $\mathcal{G}(M_f)=\mathcal{P}(M_f)$, or in other words, the map $\H^1(K^S/K,M_f)\xrightarrow{\loc_S}\mathcal{P}(M_f)\coloneq{\displaystyle \prod_{w\in S\setminus\Sigma}\frac{\H^1(K_w,M_f)}{\H^1_{\cF_{\ac}}(K_w,M_f)}}=\H^1(K_{\ol v},M_f)$ is surjective. This is nothing but the global-to-local map defining the imprimitive Greenberg Selmer group, so it is surjective by~\cref{surj for Self}.

We now prove a lemma.

\begin{lemma}\label[lemma]{H1almostdiv}
	$\H^1(K^\Sigma/K,M_f)$ is almost divisible, i.e., its Pontryagin dual has no non-trivial finite submodules.
\end{lemma}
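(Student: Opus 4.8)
The plan is to follow Greenberg's strategy, as laid out in~\cite[\S\,4]{Greenberg1999} and adapted in~\cite[\S\,3]{JSW2017}, but keeping track of the extra contributions coming from the fact that~$\ol\rho_f$ may now have a non-trivial global~$\H^0$. First I would record the standard fact that for a cofinitely generated $\Lambda$-module $\cM$, its Pontryagin dual has no non-trivial finite $\Lambda$-submodule if and only if $\cM$ is \emph{$\Lambda$-divisible}, equivalently $\cM^\Gamma$ is divisible (as an abelian group), equivalently $\cM$ has no proper $\Lambda$-submodule of finite index in the appropriate sense. So it suffices to show that $\H^1(K^\Sigma/K,M_f)$ is $\Lambda$-divisible; here one may use that $M_f = T\otimes_\cO\Lambda^\vee$ is itself $\Lambda$-divisible. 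The natural tool is the exact sequence $0\to W_f\to M_f\xrightarrow{\cdot T} M_f\to 0$ (recorded in~\cref{sec:local-cohomology-groups-of-rhof}), which shows that $\H^1(K^\Sigma/K,M_f)/T \hookrightarrow \H^2(K^\Sigma/K,W_f)$, so divisibility of $\H^1(K^\Sigma/K,M_f)$ (i.e.\ surjectivity of multiplication by $T$, hence by every height-one prime by a standard argument, hence $\Lambda$-divisibility via the structure theory of the dual module) follows once one controls $\H^2(K^\Sigma/K,W_f)$.

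Concretely, the key step is: \textbf{$\H^2(K^\Sigma/K,M_f)=0$}. This is already used in the proof of~\cref{imprimlambda} (via~\cite[Proposition~3]{Greenberg1989} together with~\cref{corankH1Mf}), and the same argument applies here: $\Gal(K^\Sigma/K)$ has $p$-cohomological dimension $2$, so $\H^2(K^\Sigma/K,M_f)$ is $\Lambda$-cofree; Greenberg's global Euler characteristic formula gives $\corank_\Lambda\H^1(K^\Sigma/K,M_f)-\corank_\Lambda\H^2(K^\Sigma/K,M_f)=1$, and since $\corank_\Lambda\H^1(K^\Sigma/K,M_f)\le 1$ (from the defining sequence of the unramified Selmer group together with~\cref{Rubin Hida}, \cref{H1 Euler} and~\cref{corankH1Mf}), the $\H^2$ is cotorsion and cofree, hence zero. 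Then from the long exact sequence of $0\to W_f\to M_f\xrightarrow{\cdot T} M_f\to 0$ one gets that $\cdot T$ is surjective on $\H^1(K^\Sigma/K,M_f)$, i.e.\ $\H^1(K^\Sigma/K,M_f)/T=0$. By the topological Nakayama lemma applied to the dual (as in~\cref{Iwasawa module free}), the dual $\Lambda$-module $\H^1(K^\Sigma/K,M_f)^\vee$ is then generated by $\rk_\Lambda=1$ element, hence is a quotient of $\Lambda$; but a quotient of $\Lambda$ of rank $1$ is $\Lambda$ itself, which has no finite submodule. Alternatively and more robustly, one invokes~\cite[Proposition~5]{Greenberg1989} directly: once $\H^2(K^\Sigma/K,M_f)=0$, that proposition yields that $\H^1(K^\Sigma/K,M_f)^\vee$ has no non-trivial finite $\Lambda$-submodule — which is exactly the assertion of the lemma.

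I would therefore structure the write-up in two moves: first cite/reprove $\H^2(K^\Sigma/K,M_f)=0$ exactly as in the proof of~\cref{imprimlambda} (so this is essentially a pointer, not new work), and then apply~\cite[Proposition~5]{Greenberg1989} verbatim. The one subtlety to flag is that Greenberg's Proposition~5 is stated under a hypothesis that in the residually irreducible case is automatic but here needs checking — typically a condition of the form ``$\H^0(K_w,W_f)$ is divisible for the relevant local places'' or ``$\H^2(K^\Sigma/K,M_f)=0$'', the latter being precisely what we just established; so the main (modest) obstacle is to confirm that the hypotheses of~\cite[Proposition~5]{Greenberg1989} are met in the residually reducible setting with possibly non-trivial $\H^0(K,\ol\rho_f)$, which reduces to the vanishing of $\H^2(K^\Sigma/K,M_f)$ already in hand. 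I do not expect the proof to be long; the real content was done in establishing~\cref{corankH1Mf} and the $\H^2$-vanishing, and here we are just packaging it into the divisibility statement needed downstream for the control theorem.
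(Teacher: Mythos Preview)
Your core approach --- show $\H^2(K^\Sigma/K,M_f)=0$ via Greenberg's Euler characteristic formula together with the corank bound coming from~\cref{corankH1Mf}, and then invoke~\cite[Proposition~5]{Greenberg1989} --- is exactly what the paper does. Two small slips to fix: (i) since $T_f$ has $\cO$-rank $2$, Greenberg's formula gives $\corank_\Lambda\H^1-\corank_\Lambda\H^2=2$, not $1$, and correspondingly the bound from the defining sequence is $\corank_\Lambda\H^1\le 2$ (the local term at $v$ has corank $2$ by~\cref{corankH1Mf}); the conclusion $\H^2$ cotorsion, hence zero, is unchanged. (ii) Your ``alternative'' route via $\H^1(K^\Sigma/K,M_f)/T=0$ does not follow from $\H^2(K^\Sigma/K,M_f)=0$: the long exact sequence only gives $\H^1(M_f)/T\hookrightarrow\H^2(K^\Sigma/K,W_f)$, which is a different group; drop that paragraph and keep the direct appeal to~\cite[Proposition~5]{Greenberg1989}.
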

\begin{proof}
Similarly as in~\cref{Lambda invariants}, this follows from~\cite[Proposition 3, Proposition 5]{Greenberg1989}. Note that by~\cref{corankH1Mf}, $\H^2(K^\Sigma/K,M_f)$ must be cotorsion. Thus it's $0$ and the result follows.
	\end{proof} 

From the proof of~\cref{corankH1Mf}, we see that the $\cO$-corank of $\H^1(K_{\ol v},W)$ is $2$. Since $\corank_\cO\mathcal{P}=\corank_\cO \H^1(K_{\ol v},W)$, the same is true for $\mathcal{P}$. The arguments after Proposition 4.12 in~\cite{Greenberg1999} then shows that $\H^1(K^\Sigma/K,M_f)_\Gamma=0$. Now Theorem 4.7 in \textit{op.\ cit.} gives the equation
\begin{equation*}
\#\ker(g)=\#\ker(r)\cdot\#\H^1_{\cF_{\ac}^\Sigma}(K,M_f)_\Gamma\cdot\#(\mathcal{P}/\mathcal{G}).
\end{equation*}
In our case the surjectivity of the map $(\mathcal{P} \to \mathcal{P}(M_f)^\Gamma)=\big(\frac{\H^1(K_v,W)}{\H^1(K_v,W)_{\div}}\times \H^1(K_{\ol v},W) \to \H^1(K_{\ol v}, M_f)^\Gamma\big)$ follows from the $p$-cohomological dimension of $G(K_{\infty,\ol v}/K_{\ol v})$ being $1$.

Now by Cassels' Theorem (Proposition 4.13 in \textit{op.\ cit.}), since $\H^1_{\cF_{ac}}(K,W)$ is finite by~\cref{Selfinite} and $\H^0(K_w,W)$ is finite by~\cref{H0 of V/T}, we have $\mathcal{P}/\mathcal{G}\cong \H^0(K,W)^\vee$.

Combining this result with~\eqref{f(0)}, \eqref{WtoM} and the computation of $\ker(r)$ in~\cite[Proposition 3.3.7]{JSW2017} (the proof of case 3(b) is the only place where (HT) is used, but by~\cref{H0 of V/T}, we already have the finiteness of $M_f^{G_{K_v}}$ so (HT) is no longer needed), \cref{control} follows.

\subsection{Applications to newforms and modular abelian varieties $A_f$}\label{B3}
Most part of the computations are already done in~\cite[section 3.5]{JSW2017}. 

As is already noted in~\cref{Selfinite}, in our residually reducible setting, $A_f(K)\otimes_{\bZ[f]}\cO=\H^1_f(K,T)$ may no longer be a free $\cO$-module, and we need to replace $A_f(K)$ and $\H^1_f(K,T)$ in \textit{loc.\ cit.} by $A_f(K)_{/\tors}$ and $\H^1_f(K,T)_{/\tors}$ respectively. In particular, from the sequence~\eqref{deltav}, equation (3.5.a) in \textit{op.\ cit.} becomes\[
\#\delta_v=\#[A_f(K_v)_{/\tors}\otimes_{\bZ[f]\otimes\bZ_p}\cO:A_f(K)_{/\tors}\otimes_{\bZ[f]}\cO].
\]
This means in our application to elliptic curves, the above index becomes $[E(K)_{/\tors}:\bZ\cdot P]_p$, which does not include the torsion of $E(K)$. Indeed, as we will see immediately, $E(K)_{\tors}$ has already been considered separately in~\cref{B1} and~\cref{B2}.

It remains to compute the torsion terms $\H^0(K,W)$ and $\H^0(K,W)^\vee$. But these are identified with $(A_f(K)\otimes_{\bZ[f]}\cO)[\frp^\infty]$ and its dual respectively, both having size $\#((A_f(K)\otimes_{\bZ[f]}\cO)_{\tors})_\frp$. When $A_f$ is an elliptic curve, $\bZ[f]=\bZ$, $\cO=\bZ_p$ and the size is $\#(E(K)\otimes\bZ_p)[p^\infty]=\#(E(K)[p^\infty])=(\#E(K)_{\tors})_p$.

\bibliographystyle{amsalpha}
\bibliography{references}

\end{document}